\documentclass{article}
\usepackage{cite}
\usepackage{amssymb,amsthm}
\usepackage{mathrsfs}
\usepackage[tbtags]{amsmath}
\usepackage{enumerate}
\usepackage{amsmath,amssymb,amsthm,mathrsfs,dsfont}

\usepackage[titletoc]{appendix}

\usepackage{dutchcal}
\usepackage{titlesec,hyperref}
\usepackage{color}
\usepackage{verbatim}
\usepackage{fancyhdr}
\usepackage[margin=1cm]{geometry}
\usepackage{mathrsfs}
\usepackage{boondox-cal}
\usepackage{indentfirst}

\newtheorem{theo}{Theorem}[section]
\newtheorem{lemm}[theo]{Lemma}

\newtheorem{rema}[theo]{Remark}
\numberwithin{equation}{section}

\allowdisplaybreaks

\begin{document}
	\title{Global uniform regularity and large time behavior of solutions to three dimensional compressible
MHD equations with vanishing vertical magnetic resistivity in half space}
	\author{
		$\mbox{Song Gao}^1$ \footnote{Corresponding author. Email: gaoshanbudi@sjtu.edu.cn}, \quad
        $\mbox{Jiahong Wu}^2$ \footnote{Email: jwu29@nd.edu}\quad and\quad
		$\mbox{Feng Xie}^1$ \footnote{Email: tzxief@sjtu.edu.cn}\\
		\quad
		$^1\mbox{School}$  of Mathematical Sciences, Shanghai Jiao Tong University,\\
		 Shanghai 200240, P. R. China\\
$^2\mbox{Department}$ of Mathematics, University of Notre Dame,\\
Notre Dame 46556, USA\\
	}
	\date{}
	\maketitle
\vspace{-1em}
	\begin{abstract}
\begin{sloppypar}
This paper aims to establish the global regularity and large time behavior of solutions to the three-dimensional (3D) compressible magnetohydrodynamics (MHD) equations with vanishing vertical magnetic resistivity in the upper half-space with no-slip boundary condition on velocity and perfectly conducting boundary condition on magnetic field.
By exploiting anisotropic Sobolev inequalities and elaborated estimates, we are able to achieve global-in-time uniform regularity estimates of solutions, which are independent of small vertical resistivity coefficient $\varepsilon$. These uniform global regularity estimates allow us to pass to the limit as \(\varepsilon \to 0\) and obtain the convergence to the corresponding MHD system without vertical magnetic resistivity globally in time. Moreover, the $H^{1}\left(\mathbb{R}_{+}^{3}\right)$ decay rates of solutions to the original system are also derived based on the detailed analysis on semigroup of the related linearized operator in half space and the uniform energy estimates achieved. In contrast to the decay estimates for the limit system obtained by Gao and Xie \cite{JDE}, the present decay estimate exhibits a slower rate, attributable to the absence of higher-order normal derivative estimates, which results from the occurrence of boundary layers. Finally, we combine both sets of regularity and decay results to rigorously prove an explicit time-uniform $L^2$ convergence rate of order $\varepsilon^{\frac{1}{4}}$ for the vanishing vertical magnetic resistivity limit process.
\end{sloppypar}

\end{abstract}
	
\vspace{-1em}

	\tableofcontents
	
	\section{Introduction}
	
	The magnetohydrodynamics (MHD) equations play a significant role in the study of many phenomena in geophysics, astrophysics, cosmology and engineering, which reflect the basic physics laws governing the motion of electrically conducting fluids such as plasmas, liquid metals and electrolytes.
For a general introduction on MHD theory, we refer the reader to the monographs and survey articles \cite{new10, new12, new26, new45}.
In this paper, we focus on the following three dimensional (3D) compressible MHD equations with small vertical resistivity parameter $\varepsilon$:
	\begin{equation}\label{eq1}
		\left\{\begin{array}{*{5}{ll}}
			\partial_t\rho^{\varepsilon} + \operatorname{div}\left(\rho^{\varepsilon}\mathbf{v}^{\varepsilon}\right) = 0 &{\rm in} ~~ \mathbb{R}_+^3,\\
\rho^{\varepsilon}\left(\partial_t\mathbf{v}^{\varepsilon} + \mathbf{v}^{\varepsilon} \cdot \nabla\mathbf{v}^{\varepsilon}\right) - \mu\Delta\mathbf{v}^{\varepsilon} - \left(\mu + \lambda\right)\nabla\operatorname{div}\mathbf{v}^{\varepsilon} + \nabla P = \left(\nabla \times \mathbf{B}^{\varepsilon}\right) \times \mathbf{B}^{\varepsilon} \quad & {\rm in} ~~\mathbb{R}_+^3,\\
			\partial_t\mathbf{B}^{\varepsilon} - \left(\Delta_h+\varepsilon\partial_3^2\right)\mathbf{B}^{\varepsilon} + \mathbf{v}^{\varepsilon} \cdot \nabla\mathbf{B}^{\varepsilon} + \mathbf{B}^{\varepsilon}\operatorname{div}\mathbf{v}^{\varepsilon} = \mathbf{B}^{\varepsilon} \cdot \nabla\mathbf{v}^{\varepsilon} &{\rm in} ~~ \mathbb{R}_+^3,\\
 \operatorname{div}\mathbf{B}^{\varepsilon} = 0 \quad & {\rm in} ~~ \mathbb{R}_+^3,\\
 \left(\rho^{\varepsilon}, \mathbf{v}^{\varepsilon}, \mathbf{B}^{\varepsilon}\right)\big|_{t=0} = \left(\rho_{0}, \mathbf{v}_{0}, \mathbf{B}_{0}\right) &{\rm in} ~~ \mathbb{R}_+^3,
		\end{array}\right.
	\end{equation}
where $\mathbb{R}_+^3=\left\{ (x_1,x_2,x_3) \in \mathbb{R} \times \mathbb{R} \times \mathbb{R_+} \right\}$, and the unknowns \(\rho^{\varepsilon} \in \mathbb{R}_+\), \(\mathbf{v}^{\varepsilon} \in \mathbb{R}^3\) and \(\mathbf{B}^{\varepsilon} \in \mathbb{R}^3\) represent the density, velocity and magnetic field, respectively.
The pressure \(P = P(\rho^{\varepsilon})\) is a smooth function of the density, satisfying \(P'(\rho^{\varepsilon}) > 0\) with \(P'(\tilde{\rho}) = 1\), where \(\tilde{\rho} > 0\) is a constant reference density.
The viscosity coefficients \(\mu\) and \(\lambda\) satisfy \(\mu > 0\) and \(2\mu + 3\lambda \geq 0\).
For notational convenience, we write
\begin{equation*}
	\Delta_h \overset{def}{=}  \partial_1^2+\partial_2^2, \quad \partial_h \overset{def}{=} \left(\partial_1, \partial_2\right).
\end{equation*}
The system \eqref{eq1} is supplemented with the no-slip boundary condition on velocity and the perfectly conducting boundary condition on magnetic field:
\begin{equation}\label{eq2}
	\left(\mathbf{v}^{\varepsilon} , \partial_3\mathbf{B}^{\varepsilon}_h\right)\big|_{x_3=0}=\mathbf{0}.
\end{equation}
In this paper, we consider the case that a uniform normal component of initial magnetic field vanishes
on the boundary
\[
\left(\mathbf{B}_{0}\right)_3\big|_{x_3=0}=0,
\]
and along with the equation of magnetic field in \eqref{eq1} and boundary condition \eqref{eq2}, it implies
\begin{equation}\label{eq3}
  B_3^\varepsilon \big|_{x_3=0}=0.
\end{equation}
\enlargethispage{0.5cm}
Formally, as the parameter $\varepsilon \rightarrow 0^+$, system \eqref{eq1}
tends to the following system
	\begin{equation}\label{eq4}
		\left\{\begin{array}{*{5}{ll}}
			\partial_t\rho^{0} + \operatorname{div}\left(\rho^{0}\mathbf{v}^{0}\right) = 0 &{\rm in} ~~ \mathbb{R}_+^3,\\
\rho^{0}\left(\partial_t\mathbf{v}^{0} + \mathbf{v}^{0} \cdot \nabla\mathbf{v}^{0}\right) - \mu\Delta\mathbf{v}^{0} - \left(\mu + \lambda\right)\nabla\operatorname{div}\mathbf{v}^{0} + \nabla P = \left(\nabla \times \mathbf{B}^{0}\right) \times \mathbf{B}^{0} \quad & {\rm in} ~~\mathbb{R}_+^3,\\
			\partial_t\mathbf{B}^{0} - \Delta_h\mathbf{B}^{0} + \mathbf{v}^{0} \cdot \nabla\mathbf{B}^{0} + \mathbf{B}^{0}\operatorname{div}\mathbf{v}^{0} = \mathbf{B}^{0} \cdot \nabla\mathbf{v}^{0} &{\rm in} ~~ \mathbb{R}_+^3,\\
 \operatorname{div}\mathbf{B}^{0} = 0 \quad & {\rm in} ~~ \mathbb{R}_+^3,\\
 \left(\rho^{0}, \mathbf{v}^{0}, \mathbf{B}^{0}\right)\big|_{t=0} = \left(\rho_{0}, \mathbf{v}_{0}, \mathbf{B}_{0}\right) &{\rm in} ~~ \mathbb{R}_+^3,
		\end{array}\right.
	\end{equation}
together with the no-slip boundary condition
\begin{equation}\label{eq5}
		\mathbf{v}^{0}\big|_{x_3=0}=\mathbf{0}.
\end{equation}
The main goal of this paper is to establish the global regularity estimates and large time behavior of solutions to \eqref{eq1}--\eqref{eq3}, and as direct byproducts, we rigorously prove the global convergence of solutions to \eqref{eq1}--\eqref{eq3} to solutions to \eqref{eq4}--\eqref{eq5} as the parameter $\varepsilon$ goes to zero,
and establish an explicit time-uniform $L^2$ convergence rate for the vanishing vertical magnetic resistivity limit process.

\textbf{Related literature}: We briefly review three strands of related research: the vanishing viscosity limit of the  Navier–Stokes equations, and developments on the stability and decay rates of solutions to 3D Navier–Stokes and MHD equations.

Regarding the vanishing viscosity limit for compressible flows, Wang, Xin and Yong \cite{New46} established the uniform regularity and vanishing viscosity limit of solutions to compressible Navier-Stokes equations with Navier-slip boundary conditions in bounded domains.
For compressible MHD equations, Cui, Li and Xie \cite{CLX1, CLX2} derived the uniform regularity of solutions and proved the vanishing viscosity limit for compressible MHD equations with the no-slip boundary condition on the velocity and the perfectly conducting boundary condition on the magnetic field in the half space. For the corresponding limit process of compressible viscoelastic equations, we refer to \cite{GWX, WX}. It is noted that all of these results hold locally in time. More broadly, the global-in-time vanishing viscosity limit for both incompressible and compressible Navier–Stokes equations and MHD equations remains one of the central open problems in fluid mechanics;
we refer the reader to \cite{New23, New27, New30, New33} and references therein for the extensive literature on this topic.

Matsumura and Nishida \cite{23a} proved the global well-posedness of $H^3$ solutions to compressible Navier-Stokes equations for small perturbed initial data in the half-space \(\mathbb{R}_{+}^{3}\). Subsequently, Kagei and Kobayashi \cite{k1, k2} systematically investigated the large-time asymptotic behavior of solutions to the compressible Navier-Stokes equations under the no-slip boundary condition on velocity. In the second part of this paper, we try to extend the results in \cite{k1, k2} to 3D compressible viscous  MHD system with small vertical resistivity in \(\mathbb{R}_+^3\), where the no-slip boundary condition is imposed on velocity and the magnetic field satisfies the perfectly conducting boundary condition.

It is noted that the compressible MHD equations incorporating both dissipation and magnetic diffusion have garnered significant attention in recent research, owing to their broad physical applicability and the associated mathematical challenges, see, e.g., \cite{3.,11.,12.,18.,29.,33.} and among others.

Compared with the compressible MHD system that includes both viscosity and resistivity, far fewer results are currently available for the compressible viscous MHD equations without magnetic diffusion. The absence of magnetic diffusivity poses substantial challenges for establishing global well-posedness (even for small initial data) and analyzing stability near a background magnetic field. As specific examples, Wu and Wu \cite{31.}, Wu and Zhu \cite{32.} systematically demonstrated the global existence and stability of solutions to 2D compressible MHD equations lacking magnetic diffusion with the constraint that initial data are small. Meanwhile, Dong, Wu and Zhai \cite{6.} established the global existence of strong solutions for the \(2\frac{1}{2}\)-dimensional (2.5-D) compressible non-resistive MHD equations under the condition of small initial data.
When considering the horizontally infinite flat layer \(\Omega = \mathbb{R}^2 \times (0,1)\), Tan and Wang \cite{27a}  proved the global existence of smooth solutions to compressible MHD system, see also Li \cite{19b} for extensions to the heat-conductive fluids. On the 3D torus \(\mathbb{T}^3\), Wu and Zhai \cite{31b} tackled the MHD stability problem involving velocity dissipation, focusing on regions near a background magnetic field \(\mathbf{n} \in \mathbb{R}^3\) that satisfies the Diophantine condition. Later, Li, Xu and Zhai \cite{21a} extended this result to cover compressible viscous non-isentropic MHD flows that do not incorporate magnetic diffusion. Very recently, Wu and Zhai \cite{31a} studied the Cauchy problem in \(\mathbb{R}^3\) for the compressible viscous MHD equations with only horizontal magnetic diffusion, which is exactly the limit equation  \eqref{eq4} in this paper. Subsequently,  Gao and Xie \cite{JDE} established the global well-posedness of solutions to initial boundary value problem \eqref{eq4}--\eqref{eq5}, and gave the decay rates of solutions accordingly.
For the incompressible MHD system, Gao, Wu and Yao et al. \cite{9a,GPWY} established the global-in-time uniform regularity of solutions to the 3D anisotropic incompressible MHD equations in half space with slip boundary conditions in the vicinity of a background magnetic field, by exploiting the enhanced dissipation induced by the background magnetic field.

Due to the smallness of parameter $\varepsilon$ in \eqref{eq1}
and appearance of boundary which will lead to the possible formation of boundary layer, we try to establish the uniform energy estimate
under the framework of conormal Sobolev space rather than the classical
Sobolev space as in \cite{JDE}. The primary objective of this paper is to study the global well-posedness of strong solutions to \eqref{eq1}--\eqref{eq3} in the vicinity of the equilibrium state \((\tilde{\rho}, \mathbf{0}, \mathbf{0})\). We set \(\tilde{\rho} = 1\) without loss of generality,
then the perturbation $( a^{\varepsilon} ,\mathbf{v}^{\varepsilon},\mathbf{B}^{\varepsilon})$ around this equilibrium with $a^{\varepsilon} \overset{def}{=} \rho^{\varepsilon} - 1$ satisfies
	\begin{equation}\label{eq6}
 	\left\{\begin{array}{*{5}{ll}}
 		\partial_t a^{\varepsilon} + \operatorname{div}\mathbf{v}^{\varepsilon} = f^{\varepsilon}_1
\quad & {\rm in} ~~\mathbb{R}_+^3,\\
\partial_t\mathbf{v}^{\varepsilon} - \mu\Delta\mathbf{v}^{\varepsilon} - \left(\mu + \lambda\right)\nabla\operatorname{div}\mathbf{v}^{\varepsilon} + \nabla a^{\varepsilon} = f^{\varepsilon}_2 \quad & {\rm in} ~~\mathbb{R}_+^3,\\
 		\partial_t\mathbf{B}^{\varepsilon} - \left(\Delta_h+\varepsilon\partial_3^2\right)\mathbf{B}^{\varepsilon} = f^{\varepsilon}_3
\quad & {\rm in} ~~ \mathbb{R}_+^3,\\
\operatorname{div}\mathbf{B}^{\varepsilon} = 0 \quad & {\rm in} ~~ \mathbb{R}_+^3,\\
\left(a^{\varepsilon}, \mathbf{v}^{\varepsilon}, \mathbf{B}^{\varepsilon}\right)\big|_{t=0} = \left(a_{0}, \mathbf{v}_{0}, \mathbf{B}_{0}\right) \quad & {\rm in} ~~ \mathbb{R}_+^3,\\
 		\left(\mathbf{v}^{\varepsilon},\partial_3 \mathbf{B}^{\varepsilon}_h, B_3^\varepsilon\right) = \mathbf{0}\quad & {\rm on} ~~ \mathbb{R}^2 \times \left\{ x_3=0 \right\},
 	\end{array}\right.
 \end{equation}	
where  $a_0 \overset{def}{=} \rho_0 - 1$ and  \(f^{\varepsilon}_i\) (i=1,2,3) are nonlinear terms given by
\begin{equation*}
 \left\{
\begin{aligned}
&f^{\varepsilon}_1 \overset{def}{=} -\mathbf{v}^{\varepsilon} \cdot \nabla a^{\varepsilon} - a^{\varepsilon}\operatorname{div}\mathbf{v}^{\varepsilon},\\
&f^{\varepsilon}_2 \overset{def}{=} -\mathbf{v}^{\varepsilon} \cdot \nabla\mathbf{v}^{\varepsilon} + \mathbf{B}^{\varepsilon} \cdot \nabla\mathbf{B}^{\varepsilon} - \mathbf{B}^{\varepsilon}\nabla\mathbf{B}^{\varepsilon} - J\left(a^{\varepsilon}\right)\nabla a^{\varepsilon} \\
&\quad\quad - I\left(a^{\varepsilon}\right)\left(\mu\Delta\mathbf{v}^{\varepsilon} + \left(\mu + \lambda\right)\nabla\operatorname{div}\mathbf{v}^{\varepsilon} + \mathbf{B}^{\varepsilon} \cdot \nabla\mathbf{B}^{\varepsilon} - \mathbf{B}^{\varepsilon}\nabla\mathbf{B}^{\varepsilon}\right),\\
&f^{\varepsilon}_3 \overset{def}{=} -\mathbf{v}^{\varepsilon} \cdot \nabla\mathbf{B}^{\varepsilon} - \mathbf{B}^{\varepsilon}\operatorname{div}\mathbf{v}^{\varepsilon} + \mathbf{B}^{\varepsilon} \cdot \nabla\mathbf{v}^{\varepsilon}\\
\end{aligned}
\right.
\end{equation*}
with
\[
\mathbf{B}^{\varepsilon} \nabla \mathbf{B}^{\varepsilon} \overset{def}{=}  \sum_{i=1}^3 B^{\varepsilon}_i \nabla B^{\varepsilon}_i,\quad
I\left(a^{\varepsilon}\right) \overset{def}{=} \frac{a^{\varepsilon}}{a^{\varepsilon} + 1} \quad \text{and} \quad J\left(a^{\varepsilon}\right) \overset{def}{=} \frac{P'\left(a^{\varepsilon} + 1\right)}{a^{\varepsilon} + 1} - 1.
\]
We will turn to derive the global uniform energy estimate of solutions to the reformulated system \eqref{eq6} instead of \eqref{eq1} in what follows.

Accordingly, as $\varepsilon \to 0^+$, \eqref{eq4}--\eqref{eq5} will be changed into the following system
 \begin{equation}\label{eq7}
 	\left\{\begin{array}{*{5}{ll}}
 		\partial_t a^{0} + \operatorname{div}\mathbf{v}^{0} = f^{0}_1
\quad & {\rm in} ~~\mathbb{R}_+^3,\\
\partial_t\mathbf{v}^{0} - \mu\Delta\mathbf{v}^{0} - \left(\mu + \lambda\right)\nabla\operatorname{div}\mathbf{v}^{0} + \nabla a^{0} = f^{0}_2 \quad & {\rm in} ~~\mathbb{R}_+^3,\\
 		\partial_t\mathbf{B}^{0} - \Delta_h\mathbf{B}^{0} = f^{0}_3
\quad & {\rm in} ~~ \mathbb{R}_+^3,\\
\operatorname{div}\mathbf{B}^{0} = 0 \quad & {\rm in} ~~ \mathbb{R}_+^3,\\
\left(a^{0}, \mathbf{v}^{0}, \mathbf{B}^{0}\right)\big|_{t=0} = \left(a_{0}, \mathbf{v}_{0}, \mathbf{B}_{0}\right) \quad & {\rm in} ~~ \mathbb{R}_+^3,\\
 		\left(\mathbf{v}^{0}, B_3^0\right) = \mathbf{0}\quad & {\rm on} ~~ \mathbb{R}^2 \times \left\{ x_3=0 \right\},
 	\end{array}\right.
 \end{equation}	
where \(f^{0}_i\) (i=1,2,3) are nonlinear terms given by
\begin{equation*}
 \left\{
\begin{aligned}
&f^{0}_1 \overset{def}{=} -\mathbf{v}^{0} \cdot \nabla a^{0} - a^{0}\operatorname{div}\mathbf{v}^{0},\\
&f^{0}_2 \overset{def}{=} -\mathbf{v}^{0} \cdot \nabla\mathbf{v}^{0} + \mathbf{B}^{0} \cdot \nabla\mathbf{B}^{0} - \mathbf{B}^{0}\nabla\mathbf{B}^{0} - J\left(a^{0}\right)\nabla a^{0} \\
&\quad\quad - I\left(a^{0}\right)\left(\mu\Delta\mathbf{v}^{0} + \left(\mu + \lambda\right)\nabla\operatorname{div}\mathbf{v}^{0} + \mathbf{B}^{0} \cdot \nabla\mathbf{B}^{0} - \mathbf{B}^{0}\nabla\mathbf{B}^{0}\right),\\
&f^{0}_3 \overset{def}{=} -\mathbf{v}^{0} \cdot \nabla\mathbf{B}^{0} - \mathbf{B}^{0}\operatorname{div}\mathbf{v}^{0} + \mathbf{B}^{0} \cdot \nabla\mathbf{v}^{0}\\
\end{aligned}
\right.
\end{equation*}
with
\[
\mathbf{B}^0 \nabla \mathbf{B}^0 \overset{def}{=}  \sum_{i=1}^3 B^0_i \nabla B^0_i,\quad
I\left(a^{0}\right) \overset{def}{=} \frac{a^{0}}{a^{0} + 1} \quad \text{and} \quad J\left(a^{0}\right) \overset{def}{=} \frac{P'\left(a^{0} + 1\right)}{a^{0} + 1} - 1.
\]

To further facilitate the subsequent analysis on decay rates of solutions, we also introduce the following perturbed system in momentum variable. Set
\[
\mathbf{m}^{\varepsilon}\overset{def}{=}\left(a^{\varepsilon}+1\right)\mathbf{v}^{\varepsilon},
\]
whereupon the initial boundary value problem \eqref{eq6} can be rewritten into the following equivalent form
\begin{equation}\label{eq8}
 	\left\{\begin{array}{*{5}{ll}}
 		\partial_t a^{\varepsilon} + \operatorname{div}\mathbf{m}^{\varepsilon} = 0
\quad & {\rm in} ~~\mathbb{R}_+^3,\\
\partial_t\mathbf{m}^{\varepsilon} - \mu\Delta\mathbf{m}^{\varepsilon} - \left(\mu + \lambda\right)\nabla\operatorname{div}\mathbf{m}^{\varepsilon} + \nabla a^{\varepsilon} = \operatorname{div}F^{\varepsilon}_1+\operatorname{div}F^{\varepsilon}_2 \quad & {\rm in} ~~\mathbb{R}_+^3,\\
 		\partial_t\mathbf{B}^{\varepsilon} - \left(\Delta_h+\varepsilon\partial_3^2\right)\mathbf{B}^{\varepsilon} = F^{\varepsilon}_3
\quad & {\rm in} ~~ \mathbb{R}_+^3,\\
\operatorname{div}\mathbf{B}^{\varepsilon} = 0 \quad & {\rm in} ~~ \mathbb{R}_+^3,\\
\left(a^{\varepsilon}, \mathbf{m}^{\varepsilon}, \mathbf{B}^{\varepsilon}\right) \big|_{t=0}= \left(a_{0}, \mathbf{m}_{0}, \mathbf{B}_{0}\right) \quad & {\rm in} ~~ \mathbb{R}_+^3,\\
 		\left(\mathbf{m}^{\varepsilon},\partial_3 \mathbf{B}^{\varepsilon}_h,  B_3^\varepsilon\right) = \mathbf{0}\quad & {\rm on} ~~ \mathbb{R}^2 \times \left\{ x_3=0 \right\},
 	\end{array}\right.
 \end{equation}
where $\mathbf{m}^{\varepsilon} \in \mathbb{R}^3$ represents the momentum, and the initial momentum $\mathbf{m}_{0}\overset{def}{=}\left(a_{0}+1\right)\mathbf{v}_{0}$. $F^{\varepsilon}_{i} (i=1,2)$ denote the $3 \times 3$ matrices $\left(\left(F^{\varepsilon}_i\right)_{j k}\right)_{1 \leq j, k \leq 3}$ and $F^{\varepsilon}_3\overset{def}{=} f^{\varepsilon}_3$.
Specifically, $F^{\varepsilon}_i (i=1,2,3)$ are defined by
\begin{equation*}
\left\{
\begin{aligned}
\left(F^{\varepsilon}_1\right)_{jk} &\overset{def}{=} -\delta_{jk}\left(\left(a^{\varepsilon}\right)^2 \int_{0}^{1} (1-\theta) P''\left(a^{\varepsilon}\theta + 1\right) \mathrm{d}\theta\right) - \mu \partial_{k}\left( \frac{a^{\varepsilon} m^{\varepsilon}_j}{a^{\varepsilon}+1} \right) \\
&\quad -\delta_{jk}(\mu + \lambda) \operatorname{div}\left( \frac{a^{\varepsilon} \mathbf{m}^{\varepsilon}}{a^{\varepsilon}+1} \right) - \frac{m^{\varepsilon}_j m^{\varepsilon}_k}{a^{\varepsilon}+1}, \\
\left(F^{\varepsilon}_2\right)_{jk} &\overset{def}{=} B^{\varepsilon}_j B^{\varepsilon}_k - \frac{1}{2}\delta_{jk}\left|\mathbf{B}^{\varepsilon}\right|^2, \\
F^{\varepsilon}_3 &\overset{def}{=} -\left( \frac{\mathbf{m}^{\varepsilon}}{a^{\varepsilon}+1} \right) \cdot \nabla\mathbf{B}^{\varepsilon} - \mathbf{B}^{\varepsilon} \cdot \operatorname{div}\left( \frac{\mathbf{m}^{\varepsilon}}{a^{\varepsilon}+1} \right) + \mathbf{B}^{\varepsilon} \cdot \nabla\left( \frac{\mathbf{m}^{\varepsilon}}{a^{\varepsilon}+1} \right),
\end{aligned}
\right.
\end{equation*}
where the $j$-th component of $\operatorname{div}F^{\varepsilon}_{i} (i=1,2)$ is given by $\sum_{k=1}^{3} \partial_{k} \left(F^{\varepsilon}_i\right)_{j k}$.

Accordingly, as $\varepsilon \to 0^+$,  we set
\[
\mathbf{m}^{0}\overset{def}{=}\left(a^{0}+1\right)\mathbf{v}^{0},
\]
whereupon the initial boundary value problem \eqref{eq7} can be rewritten into the following equivalent form
\begin{equation}\label{eq08}
 	\left\{\begin{array}{*{5}{ll}}
 		\partial_t a^{0} + \operatorname{div}\mathbf{m}^{0} = 0
\quad & {\rm in} ~~\mathbb{R}_+^3,\\
\partial_t\mathbf{m}^{0} - \mu\Delta\mathbf{m}^{0} - \left(\mu + \lambda\right)\nabla\operatorname{div}\mathbf{m}^{0} + \nabla a^{0} = \operatorname{div}F^{0}_1+\operatorname{div}F^{0}_2 \quad & {\rm in} ~~\mathbb{R}_+^3,\\
 		\partial_t\mathbf{B}^{0} - \Delta_h\mathbf{B}^{0} = F^{0}_3
\quad & {\rm in} ~~ \mathbb{R}_+^3,\\
\operatorname{div}\mathbf{B}^{0} = 0 \quad & {\rm in} ~~ \mathbb{R}_+^3,\\
\left(a^{0}, \mathbf{m}^{0}, \mathbf{B}^{0}\right) \big|_{t=0}= \left(a_{0}, \mathbf{m}_{0}, \mathbf{B}_{0}\right) \quad & {\rm in} ~~ \mathbb{R}_+^3,\\
 		\left(\mathbf{m}^{0},  B_3^{0}\right) = \mathbf{0}\quad & {\rm on} ~~ \mathbb{R}^2 \times \left\{ x_3=0 \right\},
 	\end{array}\right.
 \end{equation}
where $\mathbf{m}^{0} \in \mathbb{R}^3$ represents the momentum, and the initial momentum $\mathbf{m}_{0}=\left(a_{0}+1\right)\mathbf{v}_{0}$. $F^{0}_{i} (i=1,2)$ denote the $3 \times 3$ matrices $\left(\left(F^{0}_i\right)_{j k}\right)_{1 \leq j, k \leq 3}$ and $F^{0}_3\overset{def}{=} f^{0}_3$.
Specifically, $F^{0}_i (i=1,2,3)$ are defined by
\begin{equation*}
\left\{
\begin{aligned}
\left(F^{0}_1\right)_{jk} &\overset{def}{=} -\delta_{jk}\left(\left(a^{0}\right)^2 \int_{0}^{1} (1-\theta) P''\left(a^{0}\theta + 1\right) \mathrm{d}\theta\right) - \mu \partial_{k}\left( \frac{a^{0} m^{0}_j}{a^{0}+1} \right) \\
&\quad -\delta_{jk}(\mu + \lambda) \operatorname{div}\left( \frac{a^{0} \mathbf{m}^{0}}{a^{0}+1} \right) - \frac{m^{0}_j m^{0}_k}{a^{0}+1}, \\
\left(F^{0}_2\right)_{jk} &\overset{def}{=} B^{0}_j B^{0}_k - \frac{1}{2}\delta_{jk}\left|\mathbf{B}^{0}\right|^2, \\
F^{0}_3 &\overset{def}{=} -\left( \frac{\mathbf{m}^{0}}{a^{0}+1} \right) \cdot \nabla\mathbf{B}^{0} - \mathbf{B}^{0} \cdot \operatorname{div}\left( \frac{\mathbf{m}^{0}}{a^{0}+1} \right) + \mathbf{B}^{0} \cdot \nabla\left( \frac{\mathbf{m}^{0}}{a^{0}+1} \right),
\end{aligned}
\right.
\end{equation*}
where the $j$-th component of $\operatorname{div}F^{0}_{i} (i=1,2)$ is given by $\sum_{k=1}^{3} \partial_{k} \left(F^{0}_i\right)_{j k}$.

\textbf{Notations}: For the convenience of notations, we take $L^p\left(\mathbb{R}_+^3\right)$, $p\geq1$ and $H^k\left(\mathbb{R}_+^3\right)$, $k\geq0$ for the usual $L^p$ and
Sobolev spaces on $\mathbb{R}_+^3$ with norms $\|\cdot\|_{L^p}$ and $\|\cdot\|_{H^k}$, respectively.  $(\cdot, \cdot)$ denotes the inner product in $L^2$.
The notation $A\lesssim B$ stands for $A\leq C B$ for some generic constant $C>0$.
$\nabla$ and $\partial$ denote the standard gradient, while $\nabla_h$ and $\partial_h$ stand for the horizontal gradient, $\operatorname{div}_h$
is the horizontal divergence, and $\Delta_h$ is the horizontal Laplace operator. For vector $\mathbf{v}=(v_1, v_2, v_3)$, $\mathbf{v}_h=(v_1, v_2)$ stands for the horizontal components.
$[q]$ denotes the greatest integer less than or equal to q.
Finally, we use \(\|f\|_{L_{x_{3}}^{r} L_{x_{2}}^{q} L_{x_{1}}^{p}}\) for the norm \(\left\| \left\| \left\| f \right\|_{L_{x_{1}}^p} \right\|_{L_{x_{2}}^q} \right\|_{L_{x_{3}}^r}\) and \(\|f\|_{L_{x_{3}}^{q} L_{x_{1} x_{2}}^{p}}\) for \(\left\| \left\| f \right\|_{L^{p}_{x_1x_2}} \right\|_{L^{q}_{x_3}}\).

To give a precise account of our main results, we define
 the conormal Sobolev spaces. Let $(Z_k)_{1 \le k \le 3}$ be a finite set of generators of vector fields tangent to the boundary $\mathbb{R}^2 \times \left\{x_3=0\right\}$,
 with $Z_k = \partial_k$, $k=1,2$ and $Z_3= \varphi (x_3) \partial_3$, where $\varphi(x_3)$ is any smooth bounded function such that $\varphi(0)=0$, $\varphi'(0) \neq 0$ and $\varphi(x_3)>0$ for every $x_3>0$. In this paper, we take \(\varphi(x_3) \overset{def}{=} \frac{x_3}{x_3+1}\). Define
	\begin{equation*}
	\begin{aligned}
H_{co}^m\left(\mathbb{R}_+^3\right)\overset{def}{=} \left\{ f \in
		L^2\left(\mathbb{R}_+^3\right)~\big|~Z^\alpha f \in L^2\left(\mathbb{R}_+^3\right),~~\forall 0\le |\alpha| \le m \right\},
	\end{aligned}
\end{equation*}
	where $Z^\alpha \overset{def}{=} Z^{\alpha_1}_1 Z^{\alpha_2}_2 Z^{\alpha_3}_3, \alpha=(\alpha_1, \alpha_2, \alpha_3)$ with $|\alpha|=\sum_{i=1}^3|\alpha_i|$.
	We also use the following notation, for every $m \in \mathbb{N}$:
	\begin{equation*}
		 \| f \|_{m}^2\overset{def}{=} \sum_{0\le|\alpha| \le m} \| Z^\alpha f \|_{L^2}^2.
\end{equation*}
Correspondingly, we define the local conormal Sobolev space $H_{co, loc}^{m-1}\left(\mathbb{R}_+^3\right)$ by extending the above global conormal Sobolev space to the local setting, which is essential for describing the strong convergence behavior to be discussed later. Specifically, we set
\begin{equation*}
H_{co, loc}^{m}\left(\mathbb{R}_+^3\right)\overset{\mathrm{def}}{=} \left\{ f \in L^2_{\mathrm{loc}}\left(\mathbb{R}_+^3\right) \big| Z^\alpha f \in L^2_{\mathrm{loc}}\left(\mathbb{R}_+^3\right),~~\forall 0\le |\alpha| \le m \right\},
\end{equation*}
where $L^2_{\mathrm{loc}}\left(\mathbb{R}_+^3\right)$ denotes the space of locally square-integrable functions on $\mathbb{R}_+^3$.

For every  integer  $m\ge 2$, we introduce the following energy functional
\begin{align*}
   E_{m}^{\varepsilon}(t)\overset{def}{=} & \sup_{0\leq \tau \leq t}\left\| \left(a^{\varepsilon}, \mathbf{v}^{\varepsilon}, \mathbf{B}^{\varepsilon}\right)(\tau)\right\|_{m} +\sup_{0\leq \tau \leq t} \left\| \partial_3\left(a^{\varepsilon}, \mathbf{v}^{\varepsilon}, \mathbf{B}^{\varepsilon}\right)(\tau)\right\|_{m-1} \\
     &+\sup_{0\leq \tau \leq t} \left\| \partial_3^2\left(\mathbf{v}^{\varepsilon},\varepsilon\mathbf{B}^{\varepsilon}\right)(\tau)\right\|_{m-2}+\sup_{0\leq \tau \leq t}\left\| \partial_{\tau}\left(a^{\varepsilon}, \mathbf{v}^{\varepsilon}, \mathbf{B}^{\varepsilon}\right)(\tau)\right\|_{m-2},
\end{align*}
and the dissipation functional as
\begin{align*}
D_{m}^{\varepsilon}(t)\overset{def}{=}{}& \Bigg( \int_{0}^{t}\bigg(\left\|\partial a^{\varepsilon} \right\|_{m-1}^2+\left\|(\partial\mathbf{v}^{\varepsilon}, \partial_h\mathbf{B}^{\varepsilon})\right\|_{m}^2
        +\left\|\partial_3\left(\partial\mathbf{v}^{\varepsilon}, \partial_h\mathbf{B}^{\varepsilon}\right)\right\|_{m-1}^2+\left\| \partial_{\tau}\left(a^{\varepsilon}, \mathbf{v}^{\varepsilon}, \mathbf{B}^{\varepsilon}\right)\right\|_{m-1}^2 \\
&+\left\| \partial_3\partial_{\tau}\mathbf{v}^{\varepsilon}\right\|_{m-2}^2+\varepsilon \left( \left\| \partial_3 \mathbf{B}^{\varepsilon}\right\|_{m}^2
           +\left\| \partial_3^2 \mathbf{B}^{\varepsilon}\right\|_{m-1}^2+\left\| \partial_3\partial_{\tau}\mathbf{B}^{\varepsilon}\right\|_{m-2}^2 \right)\bigg) \mathrm{d}\tau\Bigg)^{\frac{1}{2}}.
\end{align*}
Now, we state our first result about the global-in-time
uniform regularity estimate as follows.
\begin{theo}[Global well-posedness and uniform estimate of original system]\label{Th1}
	For every integer $m \ge 4$, assume the initial data $\left( a_{0} ,\mathbf{v}_{0},\mathbf{B}_{0}\right)\in H^m_{co}\left(\mathbb{R}_+^3\right)$,
$\partial_3 \left(a_{0}, \mathbf{v}_{0}, \mathbf{B}_{0}\right)\in H^ {m-1}_{co}\left(\mathbb{R}_+^3\right)$,
$\partial_3^2 \left(\mathbf{v}_{0}, \varepsilon\mathbf{B}_{0}\right)\in H^ {m-2}_{co}\left(\mathbb{R}_+^3\right)$
and satisfy the appropriate compatibility conditions. Then, there exists a small constant $\delta_0>0$ such that, if
\begin{equation}\label{eq9}
	\begin{aligned}
			\left\| \left( a_{0} ,\mathbf{v}_{0},\mathbf{B}_{0}\right) \right\|_{m}
			+\left\| \partial_3 \left(a_{0}, \mathbf{v}_{0}, \mathbf{B}_{0}\right) \right\|_{m-1}
+\left\| \partial_3^2\left(\mathbf{v}_{0}, \varepsilon\mathbf{B}_{0}\right) \right\|_{m-2}\le \delta_0,
		\end{aligned}
\end{equation}
the	system \eqref{eq6} has a unique global
        solution $\left(a^{\varepsilon}, \mathbf{v}^{\varepsilon}, \mathbf{B}^{\varepsilon}\right)$ satisfying
        \begin{equation}\label{eq10}
		E_m^{\varepsilon}(t)+D_m^{\varepsilon}(t)\lesssim \delta_0
       \end{equation}
    for all $t>0$.
    Furthermore, let $\left(a^{0}, \mathbf{v}^{0}, \mathbf{B}^{0}\right)$ be the global solution of limit system \eqref{eq7}
    supplemented
    with the same initial data $\left( a_{0} ,\mathbf{v}_{0},\mathbf{B}_{0}\right)$ as system \eqref{eq6}. Then, it holds
    \begin{equation*}
    \left(a^{\varepsilon}, \mathbf{v}^{\varepsilon}, \mathbf{B}^{\varepsilon}\right) \rightarrow \left(a^{0}, \mathbf{v}^{0}, \mathbf{B}^{0}\right) \text{~strongly~in~}
    L^\infty\left(0, t; H^{m-1}_{co, loc}\left(\mathbb{R}_+^3\right)\right)
    \end{equation*}
    for all $t>0$.
	\end{theo}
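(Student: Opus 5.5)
The plan is a bootstrap (continuity) argument built on an $\varepsilon$-independent a priori estimate of the form
\begin{equation*}
E_m^{\varepsilon}(t)^2+D_m^{\varepsilon}(t)^2\lesssim \delta_0^2+\big(E_m^{\varepsilon}(t)+D_m^{\varepsilon}(t)\big)^3 ,
\end{equation*}
valid for as long as the classical local solution of \eqref{eq6} exists with $E_m^\varepsilon$ small. Local existence for fixed $\varepsilon>0$ is standard, since the system is parabolic--hyperbolic with full resistivity. Under the ansatz $E_m^\varepsilon+D_m^\varepsilon\le 2C\delta_0$, the cubic term is absorbed and the estimate closes, which gives \eqref{eq10}. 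Uniqueness follows from an $L^2$ energy estimate for the difference of two solutions, using the Lipschitz bounds supplied by $E_m^\varepsilon$.

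The a priori estimate is assembled in the following order.
\begin{enumerate}[(i)]
\item \emph{Tangential energy estimates.} Apply $Z^\alpha$ with $|\alpha|\le m$ to \eqref{eq6} and take the $L^2$ inner product with $Z^\alpha(a^\varepsilon,\mathbf v^\varepsilon,\mathbf B^\varepsilon)$. The boundary terms vanish: $Z^\alpha\mathbf v^\varepsilon=0$ on $x_3=0$, and either $Z^\alpha B_3^\varepsilon=0$ or $\partial_3 Z^\alpha\mathbf B_h^\varepsilon=0$ there, since $Z_3$ vanishes on the boundary. The commutators $[Z_3,\partial_3]$ are lower order. This step controls $\|(\partial\mathbf v^\varepsilon,\partial_h\mathbf B^\varepsilon)\|_m$ and $\varepsilon\|\partial_3\mathbf B^\varepsilon\|_m^2$ in time, together with the $\sup$ of $\|\cdot\|_m$.
\item \emph{Normal derivatives of $\mathbf v^\varepsilon$ and $a^\varepsilon$.} Following the Matsumura--Nishida approach, write the normal component of the momentum equation as a damped transport equation for $\partial_3 a^\varepsilon$ through the effective viscous flux $(2\mu+\lambda)\partial_3\operatorname{div}\mathbf v^\varepsilon-\partial_3 a^\varepsilon$. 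This yields the dissipation $\|\partial a^\varepsilon\|_{m-1}$ and the $\sup$ bound on $\|\partial_3 a^\varepsilon\|_{m-1}$. Then recover $\partial_3^2\mathbf v^\varepsilon$ from the elliptic (Stokes-type) structure.
\item \emph{Normal derivatives of $\mathbf B^\varepsilon$.} Here there is no uniform normal diffusion. Use $\operatorname{div}\mathbf B^\varepsilon=0$ to write $\partial_3B_3^\varepsilon=-\operatorname{div}_h\mathbf B_h^\varepsilon$. For $\partial_3\mathbf B_h^\varepsilon$, which has zero trace, estimate the equation satisfied by $\partial_3\mathbf B^\varepsilon_h$ directly in $H^{m-1}_{co}$. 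Integrations by parts in $x_3$ are then legitimate, and the $\varepsilon$-terms give the $\varepsilon$-weighted dissipation.
\item \emph{Time derivatives.} Time derivatives are handled the same way as the tangential ones.
\end{enumerate}

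For the nonlinear terms, all products are estimated with anisotropic Sobolev inequalities. The key bound is $\|f\|_{L^\infty}\lesssim\|f\|_{L^2_{x_3}}$-type interpolation in conormal norms, e.g.
\begin{equation*}
\|f\|_{L^\infty}^2\lesssim \|\partial_3 f\|_{H^{2}_{co}}\|f\|_{H^{2}_{co}},
\end{equation*}
which is where $m\ge4$ is used.

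The main obstacle is global-in-time control without decay of $\partial_3\mathbf B^\varepsilon$. The term $\mathbf B^\varepsilon\cdot\nabla\mathbf v^\varepsilon$ and the transport term $v_3\partial_3\mathbf B^\varepsilon$ involve only $\partial_3\mathbf B^\varepsilon$ in $L^\infty_t$, not in $L^2_t$. The idea is to exploit the horizontal dissipation, using $L^2_{x_3}L^\infty_{x_h}$ bounds built from $\partial_h\mathbf B^\varepsilon$. In the transport term, the $\partial_3$ derivative should be placed on $v_3$: since $v_3|_{x_3=0}=0$, Hardy's inequality gives $v_3/x_3\lesssim\partial_3 v_3$, which is time-integrable through the velocity dissipation. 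Any such product should be arranged as (dissipative factor)$^2\times$(energy factor), so that it is bounded by $E\,D^2$.

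For the convergence, the uniform bound \eqref{eq10} together with the bound on $\partial_t$ gives compactness via Aubin--Lions on bounded subdomains. The limit solves \eqref{eq7}, and uniqueness for \eqref{eq7} from \cite{JDE} identifies it as $(a^0,\mathbf v^0,\mathbf B^0)$, so the whole family converges in $L^\infty(0,t;H^{m-1}_{co,loc})$.
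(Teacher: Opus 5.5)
Your architecture matches the paper's. It has $Z^\alpha$ energy estimates, time derivatives at level $m-2$, and the Matsumura--Nishida effective-viscous-flux identity that turns the normal momentum equation into a damped transport equation for $Z^\alpha\partial_3a^\varepsilon$. It then uses Stokes regularity for $\partial^2\mathbf v^\varepsilon$ and a direct estimate of $\partial_3\mathbf B^\varepsilon$. Your use of $\partial_3B_3^\varepsilon=-\operatorname{div}_h\mathbf B_h^\varepsilon$ in place of the Neumann condition for $\partial_3B_3^\varepsilon$ is a legitimate variant. Finally, it uses anisotropic inequalities plus Hardy's inequality for $v_3^\varepsilon/x_3$ to put every product in the form $E\,D^2$, and compactness for the limit.

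\textbf{Main gap.} The list of estimates you give does not close, because it omits the estimate of Lemma \ref{GL3}. That estimate pairs the $Z^\alpha$-differentiated system, $|\alpha|\le m-1$, with $\partial_tZ^\alpha(a^\varepsilon,\mathbf v^\varepsilon,\mathbf B^\varepsilon)$. It then rewrites $(\nabla Z^\alpha a^\varepsilon,\partial_tZ^\alpha\mathbf v^\varepsilon)=-\frac{\mathrm d}{\mathrm dt}(Z^\alpha a^\varepsilon,\operatorname{div}Z^\alpha\mathbf v^\varepsilon)+(\partial_tZ^\alpha a^\varepsilon,\operatorname{div}Z^\alpha\mathbf v^\varepsilon)$.
\begin{enumerate}
\item This is the source of $\int_0^t\|\partial_\tau(a^\varepsilon,\mathbf v^\varepsilon,\mathbf B^\varepsilon)\|_{m-1}^2\,\mathrm d\tau$ in $D_m^\varepsilon$. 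Both parts of your step (ii) need it: the damped transport estimate for $Z^\alpha\partial_3a^\varepsilon$ has $Z^\alpha\partial_tv_3^\varepsilon$ on its right-hand side, and the Stokes estimate has $\partial_tZ^\alpha\mathbf v^\varepsilon$ there.
\item Your step (iv) cannot replace it. It can only be run at level $m-2$, since level $m-1$ would need $\partial_t\mathbf v^\varepsilon(0)\in H^{m-1}_{co}$, i.e. $\partial_3^2\mathbf v_0\in H^{m-1}_{co}$, which is not assumed. It therefore controls $Z^\beta\partial_t\mathbf v^\varepsilon$ in $L^2_tL^2$ only for $1\le|\beta|\le m-1$, never the zero-order $\int_0^t\|\partial_\tau\mathbf v^\varepsilon\|_0^2\,\mathrm d\tau$.
\item Reading $\partial_t\mathbf v^\varepsilon$ off the momentum equation instead requires $\|\partial^2\mathbf v^\varepsilon\|_0+\|\partial a^\varepsilon\|_0$. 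That is exactly what the Stokes estimate is supposed to deliver, so at the bottom of the hierarchy your chain is circular.
\item The same multiplier estimate also supplies the $\sup_t\|\partial_3\mathbf v^\varepsilon\|_{m-1}$ part of $E_m^\varepsilon$, which none of your steps addresses.
\end{enumerate}

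\textbf{Two smaller points.} First, the commutators $[Z^\alpha,\Delta]\mathbf v^\varepsilon$, $[Z^\alpha,\nabla]a^\varepsilon$ and $\varepsilon[Z^\alpha,\partial_3^2]\mathbf B^\varepsilon$ are linear and not small. Being one conormal order lower, they contribute $\delta_4D_m^\varepsilon(t)^2+D_{m-1}^\varepsilon(t)^2$. Your inequality $E^2+D^2\lesssim\delta_0^2+(E+D)^3$ only emerges after a descending induction on $m$ down to the basic energy level, as in \eqref{G94.1}--\eqref{G94.6}.

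Second, $a_0$ and $\mathbf B_0$ have only one normal derivative, so the data are not in classical $H^k$. The local solution has to be built in the conormal class, for example by smoothing the data and using the uniform estimates. Likewise, identifying the compactness limit by uniqueness requires uniqueness for \eqref{eq7} in that conormal class, not merely in the $H^s$ class where the limit-system theory is stated.
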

\begin{rema}
The appropriate compatibility conditions mentioned here and in what follows are analogous to those adopted in \cite{k2} and \cite{23a}. We refer the reader to these references for their detailed formulations.
\end{rema}
\begin{rema}
Since \(m \geq 4\), we can choose \(\delta_{0}>0\) small enough in \eqref{eq9} to guarantee that \(\left\|a^{\varepsilon}(t)\right\|_{L^{\infty}}\leq C \left\| a^{\varepsilon} \right\|_{2}^{\frac{1}{2}}\left\| \partial_3 a^{\varepsilon} \right\|_{2}^{\frac{1}{2}}  \leq \frac{1}{2}\) for all \(t \geq 0\),
where $C$ is a positive generic constant, by  Sobolev's inequality \eqref{a5} in Lemma \ref{Le1}.
Consequently, the density $\rho^{\varepsilon}$ in \eqref{eq1} has both uniform positive lower and upper bounds independent of $\varepsilon$.
\end{rema}
\begin{rema}
Once the uniform a priori energy estimate \eqref{eq10} holds true, it implies that no strong boundary layer appears in the process of vanishing vertical magnetic resistivity coefficient.
\end{rema}	
Furthermore, by incorporating the results on the decay rates of solutions to the related linearized compressible MHD equations and utilizing Duhamel's principle, we can derive the decay rates of solutions to the nonlinear problem \eqref{eq1}--\eqref{eq3}.
 Among these estimates, the $L^2$ decay rates of $(a^\varepsilon,\mathbf{m}^\varepsilon)$ are optimal. The precise statement of these results is presented in the following theorem.
\begin{theo}[Decay rates  of original system]\label{Th3}
For every integer $m \ge 6$, assume the initial data  $\left( a_{0} ,\mathbf{v}_{0},\mathbf{B}_{0}\right)\in H^m_{co}\left(\mathbb{R}_+^3\right)$,
$\partial_3 \left(a_{0}, \mathbf{v}_{0}, \mathbf{B}_{0}\right)\in H^ {m-1}_{co}\left(\mathbb{R}_+^3\right)$,
$\partial_3^2 \left(\mathbf{v}_{0}, \varepsilon\mathbf{B}_{0}\right)\in H^ {m-2}_{co}\left(\mathbb{R}_+^3\right)$
and satisfy the appropriate compatibility conditions.
    \begin{itemize}
        \item[(i)] There exists a positive constant $\delta_1$ such that if the initial perturbation $\left(a_{0}, \mathbf{m}_{0}\right) \in  L^1\left(\mathbb{R}_+^3\right)$,
        $\left(\mathbf{B}_{0},\partial_3 \mathbf{B}_{0}\right)$ $ \in L_{x_1 x_2}^1L_{x_3}^2\left(\mathbb{R}_+^3\right)$ and satisfy
    \begin{equation}\label{eq11}
      \begin{aligned}
      &\left\| \left( a_{0} ,\mathbf{v}_{0},\mathbf{B}_{0}\right) \right\|_{m}
			+\left\| \partial_3 \left(a_{0}, \mathbf{v}_{0}, \mathbf{B}_{0}\right) \right\|_{m-1}
+\left\| \partial_3^2\left(\mathbf{v}_{0}, \varepsilon\mathbf{B}_{0}\right) \right\|_{m-2}\\
&+ \left\| \left(a_{0}, \mathbf{m}_{0}\right) \right\|_{L^1}+\left\|\left(\mathbf{B}_{0},\partial_3 \mathbf{B}_{0}\right)\right\|_{L_{x_1 x_2}^1L_{x_3}^2} \le \delta_1,
    \end{aligned}
    \end{equation}
    then the solutions $\mathbf{u}^{\varepsilon}(t) \overset{def}{=} \left(a^{\varepsilon}, \mathbf{m}^{\varepsilon}\right)(t)$ and $\mathbf{B}^{\varepsilon}(t)$ to \eqref{eq8} satisfy
    \[
    \left\|\mathbf{u}^{\varepsilon}(t)\right\|_{H^1} = O\left(t^{-\frac{3}{4}}\right), \quad \left\|\mathbf{B}^{\varepsilon}(t)\right\|_{L^2} = O\left(t^{-\frac{1}{2}}\right),
    \]
    and
    \[
    \left\|\partial_h\mathbf{B}^{\varepsilon}(t) \right\|_{L^2} = O\left(t^{-1}\right), \quad \left\|\partial_3 \mathbf{B}^{\varepsilon}(t)\right\|_{L^2} = O\left(t^{-\frac{1}{2}+\delta_2}\right),
    \]
    as $t \to \infty$, where $\delta_2 \in \left(0, \frac{1}{32}\right)$ is an arbitrary constant.

        \item[(ii)] Also, in addition to the same assumptions on $(a_0, \mathbf{m}_0, \mathbf{B}_0)$ as in $(i)$, if $\int_{\mathbb{R}_+^3} a_0 \mathrm{d} \mathbf{x} \neq 0$, then
        \[
        \| \mathbf{u}^\varepsilon(t) \|_{L^2} \gtrsim  t^{-\frac{3}{4}},
        \]
        as $t \to \infty$.
    \end{itemize}
\end{theo}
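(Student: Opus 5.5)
Under the smallness assumption \eqref{eq11} with $\delta_1\le\delta_0$, Theorem \ref{Th1} already provides a global solution with $E_m^\varepsilon(t)+D_m^\varepsilon(t)\lesssim\delta_1$, uniformly in $\varepsilon$ and $t$. The plan is a Duhamel bootstrap on the momentum formulation \eqref{eq8}. The $(a,\mathbf{m})$ part is the linearized compressible Navier--Stokes operator in $\mathbb{R}_+^3$ with no-slip condition. For it we use the Kagei--Kobayashi semigroup analysis \cite{k1,k2}, which gives
\[
\|\partial^k e^{-tA}(a_0,\mathbf{m}_0)\|_{L^2}\lesssim (1+t)^{-\frac34-\frac k2}\|(a_0,\mathbf{m}_0)\|_{L^1\cap H^k},\quad k=0,1 .
\]
The source is in divergence form, $\operatorname{div}F$, and the $H^k$ part of the decay can be absorbed using the energy. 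We also need the lower bound: the leading large-time profile is a diffusion wave whose $L^2$ norm is $\gtrsim t^{-3/4}|\int a_0|$.

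The $\mathbf{B}$ part is the anisotropic heat semigroup $\partial_t-\Delta_h-\varepsilon\partial_3^2$, with Neumann condition for $\mathbf{B}_h$ and Dirichlet condition for $B_3$. We solve it by even or odd reflection in $x_3$. Since $\varepsilon$ may be arbitrarily small, no decay can be drawn from the $x_3$ direction. We therefore use only the horizontal heat kernel in 2D, combined with $L^2_{x_3}$ contraction, which gives
\[
\|\partial_h^j e^{t(\Delta_h+\varepsilon\partial_3^2)}\mathbf{B}_0\|_{L^2}\lesssim t^{-\frac12-\frac j2}\|\mathbf{B}_0\|_{L^1_{x_1x_2}L^2_{x_3}} .
\]
Since $\partial_3$ commutes with the reflected semigroup, the same argument gives the $t^{-1/2}$ rate for $\partial_3\mathbf{B}$ from $\partial_3\mathbf{B}_0\in L^1_{x_1x_2}L^2_{x_3}$.

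Next define
\[
M(t)=\sup_{0\le s\le t}\Big[(1+s)^{\frac34}\|\mathbf{u}^\varepsilon\|_{H^1}+(1+s)^{\frac12}\|\mathbf{B}^\varepsilon\|_{L^2}+(1+s)\|\partial_h\mathbf{B}^\varepsilon\|_{L^2}+(1+s)^{\frac12-\delta_2}\|\partial_3\mathbf{B}^\varepsilon\|_{L^2}\Big],
\]
and aim for $M(t)\lesssim\delta_1+M(t)^2+\delta_1 M(t)$. The nonlinear terms are estimated in $L^1$ or $L^1_{x_1x_2}L^2_{x_3}$ by products, using anisotropic Sobolev inequalities (Lemma \ref{Le1}) together with the uniform conormal bounds of Theorem \ref{Th1}.

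For $F_2=\mathbf{B}\otimes\mathbf{B}$ the product only decays like $t^{-1}$, which is not integrable against a $t^{-3/4}$ kernel. This is handled by the divergence structure: the horizontal derivatives of $F_2$ are placed on the kernel, gaining $t^{-1/2}$. The vertical derivative $\partial_3 F_2$ needs the $\partial_3\mathbf{B}$ decay, and that is where the slightly weaker rate $t^{-1/2+\delta_2}$ enters.

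For the $H^1$ rate of $\mathbf{u}$ we do not rely on Duhamel at the first-derivative level. Instead we combine an energy inequality of the form
\[
\frac{d}{dt}\mathcal{E}+c\,\|\nabla(a,\mathbf{m})\|^2\lesssim\|\mathbf{u}\|_{L^2}^2+\text{nonlinear},
\]
which follows from the estimates behind Theorem \ref{Th1}, with the $L^2$ decay, and close by time-weighted Gronwall.

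The main obstacle is $\partial_3\mathbf{B}$. Higher normal derivatives are not uniformly controlled, because $\partial_3^2\mathbf{B}$ is bounded only after multiplication by $\varepsilon$, and $F_3$ contains terms like $\mathbf{v}\cdot\nabla\partial_3\mathbf{B}$. To treat them, first apply $\partial_3$ to the equation and use that $\mathbf{v}$ vanishes on the boundary. Then write $v_3\partial_3^2\mathbf{B}=\varphi^{-1}v_3\,Z_3\partial_3\mathbf{B}$ and bound $\varphi^{-1}v_3$ via Hardy's inequality through $\partial_3\mathbf{v}$. Finally, interpolate between the uniform bound on $\|\partial_3\mathbf{B}\|_{m-1}$ and the decaying $L^2$ norms to get a rate slightly below $t^{-1/2}$. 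The resulting loss is the arbitrary $\delta_2$. The restriction $\delta_2<\frac1{32}$ and $m\ge6$ are what make the bootstrap exponents sum to integrable powers, and this bookkeeping is the delicate part.
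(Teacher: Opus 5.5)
\textbf{There is a genuine gap, in the Duhamel step for $\mathbf{u}^\varepsilon=(a^\varepsilon,\mathbf{m}^\varepsilon)$.} It does not close at the rate $t^{-3/4}$. Your linear tools are $\|e^{-tA}\|_{L^1\to L^2}\lesssim t^{-3/4}$, plus commuting tangential derivatives onto the semigroup for an extra $t^{-1/2}$. With only these, the normal part of the source, $\partial_3(F_1^\varepsilon+F_2^\varepsilon)_{\cdot 3}$, has to be estimated in $L^1$ against a $(t-\tau)^{-3/4}$ kernel.

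For the Maxwell stress this gives $\|\partial_3F_2^\varepsilon\|_{L^1}\lesssim\|\mathbf{B}^\varepsilon\|_{L^2}\|\partial_3\mathbf{B}^\varepsilon\|_{L^2}\lesssim(1+\tau)^{-1+\delta_2}M(t)^2$. There is no cancellation to exploit: the $j=3$ entry is $\frac12\partial_3(B_3^2-|\mathbf{B}_h|^2)$. Hence
\[
\int_0^t(t-\tau)^{-\frac34}(1+\tau)^{-1+\delta_2}\,\mathrm{d}\tau\sim t^{-\frac34+\delta_2}.
\]
Even an exact $t^{-1/2}$ rate for $\partial_3\mathbf{B}^\varepsilon$ would only give $t^{-3/4}\ln t$. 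This is the non-integrability you yourself identified for $F_2^\varepsilon$, merely moved to the normal component.

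As a result the weight $(1+s)^{3/4}$ in $M(t)$ cannot be recovered, and $M\lesssim\delta_1+M^2$ does not close. Part (ii) fails for the same reason, since the lower bound needs the Duhamel remainder to be $o(t^{-3/4})$. Keeping $\partial_3$ on $F_1^\varepsilon$ also brings in $\partial_3^2\bigl(a^\varepsilon\mathbf{m}^\varepsilon/(1+a^\varepsilon)\bigr)$, hence $\partial_3^2a^\varepsilon$, which is controlled only through $Z_3\partial_3a^\varepsilon$.

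\textbf{The missing ingredient} is the Kagei--Kobayashi estimate for the no-slip half-space semigroup on divergence-form data, Lemma \ref{L4-3}, together with Lemma \ref{L4-1}(ii) for $t\le1$. It states $\|\mathcal{U}(t)(0,\operatorname{div}G)\|_{L^2}\lesssim t^{-1}\|G\|_{L^1}+e^{-c_0t}\|G\|_{L^2}$ for $t\ge1$. This moves the whole divergence, normal derivative included, onto the semigroup. The gain is only $t^{-1/4}$ rather than $t^{-1/2}$, because of the boundary, but it suffices.

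With it you only need $\|F_1^\varepsilon\|_{L^1}\lesssim(1+\tau)^{-3/2}M^2$ and $\|F_2^\varepsilon\|_{L^1}\lesssim(1+\tau)^{-1}M^2$. These give $\int_0^{t-1}(t-\tau)^{-1}(1+\tau)^{-1}\,\mathrm{d}\tau\lesssim(1+t)^{-1}\ln(1+t)$. That closes the $L^2$ bound, and it also yields $\|\mathbf{u}^\varepsilon-\mathcal{U}(t)\mathbf{u}_0\|_{L^2}=O(t^{-1}\ln t)$, which with Lemma \ref{aria4} gives (ii).

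\textbf{Your $H^1$ step.} Once the $L^2$ rate is in hand, your time-weighted energy argument for $\|\partial\mathbf{u}^\varepsilon\|$ is a plausible alternative to the paper. The paper instead runs Duhamel at the derivative level, using the $t^{-5/4}$ kernel against $\|\partial F^\varepsilon\|_{L^1}$ and $\|\partial F^\varepsilon\|_{L^2}$, estimated via Hardy's inequality and \eqref{a8}. Your route must avoid differentiating the Lorentz force beyond $\partial\mathbf{B}^\varepsilon$.

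\textbf{Your magnetic part.} It essentially matches the paper. The one correction concerns where $\delta_2$ comes from. The loss arises from the term $\mathbf{B}^\varepsilon\cdot\nabla\partial_3\mathbf{v}^\varepsilon$ in $\partial_3F_3^\varepsilon$, not from interpolation. For $\|\partial^2\mathbf{v}^\varepsilon\|_{L^2}$ only the time-integrated bound in $D_m^\varepsilon$ is available. The paper therefore applies Cauchy--Schwarz in time, loses a factor $(\ln t)^{1/2}$, and $\delta_2$ absorbs it.
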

\begin{rema}
Since $m\geq 6$, through a direct calculation, it can be deduced that $\left\|\left(a_{0}, \mathbf{m}_{0}\right)\right\|_{H^{1} \times L^2}\lesssim \delta_1$ from \eqref{eq11}. Consequently, it is possible for us to use system \eqref{eq8} to derive the decay rates of solutions to \eqref{eq1}--\eqref{eq3} instead of system \eqref{eq6}.
\end{rema}
\begin{rema}
  For the limit system \eqref{eq4}--\eqref{eq5}, applying the method in this paper yields results similar to Theorem \ref{Th1} and \ref{Th3}, which provides an alternative approach to prove the main results in \cite{JDE}.
\end{rema}

For the convenience of comparing the results of the original system and the limit system and the completeness of this paper, we list below the global well-posedness and decay rate results for the limit system established in \cite{JDE}.
\enlargethispage{0.5cm}
We define the following energy functionals for the limit system:
\begin{align*}
   E_{s}^{0}\left(t\right)\overset{def}{=} & \sup_{0\leq \tau \leq t}\left\| \left(a^0, \mathbf{v}^0, \mathbf{B}^0\right)\left(\tau\right)\right\|_{H^s} +\sum_{j=1}^{\left[\frac{s+1}{2}\right]}\sup_{0\leq \tau \leq t} \left\| \partial_{\tau}^{j} a^0\left(\tau\right)\right\|_{H^{s+1-2j}} \\
     &+ \sum_{j=1}^{\left[\frac{s}{2}\right]}\sup_{0\leq \tau \leq t} \left\| \partial_{\tau}^{j} \left(\mathbf{v}^0, \mathbf{B}^0\right)\left(\tau\right)\right\|_{H^{s-2j}},
\end{align*}
and
\begin{align*}
D_{s}^{0}(t)\overset{def}{=} \Bigg( &\int_{0}^{t}\left(\left\| \partial a^0 (\tau)\right\|_{H^{s-1}}^{2}+\left\| (\partial\mathbf{v}^0, \partial_h\mathbf{B}^0)(\tau)\right\|_{H^s}^{2}\right) \mathrm{d} \tau \\
&+\sum_{j=1}^{\left[\frac{s+1}{2}\right]} \int_{0}^{t}\left\| \partial_{\tau}^{j} (a^0, \mathbf{v}^0, \mathbf{B}^0)(\tau)\right\|_{H^{s+1-2j}}^{2} \mathrm{d} \tau \Bigg)^{\frac{1}{2}}.
\end{align*}
With these energy functionals, we state the global well-posedness and uniform estimate result for the limit system \eqref{eq7}.

\begin{theo}[Global well-posedness and uniform estimate of limit system]\label{Th01}
For every integer $s \ge 2$, assume the initial data $\left( a _0,\mathbf{v}_0,\mathbf{B}_0\right)\in H^s\left(\mathbb{R}_+^3\right)$ and satisfy the appropriate compatibility conditions. Then, there exists a positive number \(\eta_{0}\) such that if \(\left\| \left( a _0,\mathbf{v}_0,\mathbf{B}_0\right) \right\|_{H^s} \leq \eta_{0}\), then problem \eqref{eq7} admits a unique global solution \(\left( a^0 ,\mathbf{v}^0,\mathbf{B}^0\right)\) in the class
\[
a^0 \in C\left([0, \infty) ; H^{s}\right), \quad \partial_{t}^{j} a^0 \in C\left([0, \infty) ; H^{s+1-2 j}\right)\left(1 \leq j \leq\left[\frac{s+1}{2}\right]\right),
\]
\[
\partial_{t}^{j}\left(\mathbf{v}^0, \mathbf{B}^0\right) \in C\left([0, \infty) ; H^{s-2 j}\right) \quad\left(0 \leq j \leq\left[\frac{s}{2}\right]\right),
\]
\[
\partial a^0 \in L^{2}\left(0, \infty ; H^{s-1}\right), \quad \left(\partial\mathbf{v}^0, \partial_h\mathbf{B}^0\right) \in L^{2}\left(0, \infty ; H^{s}\right),
\]
\[
\partial_{t}^{j} \left(a^0, \mathbf{v}^0, \mathbf{B}^0\right) \in L^{2}\left(0, \infty ; H^{s+1-2 j}\right) \left(1 \leq j \leq \left[ \frac{s+1}{2}\right] \right).
\]
Moreover, the following inequality
\begin{equation}\label{0blm}
  E_{s}^{0}\left(t\right)+D_{s}^{0}\left(t\right) \lesssim \eta_0
\end{equation}
holds for all \(t \geq 0\).
\end{theo}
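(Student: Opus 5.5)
Theorem \ref{Th01} is the result of Gao and Xie \cite{JDE}; here we only outline how it is proved. The plan is the standard one for small-data global existence. We combine a local existence theorem in the class stated (classical $H^s$ with time derivatives) with a uniform a priori estimate of the form
\[
E_s^0(t)^2+D_s^0(t)^2\lesssim \|(a_0,\mathbf{v}_0,\mathbf{B}_0)\|_{H^s}^2+\big(E_s^0(t)+E_s^0(t)^2\big)D_s^0(t)^2 .
\]
This estimate is valid as long as $E_s^0(t)\le \eta$ small. A continuity (bootstrap) argument then closes it and gives \eqref{0blm}. Local existence for \eqref{eq7} follows the Matsumura–Nishida scheme \cite{23a}: linearize, solve the transport equation for $a^0$, the parabolic Lamé system for $\mathbf{v}^0$ with Dirichlet data, and the horizontally-parabolic equation for $\mathbf{B}^0$. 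Iterate in the compatible class, using that $\mathbf{v}^0|_{x_3=0}=0$ makes the transport field tangent to the boundary.

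\textbf{Step 1: basic and tangential energy.} Take $L^2$ inner products of \eqref{eq7} with $(a^0,\mathbf{v}^0,\mathbf{B}^0)$. The coupling $\nabla a^0$ and $\operatorname{div}\mathbf{v}^0$ cancels by the boundary condition, and we get dissipation of $\nabla\mathbf{v}^0$ and $\partial_h\mathbf{B}^0$. Next apply $\partial_h^\alpha$ and $\partial_t^j$. These are tangential, so the boundary conditions are preserved and the same cancellations hold. Commutators are treated as nonlinear terms. For $\mathbf{B}^0$ there is no vertical diffusion. The term $\mathbf{v}^0\cdot\nabla\mathbf{B}^0$ at top order is handled by symmetrization, since $\int \mathbf{v}\cdot\nabla \partial^\alpha \mathbf{B}\,\partial^\alpha\mathbf{B}=-\frac12\int \operatorname{div}\mathbf{v}|\partial^\alpha\mathbf{B}|^2$. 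The stretching term $\mathbf{B}^0\cdot\nabla\mathbf{v}^0$ can be bounded using the $\mathbf{v}$-dissipation at one order higher.

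\textbf{Step 2: normal derivatives.} For $\mathbf{v}^0$ and $a^0$ we use the Matsumura–Nishida trick. Write the normal component of the momentum equation as $(2\mu+\lambda)\partial_3\operatorname{div}\mathbf{v}^0-\partial_3 a^0=\ldots$. Combined with the continuity equation this gives a damped transport equation for $\partial_3 a^0$, which yields $\int\|\partial a^0\|_{H^{s-1}}^2$. Stokes-type elliptic estimates for the Lamé system with Dirichlet data, treating $\partial_t\mathbf{v}^0+\nabla a^0$ as a source, then recover full $H^{s+1}$ control of $\mathbf{v}^0$ in $L^2_t$. Dissipation for the time derivatives comes from the equations themselves.

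For $\mathbf{B}^0$, normal derivatives $\partial_3^k\mathbf{B}^0$ satisfy the same horizontal heat equation. Its boundary terms vanish because no $\partial_3$ integration by parts is needed: only $\Delta_h$ appears. Hence $\partial^\alpha\mathbf{B}^0$ energy estimates give $\|\partial_h \mathbf{B}^0\|_{H^s}$ dissipation directly. The missing dissipation of $\partial_3 \mathbf{B}^0$ is replaced where needed by $\operatorname{div}\mathbf{B}^0=0$, i.e.\ $\partial_3B^0_3=-\operatorname{div}_h\mathbf{B}_h^0$.

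\textbf{Main obstacle.} The hard point is the nonlinear terms in which $\mathbf{B}^0$ appears without any $L^2_t$ integrability of $\partial_3\mathbf{B}^0$ or of $\mathbf{B}^0$ itself. An example is $\int \mathbf{B}^0\cdot\nabla\mathbf{v}^0\,\partial^\alpha\mathbf{B}^0$ with all derivatives on $\mathbf{v}^0$. For these we would use anisotropic inequalities of the type
\[
\|f\|_{L^\infty_{x_3}L^2_h}\lesssim\|f\|^{1/2}\|\partial_3 f\|^{1/2},\qquad \|f\|_{L^2_{x_3}L^\infty_h}\lesssim\|f\|^{1/2}\|\partial_h^2f\|^{1/2},
\]
placing horizontal derivatives on $\mathbf{B}^0$ and using $\mathbf{v}^0\in L^2_tH^{s+1}$, so that each cubic term is bounded by $E_s^0\,(D_s^0)^2$. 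Closing this at the top order $s$ is the delicate step.
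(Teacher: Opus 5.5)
The paper does not prove this theorem; it quotes it from Gao--Xie. The natural benchmark is therefore the argument of Section~\ref{global-estimate}, run in $H^s$ instead of $H^s_{co}$. Your overall architecture matches that argument: tangential and time-derivative energy estimates, the damped transport equation for $\partial_3 a^0$, elliptic recovery of normal derivatives, anisotropic bounds for the magnetic terms, and a bootstrap. As written, however, the dissipative chain for $(a^0,\mathbf{v}^0)$ does not close, for two reasons.

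(i) The damped transport equation controls only $\partial_3 a^0$ and its tangential derivatives. It never reaches the purely tangential derivatives $\partial_h^\beta a^0$. The tangential energy estimates give no dissipation of $a^0$ at all; they only give dissipation of $\partial_h^\alpha\operatorname{div}\mathbf{v}^0$, that is, of $\partial_h^\alpha\frac{\mathrm{D}a^0}{\mathrm{D}t}$. So that equation does not yield $\int_0^t\|\partial a^0\|_{H^{s-1}}^2\,\mathrm{d}\tau$. A Lam\'e estimate that treats $\nabla a^0$ as a given source presupposes exactly this missing quantity.

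(ii) The damped transport equation (whose right side contains $\partial_t v^0_3$) and the elliptic estimate both need $\int_0^t\|\partial_t\partial_h^\alpha\mathbf{v}^0\|_{L^2}^2\,\mathrm{d}\tau$. The $\partial_t$-energy estimates only give $\int_0^t\|\nabla\partial_t\partial_h^\alpha\mathbf{v}^0\|_{L^2}^2\,\mathrm{d}\tau$, and there is no Poincar\'e inequality on $\mathbb{R}^3_+$. Reading $\partial_t\mathbf{v}^0$ off the momentum equation "from the equations themselves" is fine for $a^0$ and $\mathbf{B}^0$. For $\mathbf{v}^0$ it requires $\partial^2\mathbf{v}^0$ and $\nabla a^0$ in $L^2_t$, which is circular.

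Two standard ingredients break the circle, and both are in the paper.

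\emph{Time-derivative dissipation.} Pair the tangentially differentiated system with $\partial_t\partial_h^\alpha(a^0,\mathbf{v}^0,\mathbf{B}^0)$, handling the pressure term by
$(\nabla\partial_h^\alpha a^0,\partial_t\partial_h^\alpha\mathbf{v}^0)=-\frac{\mathrm{d}}{\mathrm{d}t}(\partial_h^\alpha a^0,\operatorname{div}\partial_h^\alpha\mathbf{v}^0)+(\partial_t\partial_h^\alpha a^0,\operatorname{div}\partial_h^\alpha\mathbf{v}^0)$,
as in Lemma~\ref{GL3}. This produces $\int_0^t\|\partial_t\partial_h^\alpha(a^0,\mathbf{v}^0,\mathbf{B}^0)\|^2\,\mathrm{d}\tau$.

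\emph{Stokes estimate with $a^0$ as pressure.} Use the estimate of Lemma~\ref{GL6}, in which $a^0$ is the \emph{pressure} and the divergence $\operatorname{div}\mathbf{v}^0=-\frac{\mathrm{D}a^0}{\mathrm{D}t}-a^0\operatorname{div}\mathbf{v}^0$ is prescribed. It bounds $\|\partial^{\ell+1}a^0\|+\|\partial^{\ell+2}\mathbf{v}^0\|$ by $\|\partial^\ell\partial_t\mathbf{v}^0\|$, $\|\partial^{\ell+1}\frac{\mathrm{D}a^0}{\mathrm{D}t}\|$ and nonlinear terms. Here $\partial_3\frac{\mathrm{D}a^0}{\mathrm{D}t}$ comes from your damped transport equation (cf.\ Lemma~\ref{GL4}), and $\partial_h\frac{\mathrm{D}a^0}{\mathrm{D}t}$ comes from the tangential estimate. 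Induction on the number of normal derivatives then closes the estimate.

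By contrast, the magnetic step you single out as delicate is routine. By \eqref{a4}, for every $|\alpha|\le s$ and $s\ge2$,
$\|\mathbf{B}^0\,\partial^\alpha\mathbf{B}^0\|_{L^2}\lesssim\|\mathbf{B}^0\|^{\frac14}\|\partial_1\mathbf{B}^0\|^{\frac14}\|\partial_3\mathbf{B}^0\|^{\frac14}\|\partial_{13}\mathbf{B}^0\|^{\frac14}\|\partial^\alpha\mathbf{B}^0\|^{\frac12}\|\partial_2\partial^\alpha\mathbf{B}^0\|^{\frac12}\lesssim E_s^0(t)\,\|\partial_h\mathbf{B}^0\|_{H^s}$.
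Hence each cubic magnetic term is $\lesssim E_s^0(t)D_s^0(t)^2$, exactly as for $H_{2,2,1}$, $H_{3,1}$ and $H_{5,5}$ in Section~\ref{global-estimate}.
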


Based on the global well-posedness result above, we now present the large-time decay rate estimates for the solutions of the limit system \eqref{eq08}.

\begin{theo}[Decay rates of limit system]\label{Th03}
For every integer $s \ge 3$,
assume the initial data $\left(a_0, \mathbf{v}_0, \mathbf{B}_0\right) \in H^s\left(\mathbb{R}_+^3\right)$ and satisfy the appropriate compatibility conditions.
    \begin{itemize}
        \item[(i)] There exists a positive constant $\eta_1$ such that if the initial perturbation $\left(a_0, \mathbf{m}_0\right) \in  L^1\left(\mathbb{R}_+^3\right)$,
$\left(\mathbf{B}_0,\partial_3 \mathbf{B}_0\right) \in L_{x_1 x_2}^1 L_{x_3}^2 \left(\mathbb{R}_+^3\right)$ and
    \[
    \left\| \left(a_0, \mathbf{v}_0, \mathbf{B}_0\right) \right\|_{H^s} + \left\| \left(a_0, \mathbf{m}_0\right) \right\|_{L^1}+\left\| \left(\mathbf{B}_0,\partial_3 \mathbf{B}_0\right) \right\|_{L_{x_1 x_2}^1 L_{x_3}^2} \le \eta_1,
    \]
    then the solutions $\mathbf{u}^0\left(t\right) \overset{def}{=} \left(a^0, \mathbf{m}^0\right)\left(t\right)$ and $\mathbf{B}^0\left(t\right)$ to \eqref{eq08} satisfy
    \[
    \left\| \mathbf{u}^0\left(t\right) \right\|_{L^2} = O\left(t^{-\frac{3}{4}}\right), \quad \left\| \mathbf{B}^0\left(t\right) \right\|_{L^2} = O\left(t^{-\frac{1}{2}}\right),
    \]
    and
    \[
    \left\| \left(\partial\mathbf{u}^0, \partial_h\mathbf{B}^0\right)\left(t\right) \right\|_{L^2} = O\left(t^{-1}\right), \quad \left\| \partial_3 \mathbf{B}^0\left(t\right) \right\|_{L^2} = O\left(t^{-\frac{1}{2}}\right),
    \]
    as $t \to \infty$.

        \item[(ii)] Also, in addition to the same assumptions on $\left(a_0, \mathbf{m}_0, \mathbf{B}_0\right)$ as in $(i)$, if $\int_{\mathbb{R}_+^3} a_0 \mathrm{d} x \neq 0$, then
        \[
        \left\| \mathbf{u}^0\left(t\right) \right\|_{L^2} \gtrsim  t^{-\frac{3}{4}},
        \]
        as $t \to \infty$.
    \end{itemize}
\end{theo}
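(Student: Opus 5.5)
Theorem \ref{Th03} is the decay result for the limit system \eqref{eq08}, established in \cite{JDE}. The plan is a linearization-plus-Duhamel argument built on the uniform bound \eqref{0blm} of Theorem \ref{Th01}. Write $\mathbf{U}^0=(a^0,\mathbf{m}^0,\mathbf{B}^0)$. The linear part of \eqref{eq08} decouples into two pieces. The first is the linearized compressible Navier--Stokes operator $L$ acting on $(a^0,\mathbf{m}^0)$ with the no-slip condition. The second is the horizontal heat operator $\partial_t-\Delta_h$ acting on $\mathbf{B}^0$, with $B_3^0=0$ on the boundary and no condition on $\mathbf{B}_h^0$; since there is no $x_3$-diffusion, $x_3$ acts only as a parameter.

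\textbf{Linear estimates.} For the first piece I will invoke the half-space estimates of Kagei--Kobayashi \cite{k1,k2} for $e^{-tL}$. For data in $L^1\cap H^1$ they give $\|e^{-tL}\mathbf{u}_0\|_{L^2}\lesssim (1+t)^{-3/4}(\|\mathbf{u}_0\|_{L^1}+\|\mathbf{u}_0\|_{L^2})$ and $\|\partial e^{-tL}\mathbf{u}_0\|_{L^2}\lesssim (1+t)^{-1}(\cdots)$. The same decay holds with a divergence-form forcing $\operatorname{div}F$ replacing the data, provided $F\in L^1\cap H^1$. For the second piece, a partial Fourier transform in $x_h$ gives
\[
\|e^{t\Delta_h}\mathbf{B}_0\|_{L^2}\lesssim (1+t)^{-1/2}\|\mathbf{B}_0\|_{L^1_{x_1x_2}L^2_{x_3}\cap L^2}, \qquad \|\partial_h e^{t\Delta_h}\mathbf{B}_0\|_{L^2}\lesssim (1+t)^{-1}(\cdots).
\]
Because $\partial_3$ commutes with the horizontal heat semigroup, we also get $\|\partial_3 e^{t\Delta_h}\mathbf{B}_0\|_{L^2}\lesssim(1+t)^{-1/2}\|\partial_3\mathbf{B}_0\|_{L^1_{x_h}L^2_{x_3}\cap L^2}$. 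This is exactly why the hypothesis includes $\partial_3\mathbf{B}_0\in L^1_{x_1x_2}L^2_{x_3}$.

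\textbf{Nonlinear bootstrap.} Define
\[
M(t)=\sup_{0\le\tau\le t}\Big[(1+\tau)^{3/4}\|\mathbf{u}^0\|_{L^2}+(1+\tau)\|(\partial\mathbf{u}^0,\partial_h\mathbf{B}^0)\|_{L^2}+(1+\tau)^{1/2}\|(\mathbf{B}^0,\partial_3\mathbf{B}^0)\|_{L^2}\Big].
\]
The nonlinearities $F_1^0,F_2^0,F_3^0$ are at least quadratic. Their $L^1$ (respectively $L^1_{x_h}L^2_{x_3}$) norms I will bound by $\|\cdot\|_{L^2}\|\cdot\|_{L^2}$ (respectively by $\|\cdot\|_{L^2_{x_h}L^\infty_{x_3}}\|\cdot\|_{L^2}$ via the anisotropic inequality $\|f\|_{L^\infty_{x_3}}\lesssim\|f\|_{L^2_{x_3}}^{1/2}\|\partial_3 f\|_{L^2_{x_3}}^{1/2}$). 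I will control the high-norm factors by $E_s^0+D_s^0\lesssim\eta_1$ and the low-norm factors by $M$. Duhamel's formula then gives
\[
M(t)\lesssim \eta_1+\eta_1 M(t)+M(t)^2,
\]
which closes to $M(t)\lesssim\eta_1$ once $\eta_1$ is small.

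\textbf{Main obstacle.} The hardest term is the magnetic stress $\operatorname{div}F_2^0=\operatorname{div}(\mathbf{B}^0\otimes\mathbf{B}^0-\tfrac12|\mathbf{B}^0|^2I)$ entering the momentum equation. A crude bound gives $\|F_2^0\|_{L^1}\lesssim\|\mathbf{B}^0\|_{L^2}^2\sim(1+t)^{-1}$, and this is borderline for producing the $(1+t)^{-3/4}$ rate for $\mathbf{u}^0$. I will split the Duhamel integral at $t/2$. On $[t/2,t]$ I will use the $L^2$ theory for $\operatorname{div}F$ together with $\|F_2^0\|_{L^2}\lesssim\|\mathbf{B}^0\|_{L^\infty}\|\mathbf{B}^0\|_{L^2}$. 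Here $\|\mathbf{B}^0\|_{L^\infty}$ gains from the anisotropic inequality $\|\mathbf{B}^0\|_{L^\infty}\lesssim\|\partial_h\mathbf{B}^0\|^{1/2}\|\partial_h\partial_3\mathbf{B}^0\|^{1/2}$ (up to lower order) and from the time-integrability of $\partial_h\mathbf{B}^0$ supplied by $D_s^0$. On $[0,t/2]$ the extra derivative in $\operatorname{div}F$ gives the kernel an additional factor of $(t-\tau)^{-1/2}$. That factor should suffice to absorb the merely $L^1$-in-time-logarithmic integrand. In the same way, the $\partial_3\mathbf{B}^0$ estimate needs $F_3^0$ and $\partial_3F_3^0$ in $L^1_{x_h}L^2_{x_3}$. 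This in turn requires the $H^s$ control of $\partial_3^2\mathbf{B}^0$, which is available here (unlike in Theorem \ref{Th3}, where boundary layers obstruct it) since $s\ge3$.

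\textbf{Lower bound.} For (ii), the linear solution satisfies $\|e^{-tL}\mathbf{u}_0\|_{L^2}\gtrsim t^{-3/4}|\int a_0|$ by the low-frequency analysis in \cite{k2}. The nonlinear Duhamel part decays strictly faster by the bounds above. The lower bound $\|\mathbf{u}^0(t)\|_{L^2}\gtrsim t^{-3/4}$ follows for $\eta_1$ small.
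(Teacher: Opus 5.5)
Your architecture (Duhamel's formula, the Kagei--Kobayashi bounds, the explicit horizontal heat kernel, a bootstrap on a weighted $M(t)$) is the one the paper uses for Theorem \ref{Th3}; the paper does not reprove Theorem \ref{Th03} but cites \cite{JDE}. Your argument for (ii) is fine.

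\textbf{The gap.} Your bookkeeping bounds every high-norm factor by $E_s^0+D_s^0$ and tracks only first-order quantities in $M$. It therefore never gives time decay to a second-order derivative, and without that you recover only the weaker rates of Theorem \ref{Th3}.

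\emph{The rate for $\partial\mathbf{u}^0$.} With the half-space bounds available (Lemmas \ref{L4-1}--\ref{L4-3}), the outer derivative in $\partial\mathcal{U}(t-\tau)\operatorname{div}F^0$ must be paid for by $\|\partial F^0\|_{L^1}$ (Lemma \ref{L4-2}). Now $\partial F_1^0$ contains $a^0\partial^2\mathbf{m}^0$ and $\mathbf{m}^0\partial^2a^0$. If $\partial^2\mathbf{u}^0$ is merely bounded, their $L^1$ norm is at best $\|\mathbf{u}^0\|_{L^2}\|\partial^2\mathbf{u}^0\|_{L^2}\lesssim(1+t)^{-3/4}$. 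Then your bound gives $\int_{t/2}^{t-1}(t-\tau)^{-5/4}(1+\tau)^{-3/4}\,\mathrm{d}\tau\simeq t^{-3/4}$, not $t^{-1}$. This is exactly how the paper ends up with $O(t^{-3/4})$ for $\partial\mathbf{u}^\varepsilon$ (Lemma \ref{L4-7}(iii), \eqref{ubw23}).

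\emph{The rate for $\partial_3\mathbf{B}^0$.} The semigroup gives no $x_3$-smoothing, so $\partial_3$ lands on $F_3^0$. By \eqref{a6},
$\|v_3^0\partial_3^2\mathbf{B}^0\|_{L^1_{x_1x_2}L^2_{x_3}}\lesssim\|\mathbf{v}^0\|^{1/2}\|\partial_3\mathbf{v}^0\|^{1/2}\|\partial_3^2\mathbf{B}^0\|\lesssim(1+t)^{-7/8}$,
and $\int_0^t(t-\tau)^{-1/2}(1+\tau)^{-7/8}\,\mathrm{d}\tau\simeq t^{-3/8}$. The term $\mathbf{B}^0\,\partial\partial_3\mathbf{v}^0$ contributes $(1+t)^{-1/2}\|\partial^2\mathbf{v}^0\|$. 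This gives $O(1)$, or $t^{-1/2}(\ln t)^{1/2}$ if you use $\partial^2\mathbf{v}^0\in L^2_t$ as in \eqref{ubw29}. Note that $\partial_3^2\mathbf{B}^0$ is not dissipated in $D_s^0$ at all, so its $H^s$ boundedness, which is all you invoke, does not help.

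\textbf{The missing ingredient.} You need to turn the full $H^s$ bound of Theorem \ref{Th01} into decay of second derivatives, by interpolating against the first-order rates in $M$. For example, $\|\partial^2 f\|_{L^2}\lesssim\|\partial f\|_{L^2}^{1/2}\|\partial^3 f\|_{L^2}^{1/2}$ gives $\|\partial^2(a^0,\mathbf{v}^0)\|_{L^2}\lesssim(1+t)^{-1/2}$ and $\|\partial_3^2\mathbf{B}^0\|_{L^2}\lesssim(1+t)^{-1/4}$. This full normal regularity is exactly what the boundary layer destroys for $\varepsilon>0$; there the paper can use only Hardy's inequality and the conormal interpolation \eqref{a8}. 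With interpolation, $\|\partial F_1^0\|_{L^1}\lesssim(1+t)^{-5/4}$ and $v_3^0\partial_3^2\mathbf{B}^0$ becomes $O((1+t)^{-9/8})$. Even then, at $s=3$ the term $\mathbf{B}^0\partial\partial_3\mathbf{v}^0$ is only $O((1+t)^{-1})$ in $L^1_{x_1x_2}L^2_{x_3}$. It is logarithmically borderline and needs a finer argument than H\"older plus interpolation.

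\textbf{Misidentified obstacle and two incorrect claims.}
\begin{itemize}
\item The magnetic stress is not the real difficulty. For $t-\tau\ge1$, Lemma \ref{L4-3} already gives $\|\mathcal{U}(t-\tau)\operatorname{div}F_2^0\|_{L^2}\lesssim(t-\tau)^{-1}\|F_2^0\|_{L^1}$. So $\|F_2^0\|_{L^1}\lesssim(1+\tau)^{-1}$ yields $t^{-1}\ln t$, which is harmless for the $L^2$ rate.
\item The $(t-\tau)^{-1/2}$ gain from the divergence is a whole-space fact. In the no-slip half-space the known bound (Lemma \ref{L4-3}) gains only $(t-\tau)^{-1/4}$.
\item The inequality $\|\mathbf{B}^0\|_{L^\infty}\lesssim\|\partial_h\mathbf{B}^0\|^{1/2}\|\partial_h\partial_3\mathbf{B}^0\|^{1/2}$ fails even up to lower-order terms, since $\dot H^1(\mathbb{R}^2)$ does not embed in $L^\infty$. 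Use \eqref{a4} instead.
\end{itemize}
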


\begin{rema}
  For the limit system \eqref{eq4}--\eqref{eq5}, when $\mathbf{B}^0 = 0$, \eqref{eq08} is reduced to the isentropic compressible Navier-Stokes equations.
  From the results in \cite{k2}, the solution $\mathbf{u}^0\stackrel{\mathrm{def}}{=}\left(a^0,\mathbf{m}^0\right)$ has the following $H^1$ decay rates:
  \[
  \left\|\mathbf{u}^0(t)\right\|_{L^2} = O\left(t^{-\frac{3}{4}}\right), \quad
  \left\| \partial\mathbf{u}^0(t) \right\|_{L^2} = O\left(t^{-\frac{5}{4}}\right),
  \]
  when $\left\| \left(a_0, \mathbf{v}_0\right) \right\|_{H^s} + \left\| \left(a_0, \mathbf{m}_0\right) \right\|_{L^1}$ is sufficiently small with $s\geq 3$.

  From the results in \cite{JDE}, when
  \[
  \left\| \left(a_0, \mathbf{v}_0, \mathbf{B}_0\right) \right\|_{H^s} + \left\| \left(a_0, \mathbf{m}_0\right) \right\|_{L^1}+\left\|\left(\mathbf{B}_0,\partial_3 \mathbf{B}_0\right)\right\|_{L_{x_1 x_2}^1L_{x_3}^2}
  \]
  is sufficiently small with $s\geq 3$, the solution $\left(\mathbf{u}^0,\mathbf{B}^0\right)\stackrel{\mathrm{def}}{=}\left(a^0,\mathbf{m}^0,\mathbf{B}^0\right)$ has the following $H^1$ decay rates:
  \[
  \left\|\mathbf{u}^0(t)\right\|_{L^2} = O\left(t^{-\frac{3}{4}}\right), \quad \left\|\mathbf{B}^0(t)\right\|_{L^2} = O\left(t^{-\frac{1}{2}}\right),\]
\[  \left\| \left(\partial\mathbf{u}^0, \partial_h\mathbf{B}^0\right)(t) \right\|_{L^2} = O\left(t^{-1}\right), \quad \left\|\partial_3 \mathbf{B}^0(t)\right\|_{L^2} = O\left(t^{-\frac{1}{2}}\right).
  \]
  Compared with the results of the compressible Navier-Stokes equations, the decay rate of $\left\|\partial\mathbf{u}^0(t) \right\|_{L^2}$ in compressible MHD equations \eqref{eq4}--\eqref{eq5} is reduced from $O\left(t^{-\frac{5}{4}}\right)$ to $O\left(t^{-1}\right)$.
  This is because the limit equations \eqref{eq4}--\eqref{eq5} only have horizontal magnetic diffusion, leading to a low decay rate of $\left\|\mathbf{B}^{0}\right\|_{H^1}$.
  Moreover, the nonlinear terms of the momentum equation in \eqref{eq4}--\eqref{eq5} contain the Lorentz force $\left(\nabla \times \mathbf{B}^{0}\right) \times \mathbf{B}^{0}$, which further slows down the decay rate of $\left\|\partial\mathbf{u}^0(t) \right\|_{L^2}$ in \eqref{eq4}--\eqref{eq5}.

In contrast with the results in \cite{JDE}, we find that the decay rate of $ \left\| \left(\partial\mathbf{u}^{\varepsilon}, \partial_3 \mathbf{B}^{\varepsilon}\right)(t) \right\|_{L^2} $ is further reduced from  $O\left(t^{-1}, t^{-\frac12}\right)$ to $O\left(t^{-\frac34}, t^{-\frac12+\delta_2}\right)$. This is caused by the lower regularity in normal derivatives of solutions of the uniform energy estimate \eqref{eq10} obtained in this paper due to the appearance of weak boundary layers.
\end{rema}
By Theorems \ref{Th1} and \ref{Th01}, the local-in-time solutions of systems \eqref{eq1} and \eqref{eq4} have been extended to global-in-time solutions $\left(a^{\varepsilon}, \mathbf{v}^{\varepsilon}, \mathbf{B}^{\varepsilon}\right)$ and $\left(a^{0}, \mathbf{v}^{0}, \mathbf{B}^{0}\right)$, respectively. Although Theorem \ref{Th1} has established the strong convergence of $\left(a^{\varepsilon}, \mathbf{v}^{\varepsilon}, \mathbf{B}^{\varepsilon}\right)$ to $\left(a^{0}, \mathbf{v}^{0}, \mathbf{B}^{0}\right)$ in the local conormal Sobolev space, it does not provide an explicit convergence rate. Therefore, our final task is to derive an explicit, time-uniform convergence rate for the solutions between the original system \eqref{eq1} and the limit system \eqref{eq4}. This rate not only rigorously justifies the vanishing vertical magnetic resistivity limit, but also holds uniformly for all $t \geq 0$.
\begin{theo}[Convergence rate of vanishing vertical magnetic resistivity limit]\label{Th9}
For any integers $m \ge 6$ and $s \ge 3$,
assume the initial data $\left( a _0,\mathbf{v}_0,\mathbf{B}_0\right)\in H^m_{co}\left(\mathbb{R}_+^3\right) \cap H^s\left(\mathbb{R}_+^3\right)$,
$\partial_3 \left(a_{0}, \mathbf{v}_{0}, \mathbf{B}_{0}\right)\in H^ {m-1}_{co}\left(\mathbb{R}_+^3\right)$,
$\partial_3^2 \left(\mathbf{v}_{0}, \varepsilon\mathbf{B}_{0}\right)\in H^ {m-2}_{co}\left(\mathbb{R}_+^3\right)$,
$\left(a_0, \mathbf{m}_0\right) \in  L^1\left(\mathbb{R}_+^3\right)$,
$\left(\mathbf{B}_0,\partial_3 \mathbf{B}_0\right) \in L_{x_1 x_2}^1 L_{x_3}^2 \left(\mathbb{R}_+^3\right)$
and satisfy the appropriate compatibility conditions, and there exists a small positive constant $\delta_*$ such that
\begin{equation}\label{condition-convergence}
\begin{aligned}
&\left\| \left( a_{0} ,\mathbf{v}_{0},\mathbf{B}_{0}\right) \right\|_{m}
+\left\| \partial_3 \left(a_{0}, \mathbf{v}_{0}, \mathbf{B}_{0}\right) \right\|_{m-1}
+\left\| \partial_3^2\left(\mathbf{v}_{0}, \varepsilon\mathbf{B}_{0}\right) \right\|_{m-2}\\
&+\left\| (a_0, \mathbf{v}_0, \mathbf{B}_0) \right\|_{H^s} + \left\| \left(a_{0}, \mathbf{m}_{0}\right) \right\|_{L^1}
+\left\|\left(\mathbf{B}_{0},\partial_3 \mathbf{B}_{0}\right)\right\|_{L_{x_1 x_2}^1L_{x_3}^2}\le \delta_*.
\end{aligned}
\end{equation}
Then, the global solution $\left(\rho^{\varepsilon}, \mathbf{v}^{\varepsilon}, \mathbf{B}^{\varepsilon}\right)$ of system \eqref{eq1}
will converge to the global solution $\left(\rho^{0}, \mathbf{v}^{0}, \mathbf{B}^{0}\right)$ of system \eqref{eq4} with the convergence rate
\begin{equation}\label{conv-L2}
\sup_{0\leq \tau \leq t}\left\|\left(\rho^{\varepsilon}-\rho^{0}, \mathbf{v}^{\varepsilon}-\mathbf{v}^{0}, \mathbf{B}^{\varepsilon}-\mathbf{B}^{0}\right)(\tau)\right\|_{L^2\left(\mathbb{R}_+^3\right)}
\le C \varepsilon^{\frac{1}{4}}
\end{equation}
for any time $t>0$, where $C$ is a positive constant independent of $\varepsilon$ and $t$.
\end{theo}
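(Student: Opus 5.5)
We set $(\bar a,\bar{\mathbf v},\bar{\mathbf B})\overset{def}{=}(a^{\varepsilon}-a^0,\mathbf v^{\varepsilon}-\mathbf v^0,\mathbf B^{\varepsilon}-\mathbf B^0)$; note $\rho^\varepsilon-\rho^0=\bar a$. Subtracting \eqref{eq7} from \eqref{eq6}, the difference satisfies a linearized compressible MHD system with zero initial data and boundary conditions $\bar{\mathbf v}=0$, $\bar B_3=0$ on $x_3=0$. The magnetic equation reads
\[
\partial_t\bar{\mathbf B}-(\Delta_h+\varepsilon\partial_3^2)\bar{\mathbf B}=\varepsilon\partial_3^2\mathbf B^0+(f_3^\varepsilon-f_3^0).
\]
The forcing $\varepsilon\partial_3^2\mathbf B^0$ comes from the vertical resistivity acting on the limit solution. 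The limit solution does not satisfy $\partial_3\mathbf B^0_h=0$ on the boundary, and this mismatch is the source of the weak boundary layer.

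\textbf{Step 1: basic $L^2$ energy estimate.} We test the $\bar a$ equation with $P'$-weighted $\bar a$ (or simply $\bar a$ plus commutators), the $\bar{\mathbf v}$ equation with $\rho^\varepsilon\bar{\mathbf v}$, and the $\bar{\mathbf B}$ equation with $\bar{\mathbf B}$. The Lorentz-force and stretching terms cancel at the leading order, as in the standard MHD energy identity. This gives
\[
\frac{d}{dt}\|(\bar a,\bar{\mathbf v},\bar{\mathbf B})\|_{L^2}^2+\mu\|\nabla\bar{\mathbf v}\|_{L^2}^2+\|\partial_h\bar{\mathbf B}\|_{L^2}^2+\varepsilon\|\partial_3\bar{\mathbf B}\|_{L^2}^2\lesssim \mathcal N(t)\|(\bar a,\bar{\mathbf v},\bar{\mathbf B})\|_{L^2}^2+\mathcal R_\varepsilon(t).
\]
Here $\mathcal N(t)$ is a combination such as $\|\nabla(\mathbf v^\varepsilon,\mathbf v^0)\|_{L^\infty}+\|\nabla(a^0,\mathbf B^0)\|_{L^\infty}+\dots$, and $\mathcal R_\varepsilon$ is the resistive remainder. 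The nonlinear differences are bounded with the uniform bounds \eqref{eq10} and \eqref{0blm}. For the $L^\infty$ norms we use the anisotropic Sobolev inequality of Lemma \ref{Le1}, e.g. $\|f\|_{L^\infty}\lesssim\|f\|_2^{1/2}\|\partial_3 f\|_2^{1/2}$, which the conormal bounds allow. Normal derivatives of $\bar{\mathbf B}$ are never needed without a factor $\varepsilon$: in the transport terms $\mathbf v\cdot\nabla\bar{\mathbf B}$ we integrate by parts, using $v_3|_{x_3=0}=0$. For the density we integrate $\mathbf v^\varepsilon\cdot\nabla\bar a$ by parts, and in $\bar{\mathbf v}\cdot\nabla a^0$ we place $\nabla a^0$ in $L^\infty$. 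To make Gronwall uniform in time we need $\int_0^\infty\mathcal N(t)\,dt<\infty$. Dissipation-type terms appearing in $\mathcal N$ are handled through $L^2_t$ integrability from $D_m^\varepsilon$ and $D_s^0$, combined with Young's inequality. The genuinely $L^1_t$ pieces come from the decay rates of Theorems \ref{Th3} and \ref{Th03}: interpolating $L^2$ decay with bounded higher norms, $\|\nabla\mathbf v\|_{L^\infty}$ and the other terms should decay like $t^{-1-\kappa}$ for some $\kappa>0$, since $m\ge6$ and $s\ge3$ give enough room. Verifying this integrability for every term, in particular the magnetic ones that decay only like $t^{-1/2}$, is the first delicate point. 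Where needed we pair a slowly decaying factor with a dissipated factor controlled by the uniform $L^2_t$ bounds.

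\textbf{Step 2: the resistive remainder (main obstacle).} We must show $\int_0^\infty\mathcal R_\varepsilon\,dt\lesssim\varepsilon^{1/2}$, which yields the rate $\varepsilon^{1/4}$ after taking square roots. Integrating by parts,
\[
\varepsilon\int\partial_3^2\mathbf B^0\cdot\bar{\mathbf B}=-\varepsilon\int\partial_3\mathbf B^0\cdot\partial_3\bar{\mathbf B}-\varepsilon\int_{x_3=0}\partial_3\mathbf B^0_h\cdot\bar{\mathbf B}_h\,dx_h .
\]
The boundary contribution uses $\partial_3\mathbf B^\varepsilon_h=0$ and $\bar B_3=0$. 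The interior term is absorbed into $\frac{\varepsilon}{2}\|\partial_3\bar{\mathbf B}\|_{L^2}^2$, leaving $\varepsilon\|\partial_3\mathbf B^0\|_{L^2}^2$. Its time integral is not obviously finite, since $\|\partial_3\mathbf B^0\|_{L^2}$ decays only like $t^{-1/2}$. We therefore bound it differently: by $\varepsilon\|\partial_3\mathbf B^0\|_{L^2}\|\partial_3\bar{\mathbf B}\|_{L^2}$, with both factors uniformly bounded and $\|\partial_3\bar{\mathbf B}\|_{L^2}\lesssim t^{-1/2+\delta_2}$ available. For the boundary term we use the trace inequality $\|g|_{x_3=0}\|_{L^2_h}^2\lesssim\|g\|_{L^2}\|\partial_3 g\|_{L^2}$. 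This gives the bound
\[
\varepsilon\|\partial_3\mathbf B^0_h\|_{L^2}^{1/2}\|\partial_3^2\mathbf B^0_h\|_{L^2}^{1/2}\|\bar{\mathbf B}\|_{L^2}^{1/2}\|\partial_3\bar{\mathbf B}\|_{L^2}^{1/2}.
\]
The plan is to interpolate: absorb part into $\varepsilon\|\partial_3\bar{\mathbf B}\|^2$, use the horizontal dissipation integrability $\int\|\partial_h\mathbf B^0\|_{H^s}^2<\infty$, and use decay to produce a bound $\varepsilon^{1/2}h(t)$ with $h\in L^1(0,\infty)$. If full time integrability fails, we split time at $T_\varepsilon$. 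On $[0,T_\varepsilon]$ we use the crude bound $\varepsilon T_\varepsilon$. On $[T_\varepsilon,\infty)$ we use decay of both solutions, $\|\bar{\mathbf B}\|\le\|\mathbf B^\varepsilon\|+\|\mathbf B^0\|\lesssim t^{-1/2}$, together with the $t^{-3/4}$ decay for $(a,\mathbf m)$. Optimizing $T_\varepsilon$ is where the exponent $\tfrac14$ should emerge. The tail also needs $\|\bar{\mathbf B}\|$ small for large $t$, so the supremum over all $t$ is handled by the decay itself.

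\textbf{Step 3: conclusion.} Gronwall with zero initial data then gives $\sup_t\|(\bar a,\bar{\mathbf v},\bar{\mathbf B})\|_{L^2}^2\lesssim e^{\int_0^\infty\mathcal N}\,\varepsilon^{1/2}$, which is \eqref{conv-L2}. The constant $C$ is independent of $t$ because $\int_0^\infty\mathcal N\,dt<\infty$.
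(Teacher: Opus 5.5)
Your Step 1 has a genuine gap, and it lies in the density terms rather than the magnetic ones. A time-uniform Gronwall argument needs every coefficient of $\|\bar a\|_{L^2}^2$ to be in $L^1(0,\infty)$.

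Testing the $\bar a$-equation with $\bar a$ (with any weight) produces a term of the form $\int_{\mathbb{R}^3_+}\operatorname{div}\mathbf v^\varepsilon\,\bar a^2\,\mathrm d\mathbf x$, with coefficient $\|\operatorname{div}\mathbf v^\varepsilon\|_{L^\infty}$. It comes from your integration by parts of $\mathbf v^\varepsilon\cdot\nabla\bar a\,\bar a$, together with $\bar a\operatorname{div}\mathbf v^\varepsilon\,\bar a$.

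Your claim that such coefficients decay like $t^{-1-\kappa}$ is false. The only available rates are $\|\partial\mathbf u^\varepsilon\|_{L^2}=O(t^{-3/4})$ and $\|\partial\mathbf u^0\|_{L^2}=O(t^{-1})$, with higher norms merely bounded, and interpolating to $L^\infty$ only loses decay. The dissipation gives only $\|\nabla\mathbf v\|_{L^\infty}\in L^2_t$. Combining a factor $t^{-\gamma\theta}$ with an $L^2_t$ factor raised to the power $1-\theta$ is integrable only if $2\gamma\theta>1+\theta$, which is impossible for $\gamma,\theta\le 1$.

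You also cannot pair these terms with a dissipated factor, because the difference system has no dissipation for $\bar a$. The paper stresses that the Stokes argument of Lemma~\ref{GL6} is unavailable for the difference. The same failure sinks your Step 2 splitting: with only $\int_0^T\mathcal N\lesssim T^{1/2}$, the Gronwall factor on $[0,T_\varepsilon]$ grows like $e^{CT_\varepsilon^{1/2}}$, which destroys any power rate.

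The missing idea is structural, and it has two parts.
\begin{itemize}
\item The paper works with the conservative form and the relative entropy $\mathcal G(\rho^\varepsilon,\rho^0)=g(\rho^\varepsilon)-g(\rho^0)-g'(\rho^0)\bar\rho$. For it the pressure term obeys \eqref{con1}: $\operatorname{div}\mathbf v^\varepsilon$ drops out entirely, and only $\int(\tfrac12P''(\rho^0)\bar\rho^2+\tfrac16P'''(\rho^0)\bar\rho^3+\dots)\operatorname{div}\mathbf v^0$ survives. The paper states explicitly that $\|\operatorname{div}\mathbf v^0\|_{L^\infty}$ cannot be shown time-integrable.
\item It therefore substitutes $\operatorname{div}\mathbf v^0=-(\partial_t\rho^0+\mathbf v^0\cdot\nabla\rho^0)/\rho^0$ and integrates by parts in time with $J_1(\rho^0)$ and $J_2(\rho^0)$. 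This moves the bad part into the modified energy $\int(\mathcal G-J_1(\rho^0)\bar a^2-J_2(\rho^0)\bar a^3)$. After inserting $\partial_t\bar\rho=-\operatorname{div}(\rho^0\bar{\mathbf v}+\bar\rho\mathbf v^\varepsilon)$, each leftover term either pairs with $\|\partial\bar{\mathbf v}\|_{L^2}$ or has a coefficient such as $\|J_1(\rho^0)\|_{L^\infty}\|\operatorname{div}\mathbf v^\varepsilon\|_{L^\infty}\lesssim\|a^0\|_{L^\infty}^2+\|\operatorname{div}\mathbf v^\varepsilon\|_{L^\infty}^2$. That is a product of two $L^2_t$ quantities, hence in $L^1_t$.
\end{itemize}

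Your Step 2 is essentially sound once Step 1 is repaired. The two boundary integrals you generate actually cancel, because $\partial_3\mathbf B^\varepsilon_h=0$ and $\bar B_3=0$ on $x_3=0$. The total resistive contribution is exactly $-(\varepsilon\partial_3\mathbf B^\varepsilon,\partial_3\bar{\mathbf B})$, so no trace estimate is needed.

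Splitting at $T_\varepsilon$ and using the tail bound $\|(\bar a,\bar{\mathbf v},\bar{\mathbf B})\|_{L^2}^2\lesssim(1+t)^{-1}$ from Theorems~\ref{Th3} and~\ref{Th03} is then a legitimate alternative to the paper's route. The paper instead uses the weighted bound $\varepsilon\int_0^t(1+\tau)^{1-\delta_2}\|\partial_3\mathbf B^\varepsilon\|_{L^2}^2\,\mathrm d\tau\le C$ together with Cauchy--Schwarz. In your version, bounding the forcing by $\varepsilon\int_0^{T}\|\partial_3\mathbf B^0\|_{L^2}^2\lesssim\varepsilon\log(2+T)$ instead of $\varepsilon T$ would even improve the exponent.
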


\begin{rema}
By virtue of the uniform estimates in Theorems \ref{Th1} and \ref{Th01}, the $L^2$ convergence estimate \eqref{conv-L2} and the anisotropic Sobolev inequality \eqref{a5}, there holds
\[
\sup_{0\leq \tau \leq t}\left\|\left(\rho^{\varepsilon}-\rho^{0}, \mathbf{v}^{\varepsilon}-\mathbf{v}^{0}, \mathbf{B}^{\varepsilon}-\mathbf{B}^{0}\right)(\tau)\right\|_{L^{\infty}\left(\mathbb{R}_+^3\right)}
\le C \varepsilon^{\frac{1}{32}}
\]
for any $t>0$, where $C>0$ is a constant independent of $\varepsilon$ and $t$.
\end{rema}

In what follows, we will explain the main difficulties in proving Theorems \ref{Th1}, \ref{Th3} and \ref{Th9} as well as our strategies for overcoming them accordingly.
	
\textbf{Step 1: Analysis on global uniform regularity estimate independent of $\varepsilon$.}

First of all, compared with the global-in-time estimate for the limit system \eqref{eq4} in \cite{JDE},
the uniform estimate \eqref{eq10} only includes one order of vertical derivatives of $\left(a^{\varepsilon},\mathbf{B}^{\varepsilon}\right)$ that are independent of $\varepsilon$.
Therefore, since the Lorentz force $\left(\nabla \times \mathbf{B}^{\varepsilon}\right) \times \mathbf{B}^{\varepsilon}$ is a nonlinear term in the momentum equation, applying a similar argument as that in \cite{k2,23a} only yields estimates for the normal derivatives of velocity $\mathbf{v}^{\varepsilon}$ up to the second order.

Secondly, the vanishing vertical magnetic resistivity makes the estimates of nonlinear terms involving the magnetic field $\mathbf{B}^{\varepsilon}$ more intricate. To solve this issue, we apply various anisotropic inequalities to the nonlinear terms involving the magnetic field $\mathbf{B}^{\varepsilon}$, and ultimately achieve an optimal distribution of derivatives to close the uniform energy estimates.

Finally, due to the limited order of normal derivatives in the estimates available for $\left(a^{\varepsilon}, \mathbf{v}^{\varepsilon}, \mathbf{B}^{\varepsilon}\right)$ caused by the weak boundary layers, in order to close the whole energy estimate, we choose the above conormal Sobolev spaces as working space rather than the classical Sobolev spaces.
For estimating some nonlinear terms appearing therein, we can combine the given boundary condition and use techniques including the properties of commutators and  Hardy's inequality.

Therefore, we can use bootstrapping argument and the global uniform a priori energy estimates to finish the proof of Theorem \ref{Th1}.

\textbf{Step 2: Analysis on asymptotic behavior with respect to time.}

In contrast to the Cauchy problem, calculating decay rates of solutions to the initial-boundary value problem is more involved, as the standard Fourier transform method is no longer applicable here. Note that Kagei and Kobayashi previously established decay rate estimates of solutions to the linearized Navier-Stokes (NS) equations under the no-slip boundary condition in the half-space in \cite{k1,k2}. Some of their ideas can be modified to deal with the case considered here.

In addition, the magnetic field equation with small vertical magnetic diffusion considered here can be interpreted as a 3D nonlinear heat equation with a special diffusion operator $\Delta_h + \varepsilon\partial_3^2$. If we decompose the linearized magnetic field equation into the tangential component $\mathbf{B}^{\varepsilon}_h$ and the vertical component $B^{\varepsilon}_3$ separately, they can be viewed as initial-boundary value problems of the heat equation with homogeneous Neumann boundary conditions and homogeneous Dirichlet boundary conditions, respectively. The decay rates of solutions to the linearized magnetic field equation can be obtained by deriving the corresponding explicit solutions  and computing the decay rates of such explicit solutions via  Young's inequality in its convolution form.

If we denote the solution operator of the linearized magnetic field equation by $\mathcal{B}(t)$, then the corresponding solution to this equation can be written as $\mathcal{B}(t)\mathbf{B}_{0}$. We need to be extremely careful with the choice of the decay rate formula for $\mathcal{B}(t)\mathbf{B}_{0}$. Since $\varepsilon\rightarrow 0$ and $t\rightarrow+\infty$ in this paper, it is not clear which one converges faster, this may lead to the resulting decay rate being affected by $\varepsilon$. On one hand, if we employ the following decay rate estimate, which follows directly from \eqref{win3},
\begin{equation}\label{D1}
  \left\|\partial_{h}^{k}\mathcal{B}(t) \mathbf{B}_{0} \right\|_{L^{2}\left(\mathbb{R}^{3}_{+}\right)}\lesssim \left(\varepsilon t\right)^{-\frac{1}{4}}\cdot t^{-\frac{1+k}{2}}\left\|\mathbf{B}_{0} \right\|_{L^{1}\left(\mathbb{R}^{3}_{+}\right)},
\end{equation}
this will result in the decay rate depending on $\varepsilon$. To overcome this difficulty, we adopt the following decay rate formula in the form of the anisotropic $L^{p_i}$-norm
$\left\|\cdot\right\|_{L_{x_{1}}^{p_1}L_{x_{2}}^{p_2}L_{x_{3}}^{2}\left(\mathbb{R}^{3}_{+}\right)}$,
which follows directly from \eqref{win3},
\begin{equation}\label{D2}
  \left\|\partial_{h}^{k}\mathcal{B}(t) \mathbf{B}_{0} \right\|_{L^{2}\left(\mathbb{R}^{3}_{+}\right)}\lesssim t^{-\left(\frac{k}{2}+\frac{1}{2}\sum_{i=1}^{2}\left(\frac{1}{p_i}-\frac{1}{2}\right)\right)}
  \left\|\mathbf{B}_{0}\right\|_{L_{x_{1}}^{p_1}L_{x_{2}}^{p_2}L_{x_{3}}^{2}\left(\mathbb{R}^{3}_{+}\right)},
\end{equation}
which yields the decay rate formula independent of $\varepsilon$. On the other hand, since we design a weighted energy functional in the $H^1$ form to obtain the $H^1$ decay rate of the solution, we also need to choose an appropriate decay rate formula for $\partial_3\mathcal{B}(t)\mathbf{B}_{0}$. If we take the following form of the decay estimate, which follows directly from \eqref{win3},
\begin{equation*}
  \left\|\partial_{3}\mathcal{B}(t) \mathbf{B}_{0} \right\|_{L^{2}\left(\mathbb{R}^{3}_{+}\right)}\lesssim \left(\varepsilon t\right)^{-\frac{1}{2}}\cdot  t^{-\frac{1}{2}}
   \left\| \mathbf{B}_{0}\right\|_{L_{x_{1}x_{2}}^{1}L_{x_{3}}^{2}\left(\mathbb{R}^{3}_{+}\right)},
\end{equation*}
this leads to the same problem as \eqref{D1}, which indicates that the method of only using the anisotropic $L^{p_i}$-norm $\left\|\cdot\right\|_{L_{x_{1}}^{p_1}L_{x_{2}}^{p_2}L_{x_{3}}^{2}\left(\mathbb{R}^{3}_{+}\right)}$ in \eqref{D2} does not work here. To overcome this difficulty, we notice the boundary condition $\left(\mathbf{B}_{0}\right)_3\big|_{x_3=0} = 0$, which inspires us to transfer the first-order normal derivative from the solution $\mathcal{B}(t)\mathbf{B}_{0}$ to the initial data $\mathbf{B}_{0}$ via integration by parts, and thus we obtain the following formula
\begin{equation}\label{D3}
  \left\|\partial_{3}\mathcal{B}(t) \mathbf{B}_{0} \right\|_{L^{2}\left(\mathbb{R}^{3}_{+}\right)}\lesssim t^{-\frac{1}{2}} \left\|\partial_3 \mathbf{B}_{0} \right\|_{L_{x_{1}x_{2}}^{1}L_{x_{3}}^{2}\left(\mathbb{R}^{3}_{+}\right)},
\end{equation}
which gives the decay rate formula for $\partial_{3}\mathcal{B}(t) \mathbf{B}_{0}$ that is independent of $\varepsilon$.

Building on these facts, we thus derive the decay rates of solutions to the nonlinear problem considered in this paper by leveraging Duhamel's principle.
Due to the limited order of normal derivatives in the uniform energy estimate \eqref{eq10}, the estimates of the decay rates for the nonlinear terms are more difficult than those in \cite{JDE}. Accordingly, we complete the energy estimates by using   Hardy's inequality and conormal version interpolation
inequality \eqref{a8}. For the estimate of $\partial_3\mathcal{B}(t-\tau) F^{\varepsilon}_3(\tau)$, we note that the boundary condition in \eqref{eq8} implies $\left(F^{\varepsilon}_3\right)_3\big|_{x_3=0} = 0$, which allows us to transfer the first-order normal derivative from $\mathcal{B}(t-\tau) F^{\varepsilon}_3(\tau)$ to $F^{\varepsilon}_3(\tau)$ by the same method as in \eqref{D3}. Therefore, the decay rates of the nonlinear terms in this paper are relatively slow. Compared with the $H^1$ decay rates obtained in Theorem 1.3 of \cite{JDE}, the $L^2$ decay rates of the solutions are the same, while the $L^2$ decay rates of the first-order spatial derivatives of the solutions in this paper are slower.

\textbf{Step 3: Analysis on convergence rate.}

Having established the global uniform regularity and large-time decay rates of solutions to both the original system \eqref{eq1} and the limit system \eqref{eq4}, we now turn to deriving an explicit, time-uniform convergence rate for the vanishing vertical magnetic resistivity limit.
For notational convenience, we introduce the following notations:
$\bar{\rho} \overset{def}{=} \rho^\varepsilon - \rho^0$, $\bar{a} \overset{def}{=} a^\varepsilon - a^0$, $\bar{\rho} = \bar{a}$, $\bar{\mathbf{v}} \overset{def}{=} \mathbf{v}^\varepsilon - \mathbf{v}^0$, $\bar{\mathbf{B}} \overset{def}{=} \mathbf{B}^\varepsilon - \mathbf{B}^0$, $\bar{P} \overset{def}{=} P\left(\rho^\varepsilon\right) - P\left(\rho^0\right)$.
A key preliminary observation is that both the restricted order of normal derivatives in the uniform estimates for the original solutions \((a^\varepsilon, \mathbf{v}^\varepsilon, \mathbf{B}^\varepsilon)\) and the availability of only \(H^1\) decay rate estimates for the global solutions prevent us from establishing uniform estimate for the first-order normal derivative of \(\bar{\mathbf{B}}\), which in turn precludes the use of the Stokes elliptic regularity argument presented in Lemma \ref{GL6} to recover the dissipative estimate for the density difference \(\bar{a}\). As a result, we are restricted to using only the dissipation
\(\left\|\left(\partial\bar{\mathbf{v}},\partial_h \bar{\mathbf{B}}\right)\right\|_{L^2}\) to absorb nonlinear terms, and we can only achieve an \(L^2\) convergence rate rather than an \(H^1\)  convergence rate. To minimize the number of nonlinear terms that arise during the analysis of the difference system, we directly derive the difference equations from the original conservative forms \eqref{eq1} and \eqref{eq4} (yielding system \eqref{Con1}), rather than constructing them from the perturbed systems \eqref{eq6} and \eqref{eq7}.

The core challenges in establishing the \(L^2\) convergence rate for system \eqref{Con1} are twofold: obtaining a valid energy estimate for the density difference \(\bar{a}\), and handling the pressure gradient term \(\int_{\mathbb{R}_+^3}\bar{\mathbf{v}} \cdot \nabla \bar{P} \mathrm{d}\mathbf{x}\). To address these challenges, we first introduce the auxiliary function
\[
g(\rho) \overset{def}{=}\rho \int_{1}^{\rho} \frac{P(s) - P(1)}{s^2} \mathrm{d}s,
\]
which satisfies the relations
\[
\rho \nabla g'(\rho) = \nabla P(\rho), \quad g''(\rho) = \frac{P'(\rho)}{\rho},
\]
and for any fixed positive constant \(c\), if \(c \leq \rho \leq c^{-1}\), then \(g(\rho) \sim (\rho-1)^2\). We then define the auxiliary function
\[
\mathcal{G}\left(\rho^\varepsilon, \rho^0\right) \overset{def}{=} g\left(\rho^\varepsilon\right) - g\left(\rho^0\right) - g'\left(\rho^0\right) \bar{\rho},
\]
which satisfies the crucial equivalence \(\mathcal{G}\left(\rho^\varepsilon, \rho^0\right) \sim \bar{a}^2\) when both \(\rho^\varepsilon\) and \(\rho^0\) are uniformly bounded away from zero (which is guaranteed by Theorems \ref{Th1} and \ref{Th01} for small initial data). Through integration by parts and Taylor expansion of the pressure term, we transform the pressure gradient integral into
\begin{equation}\label{con1-summary}
\begin{aligned}
\int_{\mathbb{R}_+^3} \bar{\mathbf{v}} \cdot \nabla \bar{P} \mathrm{d}\mathbf{x}
={}& \frac{\mathrm{d}}{\mathrm{d}t} \int_{\mathbb{R}_+^3} \mathcal{G}\left(\rho^\varepsilon, \rho^0\right) \mathrm{d}\mathbf{x}  + \int_{\mathbb{R}_+^3} \frac{P'\left(\rho^0\right)}{\rho^0} \left(  \bar{\mathbf{v}}\cdot \nabla \rho^0 \right) \bar{\rho} \mathrm{d}\mathbf{x}  \\
&+ \int_{\mathbb{R}_+^3} \tilde{F}\left(\rho^0, \bar{\rho}\right) \bar{\rho}^2 \cdot \operatorname{div} \mathbf{v}^0 \mathrm{d}\mathbf{x},
\end{aligned}
\end{equation}
where
\[
\tilde{F}\left(\rho^0, \bar{\rho}\right) \overset{def}{=}  \int_{0}^{1} (1-\theta) P''\left(\rho^0 + \theta \bar{\rho}\right)  \mathrm{d}\theta.
\]
The identity \eqref{con1-summary} successfully extracts the density energy \(\int_{\mathbb{R}_+^3} \mathcal{G}\left(\rho^\varepsilon, \rho^0\right) \mathrm{d}\mathbf{x}\), leaving only two remainder terms.

The second remainder term \(\int_{\mathbb{R}_+^3} \tilde{F}\left(\rho^0, \bar{\rho}\right) \bar{\rho}^2 \cdot \operatorname{div} \mathbf{v}^0 \mathrm{d}\mathbf{x}\) presents a new difficulty: we cannot obtain a uniform decay rate for \(\left\|\operatorname{div} \mathbf{v}^0\right\|_{L^\infty}\), and thus cannot guarantee the time integrability of \(\left\|\operatorname{div} \mathbf{v}^0\right\|_{L^\infty}\). A simple estimate of the form \(\left\|\operatorname{div} \mathbf{v}^0\right\|_{L^\infty}\left\|\bar{a}\right\|_{L^2}^2\) would therefore fail to close the energy inequality. To overcome this, we expand \(P(\rho^\varepsilon)\) around \(\rho^0\) up to the third order in the Taylor expansion, and use the continuity equation for the limit system
\[
\operatorname{div} \mathbf{v}^0 = -\frac{1}{\rho^0}\left(\partial_t \rho^0 + \mathbf{v}^0 \cdot \nabla \rho^0\right)
\]
to replace the divergence term. We then introduce additional auxiliary functions
\[
J_1\left(\rho^0\right) = \int_{1}^{\rho^0} \frac{P''\left(s\right)}{2s} \mathrm{d}s, \quad J_2\left(\rho^0\right) = \int_{1}^{\rho^0} \frac{P'''\left(s\right)}{6s} \mathrm{d}s,
\]
and integrate by parts in time on the time derivative term. This procedure allows us to absorb the involved \(\operatorname{div} \mathbf{v}^0\) term into a modified energy functional, resulting in
\[
\int_{\mathbb{R}_+^3} \left(\mathcal{G}\left(\rho^\varepsilon, \rho^0\right)- J_1\left(\rho^0\right) \bar{a}^2-J_2\left(\rho^0\right) \bar{a}^3\right)\mathrm{d}\mathbf{x},
\]
which is still equivalent to \(\left\|\bar{a}\right\|_{L^2}^2\) for small \(\bar{a}\). All remaining terms arising in this derivation can be either absorbed into the dissipation terms or bounded by the global uniform energy estimates.

The final challenge is handling the term \(\left|\left(\varepsilon\partial_3\mathbf{B}^{\varepsilon},\partial_3\bar{\mathbf{B}}\right)\right|\).
To resolve this, we combine the weighted energy method with \(L^2\) decay rate of original system to obtain
\[
\varepsilon\int_{0}^{t}(1+\tau)^{1-\delta_2} \left\|\partial_3 \mathbf{B}^{\varepsilon}\right\|_{L^2}^2\mathrm{d}\tau \leq C.
\]
Together with the above estimate and by Hölder's inequality, it follows that
\[
\int_{0}^{t}\left|\left(\varepsilon\partial_3\mathbf{B}^{\varepsilon},\partial_3\bar{\mathbf{B}}\right)\right|\mathrm{d}\tau
\lesssim\varepsilon^{\frac{1}{2}}\left(\varepsilon\int_{0}^{t}(1+\tau)^{1-\delta_2}\left\|\partial_3\mathbf{B}^{\varepsilon}\right\|_{L^2}^{2}\mathrm{d}\tau\right)^{\frac{1}{2}}\left(\int_{0}^{t}(1+\tau)^{-1+\delta_2}\left\|\partial_3\bar{\mathbf{B}}\right\|_{L^2}^2\mathrm{d}\tau\right)^{\frac{1}{2}}.
\]
For the last factor, by Theorems \ref{Th3} and \ref{Th03}, \(\left\|\partial_3\mathbf{B}^\varepsilon\right\|_{L^2} = O\left(t^{-\frac{1}{2}+\delta_2}\right)\) and \(\left\|\partial_3\mathbf{B}^0\right\|_{L^2} = O\left(t^{-\frac{1}{2}}\right)\), so \(\left\|\partial_3\bar{\mathbf{B}}\right\|_{L^2} = O\left(t^{-\frac{1}{2}+\delta_2}\right)\). This gives
\[
\int_{0}^{t}(1+\tau)^{-1+\delta_2}\left\|\partial_3\bar{\mathbf{B}}\right\|_{L^2}^2\mathrm{d}\tau \lesssim \int_{0}^{t}(1+\tau)^{-2+3\delta_2}\mathrm{d}\tau \leq C.
\]
Combining these, we conclude
\[
\int_{0}^{t}\left|\left(\varepsilon\partial_3\mathbf{B}^{\varepsilon},\partial_3\bar{\mathbf{B}}\right)\right|\mathrm{d}\tau \lesssim \varepsilon^{\frac{1}{2}}.
\]
Combining all the above estimates, we arrive at the final differential inequality for the modified energy functional
\(
\bar{E}(t) \sim \left\|\left(\bar{a},\bar{\mathbf{v}},\bar{\mathbf{B}}\right)\right\|_{L^2},
\)
which reads
\[
\frac{\mathrm{d}}{\mathrm{d}t}\bar{E}(t)^2
\lesssim \bar{C}(t) \bar{E}(t)^2+\varepsilon^{\frac{1}{2}}\bar{K}(t),
\]
where \(\bar{C}(t)\) and \(\bar{K}(t)\) are coefficients satisfying \(\int_{0}^{t} \bar{C}(\tau) \mathrm{d}\tau \leq C\) and \(\int_{0}^{t} \bar{K}(\tau) \mathrm{d}\tau \leq C\) uniformly in \(t\) and \(\varepsilon\). Applying Gronwall's inequality with the initial condition \(\bar{E}(0) = 0\) (since both systems share the same initial data), we obtain
\[
\bar{E}(t)^2 \lesssim  \varepsilon^{\frac{1}{2}}\int_{0}^{t}  \bar{K}(s) e^{\int_{s}^{t} \bar{C}(r)\mathrm{d}r} \mathrm{d}s\lesssim
 \varepsilon^{\frac{1}{2}}.
\]
Recalling \(\bar{E}(t) \sim \left\|\left(\bar{a},\bar{\mathbf{v}},\bar{\mathbf{B}}\right)\right\|_{L^2}\), this yields the desired time-uniform \(L^2\) convergence rate
\[
\sup_{0\leq \tau \leq t}\left\|\left(\rho^{\varepsilon}-\rho^{0}, \mathbf{v}^{\varepsilon}-\mathbf{v}^{0}, \mathbf{B}^{\varepsilon}-\mathbf{B}^{0}\right)(\tau)\right\|_{L^2\left(\mathbb{R}_+^3\right)}
\lesssim  \varepsilon^{\frac{1}{4}}.
\]

The rest of this paper is organized as follows.
In Section  \ref{global-estimate}, we establish the global uniform estimate
for system  \eqref{eq1}--\eqref{eq3} under the condition of small initial data.
In Section  \ref{asymptotic-behavior}, we will establish the $H^1$ decay
rates for the system \eqref{eq1}--\eqref{eq3} under the condition \eqref{eq11}.
In Section \ref{Convergence rate of global solution}, we obtain the vanishing vertical magnetic resistivity limit with an explicit time-uniform \(L^2\) convergence rate via the regularity and decay results in Theorems \ref{Th1}, \ref{Th3}, \ref{Th01} and \ref{Th03}, rigorously justifying the asymptotic limit.
Finally, we collect some useful inequalities in Appendix \ref{ApA}.

\section{Global-in-time uniform regularity estimate}\label{global-estimate}
	In this section, we are devoted to deriving the a priori estimates of strong solutions \(\left( a^{\varepsilon} ,\mathbf{v}^{\varepsilon},\mathbf{B}^{\varepsilon}\right)\) to the compressible MHD system \(\eqref{eq6}\).
Throughout this section, we assume that
$m\geq 4$ and the solution satisfies the a priori assumption that
\begin{equation}\label{G1}
  E_m^{\varepsilon}(t)+D_m^{\varepsilon}(t) \leq \delta_3,
\end{equation}
 where \(\delta_3 > 0\) is a sufficiently small constant to be determined later.
\subsection{Conormal energy estimate}
\begin{lemm}\label{GL1}Assume \(\left(a^{\varepsilon},\mathbf{v}^{\varepsilon},\mathbf{B}^{\varepsilon}\right)\) is a solution to \eqref{eq6}, it holds that
\begin{equation}\label{G2}
  \begin{aligned}
& \left\|  Z^{\alpha} \left(a^{\varepsilon},\mathbf{v}^{\varepsilon}, \mathbf{B}^{\varepsilon}\right)(t)\right\|_{0}^{2}+\int_{0}^{t}  \left(\left\|  Z^{\alpha} \left(\frac{\mathrm{D} a^{\varepsilon}}{\mathrm{D}\tau},\partial\mathbf{v}^{\varepsilon}, \partial_h\mathbf{B}^{\varepsilon}\right)\right\|_{0}^{2}+\varepsilon\left\| \partial_3 Z^{\alpha} \mathbf{B}^{\varepsilon}\right\|_{0}^{2}\right) \mathrm{d}\tau \\
\lesssim{}&\left\|  Z^{\alpha} \left(a^{\varepsilon},\mathbf{v}^{\varepsilon}, \mathbf{B}^{\varepsilon}\right)(0)\right\|_{0}^{2}+\int_{0}^{t}  \left(\left\|  Z^{\alpha}\left( a^{\varepsilon}\operatorname{div}\mathbf{v}^{\varepsilon}\right)\right\|_{0}^{2}+\left|A _{\alpha}\right|+\left|\widetilde{A} _{\alpha}\right|\right) \mathrm{d}\tau
\end{aligned}
\end{equation}
 for \( |\alpha| \leq m \), where
\[
A_{\alpha}\overset{def}{=}\left( Z^{\alpha}f^{\varepsilon}_1,  Z^{\alpha} a^{\varepsilon}\right)+\left( Z^{\alpha} f^{\varepsilon}_2,  Z^{\alpha} \mathbf{v}^{\varepsilon}\right)+  \left(Z^{\alpha} f^{\varepsilon}_3,  Z^{\alpha} \mathbf{B}^{\varepsilon} \right)
\]
and
\begin{align*}
  \widetilde{A} _{\alpha}\overset{def}{=}{}&\left\|\left[Z^{\alpha}, \partial_3\right]\mathbf{v}^{\varepsilon}\right\|_0^2-\left( \left[Z^{\alpha}, \operatorname{div}\right]\mathbf{v}^{\varepsilon},  Z^{\alpha} a^{\varepsilon}\right)+\left(\varepsilon\left[Z^{\alpha}, \partial_3^2\right]\mathbf{B}^{\varepsilon},  Z^{\alpha} \mathbf{B}^{\varepsilon} \right)\\
  &+\left(\mu\left[Z^{\alpha}, \Delta\right]\mathbf{v}^{\varepsilon} + (\mu + \lambda)\left[Z^{\alpha}, \nabla\operatorname{div}\right]\mathbf{v}^{\varepsilon} - \left[Z^{\alpha}, \nabla\right]a^{\varepsilon},  Z^{\alpha} \mathbf{v}^{\varepsilon}\right).
\end{align*}
\end{lemm}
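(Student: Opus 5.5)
Apply $Z^{\alpha}$ to each equation of \eqref{eq6}, take the $L^2$ inner product with $Z^{\alpha}a^{\varepsilon}$, $Z^{\alpha}\mathbf{v}^{\varepsilon}$ and $Z^{\alpha}\mathbf{B}^{\varepsilon}$ respectively, sum, and integrate by parts. The linear operators do not commute with $Z^\alpha$, since $Z_3=\varphi\partial_3$. We therefore write $Z^\alpha \operatorname{div}\mathbf{v}^{\varepsilon}=\operatorname{div}Z^\alpha\mathbf{v}^{\varepsilon}+[Z^\alpha,\operatorname{div}]\mathbf{v}^{\varepsilon}$, and similarly for $\Delta$, $\nabla\operatorname{div}$, $\nabla$ and $\partial_3^2$. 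All commutators are moved to the right-hand side, where they make up $\widetilde A_\alpha$, while the nonlinear terms give $A_\alpha$.

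The boundary conditions are preserved by $Z^\alpha$ because $Z$ is tangential: $Z^\alpha\mathbf{v}^{\varepsilon}|_{x_3=0}=0$ and $Z^\alpha B_3^{\varepsilon}|_{x_3=0}=0$. For $Z^\alpha \mathbf{B}_h^{\varepsilon}$, the relevant boundary term is $\varepsilon\int_{x_3=0}\partial_3 Z^\alpha\mathbf{B}_h^{\varepsilon}\cdot Z^\alpha\mathbf{B}_h^{\varepsilon}$. If $\alpha_3=0$, then $\partial_3Z^\alpha \mathbf{B}^{\varepsilon}_h=Z^\alpha\partial_3\mathbf{B}^{\varepsilon}_h=0$. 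If $\alpha_3\ge1$, then $Z^\alpha\mathbf{B}^{\varepsilon}_h$ itself vanishes on the boundary because $\varphi(0)=0$. Hence all boundary integrals from the diffusion terms vanish, and the terms $\int_{x_3=0} Z^\alpha a^{\varepsilon}\, Z^\alpha v_3^{\varepsilon}$ also vanish.

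After these integrations by parts we obtain the following:
\begin{itemize}
\item the pressure/continuity coupling $(\nabla Z^\alpha a^{\varepsilon},Z^\alpha\mathbf{v}^{\varepsilon})+(\operatorname{div}Z^\alpha\mathbf{v}^{\varepsilon},Z^\alpha a^{\varepsilon})=0$;
\item the dissipation $\mu\|\nabla Z^\alpha\mathbf{v}^{\varepsilon}\|_0^2+(\mu+\lambda)\|\operatorname{div}Z^\alpha\mathbf{v}^{\varepsilon}\|_0^2+\|\partial_hZ^\alpha\mathbf{B}^{\varepsilon}\|_0^2+\varepsilon\|\partial_3Z^\alpha\mathbf{B}^{\varepsilon}\|_0^2$.
\end{itemize}
To convert $\|\nabla Z^\alpha \mathbf{v}^{\varepsilon}\|_0$ into $\|Z^\alpha\partial\mathbf{v}^{\varepsilon}\|_0$, we use $Z^\alpha\partial_3\mathbf{v}^{\varepsilon}=\partial_3Z^\alpha\mathbf{v}^{\varepsilon}+[Z^\alpha,\partial_3]\mathbf{v}^{\varepsilon}$. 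This produces the term $\|[Z^\alpha,\partial_3]\mathbf{v}^{\varepsilon}\|_0^2$ in $\widetilde A_\alpha$. Tangential derivatives commute, so $\partial_h Z^\alpha=Z^\alpha\partial_h$ and no commutator arises there. The condition $2\mu+3\lambda\ge0$ (equivalently, $\mu>0$ with $\lambda+\mu$ possibly negative) is handled in the standard way: $\|\operatorname{div} w\|_0\le\|\nabla w\|_0$ for $w\in H^1_0$, combined with the identity $\|\nabla w\|^2=\|\operatorname{curl}w\|^2+\|\operatorname{div}w\|^2$, gives positivity of the combined viscous form.

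The main point is the dissipative term $Z^\alpha\frac{\mathrm{D}a^{\varepsilon}}{\mathrm{D}\tau}$, which is not produced by the energy identity itself. Here the material derivative is $\frac{\mathrm D}{\mathrm D\tau}=\partial_\tau+\mathbf{v}^{\varepsilon}\cdot\nabla$. The continuity equation gives $\frac{\mathrm{D}a^{\varepsilon}}{\mathrm{D}\tau}=-\operatorname{div}\mathbf{v}^{\varepsilon}-a^{\varepsilon}\operatorname{div}\mathbf{v}^{\varepsilon}$, and hence
\[\|Z^\alpha\tfrac{\mathrm{D}a^{\varepsilon}}{\mathrm{D}\tau}\|_0^2\lesssim\|Z^\alpha\operatorname{div}\mathbf{v}^{\varepsilon}\|_0^2+\|Z^\alpha(a^{\varepsilon}\operatorname{div}\mathbf{v}^{\varepsilon})\|_0^2.\]
The first term is bounded by $\|Z^\alpha\partial\mathbf{v}^{\varepsilon}\|_0^2$, which is already controlled. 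This explains the appearance of $\|Z^\alpha(a^{\varepsilon}\operatorname{div}\mathbf{v}^{\varepsilon})\|_0^2$ on the right of \eqref{G2}. We add a small multiple of this bound to the energy identity.

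Integrating in time then gives \eqref{G2}. The only genuinely delicate step is checking that the boundary terms vanish, which requires the case split $\alpha_3=0$ versus $\alpha_3\ge1$ described above. Everything else is bookkeeping of the commutators into $\widetilde A_\alpha$.
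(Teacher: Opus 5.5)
Your proposal is correct and follows the paper's proof essentially step for step. You apply $Z^\alpha$ to \eqref{eq6}, move all commutators to the right-hand side to form $\widetilde A_\alpha$, take the $L^2$ energy identity, and recover $\|Z^\alpha \frac{\mathrm{D}a^{\varepsilon}}{\mathrm{D}\tau}\|_0$ from the continuity equation; your case split on $\alpha_3$ is simply a more explicit version of the paper's use of the boundary condition $\partial_3 Z^\alpha\mathbf{B}^\varepsilon_h=0$.

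One harmless slip: $\mu>0$ together with $2\mu+3\lambda\ge 0$ gives $\mu+\lambda\ge \mu/3>0$. So $\lambda+\mu$ cannot be negative, and the viscous form is trivially nonnegative without your curl--div argument.
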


\begin{proof}
Denote the material derivative of \(a^{\varepsilon}\) as
\[
\frac{\mathrm{D} a^{\varepsilon}}{\mathrm{D} t}\overset{def}{=}\partial_{t} a^{\varepsilon}+  \mathbf{v}^{\varepsilon} \cdot \nabla a^{\varepsilon}.
\]
Apply the conormal derivatives $Z^{\alpha}$ to the equations in \eqref{eq6},
and we obtain
	 \begin{equation}\label{G3}
	 	\left\{\begin{array}{*{5}{ll}}
	 		\partial_t Z^{\alpha}a^{\varepsilon} + \operatorname{div}Z^{\alpha}\mathbf{v}^{\varepsilon} = Z^{\alpha}f^{\varepsilon}_1-\left[Z^{\alpha}, \operatorname{div}\right]\mathbf{v}^{\varepsilon}
\quad & {\rm in} ~~\mathbb{R}_+^3,\\
\partial_t Z^{\alpha}\mathbf{v}^{\varepsilon} - \mu\Delta Z^{\alpha}\mathbf{v}^{\varepsilon} - (\mu + \lambda)\nabla\operatorname{div}Z^{\alpha}\mathbf{v}^{\varepsilon} + \nabla Z^{\alpha}a^{\varepsilon} = Z^{\alpha}f^{\varepsilon}_2 \quad &\\
+ \mu\left[Z^{\alpha}, \Delta\right]\mathbf{v}^{\varepsilon} + (\mu + \lambda)\left[Z^{\alpha}, \nabla\operatorname{div}\right]\mathbf{v}^{\varepsilon} - \left[Z^{\alpha}, \nabla\right]a^{\varepsilon} \quad & {\rm in} ~~\mathbb{R}_+^3,\\
	 		\partial_tZ^{\alpha}\mathbf{B}^{\varepsilon} - \left(\Delta_h+\varepsilon\partial_3^2\right)Z^{\alpha}\mathbf{B}^{\varepsilon} = Z^{\alpha}f^{\varepsilon}_3+\varepsilon\left[Z^{\alpha}, \partial_3^2\right]\mathbf{B}^{\varepsilon}
\quad & {\rm in} ~~ \mathbb{R}_+^3,\\
	 		Z^{\alpha}\left(\mathbf{v}^{\varepsilon}, B_3^\varepsilon\right) = \mathbf{0},\quad \partial_3 Z^{\alpha}\mathbf{B}^{\varepsilon}_h=\mathbf{0}\quad & {\rm on} ~~ \mathbb{R}^2 \times \left\{ x_3=0 \right\}.
	 	\end{array}\right.
	 \end{equation}	
And we also have
\begin{equation}\label{G4}
   Z^{\alpha} \left(\frac{\mathrm{D} a^{\varepsilon} }{\mathrm{D}t}\right)=-   Z^{\alpha}\left( a^{\varepsilon} \operatorname{div}\mathbf{v}^{\varepsilon}\right)- Z^{\alpha} \operatorname{div} \mathbf{v}^{\varepsilon}.
\end{equation}
Let \(|\alpha| \leq m\). Taking the \(L^{2}\) inner product of \eqref{G3} with \(   Z^{\alpha} \left( a^{\varepsilon} ,\mathbf{v}^{\varepsilon},\mathbf{B}^{\varepsilon}\right)\) gives
\begin{equation*}
\begin{aligned}
&\frac{1}{2}\frac{\mathrm{d}}{\mathrm{d}t}\left\|  Z^{\alpha} \left(a^{\varepsilon},\mathbf{v}^{\varepsilon},\mathbf{B}^{\varepsilon}\right)(t)\right\|_{0}^{2} + \mu\left\| \nabla Z^{\alpha}\mathbf{v}^{\varepsilon}\right\|_0^2 + (\mu+\lambda)\left\|  \operatorname{div}Z^{\alpha}\mathbf{v}^{\varepsilon}\right\|_0^2+ \left\|\nabla_h Z^{\alpha}\mathbf{B}^{\varepsilon}\right\|_0^2
+ \varepsilon\left\|\partial_3 Z^{\alpha}\mathbf{B}^{\varepsilon}\right\|_0^2\\
={}&{}\left( Z^{\alpha} f^{\varepsilon}_1,  Z^{\alpha} a^{\varepsilon}\right) +  \left(Z^{\alpha} f^{\varepsilon}_2,  Z^{\alpha} \mathbf{v}^{\varepsilon}\right)+  \left(Z^{\alpha} f^{\varepsilon}_3,  Z^{\alpha} \mathbf{B}^{\varepsilon} \right)-\left( \left[Z^{\alpha}, \operatorname{div}\right]\mathbf{v}^{\varepsilon},  Z^{\alpha} a^{\varepsilon}\right) \\
&+  \left(\mu\left[Z^{\alpha}, \Delta\right]\mathbf{v}^{\varepsilon} + (\mu + \lambda)\left[Z^{\alpha}, \nabla\operatorname{div}\right]\mathbf{v}^{\varepsilon} - \left[Z^{\alpha}, \nabla\right]a^{\varepsilon},  Z^{\alpha} \mathbf{v}^{\varepsilon}\right)+  \left(\varepsilon\left[Z^{\alpha}, \partial_3^2\right]\mathbf{B}^{\varepsilon},  Z^{\alpha} \mathbf{B}^{\varepsilon} \right),
\end{aligned}
\end{equation*}
where the following fact is used,
\[
\left(\nabla  Z^{\alpha}  a^{\varepsilon} ,  Z^{\alpha} \mathbf{v}^{\varepsilon}\right)=-\left( Z^{\alpha}  a^{\varepsilon}, \operatorname{div}  Z^{\alpha} \mathbf{v}^{\varepsilon}\right).
\]
Combining with
\[
\left\|  Z^{\alpha} \left(\frac{\mathrm{D} a^{\varepsilon} }{\mathrm{D}t}\right)\right\|_{0}^{2} \lesssim\left\|  Z^{\alpha}\left( a^{\varepsilon}\operatorname{div}\mathbf{v}^{\varepsilon}\right)\right\|_{0}^{2}+\left\| Z^{\alpha} \operatorname{div} \mathbf{v}^{\varepsilon}\right\|_{0}^{2},
\]
which follows from \eqref{G4}, we get
\begin{equation*}
\begin{aligned}
& \frac{\mathrm{d}}{\mathrm{d}t}\left\| Z^{\alpha} \left(a^{\varepsilon},\mathbf{v}^{\varepsilon},\mathbf{B}^{\varepsilon}\right)(t)\right\|_{0}^{2}
+ \left\| Z^{\alpha} \left(\frac{\mathrm{D} a^{\varepsilon}}{\mathrm{D}\tau},\partial\mathbf{v}^{\varepsilon}, \partial_h\mathbf{B}^{\varepsilon}\right)\right\|_{0}^{2}
+ \varepsilon\left\|\partial_3 Z^{\alpha} \mathbf{B}^{\varepsilon}\right\|_{0}^{2}
\\
\lesssim{}&
\left\| Z^{\alpha}\left( a^{\varepsilon}\operatorname{div}\mathbf{v}^{\varepsilon}\right)\right\|_{0}^{2}
+ \bigl( Z^{\alpha}f^{\varepsilon}_1, Z^{\alpha} a^{\varepsilon}\bigr)
+ \bigl( Z^{\alpha} f^{\varepsilon}_2, Z^{\alpha} \mathbf{v}^{\varepsilon}\bigr)
+ \bigl(Z^{\alpha} f^{\varepsilon}_3, Z^{\alpha} \mathbf{B}^{\varepsilon} \bigr)\\
&+\left\|\left[Z^{\alpha}, \partial_3\right]\mathbf{v}^{\varepsilon}\right\|_0^2
- \bigl( \left[Z^{\alpha}, \operatorname{div}\right]\mathbf{v}^{\varepsilon}, Z^{\alpha} a^{\varepsilon}\bigr)+ \bigl(\varepsilon\left[Z^{\alpha}, \partial_3^2\right]\mathbf{B}^{\varepsilon}, Z^{\alpha} \mathbf{B}^{\varepsilon} \bigr)\\
&+ \bigl(\mu\left[Z^{\alpha}, \Delta\right]\mathbf{v}^{\varepsilon} + (\mu + \lambda)\left[Z^{\alpha}, \nabla\operatorname{div}\right]\mathbf{v}^{\varepsilon} - \left[Z^{\alpha}, \nabla\right]a^{\varepsilon}, Z^{\alpha} \mathbf{v}^{\varepsilon}\bigr).
\end{aligned}
\end{equation*}
By integrating it with respect to time, we obtain inequality \eqref{G2}.
\end{proof}
\subsection{Estimate of the first order time derivative}
\begin{lemm}\label{GL2}Assume \(\left(a^{\varepsilon},\mathbf{v}^{\varepsilon},\mathbf{B}^{\varepsilon}\right)\) is a solution to \eqref{eq6}, it holds that
\begin{equation}\label{G5}
  \begin{aligned}
& \left\|  Z^{\alpha}\partial_{t} \left(a^{\varepsilon},\mathbf{v}^{\varepsilon}, \mathbf{B}^{\varepsilon}\right)(t)\right\|_{0}^{2}+\int_{0}^{t}  \left(\left\| \partial_{\tau} Z^{\alpha} \left(\frac{\mathrm{D} a^{\varepsilon}}{\mathrm{D}\tau},\partial\mathbf{v}^{\varepsilon}, \partial_h\mathbf{B}^{\varepsilon}\right)\right\|_{0}^{2}
+\varepsilon\left\|  \partial_{\tau}  \partial_3Z^{\alpha}\mathbf{B}^{\varepsilon}\right\|_{0}^{2}\right) \mathrm{d}\tau \\
\lesssim{}&\left\|  Z^{\alpha}\partial_{t} \left(a^{\varepsilon},\mathbf{v}^{\varepsilon}, \mathbf{B}^{\varepsilon}\right)(0)\right\|_{0}^{2}+\int_{0}^{t}  \left(\left\| \partial_{\tau} Z^{\alpha}\left( a^{\varepsilon}\operatorname{div}\mathbf{v}^{\varepsilon}\right)\right\|_{0}^{2}+\left|B _{\alpha}\right|+\left|\widetilde{B} _{\alpha}\right|\right) \mathrm{d}\tau
\end{aligned}
\end{equation}
 for \( |\alpha| \leq m-2 \), where
\[
B_{\alpha}\overset{def}{=}\left( Z^{\alpha}\partial_{t}f^{\varepsilon}_1,  Z^{\alpha}\partial_{t} a^{\varepsilon}\right)+\left( Z^{\alpha}\partial_{t} f^{\varepsilon}_2,  Z^{\alpha} \partial_{t}\mathbf{v}^{\varepsilon}\right)+  \left(Z^{\alpha}\partial_{t} f^{\varepsilon}_3,  Z^{\alpha}\partial_{t} \mathbf{B}^{\varepsilon} \right)
\]
and
\begin{align*}
  \widetilde{B} _{\alpha}\overset{def}{=}{}&\left\|\left[Z^{\alpha}, \partial_3\right]\partial_{t}\mathbf{v}^{\varepsilon}\right\|_0^2-\left( \left[Z^{\alpha}, \operatorname{div}\right]\partial_{t}\mathbf{v}^{\varepsilon},  Z^{\alpha}\partial_{t} a^{\varepsilon}\right)+  \left(\varepsilon\left[Z^{\alpha}, \partial_3^2\right]\partial_{t}\mathbf{B}^{\varepsilon},  Z^{\alpha}\partial_{t} \mathbf{B}^{\varepsilon} \right)\\
  &+\left(\mu\left[Z^{\alpha}, \Delta\right]\partial_{t}\mathbf{v}^{\varepsilon} + (\mu + \lambda)\left[Z^{\alpha}, \nabla\operatorname{div}\right]\partial_{t}\mathbf{v}^{\varepsilon} - \left[Z^{\alpha}, \nabla\right]\partial_{t}a^{\varepsilon},  Z^{\alpha}\partial_{t} \mathbf{v}^{\varepsilon}\right).
\end{align*}	
\end{lemm}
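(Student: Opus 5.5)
The plan mirrors the proof of Lemma \ref{GL1}, with $\partial_t$ applied first. Since the coefficients of \eqref{eq6} are constant and the boundary is flat and time independent, $\partial_t$ commutes with every linear operator in the system and with every $Z_k$. It also preserves the boundary conditions, so
\[
\left(\partial_t\mathbf{v}^{\varepsilon},\partial_3\partial_t\mathbf{B}^{\varepsilon}_h,\partial_tB_3^{\varepsilon}\right)\big|_{x_3=0}=\mathbf{0}.
\]
Thus $(\partial_t a^{\varepsilon},\partial_t\mathbf{v}^{\varepsilon},\partial_t\mathbf{B}^{\varepsilon})$ solves \eqref{eq6} with right-hand sides $\partial_t f^{\varepsilon}_i$ and the same boundary conditions. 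Next, apply $Z^{\alpha}$ with $|\alpha|\le m-2$. This gives the analogue of \eqref{G3}: every unknown is replaced by its time derivative, $Z^{\alpha}f^{\varepsilon}_i$ by $Z^{\alpha}\partial_t f^{\varepsilon}_i$, and the commutators $[Z^{\alpha},\operatorname{div}]$, $[Z^{\alpha},\Delta]$, $[Z^{\alpha},\nabla\operatorname{div}]$, $[Z^{\alpha},\nabla]$, $[Z^{\alpha},\partial_3^2]$ now act on $\partial_t\mathbf{v}^{\varepsilon}$, $\partial_t a^{\varepsilon}$, $\partial_t\mathbf{B}^{\varepsilon}$. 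Differentiating the continuity equation gives the analogue of \eqref{G4}:
\[
\partial_t Z^{\alpha}\Big(\frac{\mathrm{D}a^{\varepsilon}}{\mathrm{D}t}\Big)=-\partial_t Z^{\alpha}\left(a^{\varepsilon}\operatorname{div}\mathbf{v}^{\varepsilon}\right)-\partial_t Z^{\alpha}\operatorname{div}\mathbf{v}^{\varepsilon}.
\]
The restriction $|\alpha|\le m-2$ is only there so that all quantities appearing are controlled by $E^{\varepsilon}_m$ and $D^{\varepsilon}_m$.

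Then take the $L^2$ inner product of the differentiated system with $Z^{\alpha}\partial_t(a^{\varepsilon},\mathbf{v}^{\varepsilon},\mathbf{B}^{\varepsilon})$.
\begin{itemize}
\item Integration by parts is legitimate because $Z^{\alpha}\partial_t\mathbf{v}^{\varepsilon}$, $Z^{\alpha}\partial_tB_3^{\varepsilon}$ and $\partial_3 Z^{\alpha}\partial_t\mathbf{B}^{\varepsilon}_h$ vanish on $x_3=0$. This holds because $Z_1,Z_2$ are tangential and $Z_3=\varphi\partial_3$ carries the factor $\varphi(0)=0$.
\item For the Neumann part, note that $\partial_3 Z_3^{k}$ differs from $Z_3^{k}\partial_3$ by terms of the form $\varphi^{(j)}$ times lower-order conormal derivatives of $\partial_3$; the ones not carrying a $\varphi$ factor are of the form $c\,\partial_3$ applied to the function. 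All of these vanish at the boundary given $\partial_3\mathbf{B}_h=0$ there.
\item The pressure term cancels with the continuity term through $(\nabla Z^{\alpha}\partial_ta^{\varepsilon},Z^{\alpha}\partial_t\mathbf{v}^{\varepsilon})=-(Z^{\alpha}\partial_ta^{\varepsilon},\operatorname{div}Z^{\alpha}\partial_t\mathbf{v}^{\varepsilon})$.
\end{itemize}
This yields the dissipation $\mu\|\nabla Z^{\alpha}\partial_t\mathbf{v}^{\varepsilon}\|_0^2+(\mu+\lambda)\|\operatorname{div}Z^{\alpha}\partial_t\mathbf{v}^{\varepsilon}\|_0^2+\|\nabla_hZ^{\alpha}\partial_t\mathbf{B}^{\varepsilon}\|_0^2+\varepsilon\|\partial_3Z^{\alpha}\partial_t\mathbf{B}^{\varepsilon}\|_0^2$ on the left. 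The right side consists of $B_\alpha$ and the inner-product terms of $\widetilde B_\alpha$.

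To convert to the stated dissipation, I will use two facts:
\[
\|\partial_t Z^{\alpha}\partial\mathbf{v}^{\varepsilon}\|_0^2\lesssim\|\nabla Z^{\alpha}\partial_t\mathbf{v}^{\varepsilon}\|_0^2+\|[Z^{\alpha},\partial_3]\partial_t\mathbf{v}^{\varepsilon}\|_0^2,
\]
since $Z_1,Z_2$ commute with $\partial$. The same argument for $\partial_3 Z^{\alpha}\partial_t\mathbf{B}^{\varepsilon}$ is already of the right form, and $\partial_h$ commutes with $Z^{\alpha}$. Second, the differentiated \eqref{G4} gives $\|\partial_tZ^{\alpha}\frac{\mathrm{D}a^{\varepsilon}}{\mathrm{D}t}\|_0^2\lesssim\|\partial_tZ^{\alpha}(a^{\varepsilon}\operatorname{div}\mathbf{v}^{\varepsilon})\|_0^2+\|Z^{\alpha}\operatorname{div}\partial_t\mathbf{v}^{\varepsilon}\|_0^2$, and the last term is bounded by $\|\nabla Z^{\alpha}\partial_t\mathbf{v}^{\varepsilon}\|_0^2$ plus the same commutator. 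Adding a small multiple of this inequality to the energy identity and integrating in time gives \eqref{G5}.

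The only mildly delicate point is the boundary bookkeeping in the second step: checking that the Neumann condition for $Z^{\alpha}\partial_t\mathbf{B}^{\varepsilon}_h$ survives $Z_3$-differentiation, so that no boundary integral appears from $\varepsilon\partial_3^2$. This uses $\varphi(0)=0$ exactly as in Lemma \ref{GL1}. Everything else is a verbatim repetition of that proof.
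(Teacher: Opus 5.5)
Your proposal is correct and follows the argument the paper intends. The paper omits this proof, saying only that it is similar to Lemma \ref{GL1}, and you carry out that analogy: you differentiate \eqref{eq6} in time, apply $Z^{\alpha}$, pair with $Z^{\alpha}\partial_t(a^{\varepsilon},\mathbf{v}^{\varepsilon},\mathbf{B}^{\varepsilon})$, and absorb the material-derivative and $Z^{\alpha}\partial$-versus-$\partial Z^{\alpha}$ discrepancies through the commutator $[Z^{\alpha},\partial_3]\partial_t\mathbf{v}^{\varepsilon}$ in $\widetilde B_{\alpha}$. Your boundary check is also right: $\partial_3 Z_3^k f|_{x_3=0}=\varphi'(0)^k\partial_3 f|_{x_3=0}$, so the Neumann condition on $\partial_t\mathbf{B}^{\varepsilon}_h$ survives conormal differentiation.
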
	
\begin{proof}
The proof is similar as that of Lemma \ref{GL1} and is thus omitted.
\end{proof}
\subsection{Estimate of the first order normal derivative}
In this subsection, we will derive the estimate of the first order normal derivative of $\left(a^{\varepsilon}, \mathbf{v}^{\varepsilon}, \mathbf{B}^{\varepsilon}\right)$.
\subsubsection{Estimate of the first order normal derivative of $\mathbf{v}^{\varepsilon}$}
\begin{lemm}\label{GL3}Assume $\left(a^{\varepsilon},\mathbf{v}^{\varepsilon},\mathbf{B}^{\varepsilon}\right)$ is a solution to \eqref{eq6}, it holds that
\begin{equation}\label{G6}
\begin{aligned}
& \left\| \partial Z^{\alpha} \mathbf{v}^{\varepsilon}(t)\right\|_{0}^{2}+\left\| \partial_{h} Z^{\alpha} \mathbf{B}^{\varepsilon}(t)\right\|_{0}^{2}+\varepsilon\left\| \partial_3 Z^{\alpha} \mathbf{B}^{\varepsilon}(t)\right\|_{0}^{2}+\int_{0}^{t}  \left\|\partial_{\tau} Z^{\alpha} \left(a^{\varepsilon}, \mathbf{v}^{\varepsilon}, \mathbf{B}^{\varepsilon}\right)\right\|_{0}^{2} \mathrm{d}\tau \\
\lesssim{}&\left\|  Z^{\alpha}  a^{\varepsilon} (0)\right\|_{0}^{2}+\left\|  \partial Z^{\alpha}  \mathbf{v}^{\varepsilon} (0)\right\|_{0}^{2}+\left\| \partial_{h} Z^{\alpha} \mathbf{B}^{\varepsilon}(0)\right\|_{0}^{2}+\varepsilon\left\| \partial_3 Z^{\alpha} \mathbf{B}^{\varepsilon}(0)\right\|_{0}^{2}\\
&+ \left\|  Z^{\alpha}  a^{\varepsilon} (t)\right\|_{0}^{2}+\int_{0}^{t}  \left(\left\| \partial Z^{\alpha} \mathbf{v}^{\varepsilon}\right\|_{0}^{2}+\left|C _{\alpha}\right|+\left|\widetilde{C}_{\alpha}\right|\right) \mathrm{d}\tau
\end{aligned}
\end{equation}
for $|\alpha| \leq m-1$, where
\[
C _{\alpha}\overset{def}{=}\left(Z^{\alpha}f^{\varepsilon}_1, \partial _{t}Z^{\alpha}a^{\varepsilon} \right) +\left( Z^{\alpha}f^{\varepsilon}_2, \partial _{t}Z^{\alpha}\mathbf{v}^{\varepsilon}\right)+\left( Z^{\alpha}f^{\varepsilon}_3, \partial _{t}Z^{\alpha}\mathbf{B}^{\varepsilon}\right)
\]
and
\begin{align*}
  \widetilde{C} _{\alpha}\overset{def}{=}{}&-\left( \left[Z^{\alpha}, \operatorname{div}\right]\mathbf{v}^{\varepsilon},  \partial _{t}Z^{\alpha} a^{\varepsilon}\right)+\left(\varepsilon\left[Z^{\alpha}, \partial_3^2\right]\mathbf{B}^{\varepsilon} ,  \partial _{t}Z^{\alpha} \mathbf{B}^{\varepsilon}\right)\\
  &+\left(\mu\left[Z^{\alpha}, \Delta\right]\mathbf{v}^{\varepsilon} + (\mu + \lambda)\left[Z^{\alpha}, \nabla\operatorname{div}\right]\mathbf{v}^{\varepsilon} - \left[Z^{\alpha}, \nabla\right]a^{\varepsilon},  \partial _{t}Z^{\alpha} \mathbf{v}^{\varepsilon}\right).
\end{align*}
\end{lemm}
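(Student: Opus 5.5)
We test the differentiated system \eqref{G3} (valid for $|\alpha|\le m-1$) against the time derivatives $\partial_t Z^{\alpha}(a^{\varepsilon},\mathbf{v}^{\varepsilon},\mathbf{B}^{\varepsilon})$, rather than against $Z^{\alpha}(a^{\varepsilon},\mathbf{v}^{\varepsilon},\mathbf{B}^{\varepsilon})$ as in Lemma \ref{GL1}. Since $Z^{\alpha}\mathbf{v}^{\varepsilon}=0$ and $Z^{\alpha}B_3^{\varepsilon}=0$ on $x_3=0$, their time derivatives also vanish there. Moreover $\partial_3 Z^{\alpha}\mathbf{B}^{\varepsilon}_h=0$ on the boundary. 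Hence all boundary terms from integrating the viscous and resistive operators by parts drop out, and we get
\begin{equation*}
\begin{aligned}
&\left\|\partial_t Z^{\alpha}(a^{\varepsilon},\mathbf{v}^{\varepsilon},\mathbf{B}^{\varepsilon})\right\|_0^2+\frac12\frac{\mathrm{d}}{\mathrm{d}t}\Big(\mu\|\nabla Z^{\alpha}\mathbf{v}^{\varepsilon}\|_0^2+(\mu+\lambda)\|\operatorname{div}Z^{\alpha}\mathbf{v}^{\varepsilon}\|_0^2+\|\nabla_h Z^{\alpha}\mathbf{B}^{\varepsilon}\|_0^2+\varepsilon\|\partial_3Z^{\alpha}\mathbf{B}^{\varepsilon}\|_0^2\Big)\\
={}&-\left(\operatorname{div}Z^{\alpha}\mathbf{v}^{\varepsilon},\partial_tZ^{\alpha}a^{\varepsilon}\right)-\left(\nabla Z^{\alpha}a^{\varepsilon},\partial_tZ^{\alpha}\mathbf{v}^{\varepsilon}\right)+C_{\alpha}+\widetilde{C}_{\alpha}
\end{aligned}
\end{equation*}
Here we use that $\partial_tZ^{\alpha}a^{\varepsilon}$ is simply added to the left-hand side through its own equation. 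The quantity $C_{\alpha}+\widetilde C_{\alpha}$ collects exactly the forcing and commutator pairings displayed in the statement.

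The two coupling terms are the main point, because $\partial a^{\varepsilon}$ is not controlled at this level. For the first, Cauchy--Schwarz gives $\frac14\|\partial_tZ^{\alpha}a^{\varepsilon}\|_0^2+C\|\partial Z^{\alpha}\mathbf{v}^{\varepsilon}\|_0^2$; the first piece is absorbed and the second is the time-integrated term $\int_0^t\|\partial Z^\alpha\mathbf v^\varepsilon\|_0^2$ on the right of \eqref{G6}. For the second, we integrate by parts in space, with no boundary term since $Z^\alpha\mathbf v^\varepsilon|_{x_3=0}=0$:
\[
-\left(\nabla Z^{\alpha}a^{\varepsilon},\partial_tZ^{\alpha}\mathbf{v}^{\varepsilon}\right)=\left(Z^{\alpha}a^{\varepsilon},\partial_t\operatorname{div}Z^{\alpha}\mathbf{v}^{\varepsilon}\right)=\frac{\mathrm{d}}{\mathrm{d}t}\left(Z^{\alpha}a^{\varepsilon},\operatorname{div}Z^{\alpha}\mathbf{v}^{\varepsilon}\right)-\left(\partial_tZ^{\alpha}a^{\varepsilon},\operatorname{div}Z^{\alpha}\mathbf{v}^{\varepsilon}\right).
\]
The last piece is treated exactly like the first coupling term. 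The total-derivative piece is integrated in time, and at times $0$ and $t$ it is bounded by Young's inequality:
\[
\left|\left(Z^{\alpha}a^{\varepsilon},\operatorname{div}Z^{\alpha}\mathbf{v}^{\varepsilon}\right)(s)\right|\le \frac{\mu}{4}\|\nabla Z^\alpha\mathbf v^\varepsilon(s)\|_0^2+C\|Z^\alpha a^\varepsilon(s)\|_0^2,\qquad s\in\{0,t\}.
\]
At $s=t$ the velocity part is absorbed into the left-hand side. What remains is $\|Z^\alpha a^\varepsilon(t)\|_0^2$ and the initial quantities, which are precisely the terms appearing on the right of \eqref{G6}.

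Finally, $\|\nabla Z^\alpha\mathbf v^\varepsilon\|_0^2$ is the same as $\|\partial Z^\alpha\mathbf v^\varepsilon\|_0^2$, and $\|\nabla_hZ^\alpha\mathbf B^\varepsilon\|_0^2$ is the same as $\|\partial_hZ^\alpha\mathbf B^\varepsilon\|_0^2$. The $(\mu+\lambda)$ divergence term is nonnegative when $\mu+\lambda\ge0$. If only $2\mu+3\lambda\ge0$ is assumed, we instead use the identity $\mu\|\nabla w\|^2+(\mu+\lambda)\|\operatorname{div}w\|^2\gtrsim\|\nabla w\|^2$, valid for $w$ vanishing on the boundary since then $\|\operatorname{div} w\|_0\le\|\nabla w\|_0$. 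Integrating over $(0,t)$ and keeping $C_\alpha,\widetilde C_\alpha$ in absolute value then yields \eqref{G6}.

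The only genuinely delicate step is the time-integration by parts of the pressure coupling. It is what produces the $\|Z^\alpha a^\varepsilon(t)\|_0^2$ term in \eqref{G6}, and it is the step that requires the Dirichlet condition on $Z^\alpha\mathbf v^\varepsilon$.
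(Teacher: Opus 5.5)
Your proposal is correct and follows the paper's proof essentially verbatim: you test \eqref{G3} against $\partial_t Z^{\alpha}(a^{\varepsilon},\mathbf{v}^{\varepsilon},\mathbf{B}^{\varepsilon})$, rewrite the pressure coupling $(\nabla Z^{\alpha}a^{\varepsilon},\partial_tZ^{\alpha}\mathbf{v}^{\varepsilon})$ as $-\frac{\mathrm{d}}{\mathrm{d}t}(Z^{\alpha}a^{\varepsilon},\operatorname{div}Z^{\alpha}\mathbf{v}^{\varepsilon})+(\partial_tZ^{\alpha}a^{\varepsilon},\operatorname{div}Z^{\alpha}\mathbf{v}^{\varepsilon})$, integrate in time, and close with Young's inequality. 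Your caveat about the sign of $\mu+\lambda$ is unnecessary, since $\mu>0$ and $2\mu+3\lambda\ge0$ already give $\mu+\lambda\ge\mu/3>0$.
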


\begin{proof}
Taking the $L^{2}$ inner product of \eqref{G3} with $  \partial_{t}Z^{\alpha}\left( a^{\varepsilon}, \mathbf{v}^{\varepsilon}, \mathbf{B}^{\varepsilon}\right)$ and combining with
\[
 \left(\nabla  Z^{\alpha}  a^{\varepsilon} , \partial_{t}Z^{\alpha} \mathbf{v}^{\varepsilon}\right)=  -\frac{\mathrm{d}}{\mathrm{d}t} \left( Z^{\alpha}  a^{\varepsilon} , \operatorname{div}  Z^{\alpha} \mathbf{v}^{\varepsilon}\right) + \left(\partial_{t}Z^{\alpha}  a^{\varepsilon} , \operatorname{div}  Z^{\alpha} \mathbf{v}^{\varepsilon}\right),
\]
we can get
\begin{align*}
& \frac{1}{2} \frac{\mathrm{d}}{\mathrm{d}t} \left(\mu\left\| \nabla  Z^{\alpha} \mathbf{v}^{\varepsilon}\right\|_{0}^{2}+(\mu+\lambda)\left\| \operatorname{div}  Z^{\alpha} \mathbf{v}^{\varepsilon}\right\|_{0}^{2}-2 \left( Z^{\alpha}  a^{\varepsilon} , \operatorname{div}  Z^{\alpha} \mathbf{v}^{\varepsilon}\right)\right. \\
&\left.+\left\| \partial_{h} Z^{\alpha} \mathbf{B}^{\varepsilon}(t)\right\|_{0}^{2}+\varepsilon\left\| \partial_3 Z^{\alpha} \mathbf{B}^{\varepsilon}(t)\right\|_{0}^{2}\right) + \left\| \partial_{t}Z^{\alpha}  \left(a^{\varepsilon}, \mathbf{v}^{\varepsilon}, \mathbf{B}^{\varepsilon}\right)\right\|_{0}^{2}+2  \left(\partial_{t}Z^{\alpha}  a^{\varepsilon} , \operatorname{div}  Z^{\alpha} \mathbf{v}^{\varepsilon}\right)\\
={}&\left(Z^{\alpha}f^{\varepsilon}_1, \partial _{t}Z^{\alpha}a^{\varepsilon} \right) +\left( Z^{\alpha}f^{\varepsilon}_2, \partial _{t}Z^{\alpha}\mathbf{v}^{\varepsilon}\right)+\left( Z^{\alpha}f^{\varepsilon}_3, \partial _{t}Z^{\alpha}\mathbf{B}^{\varepsilon}\right)-\left( \left[Z^{\alpha}, \operatorname{div}\right]\mathbf{v}^{\varepsilon},  \partial _{t}Z^{\alpha} a^{\varepsilon}\right)\\
&+\left(\mu\left[Z^{\alpha}, \Delta\right]\mathbf{v}^{\varepsilon} + (\mu + \lambda)\left[Z^{\alpha}, \nabla\operatorname{div}\right]\mathbf{v}^{\varepsilon} - \left[Z^{\alpha}, \nabla\right]a^{\varepsilon},  \partial _{t}Z^{\alpha} \mathbf{v}^{\varepsilon}\right)+\left(\varepsilon\left[Z^{\alpha}, \partial_3^2\right]\mathbf{B}^{\varepsilon} ,  \partial _{t}Z^{\alpha} \mathbf{B}^{\varepsilon}\right).
\end{align*}
Integrating the above inequality with respect to time variable in $[0, t]$ and applying  Young's inequality, we obtain \eqref{G6}.
\end{proof}	
\subsubsection{Estimate of the first order normal derivative of $a^{\varepsilon}$}
\begin{lemm}\label{GL4}Assume $\left(a^{\varepsilon},\mathbf{v}^{\varepsilon},\mathbf{B}^{\varepsilon}\right)$ is a solution to \eqref{eq6}, it holds that
\begin{equation}\label{G7}
\begin{aligned}
& \left\|  Z^{\alpha}\partial_3  a^{\varepsilon} (t)\right\|_{0}^{2}+\int_{0}^{t}  \left\|  Z^{\alpha}\partial_3 \left(a^{\varepsilon}, \frac{\mathrm{D} a^{\varepsilon} }{\mathrm{D}\tau}\right)\right\|_{0}^{2} \mathrm{d}\tau \\
\lesssim{}&\left\|  Z^{\alpha}\partial_3  a^{\varepsilon} (0)\right\|_{0}^{2}
+\int_{0}^{t}  \left(\left\| \partial_{h}  Z^{\alpha}\partial\mathbf{v}^{\varepsilon}\right\|_{0}^{2}+\left\|  \partial_{\tau}Z^{\alpha} \mathbf{v}^{\varepsilon}\right\|_{0}^{2}\right) \mathrm{d}\tau \\
&+\int_{0}^{t}  \left(\left\|  Z^{\alpha}\partial_3\left( a^{\varepsilon}\operatorname{div}\mathbf{v}^{\varepsilon}\right)\right\|_{0}^{2}+\left\|  Z^{\alpha}f^{\varepsilon}_2\right\|_{0}^{2}+\left|D_{\alpha}\right|\right) \mathrm{d}\tau\\
\end{aligned}
\end{equation}
for $|\alpha| \leq m-1$, where
\[
D_{\alpha}\overset{def}{=} \left(  Z^{\alpha}\partial_3\left(\mathbf{v}^{\varepsilon}\cdot \nabla a^{\varepsilon} \right), Z^{\alpha}\partial_3 a^{\varepsilon} \right).
\]
\end{lemm}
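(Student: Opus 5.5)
The plan is the classical Matsumura--Nishida device for the normal derivative of the density: combine the normal component of the momentum equation with the transport equation for $a^{\varepsilon}$. The third component of the velocity equation in \eqref{eq6} reads
\[
\partial_t v_3^{\varepsilon} - \mu\Delta v_3^{\varepsilon} - (\mu+\lambda)\partial_3\operatorname{div}\mathbf{v}^{\varepsilon} + \partial_3 a^{\varepsilon} = (f_2^{\varepsilon})_3 .
\]
We write $\Delta v_3^{\varepsilon} = \partial_3^2 v_3^{\varepsilon} + \Delta_h v_3^{\varepsilon}$ and $\partial_3^2 v_3^{\varepsilon} = \partial_3\operatorname{div}\mathbf{v}^{\varepsilon} - \partial_3\operatorname{div}_h\mathbf{v}_h^{\varepsilon}$. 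This gives
\[
(2\mu+\lambda)\partial_3\operatorname{div}\mathbf{v}^{\varepsilon} = \partial_t v_3^{\varepsilon} - \mu\Delta_h v_3^{\varepsilon} + \mu\partial_3\operatorname{div}_h\mathbf{v}_h^{\varepsilon} + \partial_3 a^{\varepsilon} - (f_2^{\varepsilon})_3 .
\]
The density equation is $\frac{\mathrm{D} a^{\varepsilon}}{\mathrm{D}t} = -\operatorname{div}\mathbf{v}^{\varepsilon} - a^{\varepsilon}\operatorname{div}\mathbf{v}^{\varepsilon}$, so $\partial_3\operatorname{div}\mathbf{v}^{\varepsilon} = -\partial_3\frac{\mathrm{D} a^{\varepsilon}}{\mathrm{D}t} - \partial_3(a^{\varepsilon}\operatorname{div}\mathbf{v}^{\varepsilon})$. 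Substituting, we obtain the damped transport identity
\[
(2\mu+\lambda)\partial_3\frac{\mathrm{D} a^{\varepsilon}}{\mathrm{D}t} + \partial_3 a^{\varepsilon} = -\partial_t v_3^{\varepsilon} + \mu\Delta_h v_3^{\varepsilon} - \mu\partial_3\operatorname{div}_h\mathbf{v}_h^{\varepsilon} + (f_2^{\varepsilon})_3 - (2\mu+\lambda)\partial_3(a^{\varepsilon}\operatorname{div}\mathbf{v}^{\varepsilon}) .
\]
Note that every term on the right except the last two involves only horizontal derivatives of $\partial\mathbf{v}^{\varepsilon}$ or $\partial_t\mathbf{v}^{\varepsilon}$.

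Next we apply $Z^{\alpha}$ with $|\alpha|\le m-1$ and take the $L^2$ inner product with $Z^{\alpha}\partial_3 a^{\varepsilon}$. Since $\partial_3\frac{\mathrm{D} a^{\varepsilon}}{\mathrm{D}t} = \partial_t\partial_3 a^{\varepsilon} + \partial_3(\mathbf{v}^{\varepsilon}\cdot\nabla a^{\varepsilon})$, the left side produces
\[
\frac{2\mu+\lambda}{2}\frac{\mathrm{d}}{\mathrm{d}t}\left\|Z^{\alpha}\partial_3 a^{\varepsilon}\right\|_0^2 + \left\|Z^{\alpha}\partial_3 a^{\varepsilon}\right\|_0^2 + (2\mu+\lambda)D_{\alpha}.
\]
We place $D_{\alpha}$ on the right in absolute value. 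No boundary terms arise, because we only integrate in time and never by parts in $x_3$; the tangential operator $Z^{\alpha}$ also commutes with $\partial_t$. On the right, $\Delta_h$ and $\partial_3\operatorname{div}_h$ are of the form $\partial_h\partial\mathbf{v}^{\varepsilon}$, and $Z^{\alpha}\partial_h = \partial_h Z^{\alpha}$. Hence, by Cauchy--Schwarz and Young, those terms are bounded by
\[
\tfrac12\left\|Z^{\alpha}\partial_3 a^{\varepsilon}\right\|_0^2 + C\left(\left\|\partial_h Z^{\alpha}\partial\mathbf{v}^{\varepsilon}\right\|_0^2 + \left\|\partial_t Z^{\alpha}\mathbf{v}^{\varepsilon}\right\|_0^2 + \left\|Z^{\alpha}f_2^{\varepsilon}\right\|_0^2 + \left\|Z^{\alpha}\partial_3(a^{\varepsilon}\operatorname{div}\mathbf{v}^{\varepsilon})\right\|_0^2\right).
\]
The $\tfrac12$ term is absorbed on the left. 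Integrating over $[0,t]$ then gives the bound for $\|Z^{\alpha}\partial_3 a^{\varepsilon}(t)\|_0^2 + \int_0^t\|Z^{\alpha}\partial_3 a^{\varepsilon}\|_0^2\,\mathrm{d}\tau$.

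It remains to put the dissipation of $Z^{\alpha}\partial_3\frac{\mathrm{D} a^{\varepsilon}}{\mathrm{D}\tau}$ on the left. For this we use the identity itself: $(2\mu+\lambda)Z^{\alpha}\partial_3\frac{\mathrm{D} a^{\varepsilon}}{\mathrm{D}\tau}$ equals $-Z^{\alpha}\partial_3 a^{\varepsilon}$ plus the right-hand side terms. Squaring and integrating in time, its $L^2_tL^2_x$ norm is controlled by $\int_0^t\|Z^{\alpha}\partial_3 a^{\varepsilon}\|_0^2\,\mathrm{d}\tau$, which is already bounded, together with the same right-hand quantities. Since $2\mu+\lambda>0$ follows from $\mu>0$ and $2\mu+3\lambda\ge0$, we can divide by it, and \eqref{G7} follows.

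The only delicate point is bookkeeping. The commutator-free structure must hold: all mixed second derivatives of $\mathbf{v}^{\varepsilon}$ that appear must carry at least one horizontal derivative, so that they are dominated by $\|\partial_h Z^{\alpha}\partial\mathbf{v}^{\varepsilon}\|_0$. This is exactly what the elimination of $\partial_3^2 v_3^{\varepsilon}$ via $\operatorname{div}$ achieves. Estimating the convective term $D_{\alpha}$ itself is deferred, as the statement allows.
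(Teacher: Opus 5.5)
Your proposal is correct and follows essentially the same route as the paper: eliminate $\partial_3^2 v_3^{\varepsilon}$ between the normal momentum equation and the $\partial_3$-differentiated transport equation, apply $Z^{\alpha}$, and estimate. The only cosmetic difference is that the paper takes the squared $L^2$ norm of the resulting identity, so the cross term produces $\frac{\mathrm{d}}{\mathrm{d}t}\|Z^{\alpha}\partial_3 a^{\varepsilon}\|_0^2$ together with $D_{\alpha}$ and the two squares give both dissipation terms at once; you instead pair with $Z^{\alpha}\partial_3 a^{\varepsilon}$ and then recover $Z^{\alpha}\partial_3\frac{\mathrm{D} a^{\varepsilon}}{\mathrm{D}\tau}$ from the identity in a second step.
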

\begin{proof}
From the equation \eqref{eq6}, we have
\begin{align*}
\partial_{3}\left(\frac{\mathrm{D} a^{\varepsilon} }{\mathrm{D}t}\right)+  \partial_{3}^{2} v_{3}^{\varepsilon} & =-  \partial_{3} \operatorname{div}_h \mathbf{v}^{\varepsilon}_{h}-  \partial_{3}\left( a^{\varepsilon}\operatorname{div}\mathbf{v}^{\varepsilon}\right), \\
-(2\mu+\lambda) \partial_{3}^{2} v_{3}^{\varepsilon}+  \partial_{3}  a^{\varepsilon}  & =-\partial_{t} v^{\varepsilon}_{3}+\mu \Delta_h v_{3}^{\varepsilon}+(\mu+\lambda) \partial_{3} \operatorname{div}_h \mathbf{v}^{\varepsilon}_{h}+\left(f^{\varepsilon}_2\right)_{3}.
\end{align*}
Combining the above two equations to eliminate the $\partial_{3}^{2} v_{3}^{\varepsilon}$ term and applying  $Z^{\alpha}$ to both sides of the resulting identity, we obtain
\begin{align*}
& Z^{\alpha}\partial_3 \left(\frac{\mathrm{D} a^{\varepsilon} }{\mathrm{D}t}\right)  +\frac{1}{2\mu+\lambda} Z^{\alpha}\partial_3 a^{\varepsilon}  \\
={}&\frac{1}{2\mu+\lambda}\left(- Z^{\alpha}\partial_{t} v_{3}^{\varepsilon}+\mu \Delta_h Z^{\alpha}v_{3}^{\varepsilon}-\mu  Z^{\alpha}\partial_3\operatorname{div}_h\mathbf{v}^{\varepsilon}_{h}\right) \\
&-   Z^{\alpha}\partial_3\left( a^{\varepsilon}\operatorname{div}\mathbf{v}^{\varepsilon}\right)+\frac{1}{2\mu+\lambda} Z^{\alpha}\left(f^{\varepsilon}_2\right)_{3}.
\end{align*}
Taking the squared $L^2$-norm $\left(\|\cdot\|_0^2\right)$,  we obtain
\begin{align*}
& \frac{\mathrm{d}}{\mathrm{d}t} \left\|  Z^{\alpha}\partial_3  a^{\varepsilon} (t)\right\|_{0}^{2}+\left\|  Z^{\alpha}\partial_3 \left(a^{\varepsilon}, \frac{\mathrm{D} a^{\varepsilon} }{\mathrm{D}t}\right)\right\|_{0}^{2} \\
\lesssim{}&\left\| \partial_{h}  Z^{\alpha}\partial\mathbf{v}^{\varepsilon}\right\|_{0}^{2}+\left\|  \partial_{t}Z^{\alpha} \mathbf{v}^{\varepsilon}\right\|_{0}^{2}+\left\|  Z^{\alpha}\partial_3\left( a^{\varepsilon}\operatorname{div}\mathbf{v}^{\varepsilon}\right)\right\|_{0}^{2}+\left\|  Z^{\alpha} f^{\varepsilon}_2\right\|_{0}^{2}+\left|D_{\alpha}\right|.
\end{align*}
By integrating with respect to time, we obtain inequality \eqref{G7}.
\end{proof}

\subsubsection{Estimate of the first order normal derivative of $\mathbf{B}^{\varepsilon}$}
\begin{lemm}\label{GL5}Assume $\left(a^{\varepsilon},\mathbf{v}^{\varepsilon},\mathbf{B}^{\varepsilon}\right)$ is a solution to \eqref{eq6}, it holds that
\begin{equation}\label{G9}
\begin{aligned}
& \left\|  Z^{\alpha}\partial_3  \mathbf{B}^{\varepsilon}(t)\right\|_{0}^{2}+\int_{0}^{t}  \left(\left\| \partial_h Z^{\alpha}\partial_3 \mathbf{B}^{\varepsilon}\right\|_{0}^{2}+\varepsilon\left\| \partial_3 Z^{\alpha}\partial_3 \mathbf{B}^{\varepsilon}\right\|_{0}^{2}\right) \mathrm{d}\tau \\
\lesssim{}&\left\|  Z^{\alpha}\partial_3  \mathbf{B}^{\varepsilon}(0)\right\|_{0}^{2}+\int_{0}^{t}  \left(\left|E_{\alpha}\right|+\left|\widetilde{E}_{\alpha}\right|\right) \mathrm{d}\tau\\
\end{aligned}
\end{equation}
for $|\alpha| \leq m-1$, where
\[
E_{\alpha}\overset{def}{=}\left( Z^{\alpha} \partial_3 f^{\varepsilon}_3,  Z^{\alpha} \partial_3 \mathbf{B}^{\varepsilon}\right),
\]
and
\[
\widetilde{E}_{\alpha}\overset{def}{=}\left( \varepsilon\left[Z^{\alpha}, \partial_3^2\right]\partial_3 \mathbf{B}^{\varepsilon},  Z^{\alpha} \partial_3 \mathbf{B}^{\varepsilon}\right).
\]
\end{lemm}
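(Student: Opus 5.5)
The estimate follows from a direct energy identity for $\partial_3\mathbf{B}^{\varepsilon}$, set up in the same way as Lemma \ref{GL1} but with the magnetic equation differentiated once in $x_3$. I will apply $\partial_3$ to the third equation of \eqref{eq6}, which gives
\[
\partial_t \partial_3\mathbf{B}^{\varepsilon} - \left(\Delta_h+\varepsilon\partial_3^2\right)\partial_3\mathbf{B}^{\varepsilon} = \partial_3 f^{\varepsilon}_3 .
\]
Then I apply $Z^{\alpha}$ with $|\alpha|\le m-1$. Since $Z^{\alpha}$ commutes with $\partial_t$ and $\Delta_h$, we get
\[
\partial_t Z^{\alpha}\partial_3\mathbf{B}^{\varepsilon} - \left(\Delta_h+\varepsilon\partial_3^2\right)Z^{\alpha}\partial_3\mathbf{B}^{\varepsilon} = Z^{\alpha}\partial_3 f^{\varepsilon}_3+\varepsilon\left[Z^{\alpha}, \partial_3^2\right]\partial_3\mathbf{B}^{\varepsilon}.
\]
Taking the $L^2$ inner product with $Z^{\alpha}\partial_3\mathbf{B}^{\varepsilon}$, the horizontal Laplacian integrates by parts in $x_1,x_2$ with no boundary terms. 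This produces the dissipation $\|\partial_h Z^{\alpha}\partial_3\mathbf{B}^{\varepsilon}\|_0^2$.

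The main point is the vertical term $-\varepsilon(\partial_3^2 Z^{\alpha}\partial_3\mathbf{B}^{\varepsilon}, Z^{\alpha}\partial_3\mathbf{B}^{\varepsilon})$, where the boundary term at $x_3=0$ must vanish. I split by components.

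For $\mathbf{B}^{\varepsilon}_h$, the boundary condition \eqref{eq2} gives $\partial_3\mathbf{B}^{\varepsilon}_h|_{x_3=0}=0$.
\begin{itemize}
\item If $\alpha_3\ge1$, then $Z^{\alpha}\partial_3\mathbf{B}^{\varepsilon}_h$ vanishes on the boundary anyway, because $Z_3$ carries the factor $\varphi(x_3)$ with $\varphi(0)=0$.
\item If $\alpha_3=0$, then $Z^{\alpha}$ is purely tangential and the trace of $Z^{\alpha}\partial_3\mathbf{B}^{\varepsilon}_h$ vanishes by \eqref{eq2}.
\end{itemize}
In both cases the boundary integral $\varepsilon\int_{x_3=0}\partial_3 Z^{\alpha}\partial_3\mathbf{B}^{\varepsilon}_h\cdot Z^{\alpha}\partial_3\mathbf{B}^{\varepsilon}_h\,\mathrm{d}x_h$ is zero.

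For $B^{\varepsilon}_3$, I use the divergence-free condition to write $\partial_3 B^{\varepsilon}_3=-\operatorname{div}_h\mathbf{B}^{\varepsilon}_h$. Then $\partial_3^2 B^{\varepsilon}_3=-\operatorname{div}_h\partial_3\mathbf{B}^{\varepsilon}_h$ vanishes on $x_3=0$, again by \eqref{eq2}. Hence the Neumann-type trace $\partial_3 Z^{\alpha}\partial_3 B^{\varepsilon}_3|_{x_3=0}=0$:
\begin{itemize}
\item for $\alpha_3=0$ this is immediate;
\item for $\alpha_3\ge1$, I expand $\partial_3 Z^{\alpha}$ using $\partial_3 Z_3=\varphi'\partial_3+\varphi\partial_3^2$ and check that every surviving boundary contribution either carries a factor of $\varphi(0)=0$ or reduces to a tangential derivative of $\partial_3^2 B^{\varepsilon}_3|_{x_3=0}=0$.
\end{itemize}
This boundary bookkeeping is the step I expect to be the main obstacle. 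If the $\alpha_3\ge1$ case proves delicate, my fallback is this: first move $\partial_3$ past $Z^{\alpha}$, absorbing the difference into the commutator $[Z^{\alpha},\partial_3^2]$ already recorded in $\widetilde{E}_{\alpha}$, and then integrate by parts $\partial_3 \cdot Z^{\alpha}\partial_3\mathbf{B}^{\varepsilon}$ using only the trace facts above.

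With the boundary term gone, the vertical term gives $\varepsilon\|\partial_3 Z^{\alpha}\partial_3\mathbf{B}^{\varepsilon}\|_0^2$. The resulting identity is
\[
\frac12\frac{\mathrm d}{\mathrm dt}\|Z^{\alpha}\partial_3\mathbf{B}^{\varepsilon}\|_0^2+\|\partial_h Z^{\alpha}\partial_3\mathbf{B}^{\varepsilon}\|_0^2+\varepsilon\|\partial_3Z^{\alpha}\partial_3\mathbf{B}^{\varepsilon}\|_0^2 = E_{\alpha}+\widetilde{E}_{\alpha}.
\]
Integrating over $[0,t]$ and bounding the right-hand side by absolute values gives \eqref{G9}. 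Consistent with the form of the statement, no estimation of $E_{\alpha}$ or $\widetilde{E}_{\alpha}$ is needed at this stage; they are kept as remainders.
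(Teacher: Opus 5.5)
Your proposal is correct and follows essentially the same route as the paper: apply $Z^{\alpha}\partial_3$ to the magnetic equation as in \eqref{G10}, test with $Z^{\alpha}\partial_3\mathbf{B}^{\varepsilon}$, drop the boundary terms using the traces listed in \eqref{G10}, and integrate in time. Your component-wise check supplies the justification of those traces that the paper leaves implicit. For $B_3^{\varepsilon}$ with $\alpha_3\ge 1$, the expansion of $\partial_3 Z^{\alpha}$ is unnecessary: $Z^{\alpha}\partial_3 B_3^{\varepsilon}$ itself vanishes at $x_3=0$ because of the factor $\varphi(0)=0$, exactly as you observed for the horizontal components.
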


\begin{proof}
Applying $Z^{\alpha}\partial_3$ to the magnetic equation in \eqref{eq6}, we get
\begin{equation}\label{G10}
\left\{
\begin{array}{*{5}{ll}}
\partial_{t} Z^{\alpha} \partial_3 \mathbf{B}^{\varepsilon} - \Delta_h Z^{\alpha} \partial_3 \mathbf{B}^{\varepsilon} - \varepsilon\partial_3^2 Z^{\alpha} \partial_3 \mathbf{B}^{\varepsilon} = Z^{\alpha} \partial_3 f^{\varepsilon}_3+\varepsilon\left[Z^{\alpha}, \partial_3^2\right]\partial_3 \mathbf{B}^{\varepsilon} \quad & {\rm in} ~~\mathbb{R}_+^3,\\
\left(Z^{\alpha}\partial_3 \mathbf{B}^{\varepsilon}_{h},\partial_3 Z^{\alpha}\partial_3 B^{\varepsilon}_{3}\right)=\mathbf{0} \quad & {\rm on} ~~ \mathbb{R}^2 \times \left\{ x_3=0 \right\}.
\end{array}
\right.
\end{equation}
Let $|\alpha| \leq m-1$. Taking the $L^{2}$ inner product of \eqref{G10} with $   Z^{\alpha} \partial_3 \mathbf{B}^{\varepsilon}$, we arrive at
\begin{align*}
\frac{1}{2}\frac{\mathrm{d}}{\mathrm{d}t}\left\| Z^{\alpha} \partial_3 \mathbf{B}^{\varepsilon}(t)\right\|_{0}^{2}
+ \left\| \nabla_h Z^{\alpha} \partial_3\mathbf{B}^{\varepsilon}\right\|_0^2
+ \varepsilon\left\| \partial_3 Z^{\alpha} \partial_3\mathbf{B}^{\varepsilon}\right\|_0^2
\\
= \bigl( Z^{\alpha} \partial_3 f^{\varepsilon}_3, Z^{\alpha} \partial_3 \mathbf{B}^{\varepsilon}\bigr)
+ \bigl( \varepsilon\left[Z^{\alpha}, \partial_3^2\right]\partial_3 \mathbf{B}^{\varepsilon}, Z^{\alpha} \partial_3 \mathbf{B}^{\varepsilon}\bigr).
\end{align*}
By integrating it with respect to time variable in $[0, t]$, we obtain inequality \eqref{G9}.
\end{proof}
\subsection{Estimate of  dissipation of $\left( a^{\varepsilon}, \mathbf{v}^{\varepsilon}\right)$}
\begin{lemm}\label{GL6}Assume $\left(a^{\varepsilon},\mathbf{v}^{\varepsilon},\mathbf{B}^{\varepsilon}\right)$ is a solution to \eqref{eq6}, it holds that
\begin{equation}\label{G11}
\begin{aligned}
&\int_{0}^{t}  \left(\left\| \partial Z^{\alpha}  a^{\varepsilon} \right\|_{0}^{2}+\left\| \partial^{2} Z^{\alpha} \mathbf{v}^{\varepsilon}\right\|_{0}^{2}\right) \mathrm{d}\tau \\
\lesssim{}& \int_{0}^{t}  \Bigg(\left\|  \partial  Z^{\alpha}\left(\frac{\mathrm{D} a^{\varepsilon} }{\mathrm{D}\tau}\right)\right\|_{0}^{2}+\left\|  Z^{\alpha} \partial \left(\frac{\mathrm{D} a^{\varepsilon} }{\mathrm{D}\tau}\right)\right\|_{0}^{2}+\left\|\partial_{\tau} Z^{\alpha}  \mathbf{v}^{\varepsilon}\right\|_{0}^{2} \\
&+\left\| Z^{\alpha} \partial\left( a^{\varepsilon}\operatorname{div}\mathbf{v}^{\varepsilon}\right)\right\|_{0}^{2}+\left\| \partial Z^{\alpha}\left( a^{\varepsilon}\operatorname{div}\mathbf{v}^{\varepsilon}\right)\right\|_{0}^{2}+\left\|  Z^{\alpha} f^{\varepsilon}_2\right\|_{0}^{2}+\left|\widetilde{F}_{\alpha}\right|\Bigg) \mathrm{d}\tau
\end{aligned}
\end{equation}
for $|\alpha| \leq m-1$, where
\[
\widetilde{F}_{\alpha}\overset{def}{=}  \left\| \left[Z^{\alpha}, \Delta\right]\mathbf{v}^{\varepsilon}\right\|_{0}^{2}+  \left\|\left[Z^{\alpha}, \nabla\right]a^{\varepsilon}\right\|_{0}^{2}+\left\|\partial\left[Z^{\alpha}, \operatorname{div}\right]\mathbf{v}^{\varepsilon}\right\|_{0}^{2}.
\]
\end{lemm}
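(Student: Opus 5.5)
The plan is to apply the Stokes elliptic regularity in the half space to $Z^{\alpha}$ of the momentum equation, with the continuity equation supplying the divergence. First rewrite the continuity equation as
\[
\operatorname{div}\mathbf{v}^{\varepsilon}=-\frac{\mathrm{D} a^{\varepsilon}}{\mathrm{D}t}-a^{\varepsilon}\operatorname{div}\mathbf{v}^{\varepsilon}.
\]
Applying $Z^{\alpha}$ to this and to the momentum equation in \eqref{eq6} gives, for $|\alpha|\le m-1$, the stationary Stokes-type system
\begin{equation*}
\left\{
\begin{aligned}
&-\mu\Delta Z^{\alpha}\mathbf{v}^{\varepsilon}+\nabla Z^{\alpha}a^{\varepsilon}
= -\partial_t Z^{\alpha}\mathbf{v}^{\varepsilon}+(\mu+\lambda)Z^{\alpha}\nabla\operatorname{div}\mathbf{v}^{\varepsilon}+Z^{\alpha}f_2^{\varepsilon}
+\mu\left[Z^{\alpha},\Delta\right]\mathbf{v}^{\varepsilon}-\left[Z^{\alpha},\nabla\right]a^{\varepsilon},\\
&\operatorname{div}Z^{\alpha}\mathbf{v}^{\varepsilon}
= -Z^{\alpha}\Big(\frac{\mathrm{D} a^{\varepsilon}}{\mathrm{D}t}\Big)-Z^{\alpha}\big(a^{\varepsilon}\operatorname{div}\mathbf{v}^{\varepsilon}\big)-\left[Z^{\alpha},\operatorname{div}\right]\mathbf{v}^{\varepsilon},
\end{aligned}
\right.
\end{equation*}
with $Z^{\alpha}\mathbf{v}^{\varepsilon}=0$ on $x_3=0$. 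The boundary condition holds because every $Z_k$ is tangential, so $Z^{\alpha}\mathbf{v}^{\varepsilon}$ still vanishes on $\{x_3=0\}$.

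The term $(\mu+\lambda)Z^{\alpha}\nabla\operatorname{div}\mathbf{v}^{\varepsilon}$ is moved to the right-hand side. Its $L^2$ norm is controlled by
\[
\big\|\partial Z^{\alpha}\operatorname{div}\mathbf{v}^{\varepsilon}\big\|_0+\big\|\left[Z^{\alpha},\nabla\right]\operatorname{div}\mathbf{v}^{\varepsilon}\big\|_0 .
\]
Substituting the divergence identity above, the first piece is bounded by
\[
\Big\|\partial Z^{\alpha}\frac{\mathrm{D}a^{\varepsilon}}{\mathrm{D}t}\Big\|_0
+\big\|\partial Z^{\alpha}(a^{\varepsilon}\operatorname{div}\mathbf{v}^{\varepsilon})\big\|_0
+\big\|\partial\left[Z^{\alpha},\operatorname{div}\right]\mathbf{v}^{\varepsilon}\big\|_0 .
\]
These are exactly terms on the right of \eqref{G11}. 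The commutator piece $[Z^{\alpha},\nabla]\operatorname{div}\mathbf{v}^{\varepsilon}$ is handled in the same way after commuting once more; this is where the paired terms $Z^{\alpha}\partial(\cdot)$ and $\partial Z^{\alpha}(\cdot)$ in \eqref{G11} come from.

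The classical Cattabriga/Stokes estimate in $\mathbb{R}_+^3$ with Dirichlet data then gives
\[
\|\partial^2 Z^{\alpha}\mathbf{v}^{\varepsilon}\|_0+\|\partial Z^{\alpha}a^{\varepsilon}\|_0\lesssim \|\text{RHS}_1\|_0+\|\partial\,\text{RHS}_2\|_0 ,
\]
which is applicable since the data are at the level $L^2\times H^1$ and $Z^{\alpha}a^{\varepsilon}$ is determined only up to constants (harmless in the whole half space for $L^2$ functions). Squaring and integrating over $[0,t]$ yields \eqref{G11}. The three commutators $[Z^{\alpha},\Delta]\mathbf{v}^{\varepsilon}$, $[Z^{\alpha},\nabla]a^{\varepsilon}$ and $\partial[Z^{\alpha},\operatorname{div}]\mathbf{v}^{\varepsilon}$ are collected in $\widetilde F_{\alpha}$.

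The main obstacle is making sure that nothing with uncontrolled normal derivatives remains outside $\widetilde F_{\alpha}$, because $Z_3=\varphi\partial_3$ does not commute with $\partial_3$. For the commutator of $Z^{\alpha}$ with $\nabla\operatorname{div}$, the intended route is the decomposition $[Z^{\alpha},\nabla\operatorname{div}]=[Z^{\alpha},\nabla]\operatorname{div}+\nabla[Z^{\alpha},\operatorname{div}]$. The first summand would be reduced by the divergence identity to the $\mathrm{D}a^{\varepsilon}/\mathrm{D}t$ and $a^{\varepsilon}\operatorname{div}\mathbf{v}^{\varepsilon}$ terms. This uses that $[Z_3,\partial_3]=-\varphi'\partial_3$ is again a first-order operator with bounded coefficients, so these commutators redistribute into lower-order conormal derivatives of the same quantities. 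The second summand is already in $\widetilde F_{\alpha}$. No smallness is needed at this stage; absorption happens later when the lemma is combined with the others.
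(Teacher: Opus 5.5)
Your proposal is correct and is essentially the paper's proof: apply $Z^{\alpha}$, use $\operatorname{div}\mathbf{v}^{\varepsilon}=-\frac{\mathrm{D}a^{\varepsilon}}{\mathrm{D}t}-a^{\varepsilon}\operatorname{div}\mathbf{v}^{\varepsilon}$ for the divergence equation, invoke the half-space Dirichlet Stokes estimate with $\ell=0$, and integrate in time. The one simplification in the paper is that it substitutes this identity into $(\mu+\lambda)\nabla\operatorname{div}\mathbf{v}^{\varepsilon}$ \emph{before} applying $Z^{\alpha}$. The resulting term $-(\mu+\lambda)Z^{\alpha}\nabla\bigl(\frac{\mathrm{D}a^{\varepsilon}}{\mathrm{D}t}+a^{\varepsilon}\operatorname{div}\mathbf{v}^{\varepsilon}\bigr)$ is then bounded directly by the $Z^{\alpha}\partial(\cdot)$ terms, the $\partial Z^{\alpha}(\cdot)$ terms come from the divergence equation, and no commutator $[Z^{\alpha},\nabla]\operatorname{div}\mathbf{v}^{\varepsilon}$ ever appears. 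If you do keep that commutator, close it via $[Z^{\alpha},\nabla]g=Z^{\alpha}\nabla g-\nabla Z^{\alpha}g$ rather than by expanding it into lower-order conormal pieces as your last paragraph suggests. That expansion would leave terms with $|\beta|<|\alpha|$ that are not on the right of \eqref{G11} for fixed $\alpha$.
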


\begin{proof}
To obtain \eqref{G11}, we use the following estimate for the Stokes system. Let $\left( a^{\varepsilon} , \mathbf{v}^{\varepsilon}\right)$ be the solution of the Stokes system
\begin{equation*}
\left\{
\begin{array}{*{5}{ll}}
-\mu \Delta \mathbf{v}^{\varepsilon} + \nabla a^{\varepsilon} = f \quad & \text{in } \mathbb{R}_+^3,\\[4pt]
\operatorname{div}\mathbf{v}^{\varepsilon} = g \quad & \text{in } \mathbb{R}_+^3,\\[4pt]
\mathbf{v}^{\varepsilon}= \mathbf{0} \quad & \text{on } \mathbb{R}^2\times\left\{x_3=0\right\}.
\end{array}
\right.
\end{equation*}
Then for any $\ell\in \mathbb{Z}$, $\ell\ge 0$, there holds
\[
\left\| \partial^{\ell+2} \mathbf{v}^{\varepsilon}\right\|_{0}+\left\| \partial^{\ell+1}  a^{\varepsilon} \right\|_{0} \lesssim\left\| \partial^{\ell} f\right\|_{0}+\left\| \partial^{\ell+1} g\right\|_{0},
\]
whose proof can be found in many references (See, e.g. \cite{4}).

It follows from \eqref{eq6} that
\[
\left\{
\begin{array}{ll}
-\mu \Delta Z^{\alpha}\mathbf{v}^{\varepsilon} + \nabla Z^{\alpha}a^{\varepsilon}
= -\partial_{t} Z^{\alpha}\mathbf{v}^{\varepsilon}
-(\mu+\lambda) Z^{\alpha}\nabla\left(a^{\varepsilon}\operatorname{div}\mathbf{v}^{\varepsilon}+\dfrac{\mathrm{D} a^{\varepsilon}}{\mathrm{D}t}\right)
+ Z^{\alpha}f^{\varepsilon}_2 \\
 + \mu \left[Z^{\alpha}, \Delta\right]\mathbf{v}^{\varepsilon} - \left[Z^{\alpha}, \nabla\right]a^{\varepsilon}
& \text{in } \mathbb{R}_+^3,\\[6pt]
\operatorname{div} Z^{\alpha}\mathbf{v}^{\varepsilon}
= - Z^{\alpha}\left(a^{\varepsilon}\operatorname{div}\mathbf{v}^{\varepsilon}+\dfrac{\mathrm{D} a^{\varepsilon}}{\mathrm{D}t}\right)
- \left[Z^{\alpha}, \operatorname{div}\right]\mathbf{v}^{\varepsilon}
& \text{in } \mathbb{R}_+^3,\\[6pt]
Z^{\alpha}\mathbf{v}^{\varepsilon} = \mathbf{0}
& \text{on } \mathbb{R}^2\times\left\{x_3=0\right\}.
\end{array}
\right.
\]
Thanks to the above estimate for the Stokes system,  we obtain the following inequality
\begin{align*}
& \left\| \partial Z^{\alpha}  a^{\varepsilon} \right\|_{0}^{2}+\left\| \partial^{2} Z^{\alpha} \mathbf{v}^{\varepsilon}\right\|_{0}^{2}\\
\lesssim{}& \left\|  \partial  Z^{\alpha}\left(\dfrac{\mathrm{D} a^{\varepsilon} }{\mathrm{D}t}\right)\right\|_{0}^{2}+\left\|  Z^{\alpha} \partial \left(\frac{\mathrm{D} a^{\varepsilon} }{\mathrm{D}t}\right)\right\|_{0}^{2}+\left\|\partial_{t} Z^{\alpha}  \mathbf{v}^{\varepsilon}\right\|_{0}^{2} \\
&+\left\| Z^{\alpha} \partial\left( a^{\varepsilon}\operatorname{div}\mathbf{v}^{\varepsilon}\right)\right\|_{0}^{2}+\left\| \partial Z^{\alpha}\left( a^{\varepsilon}\operatorname{div}\mathbf{v}^{\varepsilon}\right)\right\|_{0}^{2}+\left\|  Z^{\alpha} f^{\varepsilon}_2\right\|_{0}^{2}\\
&+\left\| \left[Z^{\alpha}, \Delta\right]\mathbf{v}^{\varepsilon}\right\|_{0}^{2}+  \left\|\left[Z^{\alpha}, \nabla\right]a^{\varepsilon}\right\|_{0}^{2}+\left\|\partial\left[Z^{\alpha}, \operatorname{div}\right]\mathbf{v}^{\varepsilon}\right\|_{0}^{2}.
\end{align*}
Then by integrating it with respect to time variable in $[0, t]$, the proof of inequality \eqref{G11} is done.
\end{proof}
\subsection{Estimate of the second order normal derivative of $\left(\mathbf{v}^{\varepsilon},\varepsilon\mathbf{B}^{\varepsilon}\right)$}
\begin{lemm}\label{GL7}Assume $\left(a^{\varepsilon},\mathbf{v}^{\varepsilon},\mathbf{B}^{\varepsilon}\right)$ is a solution to \eqref{eq6}, it holds that
\begin{equation}\label{G13}
\begin{aligned}
 \left\| Z^{\alpha}\partial_3^2  \left(\mathbf{v}^{\varepsilon},\varepsilon\mathbf{B}^{\varepsilon}\right)(t)\right\|_{0}^{2}
\lesssim{}&  \left\|  Z^{\alpha}\partial a^{\varepsilon} (t)\right\|_{0}^{2}+\left\|  Z^{\alpha} \partial_{h}\partial \mathbf{v}^{\varepsilon}(t)\right\|_{0}^{2}+\left\|  Z^{\alpha} \partial_{h}^2 \mathbf{B}^{\varepsilon}(t)\right\|_{0}^{2}\\
&+\left\|  Z^{\alpha} \partial_{t} \left(\mathbf{v}^{\varepsilon},\mathbf{B}^{\varepsilon}\right)(t)\right\|_{0}^{2}
+\left\| Z^{\alpha} \left(f^{\varepsilon}_2,f^{\varepsilon}_3\right)(t)\right\|_{0}^{2}
\end{aligned}
\end{equation}
for $|\alpha| \leq m-2$.
\end{lemm}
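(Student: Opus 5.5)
We read off \(\partial_3^2\) directly from the equations in \eqref{eq6}, solving them algebraically (no energy method), then apply \(Z^{\alpha}\). The estimate is pointwise in time, so no time integration is needed.

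\emph{Magnetic field.} The equation for \(\mathbf{B}^{\varepsilon}\) in \eqref{eq6} gives
\[
\varepsilon\partial_3^2\mathbf{B}^{\varepsilon}=\partial_t\mathbf{B}^{\varepsilon}-\Delta_h\mathbf{B}^{\varepsilon}-f^{\varepsilon}_3 .
\]
Since \(\partial_3^2\) does not commute with \(Z_3\), we apply \(Z^{\alpha}\) to the pointwise identity itself rather than writing \(\partial_3^2 Z^{\alpha}\). This yields
\[
\bigl\|Z^{\alpha}\partial_3^2(\varepsilon\mathbf{B}^{\varepsilon})\bigr\|_0\lesssim \bigl\|Z^{\alpha}\partial_t\mathbf{B}^{\varepsilon}\bigr\|_0+\bigl\|Z^{\alpha}\partial_h^2\mathbf{B}^{\varepsilon}\bigr\|_0+\bigl\|Z^{\alpha}f^{\varepsilon}_3\bigr\|_0 ,
\]
because \(\partial_h\) commutes with every \(Z_k\).

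\emph{Velocity.} The momentum equation gives
\[
\mu\partial_3^2\mathbf{v}^{\varepsilon}+(\mu+\lambda)\partial_3(\operatorname{div}\mathbf{v}^{\varepsilon})\,\mathbf{e}_3=\partial_t\mathbf{v}^{\varepsilon}-\mu\Delta_h\mathbf{v}^{\varepsilon}-(\mu+\lambda)\nabla_h\operatorname{div}\mathbf{v}^{\varepsilon}+\nabla a^{\varepsilon}-f^{\varepsilon}_2 ,
\]
where the \(\nabla_h\) term occupies the first two components only.
\begin{itemize}
\item For the horizontal components \(j=1,2\), the \(\partial_3(\operatorname{div}\mathbf{v}^{\varepsilon})\) term is absent. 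Hence \(\mu\partial_3^2 v^{\varepsilon}_j\) is a combination of \(\partial_t v_j^{\varepsilon}\), \(\partial_h\partial\mathbf{v}^{\varepsilon}\), \(\partial_j a^{\varepsilon}\) and \((f^{\varepsilon}_2)_j\).
\item For \(j=3\), we write \(\partial_3\operatorname{div}\mathbf{v}^{\varepsilon}=\partial_3^2v_3^{\varepsilon}+\partial_3\operatorname{div}_h\mathbf{v}_h^{\varepsilon}\). This gives
\[
(2\mu+\lambda)\partial_3^2v_3^{\varepsilon}=\partial_tv_3^{\varepsilon}-\mu\Delta_hv_3^{\varepsilon}-(\mu+\lambda)\partial_3\operatorname{div}_h\mathbf{v}_h^{\varepsilon}+\partial_3a^{\varepsilon}-(f^{\varepsilon}_2)_3 ,
\]
which is the same identity already used in the proof of Lemma \ref{GL4}. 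Here \(2\mu+\lambda>0\) follows from \(\mu>0\) and \(2\mu+3\lambda\ge0\).
\end{itemize}
Every term on the right-hand side is of the form \(\partial_t\mathbf{v}^{\varepsilon}\), \(\partial_h\partial\mathbf{v}^{\varepsilon}\), \(\partial a^{\varepsilon}\) or \(f^{\varepsilon}_2\). Applying \(Z^{\alpha}\) to these pointwise identities and taking \(L^2\) norms gives exactly the terms in \eqref{G13}.

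\emph{Main obstacle.} The only delicate point is the ordering of operators. The norm in \eqref{G13} is \(\|Z^{\alpha}\partial_3^2\cdot\|_0\), with \(Z^{\alpha}\) outside the derivatives, and the right-hand side likewise has \(Z^{\alpha}\partial a^{\varepsilon}\), \(Z^{\alpha}\partial_h\partial\mathbf{v}^{\varepsilon}\), and so on. Since the identities above hold pointwise, applying \(Z^{\alpha}\) to both sides produces precisely these quantities and no commutators \([Z^{\alpha},\partial_3]\) appear. One must therefore avoid commuting \(Z_3\) past \(\partial_3\) anywhere in the argument.

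The restriction \(|\alpha|\le m-2\) is needed only so that every term is finite under the a priori assumption \eqref{G1}. For instance, \(Z^{\alpha}\partial_t\) requires \(\|\partial_t\cdot\|_{m-2}\), which is controlled by \(E_m^{\varepsilon}\). With this observation the lemma follows by the triangle inequality and squaring.
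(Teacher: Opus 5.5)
Your proposal is correct and is essentially the paper's own proof. The paper likewise applies $Z^{\alpha}$ to the horizontal momentum equations, to the vertical momentum equation (using the same $(2\mu+\lambda)\partial_3^2 v_3^{\varepsilon}$ identity from Lemma \ref{GL4}) and to the magnetic equation, solves algebraically for $Z^{\alpha}\partial_3^2$ of $\mathbf{v}^{\varepsilon}$ and $\varepsilon\mathbf{B}^{\varepsilon}$, and takes $L^2$ norms, so, as you note, no commutator $[Z^{\alpha},\partial_3]$ ever appears.
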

\begin{proof}
Applying $Z^{\alpha}$ to the equations of velocity and magnetic field in \eqref{eq6}, we get
\begin{align*}
-\mu  Z^{\alpha}\partial_3^2 \mathbf{v}^{\varepsilon}_{h}={}& - Z^{\alpha}\partial_{t}\mathbf{v}^{\varepsilon}_{h}+\mu  Z^{\alpha}\Delta_h\mathbf{v}^{\varepsilon}_{h}+(\mu+\lambda)  Z^{\alpha}\nabla_h \operatorname{div}\mathbf{v}^{\varepsilon}\\
& - Z^{\alpha}\nabla_h a^{\varepsilon} +  Z^{\alpha}\left(f^{\varepsilon}_2\right)_h,\\
-(2\mu+\lambda) Z^{\alpha}\partial_3^2 v_{3}^{\varepsilon}={}& -Z^{\alpha}\partial_{t}v_{3}^{\varepsilon}+\mu Z^{\alpha}\Delta_h v_{3}^{\varepsilon}+(\mu+\lambda)  Z^{\alpha}\partial_{3}\operatorname{div}_h\mathbf{v}^{\varepsilon}_{h} \\
& -  Z^{\alpha}\partial_{3} a^{\varepsilon} +Z^{\alpha}\left(f^{\varepsilon}_2\right)_{3},\\
-\varepsilon  Z^{\alpha}\partial_3^2 \mathbf{B}^{\varepsilon}={}& - Z^{\alpha}\partial_{t}\mathbf{B}^{\varepsilon}+Z^{\alpha}\Delta_h\mathbf{B}^{\varepsilon}+  Z^{\alpha}f^{\varepsilon}_3.
\end{align*}
Taking the $L^2$ norm on both sides of the above three identities, then we can obtain \eqref{G13}.
\end{proof}
\subsection{Synthesis}
We now chain all the estimates derived previously to conclude
the following lemma. To this end, for every  integer  $m\ge 2$, we define
\begin{equation}\label{G14}
L_{m}(t)\overset{def}{=}\left(\sum_{|\alpha|\leq m} \int_{0}^{t}  \left|\widetilde{A} _{\alpha}\right|  \mathrm{d} \tau+\sum_{|\alpha|\leq m-2}\int_{0}^{t}  \left|\widetilde{B} _{\alpha}\right|  \mathrm{d} \tau+ \sum_{|\alpha|\leq m-1} \int_{0}^{t}   \left(\left|\widetilde{C} _{\alpha}\right|
+\left|\widetilde{E} _{\alpha}\right|+\left|\widetilde{F} _{\alpha}\right|\right)   \mathrm{d} \tau\right)^{\frac{1}{2}}
\end{equation}
and
\begin{equation}\label{G15}
\begin{aligned}
N_{m}(t)\overset{\mathrm{def}}{=}{}
&\Bigg(\left\|\left(f^{\varepsilon}_2,f^{\varepsilon}_3\right)\right\|_{m-2}^{2}
+\int_{0}^{t}\Big(\left\|a^{\varepsilon}\operatorname{div}\mathbf{v}^{\varepsilon}\right\|_{m}^{2}
+\left\|\partial_{3}(a^{\varepsilon}\operatorname{div}\mathbf{v}^{\varepsilon})\right\|_{m-1}^{2}\Big)\mathrm{d}\tau\\
&+\int_{0}^{t}\Big(\left\|\partial_{\tau}(a^{\varepsilon}\operatorname{div}\mathbf{v}^{\varepsilon})\right\|_{m-2}^{2}
+\left\|f^{\varepsilon}_2\right\|_{m-1}^{2}\Big)\mathrm{d}\tau
+\sum_{|\alpha|\le m}\int_{0}^{t}\left|A_{\alpha}\right|\mathrm{d}\tau\\
&+\sum_{|\alpha|\le m-1}\int_{0}^{t}\big(\left|C_{\alpha}\right|+\left|D_{\alpha}\right|+\left|E_{\alpha}\right|\big)\mathrm{d}\tau
+\sum_{|\alpha|\le m-2}\int_{0}^{t}\left|B_{\alpha}\right|\mathrm{d}\tau\Bigg)^{\frac12}.
\end{aligned}
\end{equation}
Summing \eqref{G2}, \eqref{G5}--\eqref{G9}, \eqref{G11} and \eqref{G13} in order yields the following lemma.
\begin{lemm}\label{GL8}
Let $m\geq 4$ and assume $(a^{\varepsilon},\mathbf{v}^{\varepsilon},\mathbf{B}^{\varepsilon})$ is a solution to \eqref{eq6}. Then the following inequality holds for all $t \geq 0$.
\begin{equation}\label{G16}
\begin{aligned}
&E_{m}^{\varepsilon}(t)^{2}+D_{m}^{\varepsilon}(t)^{2}\\
 \lesssim{} &\left\|( a_{0} ,\mathbf{v}_{0},\mathbf{B}_{0})\right\|_{m}^2+
 \left\|\partial_3( a_{0} ,\mathbf{v}_{0},\mathbf{B}_{0})\right\|_{m-1}^2+\left\|\partial_3^2(\mathbf{v}_{0},\varepsilon\mathbf{B}_{0})\right\|_{m-2}^2\\
 &+L_{m}(t)^{2}+N_{m}(t)^{2}.
\end{aligned}
\end{equation}
\end{lemm}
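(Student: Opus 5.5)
The plan is to sum the estimates of Lemmas \ref{GL1}--\ref{GL7} over the admissible multi-indices with suitably chosen weights, so that every quantity on the right-hand side that is not a source term is absorbed by a left-hand side. Each piece of $E_m^{\varepsilon}(t)^2$ and $D_m^{\varepsilon}(t)^2$ should be controlled at the end.

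\textbf{Step 1 (base layer).} Sum \eqref{G2} over $|\alpha|\le m$ and \eqref{G5} over $|\alpha|\le m-2$. This controls the following quantities:
\begin{itemize}
\item $\sup_\tau\|(a^\varepsilon,\mathbf{v}^\varepsilon,\mathbf{B}^\varepsilon)\|_m^2$ and $\sup_\tau\|\partial_\tau(a^\varepsilon,\mathbf{v}^\varepsilon,\mathbf{B}^\varepsilon)\|_{m-2}^2$;
\item the time integrals of $\|(\partial\mathbf{v}^\varepsilon,\partial_h\mathbf{B}^\varepsilon)\|_m^2$, $\varepsilon\|\partial_3\mathbf{B}^\varepsilon\|_m^2$, $\|\partial_\tau\partial\mathbf{v}^\varepsilon\|_{m-2}^2$ and $\varepsilon\|\partial_3\partial_\tau\mathbf{B}^\varepsilon\|_{m-2}^2$.
\end{itemize}
The right-hand sides consist only of initial data, $\int\|a^\varepsilon\operatorname{div}\mathbf{v}^\varepsilon\|_m^2$, $\int\|\partial_\tau(a^\varepsilon\operatorname{div}\mathbf{v}^\varepsilon)\|_{m-2}^2$, $\sum|A_\alpha|,\sum|B_\alpha|$ and $\widetilde A_\alpha,\widetilde B_\alpha$. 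These are exactly pieces of $N_m^2$ and $L_m^2$ from \eqref{G14}--\eqref{G15}.

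\textbf{Step 2 (time derivatives).} Add a small multiple $\kappa_1$ of \eqref{G6} summed over $|\alpha|\le m-1$. Its left side provides $\int\|\partial_\tau(a^\varepsilon,\mathbf{v}^\varepsilon,\mathbf{B}^\varepsilon)\|_{m-1}^2$, and also $\sup\|\partial_3\mathbf{v}^\varepsilon\|_{m-1}^2$ (through $\|\partial Z^\alpha\mathbf{v}^\varepsilon\|_0$). Its right side contains $\|Z^\alpha a^\varepsilon(t)\|_0^2$ and $\int\|\partial Z^\alpha\mathbf{v}^\varepsilon\|_0^2$, both already bounded in Step 1, so no smallness is actually needed beyond bookkeeping. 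Here $\partial Z^\alpha$ versus $Z^\alpha\partial$ differ by $[Z^\alpha,\partial_3]$, which is a sum of lower-order terms $\varphi$-weighted $\partial_3 Z^\beta$; this is absorbed inductively in $|\alpha|$.

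\textbf{Step 3 (normal derivatives of $a^\varepsilon$ and $\mathbf{B}^\varepsilon$).} Add $\kappa_2$ times \eqref{G7} and \eqref{G9} over $|\alpha|\le m-1$. These give the following quantities:
\begin{itemize}
\item $\sup\|\partial_3(a^\varepsilon,\mathbf{B}^\varepsilon)\|_{m-1}^2$;
\item $\int\|\partial_3 a^\varepsilon\|_{m-1}^2$ and $\int\|\partial_3\tfrac{\mathrm D a^\varepsilon}{\mathrm D\tau}\|_{m-1}^2$;
\item $\int\|\partial_h\partial_3\mathbf{B}^\varepsilon\|_{m-1}^2$ and $\varepsilon\int\|\partial_3^2\mathbf{B}^\varepsilon\|_{m-1}^2$.
\end{itemize}
The right side of \eqref{G7} contains $\int\|\partial_hZ^\alpha\partial\mathbf{v}^\varepsilon\|_0^2+\|\partial_\tau Z^\alpha\mathbf{v}^\varepsilon\|_0^2$, bounded by Steps 1--2 since $\partial_h$ is a conormal field. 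The remaining terms are $f_2$, $a\operatorname{div}\mathbf{v}$, $D_\alpha$ and $E_\alpha$, all pieces of $N_m$.

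\textbf{Step 4 (Stokes dissipation).} Add $\kappa_3$ times \eqref{G11} over $|\alpha|\le m-1$, which yields $\int\|\partial a^\varepsilon\|_{m-1}^2$ and $\int\|\partial^2 Z^\alpha\mathbf{v}^\varepsilon\|_0^2$. The latter covers $\int\|\partial_3\partial\mathbf{v}^\varepsilon\|_{m-1}^2$. Its right side needs the following inputs:
\begin{itemize}
\item $\int\|\partial Z^\alpha\frac{\mathrm Da}{\mathrm D\tau}\|^2$: the tangential part comes from Step 1, the normal part from Step 3, with commutators placed in $\widetilde F_\alpha$;
\item $\int\|\partial_\tau Z^\alpha\mathbf{v}\|^2$ from Step 2;
\item nonlinear and commutator terms, which go into $N_m$ and $L_m$.
\end{itemize}

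\textbf{Step 5 (second normal derivatives).} Finally, \eqref{G13} for $|\alpha|\le m-2$, taken at each time, bounds $\sup\|\partial_3^2(\mathbf{v}^\varepsilon,\varepsilon\mathbf{B}^\varepsilon)\|_{m-2}^2$ by quantities already controlled in $\sup$, namely $\|\partial a\|_{m-2}$, $\|\partial_h\partial\mathbf{v}\|_{m-2}$, $\|\partial_h^2\mathbf{B}\|_{m-2}\le\|\mathbf{B}\|_m$ and $\|\partial_t(\mathbf{v},\mathbf{B})\|_{m-2}$, plus $\|(f_2,f_3)\|_{m-2}^2\subset N_m^2$.

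Choosing $1\gg\kappa_1\gg\kappa_2\gg\kappa_3$ makes every transferred term absorbable. The main obstacle is the triangular ordering together with the $[Z^\alpha,\partial]$ commutators, which mix $\partial Z^\alpha$ and $Z^\alpha\partial$. I would handle this by induction on $|\alpha|$, or equivalently on the number of $Z_3$ factors, using the explicit form $[Z_3,\partial_3]=-\varphi'\partial_3$. That commutator is lower order and is either already controlled or lies in $L_m$. Summing and taking square roots then gives \eqref{G16}.
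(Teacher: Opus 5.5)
Your proposal is correct and is essentially the paper's argument, which obtains \eqref{G16} simply by summing \eqref{G2}, \eqref{G5}--\eqref{G9}, \eqref{G11} and \eqref{G13} in that order; your weighted triangular bookkeeping, with the $[Z^\alpha,\partial_3]$ commutators reduced to lower-order terms, makes explicit what the paper leaves implicit. (One small misattribution: the commutator $[Z^\alpha,\partial_3]\frac{\mathrm{D}a^{\varepsilon}}{\mathrm{D}\tau}$ that arises in Step 4 is not part of $\widetilde F_\alpha$, but it is bounded by the lower-order terms $\int\|Z^\beta\partial_3\frac{\mathrm{D}a^{\varepsilon}}{\mathrm{D}\tau}\|_0^2$ already supplied by \eqref{G7}, via the same triangular argument.)
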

\subsection{Estimates of $L_{m}(t)$ and $N_{m}(t)$}
To close the energy estimate, it suffices to establish the estimates for $L_{m}(t)$ and $N_{m}(t)$ which are stated in the following lemma.
\begin{lemm}\label{red}
Let $m\geq 4$ and assume that $E_m^{\varepsilon}(t) \leq 1$ for all $t \geq 0$. Then the following inequalities hold for all $t \geq 0$:
  \begin{equation}\label{G17}
    L_{m}(t)^2\lesssim \delta_4 D_{m}^{\varepsilon}(t)^2+D_{m-1}^{\varepsilon}(t)^2,
  \end{equation}
and
  \begin{equation}\label{G18}
    N_{m}(t)^2\lesssim \delta_4 \left(E_{m}^{\varepsilon}(t)^2+D_{m}^{\varepsilon}(t)^2\right)+\left(E_{m}^{\varepsilon}(t)^4+D_{m}^{\varepsilon}(t)^4\right)
  \end{equation}
for any \(\delta_4>0\).
\end{lemm}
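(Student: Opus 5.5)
The plan is to treat the two inequalities separately: \eqref{G17} collects only linear commutator terms, and \eqref{G18} collects the nonlinear terms coming from $f^\varepsilon_1,f^\varepsilon_2,f^\varepsilon_3$ and $a^\varepsilon\operatorname{div}\mathbf{v}^\varepsilon$. Everything rests on the structure of conormal commutators. With $Z_3=\varphi\partial_3$ we have $[Z_3,\partial_3]=-\varphi'\partial_3$. By induction, for $|\alpha|\le m$,
\[
[Z^\alpha,\partial_3]f=\sum_{|\beta|\le|\alpha|-1}c_{\alpha\beta}(x_3)\,\partial_3 Z^\beta f ,
\]
with smooth bounded coefficients. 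Likewise $[Z^\alpha,\partial_3^2]f$ is a combination of $\partial_3^2Z^\beta f$ and $\partial_3 Z^\beta f$ with $|\beta|\le|\alpha|-1$. Horizontal derivatives commute with $Z^\alpha$.

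\textbf{Proof of \eqref{G17}.} Each term in $\widetilde A_\alpha,\widetilde B_\alpha,\widetilde C_\alpha,\widetilde E_\alpha,\widetilde F_\alpha$ is a commutator of order strictly lower in conormal derivatives, paired with a top-order quantity.

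\begin{enumerate}[(a)]
\item The term $\|[Z^\alpha,\partial_3]\mathbf{v}^\varepsilon\|_0^2$ is bounded by $\|\partial_3\mathbf{v}^\varepsilon\|_{m-1}^2$, which lies in $D_{m-1}^\varepsilon{}^2$ after time integration.
\item Terms such as $([Z^\alpha,\Delta]\mathbf{v}^\varepsilon,Z^\alpha\mathbf{v}^\varepsilon)$ contain $\partial_3^2Z^\beta\mathbf{v}^\varepsilon$ with $|\beta|\le m-1$. I integrate by parts once in $x_3$; the boundary terms vanish since $Z^\alpha\mathbf{v}^\varepsilon|_{x_3=0}=0$. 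This gives $\lesssim\|\partial Z^\beta\mathbf{v}^\varepsilon\|_0\|\partial Z^\alpha\mathbf{v}^\varepsilon\|_0$, and Young yields $\delta_4 D_m^2+C D_{m-1}^2$.
\item The pressure commutator $([Z^\alpha,\nabla]a^\varepsilon,Z^\alpha\mathbf{v}^\varepsilon)$ is estimated by $\|\partial_3 a^\varepsilon\|_{m-1}\|Z^\alpha\mathbf{v}^\varepsilon\|_0$. Since $\mathbf{v}^\varepsilon|_{x_3=0}=0$, Hardy's inequality (equivalently, the half-space Poincaré-type bound using $\varphi$ bounded) gives $\|Z^\alpha\mathbf{v}^\varepsilon\|_0\lesssim\|\partial\mathbf{v}^\varepsilon\|_m$. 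Because the $a$-dissipation only appears at order $m-1$, this product is again of the form $\delta_4D_m^2+CD_{m-1}^2$.
\item In the $\varepsilon$-terms $\varepsilon[Z^\alpha,\partial_3^2]\mathbf{B}^\varepsilon$, I integrate by parts using $\partial_3\mathbf{B}_h^\varepsilon=0$, $B_3^\varepsilon=0$ and $\varphi(0)=0$. This bounds them by $\varepsilon\|\partial_3 \mathbf{B}^\varepsilon\|_{m-1}\|\partial_3\mathbf{B}^\varepsilon\|_m$, and analogously for $\widetilde E_\alpha$ via $\varepsilon\|\partial_3^2\mathbf{B}^\varepsilon\|_{m-1}$.
\item The time-derivative versions $\widetilde B_\alpha$ and $\widetilde C_\alpha$ are identical in structure, using $\partial_3\partial_\tau\mathbf{v}^\varepsilon\in L^2_t H^{m-2}_{co}$ and $\partial_\tau(a,\mathbf{v},\mathbf{B})\in L^2_tH^{m-1}_{co}$.
\end{enumerate}

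\textbf{Proof of \eqref{G18}.} Each term of $N_m$ is at least quadratic. I expand $Z^\alpha$ of the products by Leibniz. One factor is put in $L^\infty$ and the other in $L^2$, assigning $L^\infty$ to the factor carrying at most $[m/2]+1\le m-1$ conormal derivatives; this is where $m\ge4$ is used. The $L^\infty$ factor is controlled by the anisotropic inequality \eqref{a5}, $\|f\|_{L^\infty}\lesssim\|f\|_2^{1/2}\|\partial_3 f\|_2^{1/2}$ (up to horizontal derivatives), hence by $E_m^\varepsilon$. The products then give cubic integrands of the form $E\cdot D^2$, i.e. $\lesssim E_m^2D_m^2\le E^4+D^4$ after Young.

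The magnetic nonlinearities need more care, since only $\partial_h\mathbf{B}^\varepsilon$ (and not $\partial_3\mathbf{B}^\varepsilon$) is dissipated uniformly.
\begin{enumerate}[(a)]
\item For $\mathbf{B}\cdot\nabla\mathbf{B}$ and $\mathbf{B}\nabla\mathbf{B}$ paired with $Z^\alpha\mathbf{v}^\varepsilon$, I integrate by parts to move the derivative onto $\mathbf{v}^\varepsilon$, which is dissipated at full order.
\item When a factor $\partial_3\mathbf{B}^\varepsilon$ appears undifferentiated, I pair $\partial_h\mathbf{B}^\varepsilon$ with anisotropic inequalities of type $\|fg\|_{L^2}\lesssim\|f\|_{L^2_{x_3}L^\infty_h}\|g\|_{L^\infty_{x_3}L^2_h}$ and Gagliardo–Nirenberg in $x_h$. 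This way each $\mathbf{B}$ factor not carrying a horizontal derivative is estimated in energy norms only, with two dissipative factors supplied by $\partial_h\mathbf{B}^\varepsilon$ and $\partial\mathbf{v}^\varepsilon$.
\item Terms with $v_3\partial_3$ are rewritten as $\frac{v_3}{\varphi}Z_3$. By Hardy, $\|v_3/\varphi\|_{L^\infty}\lesssim\|\partial_3 v_3\|_{L^\infty}$, using $v_3|_{x_3=0}=0$. This avoids needing $\partial_3^2$ of $(a,\mathbf{B})$.
\item The transport term $D_\alpha$ is the standard one: integrate by parts, so that the top-order term becomes $\tfrac12\int\operatorname{div}\mathbf{v}^\varepsilon|Z^\alpha\partial_3a^\varepsilon|^2$, and the commutators are handled as above.
\end{enumerate}

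\textbf{Expected obstacle.} The hard part is the magnetic terms in $E_\alpha=(Z^\alpha\partial_3f_3^\varepsilon,Z^\alpha\partial_3\mathbf{B}^\varepsilon)$. There $\mathbf{v}^\varepsilon\cdot\nabla\partial_3\mathbf{B}^\varepsilon$ and $\partial_3\mathbf{B}^\varepsilon\cdot\nabla\mathbf{v}^\varepsilon$ produce $\partial_3^2\mathbf{B}^\varepsilon$ and products of two undissipated $\partial_3\mathbf{B}^\varepsilon$ factors.
\begin{enumerate}[(a)]
\item The first is handled by the commutator/transport structure: write $v_3\partial_3^2=\frac{v_3}{\varphi}Z_3\partial_3$ and integrate by parts in the transport direction.
\item The second, $(Z^\alpha(\partial_3\mathbf{B}^\varepsilon\,\partial\mathbf{v}^\varepsilon),Z^\alpha\partial_3\mathbf{B}^\varepsilon)$, is bounded by $\|\partial\mathbf{v}^\varepsilon\|_{L^\infty}\|\partial_3\mathbf{B}^\varepsilon\|_{m-1}^2$. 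To integrate this in time I would use \eqref{a5} to place $\|\partial\mathbf{v}^\varepsilon\|_{L^\infty}\lesssim\|\partial\mathbf{v}^\varepsilon\|_2^{1/2}\|\partial_3\partial\mathbf{v}^\varepsilon\|_2^{1/2}$, which lies in $L^2_t$. The remaining $\|\partial_3\mathbf{B}\|^2$ must then be split with one $\partial_h$ gained by horizontal interpolation.
\end{enumerate}
This is precisely the point requiring the optimal anisotropic derivative distribution. If it fails, I would fall back on the $\delta_4E_m^2$ allowance in \eqref{G18}.
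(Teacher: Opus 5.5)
Your argument for \eqref{G17} is essentially the paper's. You expand the conormal commutators, pair each lower-order commutator with a top-order factor, and apply Young. Where $\partial_3^2$ appears you integrate by parts once in $x_3$; the boundary terms vanish because $\alpha_3\ge1$ forces $Z^\alpha f|_{x_3=0}=0$.

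There is one slip, in (c). With $\|Z^\alpha\mathbf{v}^\varepsilon\|_0\lesssim\|\partial\mathbf{v}^\varepsilon\|_m$, the product $\|\partial_3a^\varepsilon\|_{m-1}\|Z^\alpha\mathbf{v}^\varepsilon\|_0$ becomes a product of two top-order dissipations. That is not of the form $\delta_4D_m^2+CD_{m-1}^2$. What you need is $\|Z^\alpha\mathbf{v}^\varepsilon\|_0\lesssim\|\partial\mathbf{v}^\varepsilon\|_{m-1}$. This holds for $|\alpha|\ge1$ without Hardy: write $Z^\alpha=Z^{\alpha-e_i}Z_i$ and use that $\varphi$ and its $Z_3$-derivatives are bounded.

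The genuine gap is in \eqref{G18}. Your generic mechanism puts one Leibniz factor in $L^\infty$ via \eqref{a5}, controlled by $E_m^\varepsilon$, and the other in $L^2$. This does not close, for two reasons.

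\textbf{(i) The derivative count is wrong.} \eqref{a5} costs $\|f\|_2^{1/2}\|\partial_3 f\|_2^{1/2}$. So putting $Z^\beta g$ in $L^\infty$ requires $\|\partial_3 g\|_{|\beta|+2}$, which $E_m^\varepsilon$ supplies only when $|\beta|\le m-3$, not whenever $|\beta|\le[m/2]+1$. At $m=4$ the balanced terms $|\beta|=|\gamma|=2$ (e.g.\ in $\|a^\varepsilon\operatorname{div}\mathbf{v}^\varepsilon\|_4$) are out of reach on either side. $E_4^\varepsilon$ controls only $\|\partial_3a^\varepsilon\|_3$, and $D_4^\varepsilon$ only $\|\partial_3\partial\mathbf{v}^\varepsilon\|_3$. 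The paper uses $L^6\times L^3$ or \eqref{a4} for these terms.

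\textbf{(ii) Time integrability fails.} A $t$-uniform bound for the trilinear pairings $A_\alpha,B_\alpha,D_\alpha,E_\alpha$ needs every integrand bounded by $E\cdot D^2$, i.e.\ carrying two full powers of $L^2_t$ quantities, because $\int_0^tD\,\mathrm{d}\tau$ is not uniformly bounded. But $a^\varepsilon$, $\mathbf{B}^\varepsilon$ and $\partial_3\mathbf{B}^\varepsilon$ are not themselves dissipated uniformly in $\varepsilon$. An $L^\infty\times L^2\times L^2$ split therefore falls short in cases such as:
\begin{itemize}
\item $(a^\varepsilon\operatorname{div}\mathbf{v}^\varepsilon,a^\varepsilon)$ at $\alpha=0$;
\item $(\mathbf{B}^\varepsilon\partial\mathbf{v}^\varepsilon,Z_3^{m}\mathbf{B}^\varepsilon)$;
\item your own $\|\partial\mathbf{v}^\varepsilon\|_{L^\infty}\|\partial_3\mathbf{B}^\varepsilon\|_{m-1}^2$, which is only $D\cdot E^2$.
\end{itemize}

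The missing idea is the trilinear anisotropic inequalities \eqref{a1}/\eqref{a2}, which the paper uses throughout and not only for magnetic terms. They distribute derivatives in different coordinate directions among the three factors so that each non-dissipated factor acquires a dissipated derivative on half of its norm. For $H_{5,4}$ this gives
\[
\|Z^\beta\partial\mathbf{v}^\varepsilon\|_0^{\frac12}\|\partial_3Z^\beta\partial\mathbf{v}^\varepsilon\|_0^{\frac12}\,\|Z^\gamma\partial\mathbf{B}^\varepsilon\|_0^{\frac12}\|\partial_2Z^\gamma\partial\mathbf{B}^\varepsilon\|_0^{\frac12}\,\|Z^\alpha\partial_3\mathbf{B}^\varepsilon\|_0^{\frac12}\|\partial_1Z^\alpha\partial_3\mathbf{B}^\varepsilon\|_0^{\frac12},
\]
which is exactly $E\cdot D^2$.

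Your ``Expected obstacle'' points in this direction but leaves it unresolved, and the fallback cannot work. The quantity $\sup_\tau\|\partial_3\mathbf{B}^\varepsilon\|_{m-1}^2\int_0^t\|\partial\mathbf{v}^\varepsilon\|_{L^\infty}\mathrm{d}\tau$ is controlled only by $t^{1/2}E_m^\varepsilon(t)^2D_m^\varepsilon(t)$. So it cannot be absorbed into $\delta_4(E_m^\varepsilon(t)^2+D_m^\varepsilon(t)^2)$, or any $t$-independent function of these functionals.

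A minor point: $\|v_3^\varepsilon/\varphi\|_{L^\infty}\lesssim\|\partial_3v_3^\varepsilon\|_{L^\infty}$ is false for large $x_3$, where $v_3^\varepsilon/\varphi\approx v_3^\varepsilon$. The paper splits $\{x_3<1\}$, where $\varphi/x_3\ge\frac12$ and Hardy applies, from $\{x_3\ge1\}$, where $\varphi\ge\frac12$. It needs this for $Z^\beta v_3^\varepsilon$ with $\beta\neq0$ in $H_{4,3,2}$ and $H_{5,3,2}$, not just for $v_3^\varepsilon$.
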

\subsubsection{Commutator preliminaries}
To estimate the commutator terms in $L_m(t)$ and $N_m(t)$, we first state some properties of commutators which will be used later. For any integer $\alpha_3 \ge 1$,
\begin{align*}
	\left[Z_3^{\alpha_3},\partial_3\right]f = \sum_{k=0}^{\alpha_3-1} A_{k,\alpha_3}\left(\varphi'\right) \partial_3 Z_3^k f,
\end{align*}
where each $A_{k,\alpha_3}\left(\varphi'\right)$ is a harmless bounded function formed by finite linear combinations of $Z_3^i \left(\varphi'\right)^j$ with $i+j\le \alpha_3$ and $j\ge1$. Moreover, we have the weighted bound
\begin{align*}
	\left|Z_3^i \left(\varphi'\right)^j\right| \lesssim \frac{1}{x_3+1}, \quad \forall x_3\in\mathbb{R}_+, i\in\mathbb{N}, j\in\mathbb{N}^*,
\end{align*}
which yields the refined coefficient estimate
\begin{equation}\label{sawa}
	\left|A_{k,\alpha_3}\left(\varphi'\right)\right| \lesssim \frac{1}{x_3+1} \lesssim \frac{\varphi(x_3)}{x_3}, \quad \forall x_3\in\mathbb{R}_+, k\in\mathbb{N}.
\end{equation}
Similarly, we have
\begin{align*}
	\left[Z_3^{\alpha_3},\partial_3\right]f =&{} \sum_{k=0}^{\alpha_3-1} B_{k,\alpha_3}\left(\varphi'\right)Z_3^k \partial_3  f,\\
\left[Z_3^{\alpha_3},\partial_3^2\right]f =&{} \sum_{k=0}^{\alpha_3-1} C_{k,\alpha_3}\left(\varphi'\right)Z_3^k \partial_3^2  f+ \tilde{C}_{k,\alpha_3}\left(\varphi'\right)Z_3^k \partial_3  f,\\
=&{} \sum_{k=0}^{\alpha_3-1} D_{k,\alpha_3}\left(\varphi'\right)\partial_3^2Z_3^k   f+ \tilde{D}_{k,\alpha_3}\left(\varphi'\right)\partial_3Z_3^k   f,\\
=&{} \sum_{k=0}^{\alpha_3-1} E_{k,\alpha_3}\left(\varphi'\right)Z_3^k \partial_3^2  f+ \tilde{E}_{k,\alpha_3}\left(\varphi'\right)\partial_3Z_3^k   f,\\
=&{} \sum_{k=0}^{\alpha_3-1} F_{k,\alpha_3}\left(\varphi'\right)\partial_3^2 Z_3^k  f+ \tilde{F}_{k,\alpha_3}\left(\varphi'\right)Z_3^k \partial_3  f,
\end{align*}
where all coefficients $B_{k,\alpha_3}, C_{k,\alpha_3}, \tilde{C}_{k,\alpha_3}, D_{k,\alpha_3}, \tilde{D}_{k,\alpha_3}, E_{k,\alpha_3}, \tilde{E}_{k,\alpha_3}, F_{k,\alpha_3}$, and $\tilde{F}_{k,\alpha_3}$ enjoy similar property to that in \eqref{sawa}.
We omit the proof of the above properties, for which we refer the reader to \cite{9a}.
\subsubsection{Estimates of $L_{m}(t)$}
We first estimate the terms in $L_m(t)$. For
$\sum_{|\alpha|\leq m} \int_{0}^{t}  \left|\widetilde{A} _{\alpha}\right|  \mathrm{d} \tau$,
by the relevant properties of commutators and applying Young's inequality, we have
\begin{equation}\label{G19}
\begin{aligned}
&\sum_{|\alpha|\leq m} \int_{0}^{t}  \left|\widetilde{A} _{\alpha}\right|  \mathrm{d} \tau\\
\lesssim{}& \sum_{|\alpha|\leq m}\int_{0}^{t}\bigg(\left\|\left[Z^{\alpha}, \partial_3\right]\mathbf{v}^{\varepsilon}\right\|_0^2+\left\| [Z^{\alpha}, \partial_3]\mathbf{v}^{\varepsilon}\right\|_0\left\|  Z^{\alpha} a^{\varepsilon}\right\|_0
+\varepsilon\left\|\left[Z^{\alpha}, \partial_3^2\right]\mathbf{B}^{\varepsilon}\right\|_0\left\|  Z^{\alpha} \mathbf{B}^{\varepsilon} \right\|_0\\
&+\left(\left\|\left[Z^{\alpha}, \partial_3^2\right]\mathbf{v}^{\varepsilon}\right\|_0
+\left\|\partial_h[Z^{\alpha}, \partial_3]\mathbf{v}^{\varepsilon} \right\|_0
+\left\|[Z^{\alpha}, \partial_3]a^{\varepsilon}\right\|_0\right)\left\|   Z^{\alpha} \mathbf{v}^{\varepsilon}\right\|_0\bigg) \mathrm{d}\tau\\
\lesssim{}&\int_{0}^{t}\bigg(\delta_4\Big(\left\|\partial_3 a^{\varepsilon} \right\|_{m-1}^2+\left\|\partial_3\mathbf{v}^{\varepsilon}\right\|_{m}^2+\left\|\partial_3^2\mathbf{v}^{\varepsilon}\right\|_{m-1}^2+\varepsilon\left\| \partial_3\mathbf{B}^{\varepsilon}\right\|_{m}^2 +\varepsilon\left\| \partial_3^2 \mathbf{B}^{\varepsilon}\right\|_{m-1}^2\Big)\\
&+\left\|\partial_3\mathbf{v}^{\varepsilon}\right\|_{m-1}^2+\varepsilon \left\|\partial_3 \mathbf{B}^{\varepsilon}\right\|_{m-1}^2\bigg) \mathrm{d} \tau.
\end{aligned}
\end{equation}
We decompose $\widetilde{B}_{\alpha}$ into three parts
\begin{align*}
  \widetilde{B} _{\alpha}={}&\left\|\left[Z^{\alpha}, \partial_3\right]\partial_t\mathbf{v}^{\varepsilon}\right\|_0^2-\left( \left[Z^{\alpha}, \operatorname{div}\right]\partial_{t}\mathbf{v}^{\varepsilon},  Z^{\alpha}\partial_{t} a^{\varepsilon}\right)\\
  &+  \left(\varepsilon\left[Z^{\alpha}, \partial_3^2\right]\partial_{t}\mathbf{B}^{\varepsilon},  Z^{\alpha}\partial_{t} \mathbf{B}^{\varepsilon} \right)\\
  &+\left(\mu\left[Z^{\alpha}, \Delta\right]\partial_{t}\mathbf{v}^{\varepsilon} + (\mu + \lambda)\left[Z^{\alpha}, \nabla\operatorname{div}\right]\partial_{t}\mathbf{v}^{\varepsilon} - \left[Z^{\alpha}, \nabla\right]\partial_{t}a^{\varepsilon},  Z^{\alpha}\partial_{t} \mathbf{v}^{\varepsilon}\right)\\
\overset{def}{=}{}&\widetilde{B} _{\alpha,1}+\widetilde{B} _{\alpha,2}+\widetilde{B} _{\alpha,3}.
\end{align*}
For $\widetilde{B}_{\alpha,1}$, it holds
\begin{align*}
\left|\widetilde{B} _{\alpha,1}\right|\lesssim\left\|\left[Z^{\alpha}, \partial_3\right]\partial_t\mathbf{v}^{\varepsilon}\right\|_0^2+\left\| [Z^{\alpha}, \partial_3]\partial_{t}\mathbf{v}^{\varepsilon}\right\|_0\left\|  Z^{\alpha} \partial_{t}a^{\varepsilon}\right\|_0\lesssim\left\|\partial_t a^{\varepsilon}\right\|_{m-2}^2+\left\|\partial_3\partial_t\mathbf{v}^{\varepsilon}\right\|_{m-3}^2.
\end{align*}
Without loss of generality, we assume $\alpha_3>0$, and by the properties of commutators, integration by parts, and Young's inequality, we derive the following estimate for $\widetilde{B}_{\alpha,2}$.
\begin{align*}
\left|\widetilde{B} _{\alpha,2}\right|\lesssim{}& \varepsilon \sum_{k=0}^{\alpha_3-1} \left|\int_{\mathbb{R}_+^3} \partial_1^{\alpha_1}
	\partial_2^{\alpha_2} \left( \hat{C}_{k,\alpha_3}\left(\varphi'\right) Z^k_3 \partial_3 \partial_{t} \mathbf{B}^{\varepsilon}
	+ \tilde{C}_{k,\alpha_3}\left(\varphi'\right) \partial_3^2 Z^k_3 \partial_{t} \mathbf{B}^{\varepsilon} \right)\cdot Z^\alpha \partial_{t} \mathbf{B}^{\varepsilon} \mathrm{d}\mathbf{x}\right|\\
\lesssim{}& \varepsilon \sum_{k=0}^{\alpha_3-1} \left|\int_{\mathbb{R}_+^3} \hat{C}_{k,\alpha_3}\left(\varphi'\right) \partial_1^{\alpha_1}
	\partial_2^{\alpha_2}Z^k_3 \partial_3 \partial_{t} \mathbf{B}^{\varepsilon}\cdot Z^\alpha \partial_{t} \mathbf{B}^{\varepsilon} \mathrm{d}\mathbf{x}\right|\\
&+ \varepsilon \sum_{k=0}^{\alpha_3-1} \left|\int_{\mathbb{R}_+^3} \partial_3 \tilde{C}_{k,\alpha_3}\left(\varphi'\right) \partial_3 \partial_1^{\alpha_1} \partial_2^{\alpha_2} Z^k_3 \partial_t\mathbf{B}^{\varepsilon} \cdot Z^\alpha \partial_t\mathbf{B}^{\varepsilon} \mathrm{d}\mathbf{x}\right|\\
	&+ \varepsilon \sum_{k=0}^{\alpha_3-1}  \left|\int_{\mathbb{R}_+^3} \tilde{C}_{k,\alpha_3}\left(\varphi'\right) \partial_3 \partial_1^{\alpha_1} \partial_2^{\alpha_2} Z^k_3 \partial_t\mathbf{B}^{\varepsilon} \cdot \partial_3 Z^\alpha \partial_t\mathbf{B}^{\varepsilon} \mathrm{d}\mathbf{x}\right|\\
\lesssim{}&\delta_4\varepsilon\left\| \partial_3\partial_{t}\mathbf{B}^{\varepsilon}\right\|_{m-2}^2+\varepsilon\left\| \partial_3\partial_{t}\mathbf{B}^{\varepsilon}\right\|_{m-3}^2,
\end{align*}
where $\hat{C}_{k,\alpha_3}\left(\varphi'\right)$ and $\tilde{C}_{k,\alpha_3}\left(\varphi'\right)$ denote the harmless functions depending on derivatives of $\varphi$.
Similarly, it is direct to check that
\begin{align*}
\left|\widetilde{B} _{\alpha,3}\right|\lesssim{}&\sum_{k=0}^{\alpha_3-1} \left|\int_{\mathbb{R}_+^3} \partial_1^{\alpha_1}
	\partial_2^{\alpha_2} \left( \hat{C}_{k,\alpha_3}\left(\varphi'\right) Z^k_3 \partial_3 \partial_{t} \mathbf{v}^{\varepsilon}
	+ \tilde{C}_{k,\alpha_3}\left(\varphi'\right) \partial_3^2 Z^k_3 \partial_{t} \mathbf{v}^{\varepsilon} \right)\cdot Z^\alpha \partial_{t} \mathbf{v}^{\varepsilon} \mathrm{d}\mathbf{x}\right|\\
&+\sum_{k=0}^{\alpha_3-1} \left|\int_{\mathbb{R}_+^3}C_{k,\alpha_3}\left(\varphi'\right) \partial_{h}\partial_1^{\alpha_1}
	\partial_2^{\alpha_2} Z^k_3 \partial_3 \partial_{t} \mathbf{v}^{\varepsilon}\cdot Z^\alpha \partial_{t} \mathbf{v}^{\varepsilon} \mathrm{d}\mathbf{x}\right|\\
&+\sum_{k=0}^{\alpha_3-1} \left|\int_{\mathbb{R}_+^3}\bar{C}_{k,\alpha_3}\left(\varphi'\right) \partial_1^{\alpha_1}
	\partial_2^{\alpha_2} \partial_3Z^k_3 \partial_{t} a^{\varepsilon}\cdot Z^\alpha \partial_{t} \mathbf{v}^{\varepsilon} \mathrm{d}\mathbf{x}\right|\\
\lesssim{}&\sum_{k=0}^{\alpha_3-1} \left|\int_{\mathbb{R}_+^3} \hat{C}_{k,\alpha_3}\left(\varphi'\right)\partial_1^{\alpha_1}
	\partial_2^{\alpha_2} Z^k_3 \partial_3 \partial_{t} \mathbf{v}^{\varepsilon}\cdot Z^\alpha \partial_{t} \mathbf{v}^{\varepsilon} \mathrm{d}\mathbf{x}\right|\\
&+\sum_{k=0}^{\alpha_3-1} \left|\int_{\mathbb{R}_+^3} \partial_3 \tilde{C}_{k,\alpha_3}\left(\varphi'\right) \partial_3 \partial_1^{\alpha_1} \partial_2^{\alpha_2} Z^k_3 \partial_t\mathbf{v}^{\varepsilon} \cdot Z^\alpha \partial_t\mathbf{v}^{\varepsilon} \mathrm{d}\mathbf{x}\right|\\
	&+\sum_{k=0}^{\alpha_3-1}  \left|\int_{\mathbb{R}_+^3} \tilde{C}_{k,\alpha_3}\left(\varphi'\right) \partial_3 \partial_1^{\alpha_1} \partial_2^{\alpha_2} Z^k_3 \partial_t\mathbf{v}^{\varepsilon} \cdot \partial_3 Z^\alpha \partial_t\mathbf{v}^{\varepsilon} \mathrm{d}\mathbf{x}\right|\\
&+\sum_{k=0}^{\alpha_3-1} \left|\int_{\mathbb{R}_+^3}C_{k,\alpha_3}\left(\varphi'\right) \partial_{h}\partial_1^{\alpha_1}
	\partial_2^{\alpha_2} Z^k_3 \partial_3 \partial_{t} \mathbf{v}^{\varepsilon}\cdot Z^\alpha \partial_{t} \mathbf{v}^{\varepsilon} \mathrm{d}\mathbf{x}\right|\\
&+\sum_{k=0}^{\alpha_3-1} \left|\int_{\mathbb{R}_+^3} \partial_3 \bar{C}_{k,\alpha_3}\left(\varphi'\right) \partial_1^{\alpha_1} \partial_2^{\alpha_2} Z^k_3 \partial_t a^{\varepsilon} \cdot Z^\alpha \partial_t\mathbf{v}^{\varepsilon} \mathrm{d}\mathbf{x}\right|\\
	&+\sum_{k=0}^{\alpha_3-1}  \left|\int_{\mathbb{R}_+^3} \bar{C}_{k,\alpha_3}\left(\varphi'\right) \partial_1^{\alpha_1} \partial_2^{\alpha_2} Z^k_3 \partial_t a^{\varepsilon} \cdot \partial_3 Z^\alpha \partial_t\mathbf{v}^{\varepsilon} \mathrm{d}\mathbf{x}\right|\\
\lesssim{}&\delta_4\left\| \partial_3\partial_{t}\mathbf{v}^{\varepsilon}\right\|_{m-2}^2+\left\|\partial_{t}\left(a^{\varepsilon}, \mathbf{v}^{\varepsilon}\right)\right\|_{m-2}^2+\left\| \partial_3\partial_{t}\mathbf{v}^{\varepsilon}\right\|_{m-3}^2.
\end{align*}
Combining the above estimates gives
\begin{align*}
\left|\widetilde{B} _{\alpha}\right|\lesssim{}&\delta_4\left(\left\| \partial_3\partial_{t}\mathbf{v}^{\varepsilon}\right\|_{m-2}^2+\varepsilon\left\| \partial_3\partial_{t}\mathbf{B}^{\varepsilon}\right\|_{m-2}^2\right)+\left\|\partial_{t}\left(a^{\varepsilon}, \mathbf{v}^{\varepsilon},\mathbf{B}^{\varepsilon}\right)\right\|_{m-2}^2\\
&+\left\| \partial_3\partial_{t}\mathbf{v}^{\varepsilon}\right\|_{m-3}^2+\varepsilon\left\| \partial_3\partial_{t}\mathbf{B}^{\varepsilon}\right\|_{m-3}^2.
\end{align*}
Integrating it with respect to time variable in $[0, t]$ leads to
\begin{equation}\label{G25}
\begin{aligned}
&\sum_{|\alpha|\leq m-2}\int_{0}^{t}  \left|\widetilde{B} _{\alpha}\right|  \mathrm{d} \tau\\
\lesssim{}&\int_{0}^{t}\Big(\delta_4\left(\left\| \partial_3\partial_{\tau}\mathbf{v}^{\varepsilon}\right\|_{m-2}^2+\varepsilon\left\| \partial_3\partial_{\tau}\mathbf{B}^{\varepsilon}\right\|_{m-2}^2\right)+\left\|\partial_{\tau}\left(a^{\varepsilon}, \mathbf{v}^{\varepsilon},\mathbf{B}^{\varepsilon}\right)\right\|_{m-2}^2\\
&+\left\| \partial_3\partial_{\tau}\mathbf{v}^{\varepsilon}\right\|_{m-3}^2+\varepsilon\left\| \partial_3\partial_{\tau}\mathbf{B}^{\varepsilon}\right\|_{m-3}^2\Big) \mathrm{d}\tau.
\end{aligned}
\end{equation}
For
$\sum_{|\alpha|\leq m-1} \int_{0}^{t}  \left|\widetilde{C} _{\alpha}\right|  \mathrm{d} \tau$,
combining the relevant properties of commutators and Young's inequality yields
\begin{equation}\label{G26}
\begin{aligned}
&\sum_{|\alpha|\leq m-1} \int_{0}^{t}  \left|\widetilde{C} _{\alpha}\right|  \mathrm{d} \tau\\
\lesssim{}& \sum_{|\alpha|\leq m-1}\int_{0}^{t}\Big(\left\| [Z^{\alpha}, \partial_3]\mathbf{v}^{\varepsilon}\right\|_0\left\|  Z^{\alpha}\partial_{\tau} a^{\varepsilon}\right\|_0
+\varepsilon\left\|\left[Z^{\alpha}, \partial_3^2\right]\mathbf{B}^{\varepsilon}\right\|_0\left\|  Z^{\alpha} \partial_{\tau}\mathbf{B}^{\varepsilon} \right\|_0\\
&+\left(\left\|\left[Z^{\alpha}, \partial_3^2\right]\mathbf{v}^{\varepsilon}\right\|_0
+\left\|\partial_h[Z^{\alpha}, \partial_3]\mathbf{v}^{\varepsilon} \right\|_0
+\left\|[Z^{\alpha}, \partial_3]a^{\varepsilon}\right\|_0\right)\left\|   Z^{\alpha}\partial_{\tau} \mathbf{v}^{\varepsilon}\right\|_0\Big) \mathrm{d}\tau\\
\lesssim{}&\int_{0}^{t}\Big(\delta_4 \left\|\partial_{\tau}\left(a^{\varepsilon}, \mathbf{v}^{\varepsilon},\mathbf{B}^{\varepsilon}\right)\right\|_{m-1}^2+\left\|\partial_3 a^{\varepsilon} \right\|_{m-2}^2+\left\|\partial_3\mathbf{v}^{\varepsilon}\right\|_{m-1}^2+\left\|\partial_3^2\mathbf{v}^{\varepsilon}\right\|_{m-2}^2\\
&+\varepsilon \left\|\partial_3 \mathbf{B}^{\varepsilon}\right\|_{m-1}^2+\varepsilon\left\| \partial_3^2 \mathbf{B}^{\varepsilon}\right\|_{m-2}^2\Big) \mathrm{d} \tau.
\end{aligned}
\end{equation}
Similar to the previous argument, we obtain the following estimate for $\widetilde{E}_{\alpha}$.
\begin{align*}
\left|\widetilde{E} _{\alpha}\right|\lesssim{}& \varepsilon \sum_{k=0}^{\alpha_3-1} \left|\int_{\mathbb{R}_+^3} \partial_1^{\alpha_1}
	\partial_2^{\alpha_2} \left( \hat{C}_{k,\alpha_3}\left(\varphi'\right) Z^k_3\partial_{3}^2\mathbf{B}^{\varepsilon}
	+ \tilde{C}_{k,\alpha_3}\left(\varphi'\right) \partial_3^2 Z^k_3 \partial_{3} \mathbf{B}^{\varepsilon} \right)\cdot Z^\alpha \partial_{3} \mathbf{B}^{\varepsilon} \mathrm{d}\mathbf{x}\right|\\
\lesssim{}& \varepsilon \sum_{k=0}^{\alpha_3-1} \left|\int_{\mathbb{R}_+^3} \hat{C}_{k,\alpha_3}\left(\varphi'\right)\partial_1^{\alpha_1}
	\partial_2^{\alpha_2} Z^k_3\partial_{3}^2 \mathbf{B}^{\varepsilon}\cdot Z^\alpha \partial_{3} \mathbf{B}^{\varepsilon} \mathrm{d}\mathbf{x}\right|\\
&+ \varepsilon \sum_{k=0}^{\alpha_3-1} \left|\int_{\mathbb{R}_+^3} \partial_3 \tilde{C}_{k,\alpha_3}\left(\varphi'\right) \partial_3 \partial_1^{\alpha_1} \partial_2^{\alpha_2} Z^k_3 \partial_3\mathbf{B}^{\varepsilon} \cdot Z^\alpha \partial_3\mathbf{B}^{\varepsilon} \mathrm{d}\mathbf{x}\right|\\
	&+ \varepsilon \sum_{k=0}^{\alpha_3-1}  \left|\int_{\mathbb{R}_+^3} \tilde{C}_{k,\alpha_3}\left(\varphi'\right) \partial_3 \partial_1^{\alpha_1} \partial_2^{\alpha_2} Z^k_3 \partial_3\mathbf{B}^{\varepsilon} \cdot \partial_3 Z^\alpha \partial_3\mathbf{B}^{\varepsilon} \mathrm{d}\mathbf{x}\right|\\
\lesssim{}&\delta_4\varepsilon\left\|\partial_{3}^2\mathbf{B}^{\varepsilon}\right\|_{m-1}^2+\varepsilon\left\|\partial_{3}\mathbf{B}^{\varepsilon}\right\|_{m-1}^2+\varepsilon\left\| \partial_3^2\mathbf{B}^{\varepsilon}\right\|_{m-2}^2.
\end{align*}
Thus, its integration over $[0, t]$ satisfies
\begin{equation}\label{G28}
\begin{aligned}
\sum_{|\alpha|\leq m-1}\int_{0}^{t}  \left|\widetilde{E} _{\alpha}\right|  \mathrm{d} \tau\lesssim\varepsilon\int_{0}^{t}\Big(\delta_4\left\|\partial_{3}^2\mathbf{B}^{\varepsilon}\right\|_{m-1}^2+\left\|\partial_{3}\mathbf{B}^{\varepsilon}\right\|_{m-1}^2+\left\| \partial_3^2\mathbf{B}^{\varepsilon}\right\|_{m-2}^2\Big) \mathrm{d}\tau.
\end{aligned}
\end{equation}
Finally, let us deal with $\sum_{|\alpha|\leq m-1} \int_{0}^{t}  \left|\widetilde{F} _{\alpha}\right|  \mathrm{d} \tau$.
\begin{equation}\label{G29}
\begin{aligned}
&\sum_{|\alpha|\leq m-1} \int_{0}^{t}  \left|\widetilde{F} _{\alpha}\right|  \mathrm{d} \tau\\
\lesssim{}& \sum_{|\alpha|\leq m-1}\int_{0}^{t}\Big(\left\| \left[Z^{\alpha}, \partial_3^2\right]\mathbf{v}^{\varepsilon}\right\|_{0}^{2}+  \left\|[Z^{\alpha}, \partial_3]a^{\varepsilon}\right\|_{0}^{2}+\left\|\partial[Z^{\alpha}, \partial_3]\mathbf{v}^{\varepsilon}\right\|_{0}^{2}\Big) \mathrm{d}\tau\\
\lesssim{}&\int_{0}^{t}\Big(\left\|\partial_3 a^{\varepsilon} \right\|_{m-2}^2+\left\|\partial_3\mathbf{v}^{\varepsilon}\right\|_{m-1}^2+\left\|\partial_3^2\mathbf{v}^{\varepsilon}\right\|_{m-2}^2\Big) \mathrm{d} \tau.
\end{aligned}
\end{equation}
Then, substituting \eqref{G19}--\eqref{G29} into \eqref{G14}, we arrive at \eqref{G17}.
\subsubsection{Estimates of $N_{m}(t)$}
Next, we proceed to estimate the terms in $N_{m}(t)$. As for $\int_{0}^{t} \left\| a^{\varepsilon}  \operatorname{div}\mathbf{v}^{\varepsilon} \right\|_{m}^{2} \mathrm{d} \tau$, by Hölder's inequality, Lemmas \ref{Le1} and \ref{Le2}, we obtain
\begin{align}\label{G30}
	&\int_{0}^{t} \left\| a^{\varepsilon}  \operatorname{div}\mathbf{v}^{\varepsilon}\right\|_{m}^{2}\mathrm{d}\tau \nonumber\\
    \lesssim{}&\sum_{\substack{|\alpha|\leq m\\\beta+\gamma=\alpha}}\int_{0}^{t} \left\|Z^\beta a^{\varepsilon}  \cdot Z^\gamma \operatorname{div}\mathbf{v}^{\varepsilon}\right\|_0^2 \mathrm{d}\tau \nonumber\\
    \lesssim{}&\sum_{|\alpha|\leq m}\int_{0}^{t} \left(\left\| a^{\varepsilon}  \right\|_{L^\infty}^2\cdot\left\| Z^\alpha \operatorname{div}\mathbf{v}^{\varepsilon}\right\|_{0}^2+\left\|Z^\alpha a^{\varepsilon} \right\|_0^2 \cdot \left\|\operatorname{div}\mathbf{v}^{\varepsilon}\right\|_{L^\infty}^2\right) \mathrm{d}\tau \nonumber\\
    &+\sum_{\substack{|\alpha|\leq m\\\beta+\gamma=\alpha\\\beta\neq0,\gamma\neq0}}\int_{0}^{t} \left\|Z^\beta a^{\varepsilon} \right\|_{L^6}^2 \cdot \left\|Z^\gamma \operatorname{div}\mathbf{v}^{\varepsilon}\right\|_{L^3}^2 \mathrm{d}\tau \nonumber\\
	\lesssim{}&\sum_{|\alpha|\leq m}\int_{0}^{t}\left( \left\| a^{\varepsilon}  \right\|_2\left\| \partial_3 a^{\varepsilon}  \right\|_2\left\| Z^\alpha \operatorname{div}\mathbf{v}^{\varepsilon}\right\|_{0}^2
+ \left\|Z^\alpha a^{\varepsilon} \right\|_0^2\left\|\operatorname{div}\mathbf{v}^{\varepsilon}\right\|_{2}\left\|\partial_3\operatorname{div}\mathbf{v}^{\varepsilon}\right\|_{2}\right) \mathrm{d}\tau \nonumber\\
    &+\sum_{\substack{|\alpha|\leq m\\\beta+\gamma=\alpha\\\beta\neq0,\gamma\neq0}}\int_{0}^{t} \left\|\partial Z^\beta a^{\varepsilon} \right\|_{0}^2 \cdot \left\|Z^\gamma \operatorname{div}\mathbf{v}^{\varepsilon}\right\|_{H^1}^2 \mathrm{d}\tau \nonumber\\
    \lesssim{}&\sup_{0\leq \tau \leq t}\left(\left\| a^{\varepsilon}\right\|_{m}^{2} + \left\| \partial_3 a^{\varepsilon}\right\|_{m-1}^{2}\right)\int_{0}^{t}\left(\left\|\partial\mathbf{v}^{\varepsilon}\right\|_{m}^2+\left\|\partial_3\partial\mathbf{v}^{\varepsilon}\right\|_{m-1}^2\right) \mathrm{d}\tau.
\end{align}
We split the integral $\int_{0}^{t} \left\| \partial_3\left(a^{\varepsilon}  \operatorname{div}\mathbf{v}^{\varepsilon}\right) \right\|_{m-1}^{2} \mathrm{d} \tau$ into two terms to estimate.
\begin{align*}
	\int_{0}^{t} \left\| \partial_3\left(a^{\varepsilon}  \operatorname{div}\mathbf{v}^{\varepsilon}\right)\right\|_{m-1}^{2}\mathrm{d}\tau\lesssim\int_{0}^{t}\left( \left\| \partial_3 a^{\varepsilon}  \cdot\operatorname{div}\mathbf{v}^{\varepsilon}\right\|_{m-1}^{2}+ \left\| a^{\varepsilon}  \cdot\partial_3\operatorname{div}\mathbf{v}^{\varepsilon}\right\|_{m-1}^{2}\right)\mathrm{d}\tau.
\end{align*}
By Hölder's inequality and Lemma \ref{Le1}, the first term on the right-hand side can be estimated as follows.
\begin{align*}
	&\int_{0}^{t} \left\| \partial_3 a^{\varepsilon}  \cdot\operatorname{div}\mathbf{v}^{\varepsilon}\right\|_{m-1}^{2}\mathrm{d}\tau\\
    \lesssim{}&\sum_{\substack{|\alpha|\leq m-1\\\beta+\gamma=\alpha}}\int_{0}^{t} \left\|Z^\beta\partial_3 a^{\varepsilon}  \cdot Z^\gamma \operatorname{div}\mathbf{v}^{\varepsilon}\right\|_0^2 \mathrm{d}\tau\\
    \lesssim{}&\sum_{|\alpha|\leq m-1}\int_{0}^{t}\left( \left\|\partial_3  a^{\varepsilon}  \cdot Z^\alpha \operatorname{div}\mathbf{v}^{\varepsilon}\right\|_{0}^2+\left\|Z^\alpha \partial_3 a^{\varepsilon} \right\|_0^2 \cdot \left\|\operatorname{div}\mathbf{v}^{\varepsilon}\right\|_{L^\infty}^2\right) \mathrm{d}\tau\\
    &+\sum_{\substack{|\alpha|\leq m-1\\\beta+\gamma=\alpha\\\beta\neq0,\gamma\neq0}}\int_{0}^{t} \left\|Z^\beta\partial_3 a^{\varepsilon} \cdot Z^\gamma \operatorname{div}\mathbf{v}^{\varepsilon}\right\|_{0}^2 \mathrm{d}\tau\\
	\lesssim{}&\sum_{|\alpha|\leq m-1}\int_{0}^{t} \left\| \partial_{3} a^{\varepsilon}  \right\|_0^{\frac{1}{2}}\left\| \partial_{13} a^{\varepsilon}  \right\|_0^{\frac{1}{2}}\left\| \partial_{23} a^{\varepsilon}  \right\|_0^{\frac{1}{2}}\left\| \partial_{123} a^{\varepsilon}  \right\|_0^{\frac{1}{2}}\left\| Z^\alpha \operatorname{div}\mathbf{v}^{\varepsilon}\right\|_{0}\left\| \partial_{3}Z^\alpha \operatorname{div}\mathbf{v}^{\varepsilon}\right\|_{0} \mathrm{d}\tau\\
    &+\sum_{|\alpha|\leq m-1}\int_{0}^{t} \left\|Z^\alpha\partial_3 a^{\varepsilon} \right\|_0^2\left\|\operatorname{div}\mathbf{v}^{\varepsilon}\right\|_{2}\left\|\partial_3\operatorname{div}\mathbf{v}^{\varepsilon}\right\|_{2} \mathrm{d}\tau\\
    &+\sum_{\substack{|\alpha|\leq m-1\\\beta+\gamma=\alpha\\\beta\neq0,\gamma\neq0}}\int_{0}^{t} \left\|Z^\beta\partial_3 a^{\varepsilon} \right\|_0\left\|Z^\beta\partial_{13} a^{\varepsilon} \right\|_0
    \left\| Z^\gamma \operatorname{div}\mathbf{v}^{\varepsilon}\right\|_{0}^{\frac{1}{2}}\left\|\partial_{2} Z^\gamma \operatorname{div}\mathbf{v}^{\varepsilon}\right\|_{0}^{\frac{1}{2}}\\
    &\cdot\left\|\partial_{3} Z^\gamma \operatorname{div}\mathbf{v}^{\varepsilon}\right\|_{0}^{\frac{1}{2}}\left\|\partial_{23} Z^\gamma \operatorname{div}\mathbf{v}^{\varepsilon}\right\|_{0}^{\frac{1}{2}} \mathrm{d}\tau\\
    \lesssim{}&\sup_{0\leq \tau \leq t}\left\| \partial_3 a^{\varepsilon}\right\|_{m-1}^{2}\int_{0}^{t}\left(\left\|\partial\mathbf{v}^{\varepsilon}\right\|_{m}^2+\left\|\partial_3\partial\mathbf{v}^{\varepsilon}\right\|_{m-1}^2\right) \mathrm{d}\tau.
\end{align*}
Similarly,
\begin{align*}
	&\int_{0}^{t} \left\|  a^{\varepsilon}  \cdot\partial_3\operatorname{div}\mathbf{v}^{\varepsilon}\right\|_{m-1}^{2}\mathrm{d}\tau\\
    \lesssim{}&\sum_{\substack{|\alpha|\leq m-1\\\beta+\gamma=\alpha}}\int_{0}^{t} \left\|Z^\beta a^{\varepsilon}  \cdot Z^\gamma \partial_3\operatorname{div}\mathbf{v}^{\varepsilon}\right\|_0^2 \mathrm{d}\tau\\
    \lesssim{}&\sum_{|\alpha|\leq m-1}\int_{0}^{t}\left( \left\|a^{\varepsilon} \right\|_{L^{\infty}}^2  \left\|Z^\alpha \partial_3\operatorname{div}\mathbf{v}^{\varepsilon}\right\|_{0}^2+\left\|Z^\alpha a^{\varepsilon} \cdot\partial_3\operatorname{div}\mathbf{v}^{\varepsilon}\right\|_{0}^2 \right)\mathrm{d}\tau\\
    &+\sum_{\substack{|\alpha|\leq m-1\\\beta+\gamma=\alpha\\\beta\neq0,\gamma\neq0}}\int_{0}^{t} \left\|Z^\beta a^{\varepsilon} \cdot Z^\gamma\partial_3 \operatorname{div}\mathbf{v}^{\varepsilon}\right\|_{0}^2 \mathrm{d}\tau\\
	\lesssim{}&\sum_{|\alpha|\leq m-1}\int_{0}^{t} \left\| a^{\varepsilon}  \right\|_{2}\left\| \partial_{3} a^{\varepsilon}  \right\|_{2}\left\| Z^\alpha \partial_3\operatorname{div}\mathbf{v}^{\varepsilon}\right\|_{0}^2 \mathrm{d}\tau\\
    &+\sum_{|\alpha|\leq m-1}\int_{0}^{t} \left\|Z^\alpha a^{\varepsilon} \right\|_0\left\|\partial_3Z^\alpha a^{\varepsilon} \right\|_0\left\|\partial_{3}\operatorname{div}\mathbf{v}^{\varepsilon}\right\|_{0}^{\frac{1}{2}}\left\|\partial_{13}\operatorname{div}\mathbf{v}^{\varepsilon}\right\|_{0}^{\frac{1}{2}}\\
    &\cdot\left\|\partial_{23}\operatorname{div}\mathbf{v}^{\varepsilon}\right\|_{0}^{\frac{1}{2}}\left\|\partial_{123}\operatorname{div}\mathbf{v}^{\varepsilon}\right\|_{0}^{\frac{1}{2}} \mathrm{d}\tau\\
    &+\sum_{\substack{|\alpha|\leq m-1\\\beta+\gamma=\alpha\\\beta\neq0,\gamma\neq0}}\int_{0}^{t} \left\|Z^\beta a^{\varepsilon} \right\|_0^{\frac{1}{2}}\left\|\partial_{2}Z^\beta a^{\varepsilon} \right\|_0^{\frac{1}{2}}\left\|\partial_{3}Z^\beta a^{\varepsilon} \right\|_0^{\frac{1}{2}}\left\|\partial_{23}Z^\beta a^{\varepsilon} \right\|_0^{\frac{1}{2}}\\
    &\cdot\left\| Z^\gamma\partial_{3} \operatorname{div}\mathbf{v}^{\varepsilon}\right\|_{0}\left\| \partial_1 Z^\gamma \partial_{3}\operatorname{div}\mathbf{v}^{\varepsilon}\right\|_{0} \mathrm{d}\tau\\
    \lesssim{}&\sup_{0\leq \tau \leq t}\left(\left\| a^{\varepsilon}\right\|_{m}^{2} + \left\| \partial_3 a^{\varepsilon}\right\|_{m-1}^{2}\right)\int_{0}^{t}\left(\left\|\partial\mathbf{v}^{\varepsilon}\right\|_{m}^2+\left\|\partial_3\partial\mathbf{v}^{\varepsilon}\right\|_{m-1}^2\right) \mathrm{d}\tau.
\end{align*}
Combining the above two estimates implies
\begin{equation}\label{G34}
\begin{aligned}
	\int_{0}^{t} \left\| \partial_3\left(a^{\varepsilon}  \operatorname{div}\mathbf{v}^{\varepsilon}\right)\right\|_{m-1}^{2}\mathrm{d}\tau\lesssim\sup_{0\leq \tau \leq t}\left(\left\| a^{\varepsilon}\right\|_{m}^{2} + \left\| \partial_3 a^{\varepsilon}\right\|_{m-1}^{2}\right)\int_{0}^{t}\left(\left\|\partial\mathbf{v}^{\varepsilon}\right\|_{m}^2+\left\|\partial_3\partial\mathbf{v}^{\varepsilon}\right\|_{m-1}^2\right) \mathrm{d}\tau.
\end{aligned}
\end{equation}
We decompose the integral $\int_{0}^{t} \left\| \partial_{\tau}\left(a^{\varepsilon}  \operatorname{div}\mathbf{v}^{\varepsilon}\right)\right\|_{m-2}^{2}\mathrm{d}\tau$ into two terms.
\[
	\int_{0}^{t} \left\| \partial_{\tau}\left(a^{\varepsilon}  \operatorname{div}\mathbf{v}^{\varepsilon}\right)\right\|_{m-2}^{2}\mathrm{d}\tau\lesssim\int_{0}^{t}\left( \left\| \partial_{\tau} a^{\varepsilon}  \cdot\operatorname{div}\mathbf{v}^{\varepsilon}\right\|_{m-2}^{2}+ \left\| a^{\varepsilon}  \cdot\partial_{\tau}\operatorname{div}\mathbf{v}^{\varepsilon}\right\|_{m-2}^{2}\right)\mathrm{d}\tau.
\]
Using Lemma \ref{Le1}, the first term on the right-hand side satisfies
\begin{align*}
	&\int_{0}^{t} \left\| \partial_{\tau} a^{\varepsilon}  \cdot\operatorname{div}\mathbf{v}^{\varepsilon}\right\|_{m-2}^{2}\mathrm{d}\tau\\
    \lesssim{}&\sum_{\substack{|\alpha|\leq m-2\\\beta+\gamma=\alpha}}\int_{0}^{t} \left\|Z^\beta\partial_{\tau} a^{\varepsilon}  \cdot Z^\gamma \operatorname{div}\mathbf{v}^{\varepsilon}\right\|_0^2 \mathrm{d}\tau\\
	\lesssim{}&\sum_{\substack{|\alpha|\leq m-2\\\beta+\gamma=\alpha}}\int_{0}^{t} \left\|Z^\beta\partial_{\tau} a^{\varepsilon} \right\|_0\left\|Z^\beta\partial_{1}\partial_{\tau} a^{\varepsilon} \right\|_0\left\| Z^\gamma \operatorname{div}\mathbf{v}^{\varepsilon}\right\|_{0}^{\frac{1}{2}}\left\|\partial_{2} Z^\gamma \operatorname{div}\mathbf{v}^{\varepsilon}\right\|_{0}^{\frac{1}{2}}\\
	&\cdot\left\|\partial_{3} Z^\gamma \operatorname{div}\mathbf{v}^{\varepsilon}\right\|_{0}^{\frac{1}{2}}\left\|\partial_{23} Z^\gamma \operatorname{div}\mathbf{v}^{\varepsilon}\right\|_{0}^{\frac{1}{2}} \mathrm{d}\tau\\
    \lesssim{}&\sup_{0\leq \tau \leq t}\left(\left\| \mathbf{v}^{\varepsilon}\right\|_{m}^{2} + \left\| \partial_3 \mathbf{v}^{\varepsilon}\right\|_{m-1}^{2}+\left\| \partial_{\tau}a^{\varepsilon}\right\|_{m-2}^{2}\right)\\
    &\cdot\int_{0}^{t}\left( \left\|\partial\mathbf{v}^{\varepsilon}\right\|_{m}^2+\left\|\partial_3\partial\mathbf{v}^{\varepsilon}\right\|_{m-1}^2+\left\| \partial_{\tau}a^{\varepsilon}\right\|_{m-1}^{2}\right) \mathrm{d}\tau.
\end{align*}
In the same way, the second term is estimated as follows.
\begin{align*}
	&\int_{0}^{t} \left\|  a^{\varepsilon}  \cdot\partial_{\tau}\operatorname{div}\mathbf{v}^{\varepsilon}\right\|_{m-2}^{2}\mathrm{d}\tau\\
    \lesssim{}&\sum_{\substack{|\alpha|\leq m-2\\\beta+\gamma=\alpha}}\int_{0}^{t} \left\|Z^\beta a^{\varepsilon}  \cdot Z^\gamma \partial_{\tau}\operatorname{div}\mathbf{v}^{\varepsilon}\right\|_0^2 \mathrm{d}\tau\\
    \lesssim{}&\sum_{\substack{|\alpha|\leq m-2}}\int_{0}^{t} \left\| a^{\varepsilon} \right\|_{L^{\infty}}^2\left\|Z^\alpha\partial_{\tau}\operatorname{div}\mathbf{v}^{\varepsilon}\right\|_{0}^2 \mathrm{d}\tau\\
    &+\sum_{\substack{|\alpha|\leq m-2\\\beta+\gamma=\alpha, \beta\neq0}}\int_{0}^{t} \left\|Z^\beta a^{\varepsilon} \cdot Z^\gamma\partial_{\tau} \operatorname{div}\mathbf{v}^{\varepsilon}\right\|_{0}^2 \mathrm{d}\tau\\
	\lesssim{}&\sum_{\substack{|\alpha|\leq m-2}}\int_{0}^{t} \left\| a^{\varepsilon} \right\|_{2}\left\|\partial_3 a^{\varepsilon} \right\|_{2}\left\|Z^\alpha\partial_{\tau}\operatorname{div}\mathbf{v}^{\varepsilon}\right\|_{0}^2 \mathrm{d}\tau\\
	&+\sum_{\substack{|\alpha|\leq m-2\\\beta+\gamma=\alpha, \beta\neq0}}\int_{0}^{t} \left\|Z^\beta a^{\varepsilon} \right\|_0^{\frac{1}{2}}\left\|\partial_{2}Z^\beta a^{\varepsilon} \right\|_0^{\frac{1}{2}}
    \left\|\partial_{3}Z^\beta a^{\varepsilon} \right\|_0^{\frac{1}{2}}\left\|\partial_{23}Z^\beta a^{\varepsilon} \right\|_0^{\frac{1}{2}}\\
    &\cdot\left\| Z^\gamma\partial_{\tau} \operatorname{div}\mathbf{v}^{\varepsilon}\right\|_{0}\left\| \partial_1 Z^\gamma \partial_{\tau}\operatorname{div}\mathbf{v}^{\varepsilon}\right\|_{0} \mathrm{d}\tau\\
    \lesssim{}&\sup_{0\leq \tau \leq t}\left(\left\| a^{\varepsilon}\right\|_{m}^{2} +\left\| \partial_{3}a^{\varepsilon}\right\|_{m-1}^{2}\right)\int_{0}^{t}\left(\left\| \partial_{\tau}\mathbf{v}^{\varepsilon}\right\|_{m-1}^{2} +\left\| \partial_3\partial_{\tau}\mathbf{v}^{\varepsilon}\right\|_{m-2}^{2}\right) \mathrm{d}\tau.
\end{align*}
By combining the foregoing two estimates, we derive
\begin{equation}\label{G38}
\begin{aligned}
	&\int_{0}^{t} \left\| \partial_{\tau}\left(a^{\varepsilon}  \operatorname{div}\mathbf{v}^{\varepsilon}\right)\right\|_{m-2}^{2}\mathrm{d}\tau\\
\lesssim{}&\sup_{0\leq \tau \leq t}\left(\left\| \left(a^{\varepsilon},\mathbf{v}^{\varepsilon}\right)\right\|_{m}^{2} +\left\| \partial_{3}\left(a^{\varepsilon},\mathbf{v}^{\varepsilon}\right)\right\|_{m-1}^{2}+\left\| \partial_{\tau}a^{\varepsilon}\right\|_{m-2}^{2}\right)\\
&\cdot\int_{0}^{t}\left(\left\|\partial\mathbf{v}^{\varepsilon}\right\|_{m}^2+\left\|\partial_3\partial\mathbf{v}^{\varepsilon}\right\|_{m-1}^2+\left\| \partial_{\tau}\left(a^{\varepsilon},\mathbf{v}^{\varepsilon}\right)\right\|_{m-1}^{2}+\left\|\partial_3\partial_{\tau}\mathbf{v}^{\varepsilon}\right\|_{m-2}^2\right) \mathrm{d}\tau.
\end{aligned}
\end{equation}
We now move to estimate $\| f^{\varepsilon}_2\|_{m-2}^{2}$.
Recall that
\begin{align*}
f^{\varepsilon}_2 = &-\mathbf{v}^{\varepsilon} \cdot \nabla\mathbf{v}^{\varepsilon} + \mathbf{B}^{\varepsilon} \cdot \nabla\mathbf{B}^{\varepsilon} - \mathbf{B}^{\varepsilon}\nabla\mathbf{B}^{\varepsilon} - J\left(a^{\varepsilon}\right)\nabla a^{\varepsilon} \\
& - I\left(a^{\varepsilon}\right)\left(\mu\Delta\mathbf{v}^{\varepsilon} + (\mu + \lambda)\nabla\operatorname{div}\mathbf{v}^{\varepsilon} + \mathbf{B}^{\varepsilon} \cdot \nabla\mathbf{B}^{\varepsilon} - \mathbf{B}^{\varepsilon}\nabla\mathbf{B}^{\varepsilon}\right).
\end{align*}
Then, the following inequality holds
\begin{align*}
\left\|f^{\varepsilon}_2\right\|_{m-2}^{2} \lesssim{}&\left\|\mathbf{v}^{\varepsilon}\partial\mathbf{v}^{\varepsilon}\right\|_{m-2}^{2}+\left\|\mathbf{B}^{\varepsilon}\partial\mathbf{B}^{\varepsilon}\right\|_{m-2}^{2}+\left\| J\left(a^{\varepsilon}\right) \partial a^{\varepsilon}\right\|_{m-2}^{2}\\
&+\left\| I\left(a^{\varepsilon}\right) \partial^2\mathbf{v}^{\varepsilon}\right\|_{m-2}^{2} + \left\| I\left(a^{\varepsilon}\right)\mathbf{B}^{\varepsilon}\partial\mathbf{B}^{\varepsilon}\right\|_{m-2}^{2}.
\end{align*}
First, it follows from  Lemma \ref{Le1} that
\begin{align*}
	& \left\|\mathbf{v}^{\varepsilon} \partial \mathbf{v}^{\varepsilon}\right\|_{m-2}^{2}\\
    \lesssim{}&\sum_{\substack{|\alpha|\leq m-2\\\beta+\gamma=\alpha}} \left\|Z^\beta \mathbf{v}^{\varepsilon}  \cdot Z^\gamma \partial \mathbf{v}^{\varepsilon}\right\|_0^2\\
    \lesssim{}&\sum_{\substack{|\alpha|\leq m-2\\\beta+\gamma=\alpha}}\left\|Z^\beta \mathbf{v}^{\varepsilon} \right\|_{0}^{\frac{1}{2}}\left\|\partial_{2}Z^\beta \mathbf{v}^{\varepsilon} \right\|_{0}^{\frac{1}{2}}\left\|\partial_{3}Z^\beta \mathbf{v}^{\varepsilon} \right\|_{0}^{\frac{1}{2}}\left\|\partial_{23}Z^\beta \mathbf{v}^{\varepsilon} \right\|_{0}^{\frac{1}{2}}
    \left\|Z^\gamma \partial\mathbf{v}^{\varepsilon}\right\|_{0}\left\|\partial_{1}Z^\gamma \partial\mathbf{v}^{\varepsilon}\right\|_{0}\\
    \lesssim{}&\left\| \mathbf{v}^{\varepsilon}\right\|_{m}^{4} + \left\| \partial_3 \mathbf{v}^{\varepsilon}\right\|_{m-1}^{4}.
\end{align*}
In the same fashion, one can show that
\[
	\left\|\mathbf{B}^{\varepsilon}\partial\mathbf{B}^{\varepsilon}\right\|_{m-2}^{2}
    \lesssim\left\| \mathbf{B}^{\varepsilon}\right\|_{m}^{4} + \left\| \partial_3 \mathbf{B}^{\varepsilon}\right\|_{m-1}^{4}.
\]
Estimating the nonlinear terms involving composition functions requires a
more intricate approach.
Let $ F(\cdot) $ be a smooth function. Direct computation gives that for $ m\geq 4 $,
\[
\left\|F\left(a^{\varepsilon}\right)\right\|_m \lesssim \left\|a^{\varepsilon}\right\|_m + \left\|\partial_3 a^{\varepsilon}\right\|_{m-1},
\qquad
\left\|\partial F\left(a^{\varepsilon}\right)\right\|_{m-1} \lesssim \left\|\partial a^{\varepsilon}\right\|_{m-1},
\]
and if the condition $ F(0)=0 $ is further satisfied, we obtain
\[
\left\|F\left(a^{\varepsilon}\right)\right\|_{L^{\infty}} \lesssim \left\|a^{\varepsilon}\right\|_{L^{\infty}}.
\]
The proof is analogous to Lemma A.3 in \cite{k2} and is omitted here. Now it is ready to estimate the nonlinear terms involving composition functions.
According to  Lemma \ref{Le1}, we have
\begin{align*}
	&\left\| J\left(a^{\varepsilon}\right) \partial  a^{\varepsilon}\right\|_{m-2}^{2}\\
    \lesssim{}&\sum_{\substack{|\alpha|\leq m-2\\\beta+\gamma=\alpha}}\left\|Z^\beta J\left(a^{\varepsilon}\right)  \cdot Z^\gamma \partial a^{\varepsilon}\right\|_0^2\\
    \lesssim{}&\sum_{\substack{|\alpha|\leq m-2\\\beta+\gamma=\alpha}}\left\|Z^\beta  J\left(a^{\varepsilon}\right)  \right\|_{0}^{\frac{1}{2}}\left\|\partial_{2}Z^\beta  J\left(a^{\varepsilon}\right)  \right\|_{0}^{\frac{1}{2}}\left\|\partial_{3}Z^\beta  J\left(a^{\varepsilon}\right)  \right\|_{0}^{\frac{1}{2}}\left\|\partial_{23}Z^\beta  J\left(a^{\varepsilon}\right)  \right\|_{0}^{\frac{1}{2}}
    \left\|Z^\gamma \partial a^{\varepsilon}\right\|_{0}\left\|\partial_{1}Z^\gamma \partial a^{\varepsilon}\right\|_{0}\\
    \lesssim{}&\left\|a^{\varepsilon}\right\|_{m}^{4} + \left\| \partial_3 a^{\varepsilon}\right\|_{m-1}^{4}.
\end{align*}
With an aid of  Lemma \ref{Le1}, there holds
\begin{align*}
	&\left\| I\left(a^{\varepsilon}\right) \partial^2\mathbf{v}^{\varepsilon}\right\|_{m-2}^{2}\\
    \lesssim{}&\sum_{\substack{|\alpha|\leq m-2\\\beta+\gamma=\alpha}}\left\|Z^\beta I\left(a^{\varepsilon}\right)  \cdot Z^\gamma \partial^2\mathbf{v}^{\varepsilon}\right\|_0^2\\
    \lesssim{}& \sum_{\substack{|\alpha|\leq m-2}}\left\|  I\left(a^{\varepsilon}\right)   \right\|_{L^\infty}^2\cdot\left\| Z^\alpha \partial^2\mathbf{v}^{\varepsilon}\right\|_{0}^2+\sum_{\substack{|\alpha|\leq m-2\\\beta+\gamma=\alpha, \beta\neq0}}\left\|Z^\beta  I\left(a^{\varepsilon}\right) \cdot Z^\gamma \partial^2\mathbf{v}^{\varepsilon}\right\|_{0}^2\\
	\lesssim{}&\sum_{\substack{|\alpha|\leq m-2}}\left\| a^{\varepsilon}  \right\|_2\left\| \partial_3 a^{\varepsilon}  \right\|_2\left\| Z^\alpha \partial^2\mathbf{v}^{\varepsilon}\right\|_{0}^2\\
    &+\sum_{\substack{|\alpha|\leq m-2\\ \beta+\gamma=\alpha,\beta\neq0}}\left\|Z^\beta  I\left(a^{\varepsilon}\right)\right\|_{0}^{\frac{1}{2}}\left\|\partial_{1}Z^\beta  I\left(a^{\varepsilon}\right)\right\|_{0}^{\frac{1}{2}}\left\|\partial_{3}Z^\beta  I\left(a^{\varepsilon}\right)\right\|_{0}^{\frac{1}{2}}\left\|\partial_{13}Z^\beta  I\left(a^{\varepsilon}\right)\right\|_{0}^{\frac{1}{2}}\\
    &\cdot\left\| Z^\gamma \partial^2\mathbf{v}^{\varepsilon}\right\|_{0}\left\|\partial_{2} Z^\gamma \partial^2\mathbf{v}^{\varepsilon}\right\|_{0}\\
    \lesssim{}&\left\|\left(a^{\varepsilon}, \mathbf{v}^{\varepsilon}\right)\right\|_{m}^{4} + \left\| \partial_3 \left(a^{\varepsilon}, \mathbf{v}^{\varepsilon}\right)\right\|_{m-1}^{4}+\left\|\partial_3^2\mathbf{v}^{\varepsilon}\right\|_{m-2}^4.
\end{align*}
Finally, it follows from  Lemma \ref{Le1} that
\begin{align*}
	& \left\| I\left(a^{\varepsilon}\right)\mathbf{B}^{\varepsilon}\partial\mathbf{B}^{\varepsilon}\right\|_{m-2}^{2}\\
\lesssim{}&\sum_{\substack{|\alpha|\leq m-2\\\beta+\gamma+\delta=\alpha}} \left\|Z^\beta I\left(a^{\varepsilon}\right)\cdot Z^\gamma\mathbf{B}^{\varepsilon} \cdot Z^{\delta}\partial\mathbf{B}^{\varepsilon}\right\|_0^2\\
    \lesssim{}&\sum_{\substack{|\alpha|\leq m-2\\\gamma+\delta=\alpha}}\left\|I\left(a^{\varepsilon}\right)\right\|_{2}\left\|\partial_3 I\left(a^{\varepsilon}\right)\right\|_{2}\left\|Z^\gamma\mathbf{B}^{\varepsilon}\right\|_0^{\frac{1}{2}}\left\|\partial_{2}Z^\gamma\mathbf{B}^{\varepsilon}\right\|_0^{\frac{1}{2}}\\
    &\cdot\left\|\partial_{3}Z^\gamma\mathbf{B}^{\varepsilon}\right\|_0^{\frac{1}{2}}\left\|\partial_{23}Z^\gamma\mathbf{B}^{\varepsilon}\right\|_0^{\frac{1}{2}} \left\| Z^{\delta}\partial\mathbf{B}^{\varepsilon}\right\|_0\left\| \partial_1Z^{\delta}\partial\mathbf{B}^{\varepsilon}\right\|_0\\
    &+\sum_{\substack{|\alpha|\leq m-2\\\beta+\gamma+\delta=\alpha, \beta\neq0}}\left\|Z^\beta I\left(a^{\varepsilon}\right)\right\|_{0}^{\frac{1}{2}}\left\|\partial_2Z^\beta I\left(a^{\varepsilon}\right)\right\|_{0}^{\frac{1}{2}}\left\|\partial_3Z^\beta I\left(a^{\varepsilon}\right)\right\|_{0}^{\frac{1}{2}}\left\|\partial_{23}Z^\beta I\left(a^{\varepsilon}\right)\right\|_{0}^{\frac{1}{2}}\\
    &\cdot\left\|Z^\gamma\mathbf{B}^{\varepsilon}\right\|_2\left\|\partial_{3}Z^\gamma\mathbf{B}^{\varepsilon}\right\|_2\left\|Z^{\delta}\partial\mathbf{B}^{\varepsilon}\right\|_0\left\| \partial_{1}Z^{\delta}\partial\mathbf{B}^{\varepsilon}\right\|_0\\
    \lesssim{}&\left\|\left(a^{\varepsilon}, \mathbf{B}^{\varepsilon}\right)\right\|_{m}^{6} + \left\| \partial_3 \left(a^{\varepsilon}, \mathbf{B}^{\varepsilon}\right)\right\|_{m-1}^{6}.
\end{align*}
Collecting all the above estimates yields
\begin{equation}\label{G45}
\left\|f^{\varepsilon}_2\right\|_{m-2}^{2}
\lesssim \left\|\left(a^{\varepsilon},\mathbf{v}^{\varepsilon},\mathbf{B}^{\varepsilon}\right)\right\|_{m}^{4} + \left\|\partial_3\left(a^{\varepsilon},\mathbf{v}^{\varepsilon},\mathbf{B}^{\varepsilon}\right)\right\|_{m-1}^{4}
+ \left\|\partial_3^2\mathbf{v}^{\varepsilon}\right\|_{m-2}^{4}.
\end{equation}
By a similar argument, we can obtain
\begin{equation}\label{G45.5}
\left\|f^{\varepsilon}_3\right\|_{m-2}^{2}
\lesssim \left\|\left(\mathbf{v}^{\varepsilon},\mathbf{B}^{\varepsilon}\right)\right\|_{m}^{4} + \left\|\partial_3\left(\mathbf{v}^{\varepsilon},\mathbf{B}^{\varepsilon}\right)\right\|_{m-1}^{4}.
\end{equation}
Next, we deal with $\int_{0}^{t} \left\|f^{\varepsilon}_2\right\|_{m-1}^{2}\mathrm{d}\tau$.
Similarly,
\begin{align*}
\int_{0}^{t} \left\|f^{\varepsilon}_2\right\|_{m-1}^{2}\mathrm{d}\tau \lesssim{}&\int_{0}^{t}\Big(\left\|\mathbf{v}^{\varepsilon}\partial\mathbf{v}^{\varepsilon}\right\|_{m-1}^{2}+\left\|\mathbf{B}^{\varepsilon}\partial\mathbf{B}^{\varepsilon}\right\|_{m-1}^{2}+\left\| J\left(a^{\varepsilon}\right) \partial a^{\varepsilon}\right\|_{m-1}^{2}\\
&+\left\| I\left(a^{\varepsilon}\right) \partial^2\mathbf{v}^{\varepsilon}\right\|_{m-1}^{2} + \left\| I\left(a^{\varepsilon}\right)\mathbf{B}^{\varepsilon}\partial\mathbf{B}^{\varepsilon}\right\|_{m-1}^{2}\Big)\mathrm{d}\tau.
\end{align*}
First, it follows from  Lemma \ref{Le1} that
\begin{align*}
	&\int_{0}^{t} \left\|\mathbf{v}^{\varepsilon}\partial\mathbf{v}^{\varepsilon}\right\|_{m-1}^{2}\mathrm{d}\tau\\
    \lesssim{}&\sum_{\substack{|\alpha|\leq m-1\\\beta+\gamma=\alpha}}\int_{0}^{t} \left\|Z^\beta \mathbf{v}^{\varepsilon}  \cdot Z^\gamma \partial\mathbf{v}^{\varepsilon}\right\|_0^2 \mathrm{d}\tau\\
    \lesssim{}&\sum_{\substack{|\alpha|\leq m-1\\\beta+\gamma=\alpha}}\int_{0}^{t} \left\|Z^\beta \mathbf{v}^{\varepsilon} \right\|_{0}^{\frac{1}{2}}\left\|\partial_{1} Z^\beta \mathbf{v}^{\varepsilon} \right\|_{0}^{\frac{1}{2}}\left\|\partial_{3}Z^\beta \mathbf{v}^{\varepsilon} \right\|_{0}^{\frac{1}{2}}\left\|\partial_{13}Z^\beta \mathbf{v}^{\varepsilon} \right\|_{0}^{\frac{1}{2}}\\
    &\cdot\left\|Z^\gamma \partial\mathbf{v}^{\varepsilon}\right\|_{0}\left\|\partial_{2}Z^\gamma \partial\mathbf{v}^{\varepsilon}\right\|_{0} \mathrm{d}\tau\\
    \lesssim{}&\sup_{0\leq \tau \leq t}\left(\left\| \mathbf{v}^{\varepsilon}\right\|_{m}^{2} + \left\| \partial_3 \mathbf{v}^{\varepsilon}\right\|_{m-1}^{2}\right)\int_{0}^{t}\left\|\partial\mathbf{v}^{\varepsilon}\right\|_{m}^2 \mathrm{d}\tau.
\end{align*}
By the same argument, it follows that
\[
	\int_{0}^{t} \left\|\mathbf{B}^{\varepsilon}\partial\mathbf{B}^{\varepsilon}\right\|_{m-1}^{2}\mathrm{d}\tau
    \lesssim\sup_{0\leq \tau \leq t}\left(\left\| \mathbf{B}^{\varepsilon}\right\|_{m}^{2} + \left\| \partial_3 \mathbf{B}^{\varepsilon}\right\|_{m-1}^{2}\right)\int_{0}^{t}\left(\left\|\partial_{h}\mathbf{B}^{\varepsilon}\right\|_{m}^2+\left\|\partial_h\partial_3\mathbf{B}^{\varepsilon}\right\|_{m-1}^2\right) \mathrm{d}\tau.
\]
Then, we use  Lemma \ref{Le1} to get
\begin{align*}
	&\int_{0}^{t} \left\| J\left(a^{\varepsilon}\right) \partial a^{\varepsilon}\right\|_{m-1}^{2}\mathrm{d}\tau\\
    \lesssim{}&\sum_{\substack{|\alpha|\leq m-1\\\beta+\gamma=\alpha}}\int_{0}^{t} \left\|Z^\beta J\left(a^{\varepsilon}\right)  \cdot Z^\gamma \partial a^{\varepsilon}\right\|_0^2 \mathrm{d}\tau\\
    \lesssim{}&\sum_{\substack{|\alpha|\leq m-1}}\int_{0}^{t} \left\|  J\left(a^{\varepsilon}\right)   \right\|_{L^\infty}^2\cdot\left\| Z^\alpha \partial a^{\varepsilon}\right\|_{0}^2 \mathrm{d}\tau+\sum_{\substack{|\alpha|\leq m-1}}\int_{0}^{t} \left\|Z^\alpha  J\left(a^{\varepsilon}\right) \cdot \partial a^{\varepsilon}\right\|_{0}^2 \mathrm{d}\tau\\
    &+\sum_{\substack{|\alpha|\leq m-1\\\beta+\gamma=\alpha\\\beta\neq0,\gamma\neq0}}\int_{0}^{t} \left\|Z^\beta  J\left(a^{\varepsilon}\right) \cdot Z^\gamma \partial a^{\varepsilon}\right\|_{0}^2 \mathrm{d}\tau\\
	\lesssim{}&\sum_{\substack{|\alpha|\leq m-1}}\int_{0}^{t} \left\| a^{\varepsilon}  \right\|_2\left\| \partial_3 a^{\varepsilon}  \right\|_2\left\| Z^\alpha \partial a^{\varepsilon}\right\|_{0}^2 \mathrm{d}\tau\\
	&+\sum_{\substack{|\alpha|\leq m-1}}\int_{0}^{t} \left\|Z^\alpha J\left(a^{\varepsilon}\right) \right\|_0\left\|\partial_3 Z^\alpha J\left(a^{\varepsilon}\right) \right\|_0\left\|\partial a^{\varepsilon}\right\|_{0}^{\frac{1}{2}}\left\|\partial_{1}\partial a^{\varepsilon}\right\|_{0}^{\frac{1}{2}}\\
	&\cdot\left\|\partial_{2}\partial a^{\varepsilon}\right\|_{0}^{\frac{1}{2}}\left\|\partial_{12}\partial a^{\varepsilon}\right\|_{0}^{\frac{1}{2}} \mathrm{d}\tau\\
    &+\sum_{\substack{|\alpha|\leq m-1\\\beta+\gamma=\alpha\\\beta\neq0,\gamma\neq0}}\int_{0}^{t} \left\|Z^\beta  J\left(a^{\varepsilon}\right)\right\|_{0}^{\frac{1}{2}}\left\|\partial_{1}Z^\beta  J\left(a^{\varepsilon}\right)\right\|_{0}^{\frac{1}{2}}\left\|\partial_{3}Z^\beta  J\left(a^{\varepsilon}\right)\right\|_{0}^{\frac{1}{2}}\\
    &\cdot\left\|\partial_{13}Z^\beta  J\left(a^{\varepsilon}\right)\right\|_{0}^{\frac{1}{2}}\left\| Z^\gamma \partial a^{\varepsilon}\right\|_{0}\left\|\partial_{2} Z^\gamma \partial a^{\varepsilon}\right\|_{0} \mathrm{d}\tau\\
    \lesssim{}&\sup_{0\leq \tau \leq t}\left(\left\| a^{\varepsilon}\right\|_{m}^{2} + \left\| \partial_3 a^{\varepsilon}\right\|_{m-1}^{2}\right)\int_{0}^{t}\left\|\partial a^{\varepsilon}\right\|_{m-1}^2 \mathrm{d}\tau.
\end{align*}
Thanks to  Lemma \ref{Le1}, there holds
\begin{align*}
	&\int_{0}^{t} \left\| I\left(a^{\varepsilon}\right) \partial^2\mathbf{v}^{\varepsilon}\right\|_{m-1}^{2}\mathrm{d}\tau\\
    \lesssim{}&\sum_{\substack{|\alpha|\leq m-1\\\beta+\gamma=\alpha}}\int_{0}^{t} \left\|Z^\beta I\left(a^{\varepsilon}\right)  \cdot Z^\gamma \partial^2\mathbf{v}^{\varepsilon}\right\|_0^2 \mathrm{d}\tau\\
    \lesssim{}&\sum_{\substack{|\alpha|\leq m-1}}\int_{0}^{t} \left\|  I\left(a^{\varepsilon}\right)   \right\|_{L^\infty}^2\cdot\left\| Z^\alpha \partial^2\mathbf{v}^{\varepsilon}\right\|_{0}^2 \mathrm{d}\tau+\sum_{\substack{|\alpha|\leq m-1}}\int_{0}^{t} \left\|Z^\alpha  I\left(a^{\varepsilon}\right) \cdot \partial^2\mathbf{v}^{\varepsilon}\right\|_{0}^2 \mathrm{d}\tau\\
    &+\sum_{\substack{|\alpha|\leq m-1\\\beta+\gamma=\alpha\\\beta\neq0,\gamma\neq0}}\int_{0}^{t} \left\|Z^\beta  I\left(a^{\varepsilon}\right) \cdot Z^\gamma \partial^2\mathbf{v}^{\varepsilon}\right\|_{0}^2 \mathrm{d}\tau\\
	\lesssim{}&\sum_{\substack{|\alpha|\leq m-1}}\int_{0}^{t} \left\| a^{\varepsilon}  \right\|_2\left\| \partial_3 a^{\varepsilon}  \right\|_2\left\| Z^\alpha \partial^2\mathbf{v}^{\varepsilon}\right\|_{0}^2 \mathrm{d}\tau\\
	&+\sum_{\substack{|\alpha|\leq m-1}}\int_{0}^{t} \left\|Z^\alpha I\left(a^{\varepsilon}\right) \right\|_0\left\|\partial_3 Z^\alpha I\left(a^{\varepsilon}\right) \right\|_0\left\|\partial^2\mathbf{v}^{\varepsilon}\right\|_{0}^{\frac{1}{2}}\left\|\partial_{1}\partial^2\mathbf{v}^{\varepsilon}\right\|_{0}^{\frac{1}{2}}\\
	&\cdot\left\|\partial_{2}\partial^2\mathbf{v}^{\varepsilon}\right\|_{0}^{\frac{1}{2}}\left\|\partial_{12}\partial^2\mathbf{v}^{\varepsilon}\right\|_{0}^{\frac{1}{2}} \mathrm{d}\tau\\
    &+\sum_{\substack{|\alpha|\leq m-1\\\beta+\gamma=\alpha\\\beta\neq0,\gamma\neq0}}\int_{0}^{t} \left\|Z^\beta  I\left(a^{\varepsilon}\right)\right\|_{0}^{\frac{1}{2}}\left\|\partial_{1}Z^\beta  I\left(a^{\varepsilon}\right)\right\|_{0}^{\frac{1}{2}}\left\|\partial_{3}Z^\beta  I\left(a^{\varepsilon}\right)\right\|_{0}^{\frac{1}{2}}\\
    &\cdot\left\|\partial_{13}Z^\beta  I\left(a^{\varepsilon}\right)\right\|_{0}^{\frac{1}{2}}\left\| Z^\gamma \partial^2\mathbf{v}^{\varepsilon}\right\|_{0}\left\|\partial_{2} Z^\gamma \partial^2\mathbf{v}^{\varepsilon}\right\|_{0} \mathrm{d}\tau\\
    \lesssim{}&\sup_{0\leq \tau \leq t}\left(\left\| a^{\varepsilon}\right\|_{m}^{2} + \left\| \partial_3 a^{\varepsilon}\right\|_{m-1}^{2}\right)\int_{0}^{t}\left(\left\|\partial\mathbf{v}^{\varepsilon}\right\|_{m}^2+\left\|\partial\partial_3\mathbf{v}^{\varepsilon}\right\|_{m-1}^2\right) \mathrm{d}\tau.
\end{align*}
Finally, it follows from  Lemma \ref{Le1} that
\begin{align*}
	&\int_{0}^{t} \left\| I\left(a^{\varepsilon}\right)\mathbf{B}^{\varepsilon}\partial\mathbf{B}^{\varepsilon}\right\|_{m-1}^{2}\mathrm{d}\tau\\
\lesssim{}&\sum_{\substack{|\alpha|\leq m-1\\\beta+\gamma+\delta=\alpha}}\int_{0}^{t} \left\|Z^\beta I\left(a^{\varepsilon}\right)\cdot Z^\gamma\mathbf{B}^{\varepsilon} \cdot Z^{\delta}\partial\mathbf{B}^{\varepsilon}\right\|_0^2 \mathrm{d}\tau\\
    \lesssim{}&\sum_{\substack{|\alpha|\leq m-1\\|\beta|\leq m-3\\\beta+\gamma+\delta=\alpha}}\int_{0}^{t} \left\|Z^\beta I\left(a^{\varepsilon}\right)\right\|_{2}\left\|\partial_3Z^\beta I\left(a^{\varepsilon}\right)\right\|_{2}\left\|Z^\gamma\mathbf{B}^{\varepsilon}\right\|_0^{\frac{1}{2}}\left\|\partial_{2}Z^\gamma\mathbf{B}^{\varepsilon}\right\|_0^{\frac{1}{2}}\\
    &\cdot\left\|\partial_{3}Z^\gamma\mathbf{B}^{\varepsilon}\right\|_0^{\frac{1}{2}}\left\|\partial_{23}Z^\gamma\mathbf{B}^{\varepsilon}\right\|_0^{\frac{1}{2}} \left\| Z^{\delta}\partial\mathbf{B}^{\varepsilon}\right\|_0\left\| \partial_1Z^{\delta}\partial\mathbf{B}^{\varepsilon}\right\|_0 \mathrm{d}\tau\\
    &+\sum_{\substack{|\alpha|\leq m-1\\|\beta|\geq m-2\\\beta+\gamma+\delta=\alpha}}\int_{0}^{t} \left\|Z^\beta I\left(a^{\varepsilon}\right)\right\|_{0}\left\|\partial_3Z^\beta I\left(a^{\varepsilon}\right)\right\|_{0}\left\|Z^\gamma\mathbf{B}^{\varepsilon}\right\|_2\left\|\partial_{3}Z^\gamma\mathbf{B}^{\varepsilon}\right\|_2\\
    &\cdot\left\|Z^{\delta}\partial\mathbf{B}^{\varepsilon}\right\|_0^{\frac{1}{2}}\left\| \partial_{1}Z^{\delta}\partial\mathbf{B}^{\varepsilon}\right\|_0^{\frac{1}{2}}\left\| \partial_2 Z^{\delta}\partial\mathbf{B}^{\varepsilon}\right\|_0^{\frac{1}{2}}\left\| \partial_{12}Z^{\delta}\partial\mathbf{B}^{\varepsilon}\right\|_0^{\frac{1}{2}} \mathrm{d}\tau\\
    \lesssim{}&\sup_{0\leq \tau \leq t}\left(\left\| \left(a^{\varepsilon},\mathbf{B}^{\varepsilon}\right)\right\|_{m}^{4} + \left\| \partial_3 \left(a^{\varepsilon},\mathbf{B}^{\varepsilon}\right)\right\|_{m-1}^{4}\right)\\
    &\cdot\int_{0}^{t}\left(\left\|\partial a^{\varepsilon}\right\|_{m-1}^2+\left\|\partial_{h}\mathbf{B}^{\varepsilon}\right\|_{m}^2+\left\|\partial_h\partial_3\mathbf{B}^{\varepsilon}\right\|_{m-1}^2\right) \mathrm{d}\tau.
\end{align*}
Combining all the above estimates leads to
\begin{equation}\label{G51}
\begin{aligned}
\int_{0}^{t} \left\|f^{\varepsilon}_2\right\|_{m-1}^{2}\mathrm{d}\tau
\lesssim& \sup_{0\leq \tau \leq t}\Big(\left\|\left(a^{\varepsilon},\mathbf{v}^{\varepsilon},\mathbf{B}^{\varepsilon}\right)\right\|_{m}^{2} + \left\|\partial_3\left(a^{\varepsilon},\mathbf{v}^{\varepsilon},\mathbf{B}^{\varepsilon}\right)\right\|_{m-1}^{2}\Big)\\
&\cdot \int_{0}^{t}\Big(\left\|\partial a^{\varepsilon} \right\|_{m-1}^2+\left\|\left(\partial\mathbf{v}^{\varepsilon}, \partial_h\mathbf{B}^{\varepsilon}\right)\right\|_{m}^2
        +\left\|\partial_3\left(\partial\mathbf{v}^{\varepsilon}, \partial_h\mathbf{B}^{\varepsilon}\right)\right\|_{m-1}^2\Big)\mathrm{d}\tau.
\end{aligned}
\end{equation}
Now, we turn to derive the estimate for $\sum_{|\alpha|\leq m} \int_{0}^{t}  \left|{A} _{\alpha}\right|  \mathrm{d} \tau$.
The estimate can be divided into three parts.
\begin{align*}
  &\sum_{|\alpha|\leq m} \int_{0}^{t}  \left|{A} _{\alpha}\right|  \mathrm{d} \tau\\
\lesssim{}&\sum_{|\alpha|\leq m}\int_{0}^{t}\left( \left| \left( Z^{\alpha}f^{\varepsilon}_1,  Z^{\alpha} a^{\varepsilon}\right)\right|+\left| \left( Z^{\alpha}f^{\varepsilon}_2,  Z^{\alpha} \mathbf{v}^{\varepsilon}\right)\right|+\left| \left( Z^{\alpha}f^{\varepsilon}_3,  Z^{\alpha} \mathbf{B}^{\varepsilon}\right)\right|\right)\mathrm{d}\tau\\
\overset{def}{=}{}& H_{1}+H_{2}+H_{3}.
\end{align*}
As for $H_{1}$, we have the following splitting:
\begin{align*}
H_{1}
\lesssim{}&\sum_{|\alpha|\leq m}\int_{0}^{t} \left| \left( Z^{\alpha}\left(a^{\varepsilon}  \operatorname{div}\mathbf{v}^{\varepsilon}+\mathbf{v}^{\varepsilon}\cdot\nabla a^{\varepsilon}\right),  Z^{\alpha} a^{\varepsilon}\right)\right|\mathrm{d}\tau\\
\lesssim{}&\int_{0}^{t} \left| \left( a^{\varepsilon}  \operatorname{div}\mathbf{v}^{\varepsilon}, a^{\varepsilon}\right)\right|\mathrm{d}\tau+\sum_{1\leq|\alpha|\leq m}\int_{0}^{t} \left| \left( Z^{\alpha}\left(a^{\varepsilon}  \operatorname{div}\mathbf{v}^{\varepsilon}\right),  Z^{\alpha} a^{\varepsilon}\right)\right|\mathrm{d}\tau\\	
&+\sum_{|\alpha|\leq m}\int_{0}^{t}\int_{\mathbb{R}_+^3} \left|\operatorname{div}\mathbf{v}^{\varepsilon}\right| \cdot \left|Z^\alpha a^{\varepsilon}\right|^2 \mathrm{d}\mathbf{x}\mathrm{d}\tau+\sum_{|\alpha|\leq m}\int_{0}^{t}\left|\left( v_3^{\varepsilon} \left[Z^\alpha , \partial_3\right] a^{\varepsilon}, Z^\alpha a^{\varepsilon}\right) \right|\mathrm{d}\tau\\
&+\sum_{\substack{1\leq|\alpha|\leq m\\\beta+\gamma=\alpha,\beta \neq 0}}\int_{0}^{t} \left| \left( Z^\beta \mathbf{v}^{\varepsilon} \cdot Z^\gamma \nabla a^{\varepsilon}, Z^\alpha a^{\varepsilon} \right)\right|\mathrm{d}\tau\\
\overset{def}{=}{}& H_{1,1}+H_{1,2}+H_{1,3}+H_{1,4}+H_{1,5}.
\end{align*}
Applying Lemma \ref{Le1}, it holds
\begin{align*}
H_{1,1}
\lesssim{}&\int_{0}^{t} \left\| a^{\varepsilon}\right\|_0^{\frac{1}{2}}\left\| \partial_1a^{\varepsilon}\right\|_0^{\frac{1}{2}}\left\| a^{\varepsilon}\right\|_0^{\frac{1}{2}}\left\| \partial_3a^{\varepsilon}\right\|_0^{\frac{1}{2}} \left\| \operatorname{div}\mathbf{v}^{\varepsilon}\right\|_0^{\frac{1}{2}}\left\| \partial_2\operatorname{div}\mathbf{v}^{\varepsilon}\right\|_0^{\frac{1}{2}}\mathrm{d}\tau\\
\lesssim{}&\sup_{0\leq \tau \leq t}\left\| a^{\varepsilon}\right\|_{m}\int_{0}^{t}\left(\left\| \partial a^{\varepsilon}\right\|_{m-1}^{2}+\left\|\partial\mathbf{v}^{\varepsilon}\right\|_{m}^2\right) \mathrm{d}\tau.
\end{align*}
Using Young's inequality and substituting \eqref{G30}, we have
\begin{align*}
H_{1,2}
\lesssim{}&\delta_4\int_{0}^{t} \left\|\partial a^{\varepsilon}\right\|_{m-1}^2\mathrm{d}\tau+\int_{0}^{t} \left\|a^{\varepsilon}  \operatorname{div}\mathbf{v}^{\varepsilon}\right\|_m^2\mathrm{d}\tau\\
\lesssim{}&\delta_4\int_{0}^{t} \left\|\partial a^{\varepsilon}\right\|_{m-1}^2\mathrm{d}\tau+\sup_{0\leq \tau \leq t}\left(\left\| a^{\varepsilon}\right\|_{m}^2+ \left\| \partial_3 a^{\varepsilon}\right\|_{m-1}^2\right)\\
&\cdot\int_{0}^{t}\left(\left\|\partial\mathbf{v}^{\varepsilon}\right\|_{m}^2+\left\|\partial_3\partial\mathbf{v}^{\varepsilon}\right\|_{m-1}^2\right) \mathrm{d}\tau.
\end{align*}
The application of Lemma \ref{Le1} yields directly
\begin{align*}
H_{1,3}
\lesssim{}&\int_{0}^{t}\int_{\mathbb{R}_+^3} \left|\operatorname{div}\mathbf{v}^{\varepsilon}\right| \cdot \left| a^{\varepsilon}\right|^2 \mathrm{d}\mathbf{x} \mathrm{d}\tau+\sum_{1\leq|\alpha|\leq m}\int_{0}^{t}\int_{\mathbb{R}_+^3} \left|\operatorname{div}\mathbf{v}^{\varepsilon}\right| \cdot \left|Z^\alpha a^{\varepsilon}\right|^2 \mathrm{d}\mathbf{x} \mathrm{d}\tau\\
\lesssim{}&\int_{0}^{t} \left\| a^{\varepsilon}\right\|_0^{\frac{1}{2}}\left\| \partial_1a^{\varepsilon}\right\|_0^{\frac{1}{2}}\left\| a^{\varepsilon}\right\|_0^{\frac{1}{2}}\left\| \partial_3a^{\varepsilon}\right\|_0^{\frac{1}{2}} \left\| \operatorname{div}\mathbf{v}^{\varepsilon}\right\|_0^{\frac{1}{2}}\left\| \partial_2\operatorname{div}\mathbf{v}^{\varepsilon}\right\|_0^{\frac{1}{2}}\mathrm{d}\tau\\
&+\sum_{1\leq|\alpha|\leq m}\int_{0}^{t} \left\|Z^{\alpha} a^{\varepsilon}\right\|_0^2\left\| \operatorname{div}\mathbf{v}^{\varepsilon}\right\|_2^{\frac{1}{2}}\left\| \partial_3\operatorname{div}\mathbf{v}^{\varepsilon}\right\|_2^{\frac{1}{2}}\mathrm{d}\tau\\
\lesssim{}&\sup_{0\leq \tau \leq t}\left\| a^{\varepsilon}\right\|_{m}\int_{0}^{t}\left(\left\| \partial a^{\varepsilon}\right\|_{m-1}^{2}+\left\|\partial\mathbf{v}^{\varepsilon}\right\|_{m}^2+\left\|\partial\partial_3\mathbf{v}^{\varepsilon}\right\|_{m-1}^2\right) \mathrm{d}\tau.
\end{align*}
The term $H_{1,4}$ vanishes if $\alpha_3 =0$, so we only consider $\alpha_3 \ge 1$.
By the relevant properties of commutators and applying Lemma \ref{Le1}, we have
\begin{align*}
H_{1,4}
\lesssim{}&\sum_{\substack{|\alpha|\leq m\\\alpha_3 \geq 1}}  \sum_{k=0}^{\alpha_3-1} \int_{0}^{t}\int_{\mathbb{R}_+^3}\left|v_3^{\varepsilon}\right|
\left| Z^{\alpha_{h}}_{h} Z^k_3\partial_3 a^{\varepsilon} \right| \left| Z^\alpha a^{\varepsilon}\right| \mathrm{d}\mathbf{x}\mathrm{d}\tau\\
	\lesssim{}&\sum_{\substack{|\alpha|\leq m\\\alpha_3 \geq 1}}  \sum_{k=0}^{\alpha_3-1} \int_{0}^{t}
	\left\| v_3^{\varepsilon}\right\|_{2}^{\frac{1}{2}}\left\| \partial_3 v_3^{\varepsilon}\right\|_{2}^{\frac{1}{2}}\left\| Z^{\alpha_{h}}_{h}Z^{k}_3\partial_3 a^{\varepsilon} \right\|_0 \left\| Z^\alpha a^{\varepsilon}\right\|_0 \mathrm{d}\tau\\
\lesssim{}&\sup_{0\leq \tau \leq t}\left(\left\|\mathbf{v}^{\varepsilon}\right\|_{m}+\left\|\partial_3\mathbf{v}^{\varepsilon}\right\|_{m-1}\right)\int_{0}^{t}\left\| \partial a^{\varepsilon}\right\|_{m-1}^{2} \mathrm{d}\tau.
\end{align*}
Again, applying Lemma \ref{Le1}, it holds
\begin{align*}
H_{1,5}
\lesssim{}&\sum_{\substack{1\leq|\alpha|\leq m\\\beta+\gamma=\alpha\\|\beta|=1}}\int_{0}^{t} \left\|Z^\beta \mathbf{v}^{\varepsilon}\right\|_{2}^{\frac{1}{2}}\left\|\partial_{3}Z^\beta \mathbf{v}^{\varepsilon}\right\|_{2}^{\frac{1}{2}} \left\| Z^\gamma \partial a^{\varepsilon}\right\|_0\left\| Z^\alpha a^{\varepsilon} \right\|_0\mathrm{d}\tau\\
&+\sum_{\substack{1\leq|\alpha|\leq m\\\beta+\gamma=\alpha\\2\leq|\beta|\leq m-1}}\int_{0}^{t} \left\|Z^\beta \mathbf{v}^{\varepsilon}\right\|_{0}^{\frac{1}{4}}\left\|\partial_{2}Z^\beta \mathbf{v}^{\varepsilon}\right\|_{0}^{\frac{1}{4}}\left\|\partial_{3}Z^\beta \mathbf{v}^{\varepsilon}\right\|_{0}^{\frac{1}{4}}\left\|\partial_{23}Z^\beta \mathbf{v}^{\varepsilon}\right\|_{0}^{\frac{1}{4}}\\
&\cdot\left\| Z^\gamma \partial a^{\varepsilon}\right\|_0^{\frac{1}{2}}\left\| \partial_1Z^\gamma \partial a^{\varepsilon}\right\|_0^{\frac{1}{2}}\left\| Z^\alpha a^{\varepsilon} \right\|_0\mathrm{d}\tau\\
&+\sum_{1\leq|\alpha|\leq m}\int_{0}^{t} \left\|Z^{\alpha} \mathbf{v}^{\varepsilon}\right\|_{0}^{\frac{1}{2}}\left\|\partial_{3}Z^{\alpha} \mathbf{v}^{\varepsilon}\right\|_{0}^{\frac{1}{2}} \\
&\cdot\left\|\partial a^{\varepsilon}\right\|_0^{\frac{1}{4}}\left\| \partial_{1}\partial a^{\varepsilon}\right\|_0^{\frac{1}{4}}\left\| \partial_{2}\partial a^{\varepsilon}\right\|_0^{\frac{1}{4}}\left\| \partial_{12}\partial a^{\varepsilon}\right\|_0^{\frac{1}{4}}\left\| Z^\alpha a^{\varepsilon} \right\|_0\mathrm{d}\tau\\
\lesssim{}&\sup_{0\leq \tau \leq t}\left\| a^{\varepsilon}\right\|_{m}\int_{0}^{t}\left(\left\| \partial a^{\varepsilon}\right\|_{m-1}^{2}+\left\|\partial\mathbf{v}^{\varepsilon}\right\|_{m}^2+\left\|\partial\partial_3\mathbf{v}^{\varepsilon}\right\|_{m-1}^2\right) \mathrm{d}\tau.
\end{align*}
Combining the above estimates gives
\begin{align*}
H_1
\lesssim{}&\delta_4\int_{0}^{t} \left\|\partial a^{\varepsilon}\right\|_{m-1}^2\mathrm{d}\tau+\sup_{0\leq \tau \leq t}\left(\left\| \left(a^{\varepsilon},\mathbf{v}^{\varepsilon}\right)\right\|_{m}+ \left\| \partial_3 \left(a^{\varepsilon},\mathbf{v}^{\varepsilon}\right)\right\|_{m-1}\right)\\
&\cdot\int_{0}^{t}\left(\left\| \partial a^{\varepsilon}\right\|_{m-1}^{2}+\left\|\partial\mathbf{v}^{\varepsilon}\right\|_{m}^2+\left\|\partial_3\partial\mathbf{v}^{\varepsilon}\right\|_{m-1}^2\right) \mathrm{d}\tau.
\end{align*}
Next, we split $H_2$ into the following terms:
\begin{align*}
H_2
\lesssim{}&\sum_{|\alpha|\leq m}\int_{0}^{t} \Big(\left| \left( Z^{\alpha}\left(\mathbf{v}^{\varepsilon} \cdot \nabla\mathbf{v}^{\varepsilon}\right),  Z^{\alpha} \mathbf{v}^{\varepsilon}\right)\right|+\left| \left( Z^{\alpha}\left(\mathbf{B}^{\varepsilon} \cdot \nabla\mathbf{B}^{\varepsilon}\right),  Z^{\alpha} \mathbf{v}^{\varepsilon}\right)\right|
+\left| \left( Z^{\alpha}\left(\mathbf{B}^{\varepsilon}\nabla\mathbf{B}^{\varepsilon}\right),  Z^{\alpha} \mathbf{v}^{\varepsilon}\right)\right|\\
&+\left| \left( Z^{\alpha}\left(J\left(a^{\varepsilon}\right)\nabla a^{\varepsilon}\right),  Z^{\alpha} \mathbf{v}^{\varepsilon}\right)\right|+\left| \left( Z^{\alpha}\left(I\left(a^{\varepsilon}\right)\Delta\mathbf{v}^{\varepsilon}\right),  Z^{\alpha} \mathbf{v}^{\varepsilon}\right)\right|+\left| \left( Z^{\alpha}\left(I\left(a^{\varepsilon}\right)\nabla\operatorname{div}\mathbf{v}^{\varepsilon}\right),  Z^{\alpha} \mathbf{v}^{\varepsilon}\right)\right|\\
&+\left| \left( Z^{\alpha}\left(I\left(a^{\varepsilon}\right)\mathbf{B}^{\varepsilon} \cdot \nabla\mathbf{B}^{\varepsilon}\right),  Z^{\alpha} \mathbf{v}^{\varepsilon}\right)\right|+\left| \left( Z^{\alpha}\left(I\left(a^{\varepsilon}\right)\mathbf{B}^{\varepsilon}\nabla\mathbf{B}^{\varepsilon}\right),  Z^{\alpha} \mathbf{v}^{\varepsilon}\right)\right|\Big)\mathrm{d}\tau\\
\overset{def}{=}{}& H_{2,1}+H_{2,2}+H_{2,3}+H_{2,4}+H_{2,5}+H_{2,6}+H_{2,7}+H_{2,8}.
\end{align*}
By Hölder's inequality and Lemma \ref{Le2}, we obtain
\begin{align*}
H_{2,1}
\lesssim{}&\sum_{|\alpha|\leq m}\int_{0}^{t} \left\|\mathbf{v}^{\varepsilon}\right\|_{L^3}\left\| Z^\alpha \partial \mathbf{v}^{\varepsilon}\right\|_{0}\left\| Z^\alpha \mathbf{v}^{\varepsilon}\right\|_{L^6}\mathrm{d}\tau\\
&+\sum_{\substack{|\alpha|\leq m\\\beta+\gamma=\alpha,\beta \neq 0}}\int_{0}^{t} \left\|Z^{\beta}\mathbf{v}^{\varepsilon}\right\|_{L^3}\left\| Z^\gamma \partial \mathbf{v}^{\varepsilon}\right\|_{0}
\left\| Z^\alpha \mathbf{v}^{\varepsilon}\right\|_{L^6}\mathrm{d}\tau\\
\lesssim{}&\sum_{|\alpha|\leq m}\int_{0}^{t} \left\|\mathbf{v}^{\varepsilon}\right\|_{H^1}\left\| Z^\alpha \partial \mathbf{v}^{\varepsilon}\right\|_{0}\left\|\partial Z^\alpha \mathbf{v}^{\varepsilon}\right\|_{0}\mathrm{d}\tau\\
&+\sum_{\substack{|\alpha|\leq m\\\beta+\gamma=\alpha,\beta \neq 0}}\int_{0}^{t} \left\|Z^{\beta}\mathbf{v}^{\varepsilon}\right\|_{H^1}\left\| Z^\gamma \partial \mathbf{v}^{\varepsilon}\right\|_{0}\left\|\partial Z^\alpha \mathbf{v}^{\varepsilon}\right\|_{0}\mathrm{d}\tau\\
\lesssim{}&\sup_{0\leq \tau \leq t}\left(\left\| \mathbf{v}^{\varepsilon}\right\|_{m}+\left\|\partial_3\mathbf{v}^{\varepsilon}\right\|_{m-1}\right)\int_{0}^{t}\left\|\partial\mathbf{v}^{\varepsilon}\right\|_{m}^2\mathrm{d}\tau.
\end{align*}
Using integration by parts, we have
\begin{align*}
&H_{2,2}+H_{2,3}\\
={}&\sum_{|\alpha|\leq m}\int_{0}^{t} \Big(\left| \left( Z^{\alpha}\operatorname{div}\left(\mathbf{B}^{\varepsilon}\otimes\mathbf{B}^{\varepsilon}\right),  Z^{\alpha} \mathbf{v}^{\varepsilon}\right)\right|
+\frac{1}{2}\left| \left( Z^{\alpha}\nabla\left(\left|\mathbf{B}^{\varepsilon}\right|^2\right),  Z^{\alpha} \mathbf{v}^{\varepsilon}\right)\right|\Big)\mathrm{d}\tau\\
\lesssim{}&\sum_{|\alpha|\leq m}\int_{0}^{t} \Big(\left| \left( Z^{\alpha}\left(\mathbf{B}^{\varepsilon}\cdot\mathbf{B}^{\varepsilon}\right),  \partial Z^{\alpha} \mathbf{v}^{\varepsilon}\right)\right|+\left| \left( \left[Z^{\alpha},\partial_3\right]\left(\mathbf{B}^{\varepsilon}\cdot\mathbf{B}^{\varepsilon}\right), Z^{\alpha} \mathbf{v}^{\varepsilon}\right)\right|\Big)\mathrm{d}\tau\\
\overset{def}{=}{}&H_{2,2,1}+H_{2,3,1}.
\end{align*}
Applying Lemma \ref{Le1}, it holds
\begin{align*}
H_{2,2,1}
\lesssim{}&\sum_{\substack{|\alpha|\leq m\\\beta+\gamma=\alpha}}\int_{0}^{t}\left\| Z^{\beta}\mathbf{B}^{\varepsilon}\cdot Z^{\gamma}\mathbf{B}^{\varepsilon}\right\|_0  \left\|\partial Z^{\alpha} \mathbf{v}^{\varepsilon}\right\|_0\mathrm{d}\tau\\
\lesssim{}&\sum_{\substack{|\alpha|\leq m}}\int_{0}^{t}\left\|\mathbf{B}^{\varepsilon}\right\|_0^{\frac{1}{4}}\left\| \partial_{2}\mathbf{B}^{\varepsilon}\right\|_0^{\frac{1}{4}}\left\| \partial_{3}\mathbf{B}^{\varepsilon}\right\|_0^{\frac{1}{4}}\left\| \partial_{23}\mathbf{B}^{\varepsilon}\right\|_0^{\frac{1}{4}}  \\
&\cdot\left\| Z^{\alpha}\mathbf{B}^{\varepsilon}\right\|_0^{\frac{1}{2}}\left\| \partial_1 Z^{\alpha}\mathbf{B}^{\varepsilon}\right\|_0^{\frac{1}{2}}\left\|\partial Z^{\alpha}\mathbf{v}^{\varepsilon}\right\|_0\mathrm{d}\tau\\
&+\sum_{\substack{|\alpha|\leq m\\\beta+\gamma=\alpha,\beta\neq0}}\int_{0}^{t}\left\| Z^{\beta}\mathbf{B}^{\varepsilon}\right\|_0^{\frac{1}{2}}\left\| \partial_1 Z^{\beta}\mathbf{B}^{\varepsilon}\right\|_0^{\frac{1}{2}}\\
 &\cdot\left\|Z^{\gamma}\mathbf{B}^{\varepsilon}\right\|_0^{\frac{1}{4}}\left\| \partial_{2}Z^{\gamma}\mathbf{B}^{\varepsilon}\right\|_0^{\frac{1}{4}}\left\| \partial_{3}Z^{\gamma}\mathbf{B}^{\varepsilon}\right\|_0^{\frac{1}{4}}\left\| \partial_{23}Z^{\gamma}\mathbf{B}^{\varepsilon}\right\|_0^{\frac{1}{4}}  \left\|\partial Z^{\alpha} \mathbf{v}^{\varepsilon}\right\|_0\mathrm{d}\tau\\
\lesssim{}&\sup_{0\leq \tau \leq t}\left(\left\| \mathbf{B}^{\varepsilon}\right\|_{m}+\left\|\partial_3\mathbf{B}^{\varepsilon}\right\|_{m-1}\right)\int_{0}^{t}\left(\left\|\left(\partial\mathbf{v}^{\varepsilon},\partial_h\mathbf{B}^{\varepsilon}\right)\right\|_{m}^2+\left\|\partial_h\partial_3\mathbf{B}^{\varepsilon}\right\|_{m-1}^2\right) \mathrm{d}\tau.
\end{align*}
By virtue of the commutator property and Hardy's inequality, we derive
\begin{align*}
H_{2,3,1}
\lesssim{}&\sum_{\substack{|\alpha|\leq m\\\alpha_3 \geq 1}}  \sum_{k=0}^{\alpha_3-1} \int_{0}^{t}\int_{\mathbb{R}_+^3}
\left| Z^{\alpha_{h}}_{h} \varphi\partial_3Z^k_3 \left(\mathbf{B}^{\varepsilon}\cdot\mathbf{B}^{\varepsilon}\right) \right|\left| \frac{Z^{\alpha} \mathbf{v}^{\varepsilon}}{x_3} \right|\mathrm{d}\mathbf{x}\mathrm{d}\tau\\
\lesssim{}&\sum_{\substack{|\alpha|\leq m\\\alpha_3 \geq 1}}  \sum_{k=0}^{\alpha_3-1} \int_{0}^{t}
\left\| Z^{\alpha_{h}}_{h} Z^{k+1}_3 \left(\mathbf{B}^{\varepsilon}\cdot\mathbf{B}^{\varepsilon}\right) \right\|_0\left\| \partial_3Z^{\alpha} \mathbf{v}^{\varepsilon} \right\|_0\mathrm{d}\tau\\
\lesssim{}&\sum_{\substack{|\alpha|\leq m\\\beta+\gamma=\alpha}}\int_{0}^{t}\left\| Z^{\beta}\mathbf{B}^{\varepsilon}\cdot Z^{\gamma}\mathbf{B}^{\varepsilon}\right\|_0  \left\|\partial Z^{\alpha} \mathbf{v}^{\varepsilon}\right\|_0\mathrm{d}\tau\\
\lesssim{}&\sup_{0\leq \tau \leq t}\left(\left\| \mathbf{B}^{\varepsilon}\right\|_{m}+\left\|\partial_3\mathbf{B}^{\varepsilon}\right\|_{m-1}\right)\int_{0}^{t}\left(\left\|\left(\partial\mathbf{v}^{\varepsilon},\partial_h\mathbf{B}^{\varepsilon}\right)\right\|_{m}^2+\left\|\partial_h\partial_3\mathbf{B}^{\varepsilon}\right\|_{m-1}^2\right) \mathrm{d}\tau.
\end{align*}
Summing the above two estimates, we get
\begin{align*}
&H_{2,2}+H_{2,3}\\
\lesssim{}&\sup_{0\leq \tau \leq t}\left(\left\| \mathbf{B}^{\varepsilon}\right\|_{m}+\left\|\partial_3\mathbf{B}^{\varepsilon}\right\|_{m-1}\right)\int_{0}^{t}\left(\left\|\left(\partial\mathbf{v}^{\varepsilon},\partial_h\mathbf{B}^{\varepsilon}\right)\right\|_{m}^2+\left\|\partial_h\partial_3\mathbf{B}^{\varepsilon}\right\|_{m-1}^2\right) \mathrm{d}\tau.
\end{align*}
We only consider the case $\alpha_3 \ge 1$ for $H_{2,4}$. The other cases can be handled similarly and are omitted here.
Using integration by parts and Lemmas \ref{Le1} and \ref{Le2}, we derive
\begin{align*}
H_{2,4}={}&\sum_{\substack{|\alpha|\leq m\\\alpha_3 \geq 1}}\int_{0}^{t} \Big(\left| \left( Z^{\alpha}\left(J\left(a^{\varepsilon}\right)\nabla a^{\varepsilon}\right),  Z^{\alpha} \mathbf{v}^{\varepsilon}\right)\right|\Big)\mathrm{d}\tau\\
={}&\sum_{\substack{|\alpha|\leq m\\\alpha_3 \geq 1}}\int_{0}^{t} \Big(\left| \left( \varphi\partial_3Z^{\alpha-e_3}\left(J\left(a^{\varepsilon}\right)\nabla a^{\varepsilon}\right),  Z^{\alpha} \mathbf{v}^{\varepsilon}\right)\right|\Big)\mathrm{d}\tau\\
\lesssim{}&\sum_{\substack{|\alpha|\leq m, \alpha_3 \geq 1\\\beta+\gamma=\alpha-e_3}}\int_{0}^{t} \Big(\left| \left( \varphi'Z^{\alpha-e_3}\left(J\left(a^{\varepsilon}\right)\nabla a^{\varepsilon}\right),  Z^{\alpha} \mathbf{v}^{\varepsilon}\right)\right|\\
&+\left| \left( \varphi Z^{\alpha-e_3}\left(J\left(a^{\varepsilon}\right)\nabla a^{\varepsilon}\right),  \partial_3Z^{\alpha} \mathbf{v}^{\varepsilon}\right)\right|\Big)\mathrm{d}\tau\\
\lesssim{}&\sum_{\substack{|\alpha|\leq m, \alpha_3 \geq 1\\\beta+\gamma=\alpha-e_3}}\int_{0}^{t}\int_{\mathbb{R}_+^3}
\left|Z^{\beta}J\left(a^{\varepsilon}\right)  \right|\left|Z^{\gamma}\partial a^{\varepsilon} \right|\left(\left| Z^{\alpha} \mathbf{v}^{\varepsilon} \right|+\left| \partial_3Z^{\alpha} \mathbf{v}^{\varepsilon} \right|\right)\mathrm{d}\mathbf{x}\mathrm{d}\tau\\
\lesssim{}&\sum_{\substack{|\alpha|\leq m, \alpha_3 \geq 1\\\beta+\gamma=\alpha-e_3}}\int_{0}^{t}
\left\|Z^{\beta}J\left(a^{\varepsilon}\right)  \right\|_{L^3}\left\|Z^{\gamma}\partial a^{\varepsilon} \right\|_0\left\| Z^{\alpha} \mathbf{v}^{\varepsilon} \right\|_{L^6}\mathrm{d}\tau\\
&+\sum_{\substack{|\alpha|\leq m, \alpha_3 \geq 1\\\beta+\gamma=\alpha-e_3}}\int_{0}^{t}\int_{\mathbb{R}_+^3}
\left|Z^{\beta}J\left(a^{\varepsilon}\right)  \right|\left|Z^{\gamma}\partial a^{\varepsilon} \right|\left| \partial_3Z^{\alpha} \mathbf{v}^{\varepsilon} \right|\mathrm{d}\mathbf{x}\mathrm{d}\tau\\
\lesssim{}&\sum_{\substack{|\alpha|\leq m, \alpha_3 \geq 1\\\beta+\gamma=\alpha-e_3}}\int_{0}^{t}
\left\|Z^{\beta}J\left(a^{\varepsilon}\right)  \right\|_{H^1}\left\|Z^{\gamma}\partial a^{\varepsilon} \right\|_0\left\| \partial Z^{\alpha} \mathbf{v}^{\varepsilon} \right\|_{0}\mathrm{d}\tau\\
&+\sum_{\substack{|\alpha|\leq m, \alpha_3 \geq 1\\}}\int_{0}^{t}
\left\|a^{\varepsilon}  \right\|_2^{\frac{1}{2}}\left\|\partial_3 a^{\varepsilon}  \right\|_2^{\frac{1}{2}}\left\|Z^{\alpha-e_3}\partial a^{\varepsilon} \right\|_0\left\| \partial_3Z^{\alpha} \mathbf{v}^{\varepsilon} \right\|_0\mathrm{d}\tau\\
&+\sum_{\substack{|\alpha|\leq m, \alpha_3 \geq 1}}\int_{0}^{t}
\left\|Z^{\alpha-e_3}J\left(a^{\varepsilon}\right)  \right\|_0^{\frac{1}{2}}\left\|\partial_3Z^{\alpha-e_3}J\left(a^{\varepsilon}\right)  \right\|_0^{\frac{1}{2}}\\
&\cdot\left\|\partial a^{\varepsilon} \right\|_0^{\frac{1}{4}}\left\|\partial_{1}\partial a^{\varepsilon} \right\|_0^{\frac{1}{4}}\left\|\partial_{2}\partial a^{\varepsilon} \right\|_0^{\frac{1}{4}}\left\|\partial_{12}\partial a^{\varepsilon} \right\|_0^{\frac{1}{4}}\left\| \partial_3Z^{\alpha}\mathbf{v}^{\varepsilon} \right\|_0\mathrm{d}\tau\\
&+\sum_{\substack{|\alpha|\leq m, \alpha_3 \geq 1\\\beta+\gamma=\alpha-e_3\\\beta\neq0, \gamma\neq0}}\int_{0}^{t}
\left\|Z^{\beta}J\left(a^{\varepsilon}\right)  \right\|_0^{\frac{1}{4}}\left\|\partial_{2}Z^{\beta}J\left(a^{\varepsilon}\right)  \right\|_0^{\frac{1}{4}}\left\|\partial_{3}Z^{\beta}J\left(a^{\varepsilon}\right)  \right\|_0^{\frac{1}{4}}\\
&\cdot\left\|\partial_{23}Z^{\beta}J\left(a^{\varepsilon}\right)  \right\|_0^{\frac{1}{4}}\left\|Z^{\gamma}\partial a^{\varepsilon} \right\|_0^{\frac{1}{2}}\left\|\partial_1Z^{\gamma}\partial a^{\varepsilon} \right\|_0^{\frac{1}{2}}\left\| \partial_3Z^{\alpha} \mathbf{v}^{\varepsilon} \right\|_0\mathrm{d}\tau\\
\lesssim{}&\sup_{0\leq \tau \leq t}\left(\left\| a^{\varepsilon}\right\|_{m}+\left\|\partial_3 a^{\varepsilon}\right\|_{m-1}\right)\int_{0}^{t}\left(\left\|\partial a^{\varepsilon}\right\|_{m-1}^2+\left\|\partial\mathbf{v}^{\varepsilon}\right\|_{m}^2+\left\|\partial\partial_3\mathbf{v}^{\varepsilon}\right\|_{m-1}^2\right) \mathrm{d}\tau.
\end{align*}
Similarly,
\[
H_{2,5}+H_{2,6}
\lesssim\sup_{0\leq \tau \leq t}\left(\left\| a^{\varepsilon}\right\|_{m}+\left\|\partial_3 a^{\varepsilon}\right\|_{m-1}\right)\int_{0}^{t}\left(\left\|\partial\mathbf{v}^{\varepsilon}\right\|_{m}^2+\left\|\partial\partial_3\mathbf{v}^{\varepsilon}\right\|_{m-1}^2\right) \mathrm{d}\tau.
\]
By an argument similar to that for $H_{2,4}$, we obtain
\begin{align*}
&H_{2,7}+H_{2,8}\\
={}&\sum_{\substack{|\alpha|\leq m\\ \alpha_3 \geq 1}}\int_{0}^{t} \Big(\left| \left( Z^{\alpha}\left(I\left(a^{\varepsilon}\right)\mathbf{B}^{\varepsilon} \cdot \nabla\mathbf{B}^{\varepsilon}\right),  Z^{\alpha} \mathbf{v}^{\varepsilon}\right)\right|+\left| \left( Z^{\alpha}\left(I\left(a^{\varepsilon}\right)\mathbf{B}^{\varepsilon}\nabla\mathbf{B}^{\varepsilon}\right),  Z^{\alpha} \mathbf{v}^{\varepsilon}\right)\right|\Big)\mathrm{d}\tau\\
\lesssim{}&\sum_{\substack{|\alpha|\leq m, \alpha_3 \geq 1\\\beta+\gamma+\delta=\alpha-e_3}}\int_{0}^{t}\int_{\mathbb{R}_+^3}
\left|Z^{\beta}I\left(a^{\varepsilon}\right)  \right|\left|Z^{\gamma} \mathbf{B}^{\varepsilon} \right|\left|Z^{\delta}\partial \mathbf{B}^{\varepsilon} \right|\left(\left| Z^{\alpha} \mathbf{v}^{\varepsilon} \right|+\left| \partial_3Z^{\alpha} \mathbf{v}^{\varepsilon} \right|\right)\mathrm{d}\mathbf{x}\mathrm{d}\tau\\
\lesssim{}&\sum_{\substack{|\alpha|\leq m, \alpha_3 \geq 1\\\beta+\gamma+\delta=\alpha-e_3\\|\beta|\leq 1}}\int_{0}^{t}\left\|Z^{\beta}I\left(a^{\varepsilon}\right)  \right\|_{2}^{\frac{1}{2}}\left\|\partial_{3}Z^{\beta}I\left(a^{\varepsilon}\right)  \right\|_{2}^{\frac{1}{2}}
\left\|Z^{\gamma} \mathbf{B}^{\varepsilon}  \right\|_0^{\frac{1}{4}}\left\|\partial_{2}Z^{\gamma} \mathbf{B}^{\varepsilon}  \right\|_0^{\frac{1}{4}}\left\|\partial_{3}Z^{\gamma} \mathbf{B}^{\varepsilon}  \right\|_0^{\frac{1}{4}}\\
&\cdot\left\|\partial_{23}Z^{\gamma} \mathbf{B}^{\varepsilon}  \right\|_0^{\frac{1}{4}}
\left\|Z^{\delta}\partial \mathbf{B}^{\varepsilon} \right\|_0^{\frac{1}{2}}\left\|\partial_1Z^{\delta}\partial \mathbf{B}^{\varepsilon} \right\|_0^{\frac{1}{2}}\left(\left\|Z^{\alpha} \mathbf{v}^{\varepsilon} \right\|_0+\left\| \partial_3Z^{\alpha} \mathbf{v}^{\varepsilon} \right\|_0\right)\mathrm{d}\tau\\
&+\sum_{\substack{|\alpha|\leq m, \alpha_3 \geq 1\\\beta+\gamma+\delta=\alpha-e_3\\|\beta|\geq 2}}\int_{0}^{t}\left\|Z^{\beta}I\left(a^{\varepsilon}\right)  \right\|_{0}^{\frac{1}{2}}\left\|\partial_{3}Z^{\beta}I\left(a^{\varepsilon}\right)  \right\|_{0}^{\frac{1}{2}}
\left\|Z^{\gamma} \mathbf{B}^{\varepsilon}  \right\|_2^{\frac{1}{2}}\left\|\partial_{3}Z^{\gamma} \mathbf{B}^{\varepsilon}  \right\|_2^{\frac{1}{2}}\left\|Z^{\delta}\partial \mathbf{B}^{\varepsilon} \right\|_0^{\frac{1}{4}}\\
&\cdot\left\|\partial_{1}Z^{\delta}\partial \mathbf{B}^{\varepsilon} \right\|_0^{\frac{1}{4}}\left\|\partial_{2}Z^{\delta}\partial \mathbf{B}^{\varepsilon} \right\|_0^{\frac{1}{4}}\left\|\partial_{12}Z^{\delta}\partial \mathbf{B}^{\varepsilon} \right\|_0^{\frac{1}{4}}\left(\left\|Z^{\alpha} \mathbf{v}^{\varepsilon} \right\|_0+\left\| \partial_3Z^{\alpha} \mathbf{v}^{\varepsilon} \right\|_0\right)\mathrm{d}\tau\\
\lesssim{}&\sup_{0\leq \tau \leq t}\left(\left\| \left(a^{\varepsilon}, \mathbf{B}^{\varepsilon}\right)\right\|_{m}^2+\left\|\partial_3 \left(a^{\varepsilon}, \mathbf{B}^{\varepsilon}\right)\right\|_{m-1}^2\right)\\
&\cdot\int_{0}^{t}\left(\left\|\partial a^{\varepsilon}\right\|_{m-1}^2+\left\|\left(\partial\mathbf{v}^{\varepsilon}, \partial_h \mathbf{B}^{\varepsilon}\right)\right\|_{m}^2+\left\|\partial_3\left(\partial\mathbf{v}^{\varepsilon}, \partial_h \mathbf{B}^{\varepsilon}\right)\right\|_{m-1}^2\right) \mathrm{d}\tau.
\end{align*}
Summing up all the above estimates, we arrive at
\begin{align*}
H_2
\lesssim{}&\sup_{0\leq \tau \leq t}\left(\left\| \left(a^{\varepsilon}, \mathbf{v}^{\varepsilon}, \mathbf{B}^{\varepsilon}\right)\right\|_{m}+\left\|\partial_3 \left(a^{\varepsilon}, \mathbf{v}^{\varepsilon}, \mathbf{B}^{\varepsilon}\right)\right\|_{m-1}\right)\\
&\cdot\int_{0}^{t}\left(\left\|\partial a^{\varepsilon}\right\|_{m-1}^2+\left\|\left(\partial\mathbf{v}^{\varepsilon}, \partial_h \mathbf{B}^{\varepsilon}\right)\right\|_{m}^2+\left\|\partial_3\left(\partial\mathbf{v}^{\varepsilon}, \partial_h \mathbf{B}^{\varepsilon}\right)\right\|_{m-1}^2\right) \mathrm{d}\tau.
\end{align*}
Now, we split $H_3$ into the following terms:
\begin{align*}
H_3
\lesssim{}&\sum_{|\alpha|\leq m}\int_{0}^{t} \left| \left( Z^{\alpha}\left(-\mathbf{v}^{\varepsilon} \cdot \nabla\mathbf{B}^{\varepsilon} - \mathbf{B}^{\varepsilon}\operatorname{div}\mathbf{v}^{\varepsilon} + \mathbf{B}^{\varepsilon} \cdot \nabla\mathbf{v}^{\varepsilon}\right),  Z^{\alpha} \mathbf{B}^{\varepsilon}\right)\right|\mathrm{d}\tau\\
\lesssim{}&\sum_{|\alpha|\leq m}\int_{0}^{t}\int_{\mathbb{R}_+^3} \left|\operatorname{div}\mathbf{v}^{\varepsilon}\right| \cdot \left|Z^\alpha \mathbf{B}^{\varepsilon}\right|^2 \mathrm{d}\mathbf{x}\mathrm{d}\tau+\sum_{|\alpha|\leq m}\int_{0}^{t}\left|\left( v_3^{\varepsilon} \left[Z^\alpha , \partial_3\right] \mathbf{B}^{\varepsilon}, Z^\alpha \mathbf{B}^{\varepsilon}\right) \right|\mathrm{d}\tau\\
&+\sum_{\substack{|\alpha|\leq m\\\beta+\gamma=\alpha,\beta \neq 0}}\int_{0}^{t} \left| \left( Z^\beta \mathbf{v}^{\varepsilon} \cdot Z^\gamma \nabla \mathbf{B}^{\varepsilon}, Z^\alpha \mathbf{B}^{\varepsilon} \right)\right|\mathrm{d}\tau+\sum_{|\alpha|\leq m}\int_{0}^{t} \left| \left( Z^{\alpha}\left(\mathbf{B}^{\varepsilon} \partial\mathbf{v}^{\varepsilon}\right),  Z^{\alpha} \mathbf{B}^{\varepsilon}\right)\right|\mathrm{d}\tau\\
\overset{def}{=}{}& H_{3,1}+H_{3,2}+H_{3,3}+H_{3,4}.
\end{align*}
With the help of  Lemma \ref{Le1}, there holds
\begin{align*}
&H_{3,1}+H_{3,4}\\
\lesssim{}&\sum_{|\alpha|\leq m}\int_{0}^{t} \int_{\mathbb{R}_+^3}\left|\mathbf{B}^{\varepsilon}\right|\left| Z^{\alpha}\partial\mathbf{v}^{\varepsilon}\right|  \left|Z^{\alpha} \mathbf{B}^{\varepsilon}\right|\mathrm{d}\mathbf{x}\mathrm{d}\tau+\sum_{\substack{|\alpha|\leq m\\\beta+\gamma=\alpha,\beta \neq 0}}\int_{0}^{t} \int_{\mathbb{R}_+^3}\left| Z^{\beta}\mathbf{B}^{\varepsilon}\right|\left| Z^{\gamma}\partial\mathbf{v}^{\varepsilon}\right|  \left|Z^{\alpha} \mathbf{B}^{\varepsilon}\right|\mathrm{d}\mathbf{x}\mathrm{d}\tau\\
\lesssim{}&\sum_{|\alpha|\leq m}\int_{0}^{t} \left\|\mathbf{B}^{\varepsilon}\right\|_{0}^{\frac{1}{4}}\left\|\partial_{2}\mathbf{B}^{\varepsilon}\right\|_{0}^{\frac{1}{4}}\left\|\partial_{3}\mathbf{B}^{\varepsilon}\right\|_{0}^{\frac{1}{4}}\left\|\partial_{23}\mathbf{B}^{\varepsilon}\right\|_{0}^{\frac{1}{4}}\left\| Z^{\alpha}\partial\mathbf{v}^{\varepsilon}\right\|_0  \left\|Z^{\alpha} \mathbf{B}^{\varepsilon}\right\|_{0}^{\frac{1}{2}}\left\|\partial_1Z^{\alpha} \mathbf{B}^{\varepsilon}\right\|_{0}^{\frac{1}{2}}\mathrm{d}\tau\\
&+\sum_{\substack{|\alpha|\leq m\\\beta+\gamma=\alpha,\beta \neq 0}}\int_{0}^{t} \left\| Z^{\beta}\mathbf{B}^{\varepsilon}\right\|_{0}^{\frac{1}{2}}\left\| \partial_{1}Z^{\beta}\mathbf{B}^{\varepsilon}\right\|_{0}^{\frac{1}{2}}\left\| Z^{\gamma}\partial\mathbf{v}^{\varepsilon}\right\|_{0}^{\frac{1}{2}} \left\| \partial_{3}Z^{\gamma}\partial\mathbf{v}^{\varepsilon}\right\|_{0}^{\frac{1}{2}}
 \left\|Z^{\alpha} \mathbf{B}^{\varepsilon}\right\|_{0}^{\frac{1}{2}}\left\|\partial_{2}Z^{\alpha} \mathbf{B}^{\varepsilon}\right\|_{0}^{\frac{1}{2}}\mathrm{d}\tau\\
\lesssim{}&\sup_{0\leq \tau \leq t}\left(\left\|\mathbf{B}^{\varepsilon}\right\|_{m}+\left\|\partial_3\mathbf{B}^{\varepsilon}\right\|_{m-1}\right)\int_{0}^{t}\left(\left\|\left(\partial\mathbf{v}^{\varepsilon}, \partial_h \mathbf{B}^{\varepsilon}\right)\right\|_{m}^2+\left\|\partial_3\left(\partial\mathbf{v}^{\varepsilon}, \partial_h \mathbf{B}^{\varepsilon}\right)\right\|_{m-1}^2\right) \mathrm{d}\tau.
\end{align*}
By virtue of the commutator property,  Hardy's inequality and Lemma \ref{Le1}, we derive
\begin{align*}
H_{3,2}
\lesssim{}&\sum_{\substack{|\alpha|\leq m\\\alpha_3 \geq 1}}  \sum_{k=0}^{\alpha_3-1} \int_{0}^{t}\int_{\mathbb{R}_+^3}\left|\frac{v_3^{\varepsilon}}{x_3}\right|
\left| Z^{\alpha_{h}}_{h} \varphi\partial_3Z^k_3 \mathbf{B}^{\varepsilon} \right| \left| Z^\alpha \mathbf{B}^{\varepsilon}\right| \mathrm{d}\mathbf{x}\mathrm{d}\tau\\
	\lesssim{}&\sum_{\substack{|\alpha|\leq m\\\alpha_3 \geq 1}}  \sum_{k=0}^{\alpha_3-1} \int_{0}^{t}
	\left\|\partial_3 v_3^{\varepsilon}\right\|_{0}^{\frac{1}{2}}\left\| \partial_3^2 v_3^{\varepsilon}\right\|_{0}^{\frac{1}{2}}\left\| Z^{\alpha_{h}}_{h}Z^{k+1}_3 \mathbf{B}^{\varepsilon} \right\|_0^{\frac{1}{2}}\\
&\cdot\left\| \partial_1Z^{\alpha_{h}}_{h}Z^{k+1}_3 \mathbf{B}^{\varepsilon} \right\|_0^{\frac{1}{2}}
\left\| Z^\alpha \mathbf{B}^{\varepsilon}\right\|_0^{\frac{1}{2}}\left\|\partial_2 Z^\alpha \mathbf{B}^{\varepsilon}\right\|_0^{\frac{1}{2}} \mathrm{d}\tau\\
\lesssim{}&\sup_{0\leq \tau \leq t}\left\|\mathbf{B}^{\varepsilon}\right\|_{m}\int_{0}^{t}\left(\left\|\left(\partial\mathbf{v}^{\varepsilon}, \partial_h \mathbf{B}^{\varepsilon}\right)\right\|_{m}^2+\left\|\partial_3\partial\mathbf{v}^{\varepsilon}\right\|_{m-1}^2\right)\mathrm{d}\tau.
\end{align*}
Applying Lemma \ref{Le1}, we get
\begin{align*}
H_{3,3}
\lesssim{}&\sum_{\substack{|\alpha|\leq m\\\beta+\gamma=\alpha,\beta \neq 0}}\int_{0}^{t} \left\|Z^\beta \mathbf{v}^{\varepsilon}\right\|_{0}^{\frac{1}{2}}\left\|\partial_3Z^\beta \mathbf{v}^{\varepsilon}\right\|_{0}^{\frac{1}{2}} \left\|Z^\gamma \partial \mathbf{B}^{\varepsilon}\right\|_{0}^{\frac{1}{2}}\left\|\partial_1Z^\gamma \partial \mathbf{B}^{\varepsilon}\right\|_{0}^{\frac{1}{2}}
 \left\|Z^\alpha \mathbf{B}^{\varepsilon} \right\|_{0}^{\frac{1}{2}}\left\|\partial_2Z^\alpha \mathbf{B}^{\varepsilon} \right\|_{0}^{\frac{1}{2}}\mathrm{d}\tau\\
\lesssim{}&\sup_{0\leq \tau \leq t}\left(\left\|\mathbf{B}^{\varepsilon}\right\|_{m}+\left\|\partial_3\mathbf{B}^{\varepsilon}\right\|_{m-1}\right)\int_{0}^{t}\left(\left\|\left(\partial\mathbf{v}^{\varepsilon}, \partial_h \mathbf{B}^{\varepsilon}\right)\right\|_{m}^2+\left\|\partial_h \partial_3\mathbf{B}^{\varepsilon}\right\|_{m-1}^2\right)\mathrm{d}\tau.
\end{align*}
Combining all the above estimates, we obtain
\begin{equation*}
H_3
\lesssim\sup_{0\leq \tau \leq t}\left(\left\|\mathbf{B}^{\varepsilon}\right\|_{m}+\left\|\partial_3\mathbf{B}^{\varepsilon}\right\|_{m-1}\right)\int_{0}^{t}\left(\left\|\left(\partial\mathbf{v}^{\varepsilon}, \partial_h \mathbf{B}^{\varepsilon}\right)\right\|_{m}^2+\left\|\partial_3\left(\partial\mathbf{v}^{\varepsilon}, \partial_h \mathbf{B}^{\varepsilon}\right)\right\|_{m-1}^2\right) \mathrm{d}\tau.
\end{equation*}
Finally, combining the previous estimates, we derive
\begin{equation}\label{G75}
\begin{aligned}
&\sum_{|\alpha|\leq m} \int_{0}^{t}  \left|{A} _{\alpha}\right|  \mathrm{d} \tau\\
\lesssim{}&\delta_4\int_{0}^{t} \left\|\partial a^{\varepsilon}\right\|_{m-1}^2\mathrm{d}\tau+\sup_{0\leq \tau \leq t}\left(\left\| \left(a^{\varepsilon}, \mathbf{v}^{\varepsilon}, \mathbf{B}^{\varepsilon}\right)\right\|_{m}+\left\|\partial_3 \left(a^{\varepsilon}, \mathbf{v}^{\varepsilon}, \mathbf{B}^{\varepsilon}\right)\right\|_{m-1}\right)\\
&\cdot\int_{0}^{t}\left(\left\|\partial a^{\varepsilon}\right\|_{m-1}^2+\left\|\left(\partial\mathbf{v}^{\varepsilon}, \partial_h \mathbf{B}^{\varepsilon}\right)\right\|_{m}^2+\left\|\partial_3\left(\partial\mathbf{v}^{\varepsilon}, \partial_h \mathbf{B}^{\varepsilon}\right)\right\|_{m-1}^2\right) \mathrm{d}\tau.
\end{aligned}
\end{equation}
Applying Young's inequality, we arrive at
\begin{align*}
&\sum_{|\alpha|\leq m-1} \int_{0}^{t}  \left|{C} _{\alpha}\right|  \mathrm{d} \tau\\
\lesssim{}&\delta_4\int_{0}^{t} \left\| \partial_{\tau}\left(a^{\varepsilon}, \mathbf{v}^{\varepsilon}, \mathbf{B}^{\varepsilon}\right)\right\|_{m-1}^2\mathrm{d}\tau
+\int_{0}^{t} \Big(\left\| a^{\varepsilon}\operatorname{div}\mathbf{v}^{\varepsilon}\right\|_{m-1}^{2}+\left\|\mathbf{v}^{\varepsilon}\cdot \nabla a^{\varepsilon}\right\|_{m-1}^{2}
+\left\| \left(f^{\varepsilon}_2, f^{\varepsilon}_3\right)\right\|_{m-1}^{2}\Big)\mathrm{d} \tau.
\end{align*}
By Lemma \ref{Le1}, we have
\begin{align*}
	&\int_{0}^{t} \left\|\mathbf{v}^{\varepsilon}\cdot\nabla a^{\varepsilon}\right\|_{m-1}^{2}\mathrm{d}\tau\\
    \lesssim{}&\sum_{|\alpha|\leq m-1}\int_{0}^{t} \left\|\mathbf{v}^{\varepsilon}\right\|_{L^{\infty}}^2 \left\| Z^{\alpha} \partial a^{\varepsilon}\right\|_0^2 \mathrm{d}\tau
    +\sum_{\substack{|\alpha|\leq m-1\\\beta+\gamma=\alpha, \beta\neq0}}\int_{0}^{t} \left\|Z^\beta \mathbf{v}^{\varepsilon}  \cdot Z^\gamma \nabla a^{\varepsilon}\right\|_0^2 \mathrm{d}\tau\\
    \lesssim{}&\sum_{|\alpha|\leq m-1}\int_{0}^{t} \left\|\mathbf{v}^{\varepsilon}\right\|_{2}\left\|\partial_3\mathbf{v}^{\varepsilon}\right\|_{2} \left\| Z^{\alpha} \partial a^{\varepsilon}\right\|_0^2 \mathrm{d}\tau\\
    &+\sum_{\substack{|\alpha|\leq m-1\\\beta+\gamma=\alpha, \beta\neq0}}\int_{0}^{t} \left\|Z^\beta \mathbf{v}^{\varepsilon}\right\|_0^{\frac{1}{2}}
    \left\|\partial_{2}Z^\beta \mathbf{v}^{\varepsilon}\right\|_0^{\frac{1}{2}}\left\|\partial_{3}Z^\beta \mathbf{v}^{\varepsilon}\right\|_0^{\frac{1}{2}}
    \left\|\partial_{23}Z^\beta \mathbf{v}^{\varepsilon}\right\|_0^{\frac{1}{2}}\\
    &\cdot\left\|Z^\gamma \partial a^{\varepsilon}\right\|_0\left\| \partial_{1}Z^\gamma \partial a^{\varepsilon}\right\|_0 \mathrm{d}\tau\\
    \lesssim{}&\sup_{0\leq \tau \leq t}\Big(\left\| \left(a^{\varepsilon}, \mathbf{v}^{\varepsilon}\right)\right\|_{m}^{2} + \left\| \partial_3 \left(a^{\varepsilon}, \mathbf{v}^{\varepsilon}\right)\right\|_{m-1}^{2}\Big)\int_{0}^{t}\Big(\left\|\partial a^{\varepsilon}\right\|_{m-1}^2+\left\|\partial\mathbf{v}^{\varepsilon}\right\|_{m}^2\Big) \mathrm{d}\tau.
\end{align*}
By similar arguments, we derive
\begin{align*}
	&\int_{0}^{t} \left\|f^{\varepsilon}_3\right\|_{m-1}^{2}\mathrm{d}\tau\\
    \lesssim{}&\sum_{\substack{|\alpha|\leq m-1\\\beta+\gamma=\alpha}}\int_{0}^{t}\Big( \left\|Z^\beta \mathbf{v}^{\varepsilon}  \cdot Z^\gamma \partial\mathbf{B}^{\varepsilon}\right\|_0^2
    +\left\|Z^\beta \mathbf{B}^{\varepsilon}  \cdot Z^\gamma \partial\mathbf{v}^{\varepsilon}\right\|_0^2 \Big)\mathrm{d}\tau\\
    \lesssim{}&\sum_{\substack{|\alpha|\leq m-1\\\beta+\gamma=\alpha}}\int_{0}^{t}\Big(\left\|Z^\beta \mathbf{v}^{\varepsilon}\right\|_0^{\frac{1}{2}}
    \left\|\partial_{2}Z^\beta \mathbf{v}^{\varepsilon}\right\|_0^{\frac{1}{2}}\left\|\partial_{3}Z^\beta \mathbf{v}^{\varepsilon}\right\|_0^{\frac{1}{2}}
    \left\|\partial_{23}Z^\beta \mathbf{v}^{\varepsilon}\right\|_0^{\frac{1}{2}}
    \left\| Z^\gamma \partial\mathbf{B}^{\varepsilon}\right\|_0\left\| \partial_{1}Z^\gamma \partial\mathbf{B}^{\varepsilon}\right\|_0\\
    &+\left\|Z^\beta \mathbf{B}^{\varepsilon}\right\|_0^{\frac{1}{2}}
    \left\|\partial_{1}Z^\beta \mathbf{B}^{\varepsilon}\right\|_0^{\frac{1}{2}}\left\|\partial_{2}Z^\beta \mathbf{B}^{\varepsilon}\right\|_0^{\frac{1}{2}}
    \left\|\partial_{12}Z^\beta \mathbf{B}^{\varepsilon}\right\|_0^{\frac{1}{2}}
    \left\| Z^\gamma \partial\mathbf{v}^{\varepsilon}\right\|_0\left\| \partial_{3}Z^\gamma \partial\mathbf{v}^{\varepsilon}\right\|_0\Big)\mathrm{d}\tau\\
    \lesssim{}&\sup_{0\leq \tau \leq t}\Big(\left\| \left(\mathbf{v}^{\varepsilon}, \mathbf{B}^{\varepsilon}\right)\right\|_{m}^{2} + \left\| \partial_3 \left(\mathbf{v}^{\varepsilon}, \mathbf{B}^{\varepsilon}\right)\right\|_{m-1}^{2}\Big)\int_{0}^{t}\Big(\left\|\left(\partial\mathbf{v}^{\varepsilon}, \partial_h \mathbf{B}^{\varepsilon}\right)\right\|_{m}^2
    +\left\|\partial_3\left(\partial\mathbf{v}^{\varepsilon}, \partial_h \mathbf{B}^{\varepsilon}\right)\right\|_{m-1}^2\Big) \mathrm{d}\tau.
\end{align*}
Combining all the above estimates, \eqref{G30} and \eqref{G51}, we obtain
\begin{equation}\label{G80}
\begin{aligned}
&\sum_{|\alpha|\leq m-1} \int_{0}^{t}  \left|{C} _{\alpha}\right|  \mathrm{d} \tau\\
\lesssim{}&\delta_4\int_{0}^{t} \left\| \partial_{\tau}\left(a^{\varepsilon}, \mathbf{v}^{\varepsilon}, \mathbf{B}^{\varepsilon}\right)\right\|_{m-1}^2\mathrm{d}\tau
+\sup_{0\leq \tau \leq t}\Big(\left\| \left(a^{\varepsilon}, \mathbf{v}^{\varepsilon}, \mathbf{B}^{\varepsilon}\right)\right\|_{m}^2+\left\|\partial_3 \left(a^{\varepsilon}, \mathbf{v}^{\varepsilon}, \mathbf{B}^{\varepsilon}\right)\right\|_{m-1}^2\Big)\\
&\cdot\int_{0}^{t}\Big(\left\|\partial a^{\varepsilon}\right\|_{m-1}^2+\left\|\left(\partial\mathbf{v}^{\varepsilon}, \partial_h \mathbf{B}^{\varepsilon}\right)\right\|_{m}^2
+\left\|\partial_3\left(\partial\mathbf{v}^{\varepsilon}, \partial_h \mathbf{B}^{\varepsilon}\right)\right\|_{m-1}^2\Big) \mathrm{d}\tau.
\end{aligned}
\end{equation}
As for $\sum_{|\alpha|\leq m-1} \int_{0}^{t}  \left|{D} _{\alpha}\right|  \mathrm{d} \tau$, we have the following splitting:
\begin{align*}
\sum_{|\alpha|\leq m-1} \int_{0}^{t}  \left|{D} _{\alpha}\right|  \mathrm{d} \tau
\lesssim{}&\sum_{|\alpha|\leq m-1}\int_{0}^{t} \left| \left(  Z^{\alpha}\partial_3\left(\mathbf{v}^{\varepsilon}\cdot \nabla a^{\varepsilon} \right), Z^{\alpha}\partial_{3} a^{\varepsilon} \right)\right|\mathrm{d}\tau\\
\lesssim{}&\sum_{|\alpha|\leq m-1}\int_{0}^{t}\int_{\mathbb{R}_+^3} \left|\operatorname{div}\mathbf{v}^{\varepsilon}\right| \cdot \left|Z^\alpha \partial_{3}a^{\varepsilon}\right|^2 \mathrm{d}\mathbf{x}\mathrm{d}\tau\\
&+\sum_{|\alpha|\leq m-1}\int_{0}^{t}\left|\left( v_3^{\varepsilon} \left[Z^\alpha , \partial_3\right]\partial_{3} a^{\varepsilon}, Z^\alpha \partial_{3}a^{\varepsilon}\right) \right|\mathrm{d}\tau\\
&+\sum_{\substack{|\alpha|\leq m-1\\\beta+\gamma=\alpha,\beta \neq 0}}\int_{0}^{t} \left| \left( Z^\beta \mathbf{v}^{\varepsilon} \cdot Z^\gamma \nabla \partial_{3}a^{\varepsilon}, Z^\alpha \partial_{3}a^{\varepsilon} \right)\right|\mathrm{d}\tau\\
&+\sum_{|\alpha|\leq m-1}\int_{0}^{t} \left| \left(  Z^{\alpha}\left(\partial_3\mathbf{v}^{\varepsilon}\cdot \nabla a^{\varepsilon} \right), Z^{\alpha}\partial_{3} a^{\varepsilon} \right)\right|\mathrm{d}\tau\\
\overset{def}{=}{}& H_{4,1}+H_{4,2}+H_{4,3}+H_{4,4}.
\end{align*}
Lemma \ref{Le1} implies that
\begin{align*}
H_{4,1}
\lesssim{}&\sum_{|\alpha|\leq m-1}\int_{0}^{t} \left\| \operatorname{div}\mathbf{v}^{\varepsilon}\right\|_2^{\frac{1}{2}}\left\| \partial_3\operatorname{div}\mathbf{v}^{\varepsilon}\right\|_2^{\frac{1}{2}}\left\|Z^{\alpha} \partial_{3}a^{\varepsilon}\right\|_0^2\mathrm{d}\tau\\
\lesssim{}&\sup_{0\leq \tau \leq t}\left\| \partial_{3}a^{\varepsilon}\right\|_{m-1}\int_{0}^{t}\left(\left\| \partial a^{\varepsilon}\right\|_{m-1}^{2}+\left\|\partial\mathbf{v}^{\varepsilon}\right\|_{m}^2+\left\|\partial\partial_3\mathbf{v}^{\varepsilon}\right\|_{m-1}^2\right) \mathrm{d}\tau.
\end{align*}
By virtue of the commutator property, Hardy's inequality and Lemma \ref{Le1}, we derive
\begin{align*}
H_{4,2}
\lesssim{}&\sum_{\substack{|\alpha|\leq m-1\\\alpha_3 \geq 1}}  \sum_{k=0}^{\alpha_3-1} \int_{0}^{t}\int_{\mathbb{R}_+^3}\left|\frac{v_3^{\varepsilon}}{x_3}\right|
\left| Z^{\alpha_{h}}_{h} \varphi\partial_3Z^k_3\partial_3 a^{\varepsilon} \right| \left| Z^\alpha\partial_3 a^{\varepsilon}\right| \mathrm{d}\mathbf{x}\mathrm{d}\tau\\
\lesssim{}&\sum_{\substack{|\alpha|\leq m-1\\\alpha_3 \geq 1}}  \sum_{k=0}^{\alpha_3-1} \int_{0}^{t}\int_{\mathbb{R}_+^3}\left\|\partial_3v_3^{\varepsilon}\right\|_{L^{\infty}}
\left\| Z^{\alpha_{h}}_{h} Z^{k+1}_3\partial_3 a^{\varepsilon} \right\|_0 \left\| Z^\alpha\partial_3 a^{\varepsilon}\right\|_0 \mathrm{d}\mathbf{x}\mathrm{d}\tau\\
\lesssim{}&\sum_{\substack{|\alpha|\leq m-1\\\alpha_3 \geq 1}}  \sum_{k=0}^{\alpha_3-1} \int_{0}^{t}
\left\| \partial_3 v_3^{\varepsilon}\right\|_{2}^{\frac{1}{2}}\left\| \partial_3^2 v_3\right\|_{2}^{\frac{1}{2}}\left\| Z^{\alpha_{h}}_{h}Z^{k+1}_3\partial_3 a^{\varepsilon} \right\|_0 \left\| Z^\alpha \partial_3a^{\varepsilon}\right\|_0 \mathrm{d}\tau\\
\lesssim{}&\sup_{0\leq \tau \leq t}\left\| \partial_{3}a^{\varepsilon}\right\|_{m-1}\int_{0}^{t}\left(\left\| \partial a^{\varepsilon}\right\|_{m-1}^{2}+\left\|\partial\mathbf{v}^{\varepsilon}\right\|_{m}^2+\left\|\partial\partial_3\mathbf{v}^{\varepsilon}\right\|_{m-1}^2\right) \mathrm{d}\tau.
\end{align*}
It should be pointed out that the estimate of term $H_{4,3}$ is somewhat complicated. Thus, we decompose
this term into two components as follows:
\begin{align*}
H_{4,3}
\lesssim{}&\sum_{\substack{|\alpha|\leq m-1\\\beta+\gamma=\alpha,\beta \neq 0}}\int_{0}^{t} \Bigg(\left|\int_{\mathbb{R}_+^3} Z^\beta \mathbf{v}^{\varepsilon}_{h} \cdot Z^\gamma \partial_h \partial_3 a^{\varepsilon}  \cdot Z^\alpha \partial_3 a^{\varepsilon}  \mathrm{d}\mathbf{x}\right|\\
&+\left|\int_{\mathbb{R}_+^3} Z^\beta {v}_{3}^{\varepsilon}  Z^\gamma \partial_3^2 a^{\varepsilon}  \cdot Z^\alpha \partial_3 a^{\varepsilon}   \mathrm{d}\mathbf{x}\right|\Bigg)\mathrm{d}\tau\\
\overset{def}{=}{}&H_{4,3,1}+H_{4,3,2}.
\end{align*}
It follows from Lemma \ref{Le1} that
\begin{align*}
H_{4,3,1}
\lesssim{}&
\sum_{\substack{|\alpha|\leq m-1\\\beta+\gamma=\alpha,|\beta|=1}}\int_{0}^{t}
\left\| Z^\beta \mathbf{v}^{\varepsilon}_{h} \right\|_{2}^{\frac{1}{2}}
\left\| \partial_3 Z^\beta \mathbf{v}^{\varepsilon}_{h} \right\|_{2}^{\frac{1}{2}}
\left\| Z^\gamma \partial_h\partial_3 a^{\varepsilon}  \right\|_{0}
\left\|Z^\alpha \partial_3 a^{\varepsilon}  \right\|_{0}\mathrm{d}\tau\\
&+\sum_{\substack{|\alpha|\leq m-1\\\beta+\gamma=\alpha,|\beta|\geq2}}\int_{0}^{t}
\left\| Z^\beta \mathbf{v}^{\varepsilon}_{h} \right\|_{0}^{\frac{1}{4}}
\left\| \partial_1 Z^\beta \mathbf{v}^{\varepsilon}_{h} \right\|_{0}^{\frac{1}{4}}
\left\| \partial_3 Z^\beta \mathbf{v}^{\varepsilon}_{h} \right\|_{0}^{\frac{1}{4}}
\left\| \partial_{13}Z^\beta \mathbf{v}^{\varepsilon}_{h} \right\|_{0}^{\frac{1}{4}}\\
&\cdot\left\| Z^\gamma\partial_h  \partial_3 a^{\varepsilon}  \right\|_{0}^{\frac{1}{2}}
\left\| \partial_{2}Z^\gamma\partial_h  \partial_3 a^{\varepsilon}  \right\|_{0}^{\frac{1}{2}}
\left\|Z^\alpha \partial_3 a^{\varepsilon}  \right\|_{0}\mathrm{d}\tau \\
\lesssim{}&\sup_{0\leq \tau \leq t}\left\| \partial_{3}a^{\varepsilon}\right\|_{m-1}\int_{0}^{t}\left(\left\| \partial a^{\varepsilon}\right\|_{m-1}^{2}+\left\|\partial\mathbf{v}^{\varepsilon}\right\|_{m}^2\right) \mathrm{d}\tau.
\end{align*}
The term \(H_{4,3,2}\) requires further decomposition:
\begin{align*}
H_{4,3,2}
\lesssim{}&\sum_{\substack{|\alpha|\leq m-1\\\beta+\gamma=\alpha,\beta \neq 0}}\int_{0}^{t} \Bigg(\left|\int_{\mathbb{R}^2 \times [1,+\infty)} Z^\beta {v}_{3}^{\varepsilon}  Z^\gamma \partial_3^2 a^{\varepsilon}  \cdot Z^\alpha \partial_3 a^{\varepsilon}  \mathrm{d}\mathbf{x}\right|\\
&+\left|\int_{\mathbb{R}^2 \times [0,1)} Z^\beta {v}_{3}^{\varepsilon}  Z^\gamma \partial_3^2 a^{\varepsilon}  \cdot Z^\alpha \partial_3 a^{\varepsilon}  \mathrm{d}\mathbf{x}\right|\Bigg)\mathrm{d}\tau\\
\overset{def}{=}{}&H_{4,3,2,1}+H_{4,3,2,2}.
\end{align*}
We use \(\varphi \ge \frac{1}{2}\) on \([1,+\infty)\) to get
\begin{align*}
H_{4,3,2,1}\lesssim{}&\sum_{\substack{|\alpha|\leq m-1\\\beta+\gamma=\alpha,\beta \neq 0}}\int_{0}^{t}  \int_{\mathbb{R}^2 \times [1,+\infty)} \left|Z^\beta {v}_{3}^{\varepsilon}\right|  \left| \varphi Z^\gamma \partial_{3}^2 a^{\varepsilon} \right|  \left|Z^\alpha \partial_3 a^{\varepsilon} \right| \mathrm{d}\mathbf{x}\mathrm{d}\tau\\
\lesssim{}&\sum_{\substack{|\alpha|\leq m-1\\\beta+\gamma=\alpha,\beta \neq 0\\\tilde{\gamma}\leq\gamma+e_3}}\int_{0}^{t}  \int_{\mathbb{R}_{+}^3 } \left|Z^\beta {v}_{3}^{\varepsilon}\right|  \left| Z^{\tilde{\gamma}} \partial_3 a^{\varepsilon}  \right|  \left|Z^\alpha \partial_3 a^{\varepsilon} \right| \mathrm{d}\mathbf{x}\mathrm{d}\tau\\
\lesssim{}&\sum_{\substack{|\alpha|\leq m-1\\\beta+\gamma=\alpha, |\beta|=1\\\tilde{\gamma}\leq\gamma+e_3}}\int_{0}^{t}  \left\|Z^\beta {v}_{3}^{\varepsilon}\right\|_{2}^{\frac{1}{2}}\left\|\partial_3Z^\beta {v}_{3}^{\varepsilon}\right\|_{2}^{\frac{1}{2}}  \left\| Z^{\tilde{\gamma}} \partial_3 a^{\varepsilon}  \right\|_0  \left\|Z^\alpha \partial_3 a^{\varepsilon} \right\|_0 \mathrm{d}\tau\\	
&+\sum_{\substack{|\alpha|\leq m-1\\\beta+\gamma=\alpha, |\beta|\geq2\\\tilde{\gamma}\leq\gamma+e_3}}\int_{0}^{t}  \left\|Z^\beta {v}_{3}^{\varepsilon}\right\|_{0}^{\frac{1}{4}}\left\|\partial_{2}Z^\beta {v}_{3}^{\varepsilon}\right\|_{0}^{\frac{1}{4}}\left\|\partial_{3}Z^\beta {v}_{3}^{\varepsilon}\right\|_{0}^{\frac{1}{4}}\left\|\partial_{23}Z^\beta {v}_{3}^{\varepsilon}\right\|_{0}^{\frac{1}{4}}\\
&\cdot\left\| Z^{\tilde{\gamma}} \partial_3 a^{\varepsilon}  \right\|_0^{\frac{1}{2}}\left\| \partial_1Z^{\tilde{\gamma}} \partial_3 a^{\varepsilon}  \right\|_0^{\frac{1}{2}}  \left\|Z^\alpha \partial_3 a^{\varepsilon} \right\| \mathrm{d}\tau\\	
\lesssim{}&\sup_{0\leq \tau \leq t}\left\| \partial_{3}a^{\varepsilon}\right\|_{m-1}\int_{0}^{t}\left(\left\| \partial a^{\varepsilon}\right\|_{m-1}^{2}+\left\|\partial\mathbf{v}^{\varepsilon}\right\|_{m}^2\right) \mathrm{d}\tau.
\end{align*}
Using \(\frac{\varphi(x_3)}{x_3}\ge\frac{1}{2}\) on \([0,1)\), we obtain
\begin{align*}
H_{4,3,2,2}\lesssim{}&\sum_{\substack{|\alpha|\leq m-1\\\beta+\gamma=\alpha,\beta \neq 0}}\int_{0}^{t}  \int_{\mathbb{R}^2 \times [0, 1)} \left|\frac{Z^\beta {v}_{3}^{\varepsilon}}{x_3}\right|  \left| \varphi Z^\gamma \partial_{3}^2 a^{\varepsilon} \right|  \left|Z^\alpha \partial_3 a^{\varepsilon} \right| \mathrm{d}\mathbf{x}\mathrm{d}\tau\\
\lesssim{}&\sum_{\substack{|\alpha|\leq m-1\\\beta+\gamma=\alpha,\beta \neq 0\\\tilde{\gamma}\leq\gamma+e_3}}\int_{0}^{t}  \int_{\mathbb{R}^2 \times [0, 1)} \left|\frac{Z^\beta {v}_{3}^{\varepsilon}}{x_3}\right|  \left| Z^{\tilde{\gamma}} \partial_3 a^{\varepsilon}  \right|  \left|Z^\alpha \partial_3 a^{\varepsilon} \right| \mathrm{d}\mathbf{x}\mathrm{d}\tau\\
\lesssim{}&\sum_{\substack{|\alpha|\leq m-1\\\beta+\gamma=\alpha, |\beta|=1\\\tilde{\gamma}\leq\gamma+e_3}}\int_{0}^{t}  \left\|\partial_3Z^\beta {v}_{3}^{\varepsilon}\right\|_{2}^{\frac{1}{2}}\left\|\partial_3^2Z^\beta {v}_{3}^{\varepsilon}\right\|_{2}^{\frac{1}{2}}  \left\| Z^{\tilde{\gamma}} \partial_3 a^{\varepsilon}  \right\|_0  \left\|Z^\alpha \partial_3 a^{\varepsilon} \right\|_0 \mathrm{d}\tau\\	
&+\sum_{\substack{|\alpha|\leq m-1\\\beta+\gamma=\alpha, |\beta|\geq2\\\tilde{\gamma}\leq\gamma+e_3, \gamma\neq0}}\int_{0}^{t}  \left\|\partial_3Z^\beta {v}_{3}^{\varepsilon}\right\|_{0}^{\frac{1}{4}}\left\|\partial_{23}Z^\beta {v}_{3}^{\varepsilon}\right\|_{0}^{\frac{1}{4}}
\left\|\partial_{3}^2Z^\beta {v}_{3}^{\varepsilon}\right\|_{0}^{\frac{1}{4}}\left\|\partial_{233}Z^\beta {v}_{3}^{\varepsilon}\right\|_{0}^{\frac{1}{4}}\\
&\cdot\left\| Z^{\tilde{\gamma}} \partial_3 a^{\varepsilon}  \right\|_0^{\frac{1}{2}}\left\| \partial_1Z^{\tilde{\gamma}} \partial_3 a^{\varepsilon}  \right\|_0^{\frac{1}{2}}  \left\|Z^\alpha \partial_3 a^{\varepsilon} \right\|_0 \mathrm{d}\tau\\	
&+\sum_{\substack{2\leq|\alpha|\leq m-1\\\tilde{\gamma}\leq e_3}}\int_{0}^{t}  \left\|\partial_3Z^\alpha {v}_{3}^{\varepsilon}\right\|_{0}^{\frac{1}{2}}\left\|\partial_{3}^2Z^\alpha {v}_{3}^{\varepsilon}\right\|_{0}^{\frac{1}{2}}\left\|Z^\alpha \partial_3 a^{\varepsilon} \right\|_0\\
&\cdot\left\| Z^{\tilde{\gamma}} \partial_3 a^{\varepsilon}  \right\|_0^{\frac{1}{4}}\left\| \partial_1Z^{\tilde{\gamma}} \partial_3 a^{\varepsilon}  \right\|_0^{\frac{1}{4}}\left\| \partial_2Z^{\tilde{\gamma}} \partial_3 a^{\varepsilon}  \right\|_0^{\frac{1}{4}}\left\| \partial_{12}Z^{\tilde{\gamma}} \partial_3 a^{\varepsilon}  \right\|_0^{\frac{1}{4}}   \mathrm{d}\tau\\
\lesssim{}&\sup_{0\leq \tau \leq t}\left\| \partial_{3}a^{\varepsilon}\right\|_{m-1}\int_{0}^{t}\left(\left\| \partial a^{\varepsilon}\right\|_{m-1}^{2}+\left\|\partial\mathbf{v}^{\varepsilon}\right\|_{m}^2+\left\|\partial\partial_3\mathbf{v}^{\varepsilon}\right\|_{m-1}^2\right) \mathrm{d}\tau.
\end{align*}
Summing up the foregoing estimates, we arrive at
\[
H_{4,3,2}\lesssim\sup_{0\leq \tau \leq t}\left\| \partial_{3}a^{\varepsilon}\right\|_{m-1}\int_{0}^{t}\left(\left\| \partial a^{\varepsilon}\right\|_{m-1}^{2}+\left\|\partial\mathbf{v}^{\varepsilon}\right\|_{m}^2+\left\|\partial\partial_3\mathbf{v}^{\varepsilon}\right\|_{m-1}^2\right) \mathrm{d}\tau.
\]
Consequently, we have
\[
H_{4,3}\lesssim\sup_{0\leq \tau \leq t}\left\| \partial_{3}a^{\varepsilon}\right\|_{m-1}\int_{0}^{t}\left(\left\| \partial a^{\varepsilon}\right\|_{m-1}^{2}+\left\|\partial\mathbf{v}^{\varepsilon}\right\|_{m}^2+\left\|\partial\partial_3\mathbf{v}^{\varepsilon}\right\|_{m-1}^2\right) \mathrm{d}\tau.
\]
By Lemma \ref{Le1} again,  it holds that
\begin{align*}
H_{4,4}
\lesssim{}&\sum_{|\alpha|\leq m-1}\int_{0}^{t} \left(\left| \left( \partial_{3}\mathbf{v}^{\varepsilon} \cdot Z^\alpha \nabla a^{\varepsilon}, Z^\alpha \partial_{3}a^{\varepsilon} \right)\right|+\left| \left( Z^\alpha\partial_{3}\mathbf{v}^{\varepsilon} \cdot \nabla a^{\varepsilon}, Z^\alpha \partial_{3}a^{\varepsilon} \right)\right|\right)\mathrm{d}\tau\\
&+\sum_{\substack{|\alpha|\leq m-1\\\beta+\gamma=\alpha\\\beta \neq 0, \gamma \neq 0}}\int_{0}^{t} \left| \left( Z^\beta \partial_{3}\mathbf{v}^{\varepsilon} \cdot Z^\gamma \nabla a^{\varepsilon}, Z^\alpha \partial_{3}a^{\varepsilon} \right)\right|\mathrm{d}\tau\\
\lesssim{}&\sum_{|\alpha|\leq m-1}\int_{0}^{t}\Big( \left\|\partial_{3}\mathbf{v}^{\varepsilon}\right\|_{2}^{\frac{1}{2}}\left\|\partial_{3}^2\mathbf{v}^{\varepsilon}\right\|_{2}^{\frac{1}{2}}   \left\|Z^\alpha \partial a^{\varepsilon}\right\|_{0}  \left\|Z^\alpha \partial_{3}a^{\varepsilon}\right\|_{0}
+\left\|Z^\alpha\partial_{3}\mathbf{v}^{\varepsilon}\right\|_{0}^{\frac{1}{2}}\\
&\cdot\left\|\partial_{3}Z^\alpha\partial_{3}\mathbf{v}^{\varepsilon}\right\|_{0}^{\frac{1}{2}}\left\|\partial a^{\varepsilon}\right\|_{0}^{\frac{1}{4}}\left\|\partial_{1}\partial a^{\varepsilon}\right\|_{0}^{\frac{1}{4}}\left\|\partial_{2}\partial a^{\varepsilon}\right\|_{0}^{\frac{1}{4}}\left\|\partial_{12}\partial a^{\varepsilon}\right\|_{0}^{\frac{1}{4}}\left\|Z^\alpha \partial_{3}a^{\varepsilon}\right\|_{0}\Big)\mathrm{d}\tau\\
&+\sum_{\substack{|\alpha|\leq m-1\\\beta+\gamma=\alpha\\\beta \neq 0, \gamma \neq 0}}\int_{0}^{t}\left\| Z^\beta \partial_{3}\mathbf{v}^{\varepsilon}\right\|_{0}^{\frac{1}{4}}\left\|\partial_{2} Z^\beta \partial_{3}\mathbf{v}^{\varepsilon}\right\|_{0}^{\frac{1}{4}}\left\|\partial_{3} Z^\beta \partial_{3}\mathbf{v}^{\varepsilon}\right\|_{0}^{\frac{1}{4}}\left\|\partial_{23} Z^\beta \partial_{3}\mathbf{v}^{\varepsilon}\right\|_{0}^{\frac{1}{4}}\\
&\cdot  \left\|Z^\gamma \partial a^{\varepsilon}\right\|_{0}^{\frac{1}{2}}\left\|\partial_{1}Z^\gamma \partial a^{\varepsilon}\right\|_{0}^{\frac{1}{2}}\left\|Z^\alpha \partial_{3}a^{\varepsilon}\right\|_{0}\mathrm{d}\tau\\
\lesssim{}&\sup_{0\leq \tau \leq t}\left\| \partial_{3}a^{\varepsilon}\right\|_{m-1}\int_{0}^{t}\left(\left\| \partial a^{\varepsilon}\right\|_{m-1}^{2}+\left\|\partial\mathbf{v}^{\varepsilon}\right\|_{m}^2+\left\|\partial\partial_3\mathbf{v}^{\varepsilon}\right\|_{m-1}^2\right) \mathrm{d}\tau.
\end{align*}
Collecting all the above estimates, we conclude
\begin{equation}\label{G87}
\sum_{|\alpha|\leq m-1} \int_{0}^{t}  \left|{D} _{\alpha}\right|  \mathrm{d} \tau\lesssim\sup_{0\leq \tau \leq t}\left\| \partial_{3}a^{\varepsilon}\right\|_{m-1}\int_{0}^{t}\left(\left\| \partial a^{\varepsilon}\right\|_{m-1}^{2}+\left\|\partial\mathbf{v}^{\varepsilon}\right\|_{m}^2+\left\|\partial\partial_3\mathbf{v}^{\varepsilon}\right\|_{m-1}^2\right) \mathrm{d}\tau.
\end{equation}
Turning to estimate \(\sum_{|\alpha|\leq m-1} \int_{0}^{t}  \left|{E} _{\alpha}\right|  \mathrm{d} \tau\), we present the following splitting:
\begin{align*}
&\sum_{|\alpha|\leq m-1} \int_{0}^{t}  \left|{E} _{\alpha}\right|  \mathrm{d} \tau\\
\lesssim{}&\sum_{|\alpha|\leq m-1}\int_{0}^{t} \left| \left(  Z^{\alpha}\partial_3\left(-\mathbf{v}^{\varepsilon} \cdot \nabla\mathbf{B}^{\varepsilon} - \mathbf{B}^{\varepsilon}\operatorname{div}\mathbf{v}^{\varepsilon} + \mathbf{B}^{\varepsilon} \cdot \nabla\mathbf{v}^{\varepsilon}\right), Z^{\alpha}\partial_{3} \mathbf{B}^{\varepsilon} \right)\right|\mathrm{d}\tau\\
\lesssim{}&\sum_{|\alpha|\leq m-1}\int_{0}^{t}\int_{\mathbb{R}_+^3} \left|\operatorname{div}\mathbf{v}^{\varepsilon}\right| \cdot \left|Z^\alpha \partial_{3}\mathbf{B}^{\varepsilon}\right|^2 \mathrm{d}\mathbf{x}\mathrm{d}\tau+\sum_{|\alpha|\leq m-1}\int_{0}^{t}\left|\left( v_3^{\varepsilon} \left[Z^\alpha , \partial_3\right]\partial_{3} \mathbf{B}^{\varepsilon}, Z^\alpha \partial_{3}\mathbf{B}^{\varepsilon}\right) \right|\mathrm{d}\tau\\
&+\sum_{\substack{|\alpha|\leq m-1\\\beta+\gamma=\alpha,\beta \neq 0}}\int_{0}^{t} \left| \left( Z^\beta \mathbf{v}^{\varepsilon} \cdot Z^\gamma \nabla \partial_{3}\mathbf{B}^{\varepsilon}, Z^\alpha \partial_{3}\mathbf{B}^{\varepsilon} \right)\right|\mathrm{d}\tau
+\sum_{|\alpha|\leq m-1}\int_{0}^{t} \left| \left(  Z^{\alpha}\left(\partial\mathbf{v}^{\varepsilon}\cdot \partial\mathbf{B}^{\varepsilon} \right), Z^{\alpha}\partial_{3} \mathbf{B}^{\varepsilon} \right)\right|\mathrm{d}\tau\\
&+\sum_{|\alpha|\leq m-1}\int_{0}^{t} \left| \left(  Z^{\alpha}\left(\partial^2\mathbf{v}^{\varepsilon}\cdot \mathbf{B}^{\varepsilon}\right), Z^{\alpha}\partial_{3} \mathbf{B}^{\varepsilon} \right)\right|\mathrm{d}\tau\\
\overset{def}{=}{}& H_{5,1}+H_{5,2}+H_{5,3}+H_{5,4}+H_{5,5}.
\end{align*}
Lemma \ref{Le1} gives that
\begin{align*}
H_{5,1}
\lesssim{}&\sum_{|\alpha|\leq m-1}\int_{0}^{t}\left\| \operatorname{div}\mathbf{v}^{\varepsilon}\right\|_0^{\frac{1}{2}}\left\| \partial_3\operatorname{div}\mathbf{v}^{\varepsilon}\right\|_0^{\frac{1}{2}}\left\|Z^{\alpha} \partial_{3}\mathbf{B}^{\varepsilon}\right\|_0
\left\|\partial_{1}Z^{\alpha} \partial_{3}\mathbf{B}^{\varepsilon}\right\|_0^{\frac{1}{2}}\left\|\partial_{2}Z^{\alpha} \partial_{3}\mathbf{B}^{\varepsilon}\right\|_0^{\frac{1}{2}}\mathrm{d}\tau\\
\lesssim{}&\sup_{0\leq \tau \leq t}\left\| \partial_{3}\mathbf{B}^{\varepsilon}\right\|_{m-1}\int_{0}^{t}\left(\left\|\partial\mathbf{v}^{\varepsilon}\right\|_{m}^2+\left\|\partial_3\left(\partial\mathbf{v}^{\varepsilon}, \partial_h \mathbf{B}^{\varepsilon}\right)\right\|_{m-1}^2\right) \mathrm{d}\tau.
\end{align*}
In view of the commutator property, Hardy's inequality and Lemma \ref{Le1}, we deduce
\begin{align*}
H_{5,2}
\lesssim{}&\sum_{\substack{|\alpha|\leq m-1\\\alpha_3 \geq 1}}  \sum_{k=0}^{\alpha_3-1} \int_{0}^{t}\int_{\mathbb{R}_+^3}\left|\frac{{v}_{3}^{\varepsilon}}{x_3}\right|
\left| Z^{\alpha_{h}}_{h} \varphi\partial_3Z^k_3\partial_3 \mathbf{B}^{\varepsilon} \right| \left| Z^\alpha\partial_3 \mathbf{B}^{\varepsilon}\right| \mathrm{d}\mathbf{x}\mathrm{d}\tau\\
\lesssim{}&\sum_{\substack{|\alpha|\leq m-1\\\alpha_3 \geq 1}}  \sum_{k=0}^{\alpha_3-1} \int_{0}^{t}
\left\| \partial_3 {v}_{3}^{\varepsilon}\right\|_{0}^{\frac{1}{2}}\left\| \partial_3^2 {v}_{3}^{\varepsilon}\right\|_{0}^{\frac{1}{2}}\left\| Z^{\alpha_{h}}_{h}Z^{k+1}_3\partial_3 \mathbf{B}^{\varepsilon} \right\|_0^{\frac{1}{2}}\\
&\cdot\left\|\partial_1 Z^{\alpha_{h}}_{h}Z^{k+1}_3\partial_3 \mathbf{B}^{\varepsilon} \right\|_0^{\frac{1}{2}} \left\| Z^\alpha \partial_3\mathbf{B}^{\varepsilon}\right\|_0^{\frac{1}{2}}\left\|\partial_2 Z^\alpha \partial_3\mathbf{B}^{\varepsilon}\right\|_0^{\frac{1}{2}} \mathrm{d}\tau\\
\lesssim{}&\sup_{0\leq \tau \leq t}\left\| \partial_{3}\mathbf{B}^{\varepsilon}\right\|_{m-1}\int_{0}^{t}\left(\left\|\partial\mathbf{v}^{\varepsilon}\right\|_{m}^2+\left\|\partial_3\left(\partial\mathbf{v}^{\varepsilon}, \partial_h \mathbf{B}^{\varepsilon}\right)\right\|_{m-1}^2\right) \mathrm{d}\tau.
\end{align*}
We note that the estimate of term \(H_{5,3}\) is somewhat complex. Thus, we split this term into two components as follows:
\begin{align*}
H_{5,3}
\lesssim{}&\sum_{\substack{|\alpha|\leq m-1\\\beta+\gamma=\alpha,\beta \neq 0}}\int_{0}^{t} \Bigg(\left|\int_{\mathbb{R}_+^3} Z^\beta \mathbf{v}^{\varepsilon}_{h} \cdot Z^\gamma \partial_h \partial_3 \mathbf{B}^{\varepsilon}  \cdot Z^\alpha \partial_3 \mathbf{B}^{\varepsilon}  \mathrm{d}\mathbf{x}\right|\\
&+\left|\int_{\mathbb{R}_+^3} Z^\beta {v}_{3}^{\varepsilon}  Z^\gamma \partial_3^2 \mathbf{B}^{\varepsilon}  \cdot Z^\alpha \partial_3 \mathbf{B}^{\varepsilon}   \mathrm{d}\mathbf{x}\right|\Bigg)\mathrm{d}\tau\\
\overset{def}{=}{}&H_{5,3,1}+H_{5,3,2}.
\end{align*}
Lemma \ref{Le1} gives rise to
\begin{align*}
H_{5,3,1}
\lesssim{}&\sum_{\substack{|\alpha|\leq m-1\\\beta+\gamma=\alpha,|\beta|\neq0}}\int_{0}^{t}
\left\| Z^\beta \mathbf{v}^{\varepsilon}_{h} \right\|_{0}^{\frac{1}{2}}
\left\| \partial_3 Z^\beta \mathbf{v}^{\varepsilon}_{h} \right\|_{0}^{\frac{1}{2}}\\
&\cdot\left\| \partial_h Z^\gamma \partial_3 \mathbf{B}^{\varepsilon}  \right\|_{0}^{\frac{1}{2}}
\left\| \partial_{2}\partial_h Z^\gamma \partial_3 \mathbf{B}^{\varepsilon}  \right\|_{0}^{\frac{1}{2}}
\left\|Z^\alpha \partial_3 \mathbf{B}^{\varepsilon}  \right\|_{0}^{\frac{1}{2}}
\left\|\partial_1 Z^\alpha \partial_3 \mathbf{B}^{\varepsilon}  \right\|_{0}^{\frac{1}{2}}\mathrm{d}\tau \\
\lesssim{}&\sup_{0\leq \tau \leq t}\left\| \partial_{3}\mathbf{B}^{\varepsilon}\right\|_{m-1}
\int_{0}^{t}\left(\left\|\partial\mathbf{v}^{\varepsilon}\right\|_{m}^2+\left\|\partial_3\left(\partial\mathbf{v}^{\varepsilon}, \partial_h \mathbf{B}^{\varepsilon}\right)\right\|_{m-1}^2\right) \mathrm{d}\tau.
\end{align*}
Further decomposition is required for the term \(H_{5,3,2}\):
\begin{align*}
H_{5,3,2}
\lesssim{}&\sum_{\substack{|\alpha|\leq m-1\\\beta+\gamma=\alpha,\beta \neq 0}}\int_{0}^{t} \Bigg(\left|\int_{\mathbb{R}^2 \times [1,+\infty)} Z^\beta {v}_{3}^{\varepsilon}  Z^\gamma \partial_3^2 \mathbf{B}^{\varepsilon}  \cdot Z^\alpha \partial_3 \mathbf{B}^{\varepsilon}  \mathrm{d}\mathbf{x}\right|\\
&+\left|\int_{\mathbb{R}^2 \times [0,1)} Z^\beta {v}_{3}^{\varepsilon}  Z^\gamma \partial_3^2 \mathbf{B}^{\varepsilon}  \cdot Z^\alpha \partial_3 \mathbf{B}^{\varepsilon}   \mathrm{d}\mathbf{x}\right|\Bigg)\mathrm{d}\tau\\
\overset{def}{=}{}&H_{5,3,2,1}+H_{5,3,2,2}.
\end{align*}
Taking advantage of \(\varphi \ge \frac{1}{2}\) on \([1,+\infty)\), we arrive at
\begin{align*}
H_{5,3,2,1}\lesssim{}&\sum_{\substack{|\alpha|\leq m-1\\\beta+\gamma=\alpha,\beta \neq 0}}\int_{0}^{t}  \int_{\mathbb{R}^2 \times [1,+\infty)} \left|Z^\beta {v}_{3}^{\varepsilon}\right|  \left| \varphi Z^\gamma \partial_{3}^2 \mathbf{B}^{\varepsilon} \right|  \left|Z^\alpha \partial_3 \mathbf{B}^{\varepsilon} \right| \mathrm{d}\mathbf{x}\mathrm{d}\tau\\
\lesssim{}&\sum_{\substack{|\alpha|\leq m-1\\\beta+\gamma=\alpha,\beta \neq 0\\\tilde{\gamma}\leq\gamma+e_3}}\int_{0}^{t}  \int_{\mathbb{R}_{+}^3 } \left|Z^\beta {v}_{3}^{\varepsilon}\right|  \left| Z^{\tilde{\gamma}} \partial_3 \mathbf{B}^{\varepsilon}  \right|  \left|Z^\alpha \partial_3 \mathbf{B}^{\varepsilon} \right| \mathrm{d}\mathbf{x}\mathrm{d}\tau\\
\lesssim{}&\sum_{\substack{|\alpha|\leq m-1\\\beta+\gamma=\alpha,\beta \neq 0\\\tilde{\gamma}\leq\gamma+e_3}}\int_{0}^{t}  \left\|Z^\beta {v}_{3}^{\varepsilon}\right\|_{0}^{\frac{1}{2}}\left\|\partial_3Z^\beta {v}_{3}^{\varepsilon}\right\|_{0}^{\frac{1}{2}}  \\
&\cdot\left\| Z^{\tilde{\gamma}} \partial_3 \mathbf{B}^{\varepsilon}  \right\|_0^{\frac{1}{2}}\left\|\partial_2 Z^{\tilde{\gamma}} \partial_3 \mathbf{B}^{\varepsilon}  \right\|_0^{\frac{1}{2}}  \left\|Z^\alpha \partial_3 \mathbf{B}^{\varepsilon} \right\|_0^{\frac{1}{2}} \left\|\partial_1Z^\alpha \partial_3 \mathbf{B}^{\varepsilon} \right\|_0^{\frac{1}{2}}\mathrm{d}\tau\\	
\lesssim{}&\sup_{0\leq \tau \leq t}\left\| \partial_{3}\mathbf{B}^{\varepsilon}\right\|_{m-1}\int_{0}^{t}\left(\left\|\partial\mathbf{v}^{\varepsilon}\right\|_{m}^2+\left\|\partial_3\partial_h \mathbf{B}^{\varepsilon}\right\|_{m-1}^2\right) \mathrm{d}\tau.
\end{align*}
Making use of \(\dfrac{\varphi(x_3)}{x_3}\ge\dfrac{1}{2}\) on \([0,1)\), we derive
\begin{align*}
H_{5,3,2,2}\lesssim{}&\sum_{\substack{|\alpha|\leq m-1\\\beta+\gamma=\alpha,\beta \neq 0}}\int_{0}^{t}  \int_{\mathbb{R}^2 \times [0, 1)} \left|\frac{Z^\beta {v}_{3}^{\varepsilon}}{x_3}\right|  \left| \varphi Z^\gamma \partial_{3}^2 \mathbf{B}^{\varepsilon} \right|  \left|Z^\alpha \partial_3 \mathbf{B}^{\varepsilon} \right| \mathrm{d}\mathbf{x}\mathrm{d}\tau\\
\lesssim{}&\sum_{\substack{|\alpha|\leq m-1\\\beta+\gamma=\alpha,\beta \neq 0\\\tilde{\gamma}\leq\gamma+e_3}}\int_{0}^{t}  \int_{\mathbb{R}^2 \times [0, 1)} \left|\frac{Z^\beta {v}_{3}^{\varepsilon}}{x_3}\right|  \left| Z^{\tilde{\gamma}} \partial_3 \mathbf{B}^{\varepsilon}  \right|  \left|Z^\alpha \partial_3 \mathbf{B}^{\varepsilon} \right| \mathrm{d}\mathbf{x}\mathrm{d}\tau\\
\lesssim{}&\sum_{\substack{|\alpha|\leq m-1\\\beta+\gamma=\alpha,\beta \neq 0\\\tilde{\gamma}\leq\gamma+e_3}}\int_{0}^{t}  \left\|\partial_3Z^\beta {v}_{3}^{\varepsilon}\right\|_{0}^{\frac{1}{2}}\left\|\partial_3^2Z^\beta {v}_{3}^{\varepsilon}\right\|_{0}^{\frac{1}{2}}  \\
&\cdot\left\| Z^{\tilde{\gamma}} \partial_3 \mathbf{B}^{\varepsilon}  \right\|_0^{\frac{1}{2}}\left\|\partial_2 Z^{\tilde{\gamma}} \partial_3 \mathbf{B}^{\varepsilon}  \right\|_0^{\frac{1}{2}}  \left\|Z^\alpha \partial_3 \mathbf{B}^{\varepsilon} \right\|_0^{\frac{1}{2}} \left\|\partial_1Z^\alpha \partial_3 \mathbf{B}^{\varepsilon} \right\|_0^{\frac{1}{2}}\mathrm{d}\tau\\	
\lesssim{}&\sup_{0\leq \tau \leq t}\left\| \partial_{3}\mathbf{B}^{\varepsilon}\right\|_{m-1}\int_{0}^{t}\left(\left\|\partial\mathbf{v}^{\varepsilon}\right\|_{m}^2+\left\|\partial_3\left(\partial\mathbf{v}^{\varepsilon}, \partial_h \mathbf{B}^{\varepsilon}\right)\right\|_{m-1}^2\right) \mathrm{d}\tau.
\end{align*}
Summing the preceding estimates, we reach
\begin{align*}
H_{5,3,2}
\lesssim\sup_{0\leq \tau \leq t}\left\| \partial_{3}\mathbf{B}^{\varepsilon}\right\|_{m-1}\int_{0}^{t}\left(\left\|\partial\mathbf{v}^{\varepsilon}\right\|_{m}^2+\left\|\partial_3\left(\partial\mathbf{v}^{\varepsilon}, \partial_h \mathbf{B}^{\varepsilon}\right)\right\|_{m-1}^2\right) \mathrm{d}\tau.
\end{align*}
Gathering the foregoing estimates, we deduce
\begin{equation*}
H_{5,3}
\lesssim\sup_{0\leq \tau \leq t}\left\| \partial_{3}\mathbf{B}^{\varepsilon}\right\|_{m-1}
\int_{0}^{t}\left(\left\|\partial\mathbf{v}^{\varepsilon}\right\|_{m}^2+\left\|\partial_3\left(\partial\mathbf{v}^{\varepsilon}, \partial_h \mathbf{B}^{\varepsilon}\right)\right\|_{m-1}^2\right) \mathrm{d}\tau.
\end{equation*}
By Lemma \ref{Le1}, we have
\begin{align*}
H_{5,4}
\lesssim{}&\sum_{\substack{|\alpha|\leq m-1\\\beta+\gamma=\alpha}}\int_{0}^{t} \left| \left( Z^\beta \partial\mathbf{v}^{\varepsilon} \cdot Z^\gamma \partial \mathbf{B}^{\varepsilon}, Z^\alpha \partial_{3}\mathbf{B}^{\varepsilon} \right)\right|\mathrm{d}\tau\\
\lesssim{}&\sum_{\substack{|\alpha|\leq m-1\\\beta+\gamma=\alpha}}\int_{0}^{t}  \left\|Z^\beta \partial\mathbf{v}^{\varepsilon}\right\|_{0}^{\frac{1}{2}}\left\|\partial_3 Z^\beta \partial\mathbf{v}^{\varepsilon}\right\|_{0}^{\frac{1}{2}}  \\
&\cdot\left\| Z^\gamma \partial \mathbf{B}^{\varepsilon}\right\|_0^{\frac{1}{2}}\left\|\partial_2 Z^\gamma \partial \mathbf{B}^{\varepsilon}\right\|_0^{\frac{1}{2}}  \left\|Z^\alpha \partial_3 \mathbf{B}^{\varepsilon} \right\|_0^{\frac{1}{2}} \left\|\partial_1Z^\alpha \partial_3 \mathbf{B}^{\varepsilon} \right\|_0^{\frac{1}{2}}\mathrm{d}\tau\\	
\lesssim{}&\sup_{0\leq \tau \leq t}\left(\left\|\mathbf{B}^{\varepsilon}\right\|_{m}+\left\| \partial_{3}\mathbf{B}^{\varepsilon}\right\|_{m-1}\right)\int_{0}^{t}\left(\left\|\left(\partial\mathbf{v}^{\varepsilon}, \partial_h \mathbf{B}^{\varepsilon}\right)\right\|_{m}^2+\left\|\partial_3\left(\partial\mathbf{v}^{\varepsilon}, \partial_h \mathbf{B}^{\varepsilon}\right)\right\|_{m-1}^2\right) \mathrm{d}\tau.
\end{align*}
It follows from Lemma \ref{Le1} that
\begin{align*}
H_{5,5}
\lesssim{}&\sum_{\substack{|\alpha|\leq m-1\\\beta+\gamma=\alpha}}\int_{0}^{t} \left| \left( Z^\beta \partial^2\mathbf{v}^{\varepsilon} \cdot Z^\gamma \mathbf{B}^{\varepsilon}, Z^\alpha \partial_{3}\mathbf{B}^{\varepsilon} \right)\right|\mathrm{d}\tau\\
\lesssim{}&\sum_{\substack{|\alpha|\leq m-1\\\beta+\gamma=\alpha}}\int_{0}^{t}  \left\|Z^\beta \partial^2\mathbf{v}^{\varepsilon}\right\|_{0}\left\| Z^\gamma\mathbf{B}^{\varepsilon}\right\|_0^{\frac{1}{4}}\left\|\partial_2 Z^\gamma \mathbf{B}^{\varepsilon}\right\|_0^{\frac{1}{4}}\\
&\cdot\left\|\partial_3 Z^\gamma \mathbf{B}^{\varepsilon}\right\|_0^{\frac{1}{4}}\left\|\partial_{23} Z^\gamma \mathbf{B}^{\varepsilon}\right\|_0^{\frac{1}{4}}  \left\|Z^\alpha \partial_3 \mathbf{B}^{\varepsilon} \right\|_0^{\frac{1}{2}} \left\|\partial_1Z^\alpha \partial_3 \mathbf{B}^{\varepsilon} \right\|_0^{\frac{1}{2}}\mathrm{d}\tau\\	
\lesssim{}&\sup_{0\leq \tau \leq t}\left(\left\|\mathbf{B}^{\varepsilon}\right\|_{m}+\left\| \partial_{3}\mathbf{B}^{\varepsilon}\right\|_{m-1}\right)\int_{0}^{t}\left(\left\|\left(\partial\mathbf{v}^{\varepsilon}, \partial_h \mathbf{B}^{\varepsilon}\right)\right\|_{m}^2+\left\|\partial_3\left(\partial\mathbf{v}^{\varepsilon}, \partial_h \mathbf{B}^{\varepsilon}\right)\right\|_{m-1}^2\right) \mathrm{d}\tau.
\end{align*}
By assembling all the above estimates, we arrive at
\begin{equation}\label{G93}
\begin{aligned}
\sum_{|\alpha|\leq m-1} \int_{0}^{t}  \left|{E} _{\alpha}\right|  \mathrm{d} \tau
\lesssim{}&\sup_{0\leq \tau \leq t}\left(\left\|\mathbf{B}^{\varepsilon}\right\|_{m}+\left\| \partial_{3}\mathbf{B}^{\varepsilon}\right\|_{m-1}\right)\\
&\cdot\int_{0}^{t}\left(\left\|\left(\partial\mathbf{v}^{\varepsilon}, \partial_h \mathbf{B}^{\varepsilon}\right)\right\|_{m}^2+\left\|\partial_3\left(\partial\mathbf{v}^{\varepsilon}, \partial_h \mathbf{B}^{\varepsilon}\right)\right\|_{m-1}^2\right) \mathrm{d}\tau.
\end{aligned}
\end{equation}
By a similar argument as before, we derive the following estimate for $\sum_{|\alpha|\leq m-2} \int_{0}^{t}  \left|{B} _{\alpha}\right|  \mathrm{d} \tau$, and omit the details for brevity.
\begin{equation}\label{G76}
\begin{aligned}
&\sum_{|\alpha|\leq m-2} \int_{0}^{t}  \left|{B} _{\alpha}\right|  \mathrm{d} \tau\\
\lesssim{}&\sup_{0\leq \tau \leq t}\Big(\left\|\left(a^{\varepsilon}, \mathbf{v}^{\varepsilon}, \mathbf{B}^{\varepsilon}\right)\right\|_{m}+\left\|\partial_3 \left(a^{\varepsilon}, \mathbf{v}^{\varepsilon}, \mathbf{B}^{\varepsilon}\right)\right\|_{m-1}
+\left\| \partial_3^2\mathbf{v}^{\varepsilon}\right\|_{m-2}+\left\| \partial_{\tau}\left(a^{\varepsilon}, \mathbf{v}^{\varepsilon}, \mathbf{B}^{\varepsilon}\right)\right\|_{m-2}\Big)\\
&\cdot\int_{0}^{t}\Big(\left\|\partial a^{\varepsilon}\right\|_{m-1}^2+\left\|\left(\partial\mathbf{v}^{\varepsilon}, \partial_h \mathbf{B}^{\varepsilon}\right)\right\|_{m}^2
+\left\|\partial_3\left(\partial\mathbf{v}^{\varepsilon}, \partial_h \mathbf{B}^{\varepsilon}\right)\right\|_{m-1}^2+\left\| \partial_{\tau}\left(a^{\varepsilon}, \mathbf{v}^{\varepsilon}, \mathbf{B}^{\varepsilon}\right)\right\|_{m-1}^2 \\
&+\left\| \partial_3\partial_{\tau}\mathbf{v}^{\varepsilon}\right\|_{m-2}^2\Big) \mathrm{d}\tau.
\end{aligned}
\end{equation}
As a consequence, inserting \eqref{G30}--\eqref{G76} into \eqref{G15} leads to \eqref{G18}.
This completes the proof of Lemma \ref{red}.
\subsection{Proof of Theorem \ref{Th1}}
Inserting \eqref{G17} and \eqref{G18} into \eqref{G16}, we conclude that
\begin{lemm}\label{GL10}
  Let \(m \geq 4\) and assume that \(E_{m}^{\varepsilon}(t) \leq 1\) for all \(t \geq 0\).
Then the following inequality holds for all $t \geq 0$:
\begin{equation}\label{G94}
  \begin{aligned}
  E_{m}^{\varepsilon}(t)^{2}+D_{m}^{\varepsilon}(t)^{2}\lesssim{} &\left\|( a_{0},\mathbf{v}_{0},\mathbf{B}_{0})\right\|_{m}^2+\left\|\partial_3( a_{0},\mathbf{v}_{0},\mathbf{B}_{0})\right\|_{m-1}^2+\left\|\partial_3^2(\mathbf{v}_{0},\varepsilon\mathbf{B}_{0})\right\|_{m-2}^2\\
  &+\delta_4 \left(E_{m}^{\varepsilon}(t)^2+D_{m}^{\varepsilon}(t)^2\right)+E_{m}^{\varepsilon}(t)^{4}+D_{m}^{\varepsilon}(t)^{4}+D_{m-1}^{\varepsilon}(t)^2
  \end{aligned}
\end{equation}
for any \(\delta_4>0\).
\end{lemm}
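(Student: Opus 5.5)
The plan is to read off \eqref{G94} by inserting the bounds of Lemma \ref{red} into the synthesized inequality \eqref{G16} of Lemma \ref{GL8}, since \eqref{G16} already controls $E_m^{\varepsilon}(t)^2+D_m^{\varepsilon}(t)^2$ by the initial norms plus $L_m(t)^2+N_m(t)^2$.

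First I check that the hypotheses match. Lemma \ref{GL8} only needs $m\ge4$ and that $(a^\varepsilon,\mathbf{v}^\varepsilon,\mathbf{B}^\varepsilon)$ solves \eqref{eq6}. Lemma \ref{red} additionally needs $E_m^{\varepsilon}(t)\le 1$, which is exactly the assumption of Lemma \ref{GL10}. In particular the smallness of $\|a^\varepsilon\|_{L^\infty}$ (used for $I(a^\varepsilon)$, $J(a^\varepsilon)$ and the composition estimates) holds because of the a priori bound \eqref{G1} together with \eqref{a5}.

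I then substitute \eqref{G17}, namely $L_m(t)^2\lesssim \delta_4 D_m^{\varepsilon}(t)^2+D_{m-1}^{\varepsilon}(t)^2$, and \eqref{G18}, namely $N_m(t)^2\lesssim \delta_4(E_m^{\varepsilon}(t)^2+D_m^{\varepsilon}(t)^2)+E_m^{\varepsilon}(t)^4+D_m^{\varepsilon}(t)^4$, into the right-hand side of \eqref{G16}. The two $\delta_4$-terms combine into $\delta_4(E_m^{\varepsilon}(t)^2+D_m^{\varepsilon}(t)^2)$. The quartic terms are kept as they are, and $D_{m-1}^{\varepsilon}(t)^2$ is carried over unchanged. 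Since $\delta_4>0$ is arbitrary in Lemma \ref{red}, it stays arbitrary in \eqref{G94}, with the implicit constant depending on $\delta_4$ through the Young-inequality splittings, as in \eqref{G17}–\eqref{G18}.

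One minor point needs care. In \eqref{G18} the terms $\sup E\cdot\int D^2$ appear as products such as $E_m^{\varepsilon}(t)\,D_m^{\varepsilon}(t)^2$ (from \eqref{G75}, \eqref{G87}, \eqref{G93}) or with mixed powers (for example the sixth power in the $I(a^\varepsilon)\mathbf{B}^\varepsilon\partial\mathbf{B}^\varepsilon$ estimate). These must be brought to the form stated in \eqref{G18}. This is a Young inequality, $E D^2\le \delta_4 D^2+C_{\delta_4}E^2D^2\le \delta_4D^2+C(E^4+D^4)$, together with $E\le1$ to lower higher powers to fourth powers. It is presumably already contained in Lemma \ref{red}, so I regard it as routine. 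No step here is a genuine obstacle: the lemma is bookkeeping. The real difficulty is postponed to the next step, where $D_{m-1}^{\varepsilon}(t)^2$ has to be absorbed by an induction on $m$ (down to a base case at lower conormal order), and where $\delta_4$ and $\delta_3$ are chosen small so that the $\delta_4$ and quartic terms can be absorbed in the bootstrap.
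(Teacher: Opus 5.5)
Your proposal is correct and matches the paper's own argument: the paper obtains \eqref{G94} by inserting \eqref{G17} and \eqref{G18} from Lemma \ref{red} into \eqref{G16} of Lemma \ref{GL8}, exactly as you do. Your extra remark is consistent with that argument and only makes explicit what Lemma \ref{red} implicitly contains. There, Young's inequality together with $E_m^{\varepsilon}(t)\le 1$ turns products such as $E_m^{\varepsilon}(t)D_m^{\varepsilon}(t)^2$, and the higher powers, into $\delta_4 D_m^{\varepsilon}(t)^2 + C_{\delta_4}\bigl(E_m^{\varepsilon}(t)^4+D_m^{\varepsilon}(t)^4\bigr)$.
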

By induction on $m$ via \eqref{G94}, we have for \(m \geq 4\)
\begin{equation}\label{G94.1}
  \begin{aligned}
  E_{m}^{\varepsilon}(t)^{2}+D_{m}^{\varepsilon}(t)^{2}\lesssim{} &\left\|( a_{0},\mathbf{v}_{0},\mathbf{B}_{0})\right\|_{m}^2+\left\|\partial_3( a_{0},\mathbf{v}_{0},\mathbf{B}_{0})\right\|_{m-1}^2+\left\|\partial_3^2(\mathbf{v}_{0},\varepsilon\mathbf{B}_{0})\right\|_{m-2}^2\\
  &+\delta_4 \left(E_{m}^{\varepsilon}(t)^2+D_{m}^{\varepsilon}(t)^2\right)+E_{m}^{\varepsilon}(t)^{4}+D_{m}^{\varepsilon}(t)^{4}+D_{3}^{\varepsilon}(t)^2.
  \end{aligned}
\end{equation}
Using the a priori assumption \eqref{G1} and proceeding analogously to the derivation of estimate \eqref{G94}, we obtain for all \(m \geq 4\)
\begin{equation}\label{G94.2}
  \begin{aligned}
  E_{3}^{\varepsilon}(t)^{2}+D_{3}^{\varepsilon}(t)^{2}\lesssim{} &\left\|( a_{0},\mathbf{v}_{0},\mathbf{B}_{0})\right\|_{m}^2+\left\|\partial_3( a_{0},\mathbf{v}_{0},\mathbf{B}_{0})\right\|_{m-1}^2+\left\|\partial_3^2(\mathbf{v}_{0},\varepsilon\mathbf{B}_{0})\right\|_{m-2}^2\\
  &+\delta_4 \left(E_{m}^{\varepsilon}(t)^2+D_{m}^{\varepsilon}(t)^2\right)+E_{m}^{\varepsilon}(t)^{4}+D_{m}^{\varepsilon}(t)^{4}+D_{2}^{\varepsilon}(t)^2,
\end{aligned}
\end{equation}
and
\begin{equation}\label{G94.3}
  \begin{aligned}
  E_{2}^{\varepsilon}(t)^{2}+D_{2}^{\varepsilon}(t)^{2}\lesssim{} &\left\|( a_{0},\mathbf{v}_{0},\mathbf{B}_{0})\right\|_{m}^2+\left\|\partial_3( a_{0},\mathbf{v}_{0},\mathbf{B}_{0})\right\|_{m-1}^2+\left\|\partial_3^2(\mathbf{v}_{0},\varepsilon\mathbf{B}_{0})\right\|_{m-2}^2\\
  &+\delta_4 \left(E_{m}^{\varepsilon}(t)^2+D_{m}^{\varepsilon}(t)^2\right)+E_{m}^{\varepsilon}(t)^{4}+D_{m}^{\varepsilon}(t)^{4}+D_{1}^{\varepsilon}(t)^2,
\end{aligned}
\end{equation}
where
\begin{align*}
  D_{1}^{\varepsilon}(t)\overset{def}{=} \Bigg( &\int_{0}^{t}\bigg(\left\|\partial a^{\varepsilon} \right\|_{0}^2+\left\|(\partial\mathbf{v}^{\varepsilon}, \partial_h\mathbf{B}^{\varepsilon})\right\|_{1}^2
        +\left\|\partial_3\left(\partial\mathbf{v}^{\varepsilon}, \partial_h\mathbf{B}^{\varepsilon}\right)\right\|_{0}^2+\left\| \partial_{\tau}\left(a^{\varepsilon}, \mathbf{v}^{\varepsilon}, \mathbf{B}^{\varepsilon}\right)\right\|_{0}^2 \\
&+\varepsilon \left( \left\| \partial_3 \mathbf{B}^{\varepsilon}\right\|_{1}^2+\left\| \partial_3^2 \mathbf{B}^{\varepsilon}\right\|_{0}^2 \right)\bigg) \mathrm{d}\tau\Bigg)^{\frac{1}{2}}.
\end{align*}
By the same argument used to establish \eqref{G16}--\eqref{G18}, for $m\geq 4$, we have
\begin{align*}
  E_{1}^{\varepsilon}(t)^{2}+D_{1}^{\varepsilon}(t)^{2} \lesssim{}&\left\|( a_{0} ,\mathbf{v}_{0},\mathbf{B}_{0})\right\|_{m}^2
  +\left\|\partial_3( a_{0},\mathbf{v}_{0},\mathbf{B}_{0})\right\|_{m-1}^2+L_{1}(t)^{2}+N_{1}(t)^{2},\\
  L_{1}(t)^2\lesssim{}& \delta_4 D_{m}^{\varepsilon}(t)^2+D_{0}^{\varepsilon}(t)^2,\\
  N_{1}(t)^2\lesssim{}& \delta_4 \left(E_{m}^{\varepsilon}(t)^2+D_{m}^{\varepsilon}(t)^2\right)+\left(E_{m}^{\varepsilon}(t)^4+D_{m}^{\varepsilon}(t)^4\right),
\end{align*}
where
\begin{align*}
E_{1}^{\varepsilon}(t)\overset{def}{=} {}& \sup_{0\leq \tau \leq t}\left\| \left(a^{\varepsilon}, \mathbf{v}^{\varepsilon}, \mathbf{B}^{\varepsilon}\right)(\tau)\right\|_{1} +\sup_{0\leq \tau \leq t} \left\| \partial_3\left(a^{\varepsilon}, \mathbf{v}^{\varepsilon}, \mathbf{B}^{\varepsilon}\right)(\tau)\right\|_{0},\\
L_{1}(t)\overset{def}{=}{}&\left(\sum_{|\alpha|\leq 1} \int_{0}^{t}  \left|\widetilde{A} _{\alpha}\right|  \mathrm{d} \tau\right)^{\frac{1}{2}},\\
N_{1}(t)\overset{\mathrm{def}}{=}{}
&\Bigg(\int_{0}^{t}\Big(\left\|a^{\varepsilon}\operatorname{div}\mathbf{v}^{\varepsilon}\right\|_{1}^{2}
+\left\|\partial_{3}(a^{\varepsilon}\operatorname{div}\mathbf{v}^{\varepsilon})\right\|_{0}^{2}+\left\|f^{\varepsilon}_2\right\|_{0}^{2}\Big)\mathrm{d}\tau\\
&+\sum_{|\alpha|\le1}\int_{0}^{t}\left|A_{\alpha}\right|\mathrm{d}\tau
+\sum_{|\alpha|=0}\int_{0}^{t}\big(\left|C_{\alpha}\right|+\left|D_{\alpha}\right|+\left|E_{\alpha}\right|\big)\mathrm{d}\tau\Bigg)^{\frac12},\\
D_{0}^{\varepsilon}(t)\overset{def}{=}{}& \Bigg( \int_{0}^{t}\bigg(\left\|(\partial\mathbf{v}^{\varepsilon}, \partial_h\mathbf{B}^{\varepsilon})\right\|_{0}^2
       +\varepsilon \left\| \partial_3 \mathbf{B}^{\varepsilon}\right\|_{0}^2\bigg) \mathrm{d}\tau\Bigg)^{\frac{1}{2}}.
\end{align*}
Collecting the above estimates yields for \(m \geq 4\)
\begin{equation}\label{G94.4}
  \begin{aligned}
    E_{1}^{\varepsilon}(t)^{2}+D_{1}^{\varepsilon}(t)^{2}\lesssim{}&\left\|( a_{0},\mathbf{v}_{0},\mathbf{B}_{0})\right\|_{m}^2
+\left\|\partial_3( a_{0},\mathbf{v}_{0},\mathbf{B}_{0})\right\|_{m-1}^2\\
&+\delta_4 \left(E_{m}^{\varepsilon}(t)^2+D_{m}^{\varepsilon}(t)^2\right)+E_{m}^{\varepsilon}(t)^{4}+D_{m}^{\varepsilon}(t)^{4}+D_{0}^{\varepsilon}(t)^2.
  \end{aligned}
\end{equation}
Analogously to \eqref{G94.4}, we obtain for \(m \geq 4\)
\begin{equation}\label{G94.5}
  E_{0}^{\varepsilon}(t)^{2}+D_{0}^{\varepsilon}(t)^{2}\lesssim\left\|( a_{0},\mathbf{v}_{0},\mathbf{B}_{0})\right\|_{m}^2+\delta_4 \left(E_{m}^{\varepsilon}(t)^2+D_{m}^{\varepsilon}(t)^2\right)+E_{m}^{\varepsilon}(t)^{4}+D_{m}^{\varepsilon}(t)^{4},
\end{equation}
where
\[
E_{0}^{\varepsilon}(t)\overset{def}{=}\sup_{0\leq \tau \leq t}\left\| \left(a^{\varepsilon}, \mathbf{v}^{\varepsilon}, \mathbf{B}^{\varepsilon}\right)(\tau)\right\|_{0}.
\]
Inserting estimates \eqref{G94.2}–\eqref{G94.5} into \eqref{G94.1} and taking \(\delta_4\) sufficiently small, we conclude that for all \(m \geq 4\)
\begin{equation}\label{G94.6}
  \begin{aligned}
  E_{m}^{\varepsilon}(t)^{2}+D_{m}^{\varepsilon}(t)^{2}\lesssim{} &\left\|( a_{0},\mathbf{v}_{0},\mathbf{B}_{0})\right\|_{m}^2+\left\|\partial_3( a_{0},\mathbf{v}_{0},\mathbf{B}_{0})\right\|_{m-1}^2+\left\|\partial_3^2(\mathbf{v}_{0},\varepsilon\mathbf{B}_{0})\right\|_{m-2}^2\\
  &+E_{m}^{\varepsilon}(t)^{4}+D_{m}^{\varepsilon}(t)^{4}.
  \end{aligned}
\end{equation}
Thus, the energy estimate \eqref{eq10} in Theorem \ref{Th1} immediately follows from the a priori assumption \eqref{G1} and estimate \eqref{G94.6}.
In a similar fashion as in \cite{MN1982} and \cite{Tani1977}, one can first establish the \(\varepsilon\)-uniform local-in-time well-posedness for equation \eqref{eq6}.
Then, by virtue of the uniform estimate \eqref{eq10}, the global well-posedness follows via the standard continuity argument.

Furthermore, the estimate \eqref{eq10} gives directly
\begin{equation}\label{G95}
\underset{0 \le \tau \le t}{\sup}
\left\|\left(a^{\varepsilon}, \mathbf{v}^{\varepsilon}, \mathbf{B}^{\varepsilon}\right)\left(\tau\right)\right\|_{m}^2
+\underset{0 \le \tau \le t}{\sup}
\left\|\partial_3\left(a^{\varepsilon}, \mathbf{v}^{\varepsilon}, \mathbf{B}^{\varepsilon}\right)\left(\tau\right)\right\|_{m-1}^2+\int_{0}^{t}\left\| \partial_{\tau}\left(a^{\varepsilon}, \mathbf{v}^{\varepsilon}, \mathbf{B}^{\varepsilon}\right)\right\|_{m-1}^2\mathrm{d}\tau \lesssim   \delta_0^2.
\end{equation}
Therefore, the estimate \eqref{G95}
and strong compactness argument (see \cite[Lemma 4]{MR1062395SIAM1990})
yield directly
\begin{equation*}
\left(a^{\varepsilon}, \mathbf{v}^{\varepsilon}, \mathbf{B}^{\varepsilon}\right) \rightarrow \left(a^{0}, \mathbf{v}^{0}, \mathbf{B}^{0}\right) \text{~strongly~in~}
L^\infty\left(0, t; H^{m-1}_{co,loc}\left(\mathbb{R}_+^3\right)\right)
\end{equation*}
for all $t>0$. Therefore, we complete the proof of Theorem \ref{Th1}.
\section{Asymptotic behavior of global solutions}\label{asymptotic-behavior}
\subsection{Decay rates of linear operators}
In this section we investigate large-time behavior of solutions to the nonlinear
problem \eqref{eq8}. For simplicity of notations, throughout this section,
we rewrite the initial boundary value problem \eqref{eq8} as
\begin{equation}\label{4-1}
		\left\{\begin{array}{*{5}{ll}}
			\partial_t \mathbf{u}^{\varepsilon} + A \mathbf{u}^{\varepsilon} = \operatorname{div} F^{\varepsilon} & {\rm in} ~~ \mathbb{R}_+^3,\\
			\partial_t\mathbf{B}^{\varepsilon} - \left(\Delta_h+\varepsilon\partial_3^2\right)\mathbf{B}^{\varepsilon} = F^{\varepsilon}_3 & {\rm in} ~~ \mathbb{R}_+^3,\\
			\operatorname{div}\mathbf{B}^{\varepsilon} = 0 & {\rm in} ~~ \mathbb{R}_+^3,\\
			\left(\mathbf{u}^{\varepsilon}, \mathbf{B}^{\varepsilon}\right)\big|_{t=0} = \left(\mathbf{u}_{0}, \mathbf{B}_{0}\right) & {\rm in} ~~ \mathbb{R}_+^3,\\
		\left(\mathbf{m}^{\varepsilon},\partial_3 \mathbf{B}^{\varepsilon}_h, B_3^\varepsilon\right) = \mathbf{0}\quad & {\rm on} ~~ \mathbb{R}^2 \times \left\{ x_3=0 \right\},
		\end{array}\right.
	\end{equation}
where \(\mathbf{u}^{\varepsilon}\overset{def}{=}\left(a^\varepsilon, \mathbf{m}^{\varepsilon}\right)\), \(\mathbf{u}_{0}\overset{def}{=}\left(a_0, \mathbf{m}_{0}\right)\), \(F^{\varepsilon}\overset{def}{=}\begin{pmatrix}0  \\ F^{\varepsilon}_1+F^{\varepsilon}_2\end{pmatrix}\) and \(A\overset{def}{=}\begin{pmatrix}0 &   \nabla^T \\   \nabla & -\mu \Delta-\left(\mu+\lambda\right) \nabla \operatorname{div}\end{pmatrix}\).

The solutions to system \eqref{4-1} can be represented in the integral form by Duhamel's principle. Based on the decay rate estimates of solutions to the corresponding linearized system, we can obtain the decay rates of solutions to the nonlinear problem by weighted energy estimates. To achieve the decay rates of solutions to linearized operator, we divide it into two cases: The first one specifies the decay rates of solutions of the linearized compressible Navier-Stokes equations on the half space with no-slip boundary condition.

Let \(\mathcal{U}(t) \mathbf{u}_{0}\) denote the solution of the following linearized problem,
\[
\left\{\begin{array}{*{5}{ll}}
\partial_t \mathbf{u}^{\varepsilon} + A \mathbf{u}^{\varepsilon} = \mathbf{0} & {\rm in} ~~ \mathbb{R}_+^3,\\
\mathbf{u}^{\varepsilon}\big|_{t=0} = \mathbf{u}_{0} & {\rm in} ~~ \mathbb{R}_+^3,\\
\mathbf{m}^{\varepsilon} = \mathbf{0} & {\rm on} ~~ \mathbb{R}^2 \times \left\{ x_3=0 \right\}.
\end{array}\right.
\]
where \(\mathbf{u}^{\varepsilon}=\left(a^\varepsilon, \mathbf{m}^{\varepsilon}\right)\), \(\mathbf{u}_{0}=\left(a_0, \mathbf{m}_{0}\right)\).
The decay rates of \(\mathcal{U}(t) \mathbf{u}_{0}\) have been established in \cite{k1, k2}; we only list the relevant results used in this paper in Lemmas \ref{L4-1}--\ref{aria4} without proofs.
For $\mathbf{u}^{\varepsilon}=\left(a^\varepsilon,\mathbf{m}^{\varepsilon}\right)$ (wherein $a^\varepsilon\in W^{\ell,p}$ and $\mathbf{m}^{\varepsilon}=\left(m_1^{\varepsilon},m_2^{\varepsilon},m_3^{\varepsilon}\right)\in W^{k,q}$), we define $\left\|\mathbf{u}^{\varepsilon}\right\|_{W^{\ell,p}\times W^{k,q}}$ as follows
$$\left\|\mathbf{u}^{\varepsilon}\right\|_{W^{\ell,p}\times W^{k,q}} \overset{def}{=} \left\|a^\varepsilon\right\|_{W^{\ell,p}} + \left\|\mathbf{m}^{\varepsilon}\right\|_{W^{k,q}}.$$
\begin{lemm}\label{L4-1}
Let \(\mathbf{u}_{0}=\left( a_{0}, \mathbf{m}_{0}\right)\) belong to required functional spaces. Then, the following inequalities hold uniformly in \(0< t \leq 1\).\\
(i)
\[\left\|\partial \mathcal{U}(t) \mathbf{u}_{0}\right\|_{L^2} \lesssim t^{-\frac{1}{2}}\left\| \mathbf{u}_{0}\right\|_{H^{1} \times L^2},\]
(ii) in particular, when \(a_0=0\), i.e., \(\mathbf{u}_{0}=\left(0,\mathbf{m}_{0}\right)\),
\[\left\| \mathcal{U}(t) \partial\mathbf{u}_{0}\right\|_{L^2} \lesssim t^{-\frac{1}{2}}\left\| \mathbf{m}_{0}\right\|_{L^2}.\]
\end{lemm}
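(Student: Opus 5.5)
We obtain both inequalities by energy methods on $0<t\le 1$, using only the structure of $A$ and the no-slip condition $\mathbf{m}|_{x_3=0}=\mathbf{0}$. The estimates are not uniform as $t\to\infty$, but they need not be. Write $\mathbf{u}=(a,\mathbf{m})=\mathcal{U}(t)\mathbf{u}_0$. Three levels of estimates are needed, in this order.

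\emph{Step 1 (basic energy).} Pairing the system with $\mathbf{u}$ gives
\[
\|\mathbf{u}(t)\|_{L^2}^2+2\int_0^t\left(\mu\|\nabla\mathbf{m}\|_{L^2}^2+(\mu+\lambda)\|\operatorname{div}\mathbf{m}\|_{L^2}^2\right)\mathrm{d}s=\|\mathbf{u}_0\|_{L^2}^2 .
\]
The boundary terms vanish because $\mathbf{m}=\mathbf{0}$ on $x_3=0$.

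\emph{Step 2 (weighted gradient bound for the momentum).} We pair the momentum equation with $t\,\partial_t\mathbf{m}$. The pressure term is handled as in the proof of Lemma \ref{GL3}:
\[
(\nabla a,\partial_t\mathbf{m})=-\frac{\mathrm{d}}{\mathrm{d}t}(a,\operatorname{div}\mathbf{m})-\|\operatorname{div}\mathbf{m}\|_{L^2}^2 ,
\]
where we used $\partial_t a=-\operatorname{div}\mathbf{m}$. This gives
\[
\frac{\mathrm{d}}{\mathrm{d}t}\Big[t\big(\mu\|\nabla\mathbf{m}\|^2+(\mu+\lambda)\|\operatorname{div}\mathbf{m}\|^2-2(a,\operatorname{div}\mathbf{m})\big)\Big]+2t\|\partial_t\mathbf{m}\|^2
\lesssim \|\nabla\mathbf{m}\|^2+|(a,\operatorname{div}\mathbf{m})|+t\|\operatorname{div}\mathbf{m}\|^2 .
\]
We integrate over $(0,t)$ and absorb $t|(a,\operatorname{div}\mathbf{m})|$ by Young's inequality. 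Step 1 then yields, for $t\le1$,
\[
t\|\nabla\mathbf{m}(t)\|_{L^2}^2+\int_0^t s\|\partial_s\mathbf{m}\|_{L^2}^2\,\mathrm{d}s\lesssim\|\mathbf{u}_0\|_{L^2}^2 .
\]
This already gives the momentum part of (i), and it requires only $L^2$ data.

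\emph{Step 3 (density gradient).} The density does not regularize, so here we use $a_0\in H^1$. Tangential derivatives $\partial_h$ commute with $A$ and preserve the boundary condition. For $\partial_h a$ we therefore repeat Step 1 for $\partial_h\mathbf{u}$ in a weak form: we pair with $\partial_h a$ only, and the coupling term $(\partial_h\operatorname{div}\mathbf{m},\partial_h a)$ is controlled by $\|\partial_h a\|$ times a weighted $H^2$-type norm of $\mathbf{m}$. For the normal derivative we use the damped transport identity from the proof of Lemma \ref{GL4}, with the nonlinear terms dropped:
\[
\partial_t\partial_3 a+\frac{1}{2\mu+\lambda}\partial_3 a=\frac{1}{2\mu+\lambda}\big(-\partial_t m_3+\mu\Delta_h m_3-\mu\partial_3\operatorname{div}_h\mathbf{m}_h\big).
\]
This gives $\|\partial_3 a(t)\|\lesssim\|\partial_3 a_0\|+\int_0^t\big(\|\partial_t\mathbf{m}\|+\|\partial_h\partial\mathbf{m}\|\big)\,\mathrm{d}s$. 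The $\partial_t\mathbf{m}$ contribution is bounded by Cauchy--Schwarz against $s^{-1/2}$ together with Step 2. The second-order tangential term $\|\partial_h\partial\mathbf{m}\|$ is the main obstacle, since it cannot be read off from $L^2$ data of $\mathbf{m}$ alone. I would first try the Stokes estimate quoted in the proof of Lemma \ref{GL6}, applied to $-\mu\Delta\mathbf{m}+\nabla a=-\partial_t\mathbf{m}+(\mu+\lambda)\nabla\operatorname{div}\mathbf{m}$ together with $\operatorname{div}\mathbf{m}=-\partial_t a$. The aim is to bound $\|\partial^2\mathbf{m}\|$ by $\|\partial_t\mathbf{m}\|+\|\partial a\|+\|\partial\operatorname{div}\mathbf{m}\|$. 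One then closes a Gronwall-type inequality for $\|\partial a\|$ using the damping term. If this does not close, we follow the resolvent estimates of Kagei--Kobayashi \cite{k1,k2}.

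\emph{Step 4, part (ii), by duality.} Let $\mathcal{U}^*(t)$ be the adjoint semigroup. It solves the same type of system (the sign of the off-diagonal coupling flips) with the same no-slip condition. For $\boldsymbol{\psi}\in L^2$ we write
\[
(\mathcal{U}(t)(0,\partial\mathbf{m}_0),\boldsymbol{\psi})=((0,\partial\mathbf{m}_0),\mathcal{U}^*(t)\boldsymbol{\psi})=-(\mathbf{m}_0,\partial[\mathcal{U}^*(t)\boldsymbol{\psi}]_{\mathbf{m}}).
\]
The integration by parts has no boundary term: for tangential derivatives this is automatic, and for $\partial_3$ it holds because the momentum component of $\mathcal{U}^*(t)\boldsymbol{\psi}$ vanishes on $x_3=0$. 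Only the momentum component enters, since the density component of the datum is zero. Step 2 applied to $\mathcal{U}^*$ then gives $\|\partial[\mathcal{U}^*(t)\boldsymbol{\psi}]_{\mathbf{m}}\|\lesssim t^{-1/2}\|\boldsymbol{\psi}\|_{L^2}$, and taking the supremum over $\|\boldsymbol{\psi}\|_{L^2}\le1$ proves (ii) with only $\|\mathbf{m}_0\|_{L^2}$ on the right-hand side.
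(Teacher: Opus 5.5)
Your Steps 1, 2 and 4 are sound. The multiplier $t\,\partial_t\mathbf{m}$ gives $\|\partial\mathbf{m}(t)\|_{L^2}\lesssim t^{-1/2}\|\mathbf{u}_0\|_{L^2}$. Since the adjoint generator is $SAS$ with $S=\operatorname{diag}(-1,1,1,1)$, your duality argument then proves (ii) from that bound alone. For these parts this is a self-contained replacement for the paper's bare citation of Kagei--Kobayashi. The gap is the density half of (i), $\|\partial a(t)\|_{L^2}\lesssim t^{-1/2}(\|a_0\|_{H^1}+\|\mathbf{m}_0\|_{L^2})$. The paper uses exactly this with $a_0=0$ in \eqref{ubw21}, and your Step 3 does not establish it.

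The problem is that with $\mathbf{m}_0$ only in $L^2$, the forcing terms you integrate in time are not integrable near $s=0$.
\begin{itemize}
\item $\|\partial_s\mathbf{m}\|$ and $\|\partial_h\partial\mathbf{m}\|$ behave like $s^{-1}\|\mathbf{m}_0\|$, and their integrals over $(0,t)$ are not controlled by $\|\mathbf{m}_0\|_{L^2}$. Already for the heat semigroup, data with mass $c_j$ at frequency $2^j$ gives $\int_0^1\|\Delta e^{s\Delta}u_0\|\,ds\gtrsim\sum_j c_j$, which diverges for $(c_j)\in\ell^2\setminus\ell^1$. Your Cauchy--Schwarz against $s^{-1/2}$ produces the factor $(\int_0^t s^{-1}\,ds)^{1/2}=\infty$.
\item The $\partial_t m_3$ term can be rescued by integrating by parts in time, i.e.\ by working with $\partial_3 a+\frac{1}{2\mu+\lambda}m_3$.
\item The remaining forcing $\mu(\Delta_h m_3-\partial_3\operatorname{div}_h\mathbf{m}_h)=-\mu(\operatorname{curl}\operatorname{curl}\mathbf{m})_3$ cannot be rescued this way. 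In $\mathbb{R}^3$ it equals $\partial_t$ of the solenoidal part of $m_3$ and integrates out. Under no-slip, however, vorticity is produced at the wall and the Helmholtz projection does not commute with the Dirichlet Lam\'e operator, so no exact time derivative is available.
\item The same defect breaks your ``weak form'' bound for $\partial_h a$. Pairing only with $\partial_h a$ gives $\frac{\mathrm{d}}{\mathrm{d}t}\|\partial_h a\|\le\|\partial_h\operatorname{div}\mathbf{m}\|$ and discards the Step~1 cancellation. Running the full Step~1 for $\partial_h\mathbf{u}$ would instead need $\mathbf{m}_0\in H^1$.
\item The Stokes estimate does not break the circle. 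It controls $\|\partial^2\mathbf{m}\|$ only through $\|\partial_t\mathbf{m}\|$ and $\|\nabla\operatorname{div}\mathbf{m}\|$, or through $\|\nabla\partial_t a\|$, which via the damped identity again involves $\|\partial_h\partial\mathbf{m}\|$ and $\|\partial_t\mathbf{m}\|$. All of these have the same non-integrability.
\item Gronwall only absorbs the $\|\partial a\|$ feedback. A time weight does not help either: $\frac{\mathrm{d}}{\mathrm{d}t}(t\|\partial a\|^2)$ reintroduces $\int_0^t\|\partial a\|^2\,ds$, and $a$ does not smooth.
\end{itemize}
So the density estimate needs a finer half-space analysis, namely the Fourier--Laplace/resolvent treatment of Kagei--Kobayashi, which is precisely what the paper invokes without proof. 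Falling back on it, as you propose, leaves this part of (i) unproved by your own argument.
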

\begin{lemm}\label{L4-2}
Let \(\mathbf{u}_{0}=\left( a_{0}, \mathbf{m}_{0}\right)\) belong to required functional spaces and \(0 \leq \ell \leq 2\). Then, there exists a constant $c_0>0$ such that the following estimates hold for all \(t \geq 1\).
\[
\left\| \partial^{\ell} \mathcal{U}(t) \mathbf{u}_{0}\right\|_{L^2} \lesssim  t^{-\frac{3}{4}-q(\ell)}\left\| \mathbf{u}_{0}\right\|_{L^1}+e^{-c_{0} t}\left(\left\|  a_{0}\right\|_{k_{2}(\ell)}+\left\| \mathbf{m}_{0}\right\|_{k_{2}(\ell-1)}\right),
\]
where
\(
q(s)=
\min\left\{\frac{s}{2}, \frac{s}{4}+\frac{3}{8}\right\}
\)
and
\(
k_{2}(s)=
\max\left\{0, s\right\}.
\)
\end{lemm}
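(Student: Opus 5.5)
The estimate in Lemma \ref{L4-2} concerns the linearized compressible Navier--Stokes semigroup $\mathcal{U}(t)$ on $\mathbb{R}_+^3$ with the no-slip condition. The plan is to follow the resolvent approach of Kagei and Kobayashi \cite{k1,k2}, splitting the solution into a low-frequency and a high-frequency part.

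\textbf{Reduction to the resolvent.} Take the Fourier transform $\xi'$ in the tangential variables $x'=(x_1,x_2)$ only. The operator $A$ then becomes a family $\hat A_{\xi'}$ of ODE boundary-value problems in $x_3\in(0,\infty)$, with $\hat m=0$ at $x_3=0$. Write
\[
\mathcal{U}(t)=\frac{1}{2\pi i}\int_\Gamma e^{\lambda t}(\lambda+A)^{-1}\,d\lambda ,
\]
and use a cutoff $\chi(\xi')$ to decompose $\mathcal{U}(t)=\mathcal{U}_1(t)+\mathcal{U}_\infty(t)$, where $\mathcal{U}_1$ carries $|\xi'|\le r$ and $\mathcal{U}_\infty$ carries $|\xi'|\ge r$.

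\textbf{High-frequency part.} For $\mathcal{U}_\infty$, combine the spectral gap $\operatorname{Re}\sigma(-\hat A_{\xi'})\le -c$ for $|\xi'|\ge r$ with a Matsumura--Nishida type energy method. Coupling the $H^1$ energy of $a$ with the parabolic dissipation of $\mathbf{m}$, via the Stokes estimate of Lemma \ref{GL6}, should give
\[
\|\partial^\ell \mathcal{U}_\infty(t)\mathbf{u}_0\|_{L^2}\lesssim e^{-c_0t}\bigl(\|a_0\|_{H^{\ell}}+\|\mathbf{m}_0\|_{H^{\max\{0,\ell-1\}}}\bigr).
\]
The index $\ell-1$ on $\mathbf{m}_0$ comes from parabolic smoothing for $t\ge1$, while $a_0$ gets no smoothing. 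This produces the $k_2$ terms in the statement.

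\textbf{Low-frequency part.} For $\mathcal{U}_1$, solve the resolvent ODE explicitly. Its characteristic roots are a "parabolic" root $\sqrt{\lambda/\mu+|\xi'|^2}$ and a "sound" root $\sqrt{|\xi'|^2+\lambda/(1+(2\mu+\lambda')\lambda)}$; the boundary condition then fixes the coefficients. Deform $\Gamma$ around the branch points. The resulting kernel is a sum of a diffusion-wave part, which behaves like the whole-space heat/sound kernel, and a boundary part, which behaves like the heat kernel $e^{-c|\xi'|^2t}$ with decay $e^{-c\sqrt{t}\,\cdot}$-type profiles in $x_3$.

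For the whole-space-like part, estimate $\|\partial^\ell\cdot\|_{L^2}\lesssim t^{-3/4-\ell/2}\|\mathbf{u}_0\|_{L^1}$ by Plancherel plus $L^1\to L^\infty$ in Fourier. Normal derivatives of the boundary part, however, gain only $t^{-1/4}$ per derivative beyond the first. The reason is that the boundary layer has width $\sim\sqrt t$ in the parabolic scaling but the sound mode couples to it. This should give the cap $q(\ell)=\min\{\ell/2,\ell/4+3/8\}$: $\ell=1$ gives $1/2$ and $\ell=2$ gives $7/8$.

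\textbf{Main obstacle.} I expect the hardest step to be this boundary-layer contribution to $\partial_3^2$. It requires bounding the contour integral near $\lambda=0$ uniformly in $\xi'$, where the two roots nearly coincide. I would first try the Kagei--Kobayashi decomposition directly, which splits the resolvent into a part continuous at $\lambda=0$ and a singular part $\propto\lambda^{-1/2}$, and then estimate each piece in $L^2_{x_3}$ before integrating in $\xi'$. Finally, combine the two parts and restrict to $t\ge1$ so that the short-time singularities are absent.
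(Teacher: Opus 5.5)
The paper does not prove this lemma. It quotes it from Kagei--Kobayashi \cite{k1,k2}, and your plan (tangential Fourier transform, resolvent ODE in $x_3$ with a root analysis near $\lambda=0$, energy method for the high-frequency part) is their route. At the level of strategy you therefore agree with the paper, and citing \cite{k1,k2} as a black box would suffice. However, the low-frequency analysis you actually sketch misidentifies the mechanism behind $q(\ell)$, and carried out as written it would not give the lemma.

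\textbf{The sound root is wrong.} For solutions $e^{-\omega x_3}$ of the longitudinal part one gets $\omega^2=|\xi'|^2+\lambda^2/(1+\nu\lambda)$, with $\nu=2\mu+\lambda'$ in your notation. The numerator is $\lambda^2$, not $\lambda$.
\begin{itemize}
\item The branch points of this root sit at $\lambda\approx\pm i|\xi'|-\frac{\nu}{2}|\xi'|^2$, so this root is the one that carries the sound waves.
\item With your expression both roots scale like $|\lambda|^{1/2}$ and the problem looks parabolic. The mechanism that produces $q(2)=7/8<1$ then disappears.
\end{itemize}

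\textbf{The relevant layer does not have width $\sqrt t$.}
\begin{itemize}
\item Near the sound branch points $|\lambda|\sim|\xi'|\sim t^{-1/2}$. The parabolic root $\mu_1=\sqrt{|\xi'|^2+\lambda/\mu}$ is therefore of size $t^{-1/4}$.
\item The no-slip condition thus creates a Stokes layer $e^{-\mu_1x_3}$ of width $t^{1/4}$ attached to the sound waves. Each $\partial_3$ on it gains only $t^{-1/4}$.
\item This piece behaves like $t^{-\frac34-\frac38-\frac{\ell}{4}}$ for every $\ell$, which is the second entry of the minimum in $q$. It is dominated by the interior part $t^{-\frac34-\frac{\ell}{2}}$ for $\ell\le 1$ and dominates at $\ell=2$.
\item Your rule ``$t^{-1/4}$ per derivative beyond the first'' would give $q(2)=3/4$, which contradicts your own value $7/8$.
\end{itemize}
The delicate estimates must therefore be done in the hyperbolic scaling $|\lambda|\sim|\xi'|$ around $\lambda\approx\pm i|\xi'|$. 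They cannot be confined to the parabolic regime where $\mu_1\approx\mu_2$.

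\textbf{Two smaller points.}
\begin{itemize}
\item The index $k_2(\ell-1)$ on $\mathbf{m}_0$ does not come from parabolic smoothing, which would cost no derivatives of $\mathbf{m}_0$ at all. It comes from the $O(|\xi|^{-1})$ coupling of $\mathbf{m}_0$ into the non-smoothing damped mode, whose eigenvalue tends to $-1/\nu$ as $|\xi|\to\infty$.
\item A cutoff in $\xi'$ alone leaves large-$|\lambda|$ components in $\mathcal{U}_1$. These are not disposed of by deforming $\Gamma$: because of the hyperbolic density equation the resolvent decays only like $|\lambda|^{-1}$, so the contour integral is not absolutely convergent. They need a separate argument.
\end{itemize}
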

\begin{lemm}\label{L4-3}
Let \(\mathbf{u}_{0}=\left(0, \mathbf{m}_{0}\right)\) belong to required functional spaces. Then, there exists a constant \(c_{0}>0\) such that
\[
\left\| \mathcal{U}(t)\left(\partial \mathbf{u}_{0}\right)\right\|_{L^2} \lesssim t^{-1}\left\| \mathbf{m}_{0}\right\|_{L^1}+e^{-c_{0} t}\left\| \mathbf{m}_{0}\right\|_{L^2}
\]
for all \(t \geq 1\).
\end{lemm}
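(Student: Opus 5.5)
The plan is to prove Lemma \ref{L4-3} by reduction to the linearized Navier--Stokes semigroup theory of Kagei and Kobayashi \cite{k1,k2}. Since $\mathcal{U}(t)$ is exactly their solution operator for the linearized compressible Navier--Stokes problem in $\mathbb{R}_+^3$ with no-slip condition, one could simply quote their estimate. To show why the rate is $t^{-1}$ rather than $t^{-3/4}$, I would argue as follows.

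First, I would decompose $\mathcal{U}(t)$ through the Fourier transform in the tangential variables $x'=(x_1,x_2)$, with dual variable $\xi'$. This gives a family of ODE boundary value problems in $x_3$. Then I would split into low frequencies $|\xi'|\le r_0$ and high frequencies $|\xi'|\ge r_0$, using a cutoff $\chi(\xi')$.

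On the high-frequency part, the resolvent analysis of \cite{k1} gives a spectral gap. This yields exponential decay $e^{-c_0 t}$ in $L^2$. The data $\partial \mathbf{u}_0=(0,\partial\mathbf{m}_0)$ then costs one derivative, which I would absorb as follows:
\begin{itemize}
\item tangential derivatives by duality, i.e.\ the adjoint semigroup applied to $\partial_h$ together with its $t^{-1/2}$ smoothing for $t\ge 1$;
\item the normal derivative by integrating by parts in $x_3$ inside the Duhamel/Green kernel representation, where the boundary term vanishes because of the Dirichlet structure of the momentum component.
\end{itemize}
This produces the term $e^{-c_0 t}\|\mathbf{m}_0\|_{L^2}$.

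On the low-frequency part, I would use the explicit asymptotic expansion of the Green kernel from \cite{k2}. That expansion consists of a diffusion-wave part (the heat kernel $e^{-t|\xi|^2}$ convected by sound waves) plus a boundary-layer part with $x_3$-kernels decaying like $e^{-c|\xi'|x_3}$ or like $e^{-x_3/\sqrt{t}}$. Moving the derivative from $\partial\mathbf{m}_0$ onto the kernel gains a factor $|\xi|$ in the interior part and a factor $|\xi'|$ or $t^{-1/2}$ in the boundary part. Combined with the $L^1\to L^2$ estimate $t^{-3/4}$ from Lemma \ref{L4-2} with $\ell=0$, this gives $t^{-3/4-1/4}=t^{-1}$.

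The key point is that the rate is $t^{-1}$ rather than the full $t^{-5/4}$. The loss comes from the boundary-layer component in the normal direction: there the transferred $\partial_3$ produces only $q(1)=\tfrac14+\tfrac38$-type gains, which after accounting for the half-space Neumann/Dirichlet reflection yield just an extra $t^{-1/4}$. This matches the formula $q(s)$ in Lemma \ref{L4-2}.

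I expect the main obstacle to be the transfer of $\partial_3$ onto the kernel in the low-frequency boundary-layer part. The kernel does not commute with $\partial_3$, and boundary terms at $x_3=0$ appear. I would handle these by using that only the momentum component is nonzero in the data, so the boundary term involves $\mathbf{m}_0|_{x_3=0}$ paired against a kernel, which vanishes by the adjoint Dirichlet condition. In practice I would expect to defer to \cite[Theorem 2.1 and Section 5]{k2}, citing their estimate directly, as the paper does for Lemmas \ref{L4-1}--\ref{L4-2}.
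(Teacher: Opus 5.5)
The paper does not prove Lemma \ref{L4-3}. It lists Lemmas \ref{L4-1}--\ref{aria4} as known properties of the linearized compressible Navier--Stokes semigroup with no-slip condition and refers to \cite{k1,k2}. Your fallback of quoting Kagei--Kobayashi is therefore exactly the paper's route.

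Your one structural observation is also the right one. The momentum component of $\mathcal{U}(t)^{*}w$ vanishes on $x_3=0$ for every $w$, so by duality the derivative can be moved off $\mathbf{m}_0$ with no boundary term and no condition on $\mathbf{m}_0|_{x_3=0}$. This is what makes the lemma usable in the estimate of $I_2(t)$, where $\mathbf{m}_0$ is replaced by $F^{\varepsilon}_1+F^{\varepsilon}_2$, which does not vanish on the boundary.

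What you should not do is present the surrounding sketch as a derivation of the exponent, because it does not produce $t^{-1}$.
\begin{itemize}
\item If transferring the derivative gained $|\xi|$ in the interior part and $t^{-1/2}$ in the boundary part, the result would be $t^{-3/4-1/2}=t^{-5/4}$, not $t^{-1}$.
\item The appeal to $q(1)$ does not repair this. We have $q(1)=\min\{\frac{1}{2},\frac{1}{4}+\frac{3}{8}\}=\frac{1}{2}$, so Lemma \ref{L4-2} shows no boundary-layer loss at first order; the branch $\frac{s}{4}+\frac{3}{8}$ is active only for $s>\frac{3}{2}$. A gain of $\frac{1}{4}+\frac{3}{8}$ would in any case exceed $\frac{1}{4}$, not equal it.
\item There is no Neumann/Dirichlet reflection for this system. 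If all three components of $\mathbf{m}$ are extended oddly, $\operatorname{div}\mathbf{m}$ has no definite parity, so the continuity equation is not preserved. The image method used for $\mathcal{B}(t)$ in Lemma \ref{L4-5} is therefore unavailable, which is why \cite{k1,k2} need a resolvent analysis of the boundary-layer kernels.
\end{itemize}
So the step that would actually fix the exponent, the low-frequency boundary-layer estimate, is missing from your sketch. As written, the lemma rests entirely on the citation, just as in the paper.
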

\begin{lemm}\label{aria4}
Let \(\mathbf{u}_{0}=\left( a _{0}, \mathbf{m}_{0}\right) \in \left(H^{1} \times L^{2}\right) \cap \left(L^{1} \times L^{1}\right)\). If \(\int_{\mathbb{R}_{+}^{3}}  a _{0}(x)  \mathrm{d} x \neq 0\), then
\[
\left\| \mathcal{U}(t) \mathbf{u}_{0}\right\|_{L^2} \gtrsim t^{-\frac{3}{4}}
\]
as \(t \to \infty\).
\end{lemm}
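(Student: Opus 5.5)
The plan is to reduce the half-space problem to an explicit whole-space profile plus a remainder that decays strictly faster than $t^{-\frac34}$, and then to use the reverse triangle inequality. Following the resolvent analysis of Kagei and Kobayashi \cite{k1,k2}, I would split $\mathcal{U}(t)\mathbf{u}_0$ by a cut-off in the tangential Fourier variable $\xi'$ into a low-frequency part and a high-frequency part. The high-frequency part decays like $e^{-c_0t}\|\mathbf{u}_0\|_{H^1\times L^2}$; this is the exponential term in Lemma \ref{L4-2}. For the low-frequency part, I would write
\[
\mathcal{U}(t)\mathbf{u}_0=\mathcal{U}_{\mathbb{R}^3}(t)\,E\mathbf{u}_0\big|_{\mathbb{R}^3_+}+\mathcal{R}(t)\mathbf{u}_0 .
\]
Here $E$ extends $a_0,\mathbf{m}_{0,h}$ evenly and $m_{0,3}$ oddly in $x_3$, and $\mathcal{U}_{\mathbb{R}^3}$ is the whole-space linearized compressible Navier--Stokes semigroup. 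The remainder $\mathcal{R}(t)$ is the boundary correction that restores $\mathbf{m}=\mathbf{0}$ on $x_3=0$. The key input, which I would take from \cite{k2}, is
\[
\|\mathcal{R}(t)\mathbf{u}_0\|_{L^2}\lesssim t^{-\frac34-\kappa}\|\mathbf{u}_0\|_{L^1}+e^{-c_0t}\|\mathbf{u}_0\|_{H^1\times L^2}
\]
for some $\kappa>0$ (the exponent $q(\ell)$ in Lemma \ref{L4-2} suggests $\kappa=\frac18$).

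Next I would compute the leading term explicitly. In Fourier variables, the density component of $\mathcal{U}_{\mathbb{R}^3}(t)E\mathbf{u}_0$ behaves at low frequency like
\[
\hat a(\xi,t)\approx e^{-\frac{2\mu+\lambda}{2}|\xi|^2t}\Big(\cos(|\xi|t)\,\widehat{Ea_0}(\xi)-i\,\frac{\sin(|\xi|t)}{|\xi|}\,\xi\cdot\widehat{E\mathbf{m}_0}(\xi)\Big),
\]
up to errors of relative size $O(|\xi|)$. I would replace $\widehat{Ea_0}(\xi)$ by $\widehat{Ea_0}(0)=2\int_{\mathbb{R}^3_+}a_0\,\mathrm{d}x\neq0$. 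The resulting profile has squared $L^2$ norm over $\mathbb{R}^3$ equal to
\[
|\widehat{Ea_0}(0)|^2\int e^{-(2\mu+\lambda)|\xi|^2t}\cos^2(|\xi|t)\,\mathrm{d}\xi \sim c\,t^{-\frac32},
\]
after rescaling $\xi=\eta/\sqrt t$ and using that $\cos^2(|\eta|\sqrt t)$ averages to $\frac12$ as $t\to\infty$ (Riemann--Lebesgue in the radial variable). By symmetry of the even extension, restricting to $\mathbb{R}^3_+$ keeps exactly half of this mass. The two remaining differences are treated as errors. The term with $\widehat{Ea_0}(\xi)-\widehat{Ea_0}(0)$ is $o(t^{-\frac34})$ by continuity of $\widehat{Ea_0}$ at $0$ (since $a_0\in L^1$) and dominated convergence in the rescaled variable. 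The momentum term uses $\widehat{E\mathbf{m}_0}$: the odd component $m_{0,3}$ has vanishing Fourier transform at $\xi=0$, and the term $\xi\cdot\widehat{E\mathbf{m}_{0,h}}$ carries a factor $\sin(|\xi|t)/|\xi|$. I expect this term to be the delicate one, since it is of the same order $t^{-\frac34}$ in general. It does not cancel the $a_0$-term on average, because the $\cos^2$ and $\sin^2$ parts are orthogonal after averaging; the cross term contains $\sin\cos$, which averages to $0$. Hence the squared norm is bounded below by $c\,t^{-\frac32}|\int a_0|^2\,(1-o(1))$.

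Combining these pieces with the reverse triangle inequality gives
\[
\|\mathcal{U}(t)\mathbf{u}_0\|_{L^2}\ge c\,t^{-\frac34}-o(t^{-\frac34})-Ct^{-\frac34-\kappa}-Ce^{-c_0t}\gtrsim t^{-\frac34}
\]
for $t$ large. The main obstacle is the boundary correction bound for $\mathcal{R}(t)$ with a strictly better rate, which requires the explicit solution formula for the Stokes-type resolvent problem in the half space. I would take this directly from \cite{k2} rather than rederive it.
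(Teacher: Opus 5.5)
The paper gives no proof of this lemma; it quotes it from \cite{k1,k2}. So your argument stands or falls with the remainder bound you import, and that bound is false in the form you state it.

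To see this, let hats denote the Fourier transform in $x'=(x_1,x_2)$ only. Put $\mathbf{e}(\xi'):=(-\xi_2,\xi_1)/|\xi'|$ and $\widehat w:=\mathbf{e}(\xi')\cdot\widehat{\mathbf{m}}_h$. Dotting the tangential momentum equations with $\mathbf{e}(\xi')$ annihilates both $i\xi'\,\widehat a$ and $i\xi'\,\widehat{\operatorname{div}\mathbf{m}}$, so $w$ decouples exactly: $\partial_t\widehat w=\mu(\partial_3^2-|\xi'|^2)\widehat w$. Under no-slip, $\widehat w|_{x_3=0}=0$, a Dirichlet condition. By contrast, $\mathcal{U}_{\mathbb{R}^3}(t)E\mathbf{u}_0$ restricted to $\mathbb{R}^3_+$ gives the even, i.e.\ Neumann, evolution. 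Hence the $w$-part of $\mathcal{R}(t)\mathbf{u}_0$ is exactly the image term
\[
-2e^{-\mu|\xi'|^2t}\int_0^\infty G(\mu t,x_3+y_3)\,\widehat w_0(\xi',y_3)\,\mathrm{d}y_3 ,
\]
where $G$ is the one-dimensional heat kernel from the proof of Lemma \ref{L4-5}. Rescaling $\xi'=\eta'/\sqrt t$ shows that its $L^2(\mathbb{R}^3_+)$ norm is at least $c\,|M|\,t^{-\frac34}$ for large $t$, where $M:=\int_{\mathbb{R}^3_+}\mathbf{m}_{0,h}\,\mathrm{d}x$. Since $\mathbf{m}_h\mapsto w$ is an $L^2$ contraction, this is also a lower bound for $\|\mathcal{R}(t)\mathbf{u}_0\|_{L^2}$.

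So for any data with $M\neq 0$, $\mathcal{R}$ is of exact order $t^{-3/4}$, and no bound $Ct^{-\frac34-\kappa}\|\mathbf{u}_0\|_{L^1}+e^{-c_0t}(\cdots)$ can hold. Physically, the no-slip wall absorbs tangential momentum: the shear part of $\mathbf{m}$ is a Dirichlet heat flow and is $o(t^{-3/4})$, while your whole-space profile keeps it at full strength. Also, Lemma \ref{L4-2} has $q(0)=0$, so it gives no hint of any $\kappa>0$ at $\ell=0$.

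The rest of your argument only ever uses the density, and it is fine. That includes the low-frequency diffusion-wave profile, the $\cos^2$ averaging, the vanishing of the cross term, and halving the mass by parity. So you can repair the proof by using $\|\mathcal{U}(t)\mathbf{u}_0\|_{L^2}\ge\|a(t)\|_{L^2}$. The key input then becomes a density-only statement: the density component of $\mathcal{U}(t)\mathbf{u}_0-\mathcal{U}_{\mathbb{R}^3}(t)E\mathbf{u}_0|_{\mathbb{R}^3_+}$ is $o(t^{-3/4})$ in $L^2(\mathbb{R}^3_+)$.

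That statement is plausible. At the diffusive scale the wall correction is nearly incompressible, so through $\partial_t a=-\operatorname{div}\mathbf{m}$ it feeds $a$ only at lower order. At the acoustic scale $\lambda\approx\pm i|\xi|$ it is a thin Stokes layer, of thickness about $t^{1/4}$. But this is exactly the nontrivial content of the lemma. It must be proved from the explicit half-space resolvent, or located as an explicitly stated result in \cite{k2}. It cannot be obtained by citing a full-vector remainder estimate that does not hold.
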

The second one specifies the decay rates of solutions to the heat equation on the half space with homogeneous Neumann boundary condition or homogeneous Dirichlet boundary condition.
Let \(\mathcal{B}(t) \mathbf{B}_{0}\) denote the solution to the following linearized problem,
\[
\left\{\begin{array}{*{5}{ll}}
\partial_t \mathbf{B}^{\varepsilon} - \left(\Delta_h+\varepsilon\partial_3^2\right) \mathbf{B}^{\varepsilon} = \mathbf{0} & {\rm in} ~~ \mathbb{R}_+^3,\\
\mathbf{B}^{\varepsilon}\big|_{t=0} = \mathbf{B}_{0} & {\rm in} ~~ \mathbb{R}_+^3,\\
\left(\partial_3 \mathbf{B}^{\varepsilon}_h,  B_3^\varepsilon\right) = \mathbf{0}\quad & {\rm on} ~~ \mathbb{R}^2 \times \left\{ x_3=0 \right\}.
\end{array}\right.
\]
Then, we can obtain the following decay rate results of \(\mathcal{B}(t) \mathbf{B}_{0}\).
 \begin{lemm}\label{L4-5}
 For any \(1 \leq p_i \leq 2\) $(i=1,2)$, \(k \geq 0\) and $t>0$, there exists a positive constant \(C\) independent of \(t\) and $\varepsilon$, such that the solution \(\mathcal{B}(t) \mathbf{B}_{0}\) satisfies the following decay estimates.
\begin{equation}\label{4-2}
  \left\|\partial_{h}^{k}\mathcal{B}(t) \mathbf{B}_{0} \right\|_{L^{2}\left(\mathbb{R}^{3}_{+}\right)}\leq C t^{-\left(\frac{k}{2}+\frac{1}{2}\sum_{i=1}^{2}\left(\frac{1}{p_i}-\frac{1}{2}\right)\right)}
  \left\|\mathbf{B}_{0}\right\|_{L_{x_{1}}^{p_1}L_{x_{2}}^{p_2}L_{x_{3}}^{2}\left(\mathbb{R}^{3}_{+}\right)}
\end{equation}
and
\begin{equation}\label{4-3}
  \left\|\partial_{3}\mathcal{B}(t) \mathbf{B}_{0} \right\|_{L^{2}\left(\mathbb{R}^{3}_{+}\right)}\leq C t^{-\frac{1}{2}}\left\|\partial_3 \mathbf{B}_{0} \right\|_{L_{x_{1}x_{2}}^{1}L_{x_{3}}^{2}\left(\mathbb{R}^{3}_{+}\right)},
\end{equation}
where \(\mathbf{B}_0\) is subject to the boundary condition \((\mathbf{B}_0)_3\big|_{x_3=0} = 0\).
\end{lemm}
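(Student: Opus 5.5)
The plan is to write $\mathcal{B}(t)\mathbf{B}_0$ explicitly, component by component, and then apply Young's convolution inequality in the horizontal variables with Minkowski in $x_3$.

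\textbf{Explicit kernels.} Write $G_h(t,x_h)=(4\pi t)^{-1}e^{-|x_h|^2/(4t)}$ for the 2D heat kernel. For $B_3$, which satisfies a Dirichlet condition, I will use the odd reflection kernel
\[
K^D_\varepsilon(t,x_3,y_3)=(4\pi\varepsilon t)^{-\frac12}\Bigl(e^{-\frac{(x_3-y_3)^2}{4\varepsilon t}}-e^{-\frac{(x_3+y_3)^2}{4\varepsilon t}}\Bigr).
\]
For $\mathbf{B}_h$, which satisfies a Neumann condition, I will use the even kernel $K^N_\varepsilon$ (same formula with a plus sign). Then
\[
\mathcal{B}(t)\mathbf{B}_0=G_h(t)\ast_{x_h}\bigl(K_\varepsilon(t)\ast_{x_3}\mathbf{B}_0\bigr),
\]
and horizontal derivatives fall only on $G_h$.

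\textbf{Estimate \eqref{4-2}.} The key observation is that, for each fixed $t$, the operator $f\mapsto\int_0^\infty K^{N/D}_\varepsilon(t,\cdot,y_3)f(y_3)\,\mathrm{d}y_3$ is bounded on $L^2(\mathbb{R}_+)$ with norm at most $2$, uniformly in $\varepsilon$ and $t$. This holds because each Gaussian piece has unit $L^1$ mass, so Young's inequality applies after extending to $\mathbb{R}$. Since this operator commutes with horizontal convolution, I take the $L^2_{x_3}$ norm inside to get
\[
\|\partial_h^k\mathcal{B}(t)\mathbf{B}_0\|_{L^2}\lesssim \bigl\|\,\partial_h^kG_h(t)\ast_{x_h}\|\mathbf{B}_0(x_h,\cdot)\|_{L^2_{x_3}}\bigr\|_{L^2_{x_h}}.
\]
Here I pass from $L^2_{x_h}L^2_{x_3}$ to $L^2_{x_3}$ inside via Minkowski's integral inequality. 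The kernel $\partial_h^kG_h$ factorizes in $x_1,x_2$ into 1D Gaussian derivatives $g_1(t,x_1)g_2(t,x_2)$. Applying iterated one-dimensional Young's inequalities, first in $x_1$ and then in $x_2$, with $1+\frac12=\frac1{r_i}+\frac1{p_i}$ gives the factors $\|\partial^{k_i}g_i\|_{L^{r_i}}\lesssim t^{-\frac{k_i}{2}-\frac12(1-\frac1{r_i})}=t^{-\frac{k_i}{2}-\frac12(\frac1{p_i}-\frac12)}$. The mixed norm ordering $L^{p_1}_{x_1}L^{p_2}_{x_2}L^2_{x_3}$ requires care: I will apply Minkowski to swap norms where $p_i\le2$ permits it, since exchanging an inner $L^{p}$ with an outer $L^2$ is allowed when $p\le 2$. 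This yields \eqref{4-2} with a constant independent of $\varepsilon$.

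\textbf{Estimate \eqref{4-3}.} Here $\partial_3$ hitting $K_\varepsilon$ would produce $(\varepsilon t)^{-1/2}$, so instead I move the derivative onto the data. For the even kernel, $\partial_{x_3}K^N_\varepsilon=-\partial_{y_3}K^D_\varepsilon$. Integrating by parts in $y_3$, the boundary term vanishes because $K^D_\varepsilon(t,x_3,0)=0$, which gives
\[
\partial_3(K^N\ast B_{0,h})=K^D\ast\partial_3B_{0,h}.
\]
For the odd kernel, $\partial_{x_3}K^D_\varepsilon=-\partial_{y_3}K^N_\varepsilon$. The boundary term is now $K^N_\varepsilon(t,x_3,0)B_{0,3}(x_h,0)$, which vanishes precisely because $(\mathbf{B}_0)_3|_{x_3=0}=0$; this is the only place that hypothesis is used. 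Hence
\[
\partial_3(K^D\ast B_{0,3})=K^N\ast\partial_3B_{0,3}.
\]
So $\partial_3\mathcal{B}(t)\mathbf{B}_0$ has the same structure as the solution operator applied to $\partial_3\mathbf{B}_0$, with the two kernel types swapped, and the previous argument with $k=0$ and $p_1=p_2=1$ gives $t^{-1/2}\|\partial_3\mathbf{B}_0\|_{L^1_{x_1x_2}L^2_{x_3}}$. I need to arrange the norms as $L^2_{x_3}$ inside and $L^1_{x_h}$ outside, and this follows from Minkowski.

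\textbf{Expected obstacle.} I expect the main difficulty to be the bookkeeping of the mixed-norm order in \eqref{4-2}: the norm is given as $L^{p_1}_{x_1}L^{p_2}_{x_2}L^2_{x_3}$, and I must justify the interchanges using $p_i\le 2$. The integration by parts in \eqref{4-3} is otherwise routine.
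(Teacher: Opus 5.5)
Your proposal is correct and follows the paper's proof essentially step for step. The paper uses the same even/odd reflection representation and the same iterated one-dimensional Young and Minkowski inequalities in the horizontal variables. Its vertical factor, taken at $p_3=2$, $k_3=0$, is exactly your $\varepsilon$-uniform $L^2_{x_3}$ bound on the reflected kernels. For the normal-derivative estimate it uses the same integration by parts swapping the even and odd kernels, with $(\mathbf{B}_0)_3|_{x_3=0}=0$ removing the boundary term.
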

\begin{proof}
We treat the tangential and normal components of the magnetic field $\mathbf{B}^{\varepsilon}$ separately.
The horizontal component $\mathbf{B}^{\varepsilon}_h$ satisfies the heat equation with homogeneous Neumann boundary condition,
whereas the vertical component $B^{\varepsilon}_3$ satisfies the heat equation with homogeneous Dirichlet boundary condition.

We first extend the initial data $\mathbf{B}_0$ from the half-space $\mathbb{R}_+^3$ to the whole space $\mathbb{R}^3$ via the method of images.
For the horizontal component $(\mathbf{B}_0)_h$, we define the even reflection across $x_3=0$:
\[
\left(\widetilde{\mathbf{B}}_0(\mathbf{x})\right)_h \overset{def}{=}
\begin{cases}
(\mathbf{B}_0(\mathbf{x}))_h, & x_3 \ge 0, \\
\left(\mathbf{B}_0(\mathbf{x}_h, -x_3)\right)_h, & x_3 < 0.
\end{cases}
\]
For the vertical component $(\mathbf{B}_0)_3$, we define the odd reflection across $x_3=0$:
\[
\left(\widetilde{\mathbf{B}}_0(\mathbf{x})\right)_3 \overset{def}{=}
\begin{cases}
(\mathbf{B}_0(\mathbf{x}))_3, & x_3 \ge 0, \\
-\left(\mathbf{B}_0(\mathbf{x}_h, -x_3)\right)_3, & x_3 < 0.
\end{cases}
\]
Let $G(t, s)$ denote the one-dimensional heat kernel:
\[
G(t, s) \overset{def}{=} \frac{1}{\left(4\pi t\right)^{\frac{1}{2}}} \exp\left( -\frac{s^2}{4t} \right).
\]
It is straightforward to verify that for any integer $k \ge 0$ and $1 \le r \le \infty$, there exists a constant $C_{k,r} > 0$ depending only on $k$ and $r$ such that
\begin{equation}\label{win0}
  \left\| \partial_s^k G(t, \cdot) \right\|_{L^r(\mathbb{R})} \le C_{k,r} \, t^{ -\frac{1}{2}\left(k+1 - \frac{1}{r}\right) }.
\end{equation}
The anisotropic three-dimensional heat kernel $G_\varepsilon(t, \mathbf{x})$ is built by separating the horizontal and vertical variables:
\begin{align*}
G_\varepsilon(t, \mathbf{x})
&\overset{def}{=} G(t, x_1) \cdot G(t, x_2) \cdot G(\varepsilon t, x_3) \\
&= \frac{1}{\varepsilon^{\frac{1}{2}} \left(4\pi t\right)^{\frac{3}{2}}} \exp\left( -\frac{|\mathbf{x}_h|^2}{4t} - \frac{x_3^2}{4\varepsilon t} \right).
\end{align*}
By the reflection principle, the solution operator $\mathcal{B}(t)$ on $\mathbb{R}_+^3$ admits the following integral representation:
\begin{equation}\label{win1}
  \begin{aligned}
&(\mathcal{B}(t)\mathbf{B}_0)_h\\
={}& G_\varepsilon(t, \cdot) * \left(\widetilde{\mathbf{B}}_0\right)_h \\
={}& \int_{\mathbb{R}^3} G_\varepsilon(t, \mathbf{x} - \mathbf{y}) \left(\widetilde{\mathbf{B}}_0(\mathbf{y})\right)_h \mathrm{d}\mathbf{y} \\
={}& \int_{\mathbb{R}_+^3} G_\varepsilon(t, \mathbf{x} - \mathbf{y})  (\mathbf{B}_0(\mathbf{y}))_h \mathrm{d}\mathbf{y} + \int_{\mathbb{R}_-^3} G_\varepsilon(t, \mathbf{x} - \mathbf{y})  \left(\widetilde{\mathbf{B}}_0(\mathbf{y})\right)_h \mathrm{d}\mathbf{y} \\
={}& \int_{\mathbb{R}_+^3} G_\varepsilon(t, \mathbf{x}_h - \mathbf{y}_h, x_3 - y_3)  (\mathbf{B}_0(\mathbf{y}))_h \mathrm{d}\mathbf{y} + \int_{\mathbb{R}_+^3} G_\varepsilon(t, \mathbf{x}_h - \mathbf{y}_h, x_3 + y_3)  (\mathbf{B}_0(\mathbf{y}))_h \mathrm{d}\mathbf{y} \\
={}& \int_{\mathbb{R}_+^3} \left( G_\varepsilon(t, \mathbf{x}_h - \mathbf{y}_h, x_3 - y_3) + G_\varepsilon(t, \mathbf{x}_h - \mathbf{y}_h, x_3 + y_3) \right) (\mathbf{B}_0(\mathbf{y}))_h \mathrm{d}\mathbf{y},
  \end{aligned}
\end{equation}
\begin{equation}\label{win2}
\begin{aligned}
&(\mathcal{B}(t)\mathbf{B}_0)_3\\
={}& G_\varepsilon(t, \cdot) * \left(\widetilde{\mathbf{B}}_0\right)_3 \\
={}& \int_{\mathbb{R}^3} G_\varepsilon(t, \mathbf{x} - \mathbf{y})  \left(\widetilde{\mathbf{B}}_0(\mathbf{y})\right)_3 \mathrm{d}\mathbf{y} \\
={}& \int_{\mathbb{R}_+^3} G_\varepsilon(t, \mathbf{x} - \mathbf{y})  (\mathbf{B}_0(\mathbf{y}))_3 \mathrm{d}\mathbf{y} + \int_{\mathbb{R}_-^3} G_\varepsilon(t, \mathbf{x} - \mathbf{y})  \left(\widetilde{\mathbf{B}}_0(\mathbf{y})\right)_3 \mathrm{d}\mathbf{y} \\
={}& \int_{\mathbb{R}_+^3} G_\varepsilon(t, \mathbf{x}_h - \mathbf{y}_h, x_3 - y_3)  (\mathbf{B}_0(\mathbf{y}))_3 \mathrm{d}\mathbf{y} - \int_{\mathbb{R}_+^3} G_\varepsilon(t, \mathbf{x}_h - \mathbf{y}_h, x_3 + y_3)  (\mathbf{B}_0(\mathbf{y}))_3 \mathrm{d}\mathbf{y} \\
={}& \int_{\mathbb{R}_+^3} \left( G_\varepsilon(t, \mathbf{x}_h - \mathbf{y}_h, x_3 - y_3) - G_\varepsilon(t, \mathbf{x}_h - \mathbf{y}_h, x_3 + y_3) \right) (\mathbf{B}_0(\mathbf{y}))_3 \mathrm{d}\mathbf{y}.
\end{aligned}
\end{equation}

In what follows, we shall employ the following form of Minkowski's inequality: for an arbitrary measurable function $f$ defined on $\mathbb{R}^m \times \mathbb{R}^n$ and any exponents satisfying $1 \leq q\leq p \leq \infty$,
\begin{equation}\label{UBW2}
  \left\| \| f \|_{L_y^q(\mathbb{R}^n)} \right\|_{L_x^p(\mathbb{R}^m)} \leq \left\| \| f \|_{L_x^p(\mathbb{R}^m)} \right\|_{L_y^q(\mathbb{R}^n)},
\end{equation}
where $L_y^q(\mathbb{R}^n)$ stands for the $L^q$-norm with respect to the $y$-variable, and $L_x^p(\mathbb{R}^m)$ denotes the $L^p$-norm with respect to the $x$-variable.
A more general formulation of Minkowski's inequality, along with its rigorous proof, can be found in the reference \cite{30}.

Applying Young's convolution inequality, Minkowski's inequality \eqref{UBW2} and estimate \eqref{win0}, we deduce that
  \begin{align*}
  &\left\| \partial_1^{k_1} \partial_2^{k_2} \partial_3^{k_3} \left( \mathcal{B}(t) \mathbf{B}_0 \right) \right\|_{L^2\left(\mathbb{R}_+^3\right)}\\
  \leq{}& C \left\| \partial_1^{k_1} \partial_2^{k_2} \partial_3^{k_3} \left( G_\varepsilon(t, \cdot) * \widetilde{\mathbf{B}}_0 \right) \right\|_{L^2(\mathbb{R}^3)}\\
  ={}& C \left\| \left( \partial_{x_1}^{k_1} G(t, \cdot) \cdot \partial_{x_2}^{k_2} G(t, \cdot) \cdot \partial_{x_3}^{k_3} G(\varepsilon t, \cdot) \right) * \widetilde{\mathbf{B}}_0 \right\|_{L^2(\mathbb{R}^3)}\\
  ={}& C \left\| \partial_{x_1}^{k_1} G(t, \cdot) *_{x_1} \left( \partial_{x_2}^{k_2} G(t, \cdot) *_{x_2} \left( \partial_{x_3}^{k_3} G(\varepsilon t, \cdot) *_{x_3} \widetilde{\mathbf{B}}_0 \right) \right) \right\|_{L^2(\mathbb{R}^3)}\\
  \leq{}& C \left\| \left\| \partial_{x_1}^{k_1} G(t, \cdot) \right\|_{L_{x_1}^{r_1}} \cdot \left\| \partial_{x_2}^{k_2} G(t, \cdot) *_{x_2} \left( \partial_{x_3}^{k_3} G(\varepsilon t, \cdot) *_{x_3} \widetilde{\mathbf{B}}_0 \right) \right\|_{L_{x_1}^{p_1}} \right\|_{L_{x_3}^2 L_{x_2}^2}\\
  \leq{}& C \left\| \partial_{x_1}^{k_1} G(t, \cdot) \right\|_{L_{x_1}^{r_1}} \cdot \left\| \left\| \partial_{x_2}^{k_2} G(t, \cdot) *_{x_2} \left( \partial_{x_3}^{k_3} G(\varepsilon t, \cdot) *_{x_3} \widetilde{\mathbf{B}}_0 \right) \right\|_{L_{x_2}^2} \right\|_{L_{x_1}^{p_1} L_{x_3}^2}\\
  \leq{}& C \left\| \partial_{x_1}^{k_1} G(t, \cdot) \right\|_{L_{x_1}^{r_1}} \cdot \left\| \partial_{x_2}^{k_2} G(t, \cdot) \right\|_{L_{x_2}^{r_2}} \cdot \left\| \left\| \partial_{x_3}^{k_3} G(\varepsilon t, \cdot) *_{x_3} \widetilde{\mathbf{B}}_0 \right\|_{L_{x_3}^2} \right\|_{L_{x_1}^{p_1} L_{x_2}^{p_2}}\\
  \leq{}& C \left\| \partial_{x_1}^{k_1} G(t, \cdot) \right\|_{L_{x_1}^{r_1}} \cdot \left\| \partial_{x_2}^{k_2} G(t, \cdot) \right\|_{L_{x_2}^{r_2}} \cdot \left\| \partial_{x_3}^{k_3} G(\varepsilon t, \cdot) \right\|_{L_{x_3}^{r_3}} \cdot \left\| \widetilde{\mathbf{B}}_0 \right\|_{L_{x_1}^{p_1} L_{x_2}^{p_2} L_{x_3}^{p_3}(\mathbb{R}^3)}\\
  \leq{}& C t^{ -\frac{1}{2}\sum_{i=1}^2 \left( k_i + \frac{1}{p_i} - \frac{1}{2} \right) } \cdot \left(\varepsilon t\right)^{ -\frac{1}{2}\left( k_3 + \frac{1}{p_3} - \frac{1}{2} \right) } \cdot \left\| \mathbf{B}_0 \right\|_{L_{x_1}^{p_1} L_{x_2}^{p_2} L_{x_3}^{p_3}\left(\mathbb{R}_+^3\right)},
  \end{align*}
where \(1 + \frac{1}{2} = \frac{1}{p_i} + \frac{1}{r_i}\) with \(1 \le p_i \le 2\) for \(i = 1, 2, 3\), and \(C > 0\) is a generic positive constant independent of $t$ and \(\varepsilon\), which may vary from line to line.

Consequently, we arrive at the following time decay estimate for the derivatives of the solution:
\begin{equation}\label{win3}
\boxed{
  \left\| \partial_1^{k_1} \partial_2^{k_2} \partial_3^{k_3} \left( \mathcal{B}(t) \mathbf{B}_0 \right) \right\|_{L^2\left(\mathbb{R}_+^3\right)}
  \leq C t^{ -\frac{1}{2}\sum_{i=1}^2 \left( k_i + \frac{1}{p_i} - \frac{1}{2} \right) } \cdot \left(\varepsilon t\right)^{ -\frac{1}{2}\left( k_3 + \frac{1}{p_3} - \frac{1}{2} \right) } \cdot \left\| \mathbf{B}_0 \right\|_{L_{x_1}^{p_1} L_{x_2}^{p_2} L_{x_3}^{p_3}\left(\mathbb{R}_+^3\right)}.
}
\end{equation}
For estimate \eqref{win3}, only by setting $p_3 = 2$ and $k_3 = 0$ can we eliminate all $\varepsilon$-dependence from the resulting decay estimate, and we then obtain estimate \eqref{4-2}.
For the case $k_3 = 1$, we apply integration by parts to transfer the normal derivative onto the initial data $\mathbf{B}_0$, which ensures the decay rate is free of $\varepsilon$.

We now turn to the proof of estimate \eqref{4-3}. For identity \eqref{win1}, applying integration by parts, we obtain
\begin{equation}\label{4-4}
  \begin{aligned}
  \partial_3\left(\mathcal{B}(t) \mathbf{B}_0\right)_h ={}& \int_{\mathbb{R}^3_+} \partial_{x_3} \left(G_\varepsilon\left(t, \mathbf{x}_h-\mathbf{y}_h, x_3-y_3\right) + G_\varepsilon\left(t, \mathbf{x}_h-\mathbf{y}_h, x_3+y_3\right)\right) \left(\mathbf{B}_0(\mathbf{y})\right)_h  \mathrm{d}\mathbf{y} \\
  ={}& \int_{\mathbb{R}^3_+} \partial_{y_3} \left(-G_\varepsilon\left(t, \mathbf{x}_h-\mathbf{y}_h, x_3-y_3\right) + G_\varepsilon\left(t, \mathbf{x}_h-\mathbf{y}_h, x_3+y_3\right)\right) \left(\mathbf{B}_0(\mathbf{y})\right)_h  \mathrm{d}\mathbf{y} \\
  ={}& \int_{\mathbb{R}^3_+} \left(G_\varepsilon\left(t, \mathbf{x}_h-\mathbf{y}_h, x_3-y_3\right) - G_\varepsilon\left(t, \mathbf{x}_h-\mathbf{y}_h, x_3+y_3\right)\right) \partial_{y_3} \left(\mathbf{B}_0(\mathbf{y})\right)_h  \mathrm{d}\mathbf{y}.
  \end{aligned}
\end{equation}
Similarly, for the identity \eqref{win2}, integrating by parts and using the boundary condition $\left(\mathbf{B}_0\right)_3\big|_{x_3=0} = 0$, we obtain
\begin{equation}\label{4-5}
  \begin{aligned}
  \partial_3\left(\mathcal{B}(t) \mathbf{B}_0\right)_3 ={}& \int_{\mathbb{R}^3_+} \partial_{x_3} \left(G_\varepsilon\left(t, \mathbf{x}_h-\mathbf{y}_h, x_3-y_3\right) - G_\varepsilon\left(t, \mathbf{x}_h-\mathbf{y}_h, x_3+y_3\right)\right) \left(\mathbf{B}_0(\mathbf{y})\right)_3  \mathrm{d}\mathbf{y} \\
  ={}& -\int_{\mathbb{R}^3_+} \partial_{y_3} \left(G_\varepsilon\left(t, \mathbf{x}_h-\mathbf{y}_h, x_3-y_3\right) + G_\varepsilon\left(t, \mathbf{x}_h-\mathbf{y}_h, x_3+y_3\right)\right) \left(\mathbf{B}_0(\mathbf{y})\right)_3  \mathrm{d}\mathbf{y} \\
  ={}& \int_{\mathbb{R}^3_+} \left(G_\varepsilon\left(t, \mathbf{x}_h-\mathbf{y}_h, x_3-y_3\right) + G_\varepsilon\left(t, \mathbf{x}_h-\mathbf{y}_h, x_3+y_3\right)\right) \partial_{y_3} \left(\mathbf{B}_0(\mathbf{y})\right)_3  \mathrm{d}\mathbf{y}.
  \end{aligned}
\end{equation}
Applying the same argument used for \eqref{win3} to \eqref{4-4} and \eqref{4-5}, we arrive at estimate \eqref{4-3}.
\end{proof}
\subsection{Decay rates of global solutions}
Once the above preparations are in hand, we represent the solution $\left(\mathbf{u}^{\varepsilon}, \mathbf{B}^{\varepsilon}\right)$ to \eqref{4-1} as follows:
\begin{equation}\label{4-6}
  \begin{aligned}
  \mathbf{u}^{\varepsilon}\left(t\right)&=\mathcal{U}(t) \mathbf{u}_{0}+\int_{0}^{t} \mathcal{U}(t-\tau) \operatorname{div} F^{\varepsilon}\left(\tau\right) \mathrm{d} \tau,\\
  \mathbf{B}^{\varepsilon}\left(t\right)&=\mathcal{B}(t) \mathbf{B}_{0}+\int_{0}^{t} \mathcal{B}(t-\tau) F^{\varepsilon}_3\left(\tau\right) \mathrm{d} \tau.\\
  \end{aligned}
\end{equation}
To state the results of decay rates of solutions, we introduce the following notation. Set
\begin{equation*}
\begin{split}
M(t)\overset{def}{=}{}&\sup_{0\leq \tau \leq t}(1+\tau)^{\frac{3}{4}}\left\|\mathbf{u}^{\varepsilon}\left(\tau\right)\right\|_{H^1}+\sup_{0\leq \tau \leq t}(1+\tau)^{\frac{1}{2}}\left\|\mathbf{B}^{\varepsilon}\left(\tau\right)\right\|_{L^2} \\
&+\sup_{0\leq \tau \leq t}(1+\tau)\left\| \partial_h\mathbf{B}^{\varepsilon}\left(\tau\right)\right\|_{L^2}+\sup_{0\leq \tau \leq t}(1+\tau)^{\frac{1}{2}-\delta_2}\left\|\partial_3\mathbf{B}^{\varepsilon}\left(\tau\right)\right\|_{L^2},
\end{split}
\end{equation*}
where $\delta_2 \in \left(0, \frac{1}{32}\right)$ is an arbitrary constant.

Below, the main task is to prove that
\begin{equation}\label{L4-6}
  M(t) \lesssim\left\| \mathbf{u}_{0} \right\|_{L^1}+\left\| \mathbf{u}_{0}\right\|_{H^{1} \times L^2}+\left\|\left(\mathbf{B}_{0},\partial_3 \mathbf{B}_{0}\right)\right\|_{L_{x_1 x_2}^1L_{x_3}^2}+ E_{m}^{\varepsilon}(t)^{2}+ D_{m}^{\varepsilon}(t)^{2}+M(t)^{2}
\end{equation}
for all $t \geq 0$, provided that $E_{m}^{\varepsilon}(\infty) \leq 1$. The desired $H^{1}$ estimates in Theorem \ref{Th3} $(i)$
then follow from Theorem \ref{Th1} and \eqref{L4-6},
provided that
\begin{align*}
      &\left\| ( a_0, \mathbf{v}_{0},\mathbf{B}_{0}) \right\|_{m}
			+\left\| \partial_3 (a_0, \mathbf{v}_{0}, \mathbf{B}_{0}) \right\|_{m-1}
+\left\| \partial_3^2\left(\mathbf{v}_{0}, \varepsilon\mathbf{B}_{0}\right) \right\|_{m-2}\\
&+ \left\| (a_0, \mathbf{m}_{0}) \right\|_{L^1}+\left\|\left(\mathbf{B}_{0},\partial_3 \mathbf{B}_{0}\right)\right\|_{L_{x_1 x_2}^1L_{x_3}^2} \le \delta_1
    \end{align*}
for some small constant $\delta_{1}>0$.

By Theorem \ref{Th1}, it is enough to show \eqref{L4-6} for $t \geq 1$, and hence hereafter we will assume that $t \geq 1$.
We decompose \eqref{4-6} as
\begin{equation}\label{ubw8}
  \begin{aligned}
  \mathbf{u}^{\varepsilon}\left(t\right)&=\mathcal{U}(t) \mathbf{u}_{0}+\int_{t-1}^{t} \mathcal{U}(t-\tau) \operatorname{div} F^{\varepsilon}\left(\tau\right) \mathrm{d} \tau+\int_{0}^{t-1} \mathcal{U}(t-\tau) \operatorname{div} F^{\varepsilon}\left(\tau\right) \mathrm{d} \tau\\
  &\overset{def}{=} I_0(t)+I_1(t)+I_2(t),\\
  \mathbf{B}^{\varepsilon}\left(t\right)&=\mathcal{B}(t) \mathbf{B}_{0}+\int_{t-1}^{t} \mathcal{B}(t-\tau) F^{\varepsilon}_3\left(\tau\right) \mathrm{d} \tau+\int_{0}^{t-1} \mathcal{B}(t-\tau) F^{\varepsilon}_3\left(\tau\right) \mathrm{d} \tau\\
  &\overset{def}{=} II_0(t)+II_1(t)+II_2(t).\\
  \end{aligned}
\end{equation}
By Lemma \ref{L4-2}, we have
\begin{equation}\label{ubw9}
\begin{aligned}
\left\|I_{0}(t)\right\|_{L^2} \lesssim  {} &  t^{-\frac{3}{4}}\left\| \mathbf{u}_{0}\right\|_{L^1}+e^{-c_{0} t}\left\| \mathbf{u}_{0}\right\|_{L^2}\\
\lesssim  {} &  (1+t)^{-\frac{3}{4}}\left(\left\| \mathbf{u}_{0}\right\|_{L^1}+\left\| \mathbf{u}_{0}\right\|_{L^2}\right),\\
\end{aligned}
\end{equation}
and
\begin{equation}\label{ubw10}
\begin{aligned}
\left\| \partial I_{0}(t)\right\|_{L^2} \lesssim  {} &  t^{-\frac{5}{4}}\left\| \mathbf{u}_{0}\right\|_{L^1}+e^{-c_{0} t}\left\| \mathbf{u}_{0}\right\|_{H^{1} \times L^2}\\
\lesssim  {} &  (1+t)^{-\frac{5}{4}}\left(\left\| \mathbf{u}_{0}\right\|_{L^1}+\left\| \mathbf{u}_{0}\right\|_{H^{1} \times L^2}\right).\\
\end{aligned}
\end{equation}
Similarly, by Lemma \ref{L4-5}, we get
\begin{equation}\label{ubw11}
\begin{aligned}
\left\| II_{0}(t)\right\|_{L^{2}\left(\mathbb{R}_+^3\right)}
 \lesssim {} &  t^{-\frac{1}{2}} \left\| \mathbf{B}_{0}\right\|_{L_{x_1 x_2}^1L_{x_3}^2}\\
 \lesssim {} &  (1+t)^{-\frac{1}{2}} \left\| \mathbf{B}_{0}\right\|_{L_{x_1 x_2}^1L_{x_3}^2},\\
\end{aligned}
\end{equation}
\begin{equation}\label{ubw12}
\begin{aligned}
\left\| \partial_h II_{0}(t)\right\|_{L^{2}\left(\mathbb{R}_+^3\right)}
\lesssim {} &  t^{-1} \left\| \mathbf{B}_{0}\right\|_{L_{x_1 x_2}^1L_{x_3}^2}\\
\lesssim {} &  (1+t)^{-1} \left\| \mathbf{B}_{0}\right\|_{L_{x_1 x_2}^1L_{x_3}^2},\\
\end{aligned}
\end{equation}
and
\begin{equation}\label{ubw13}
\begin{aligned}
\left\| \partial_3II_{0}(t)\right\|_{L^{2}\left(\mathbb{R}_+^3\right)}
 \lesssim {} &  t^{-\frac{1}{2}} \left\| \partial_3\mathbf{B}_{0}\right\|_{L_{x_1 x_2}^1L_{x_3}^2}\\
 \lesssim {} &  (1+t)^{-\frac{1}{2}} \left\| \partial_3\mathbf{B}_{0}\right\|_{L_{x_1 x_2}^1L_{x_3}^2}.\\
\end{aligned}
\end{equation}
To estimate $I_1(t)$ and $I_2(t)$, we will use the following estimates for the nonlinearities $F^{\varepsilon}_1$ and $F^{\varepsilon}_2$.

\begin{lemm}\label{L4-7}
Let $m\geq6$. Assume that $ E_{m}^{\varepsilon}(t)\leq 1$ and $a^\varepsilon\left(t\right)\geq-\frac{1}{2}$ for all $t \geq 0$. Then the
following inequalities hold for all $t \geq 0$:

(i)
\[
\left\| F^{\varepsilon}_1\left(t\right)\right\|_{L^1} \lesssim(1+t)^{-\frac{3}{2}}M(t)^2,
\]

(ii)
\[
\left\| F^{\varepsilon}_1\left(t\right)\right\|_{L^2} \lesssim(1+t)^{-\frac{9}{8}}\left( E_m^{\varepsilon}(t)^2+M(t)^2\right),
\]

(iii)
\[
\left\| \partial F^{\varepsilon}_1\left(t\right)\right\|_{L^1} \lesssim (1+t)^{-\frac{3}{4}}\left( E_m^{\varepsilon}(t)^2+M(t)^2\right),
\]

(iv)
\[
\left\| \partial F^{\varepsilon}_1\left(t\right)\right\|_{L^2} \lesssim (1+t)^{-\frac{3}{4}}\left( E_m^{\varepsilon}(t)^2+M(t)^2\right),
\]

(v)
\[
\left\| F^{\varepsilon}_2\left(t\right)\right\|_{L^1} \lesssim(1+t)^{-1}M(t)^2,
\]

(vi)
\[
\left\| F^{\varepsilon}_2\left(t\right)\right\|_{L^2} \lesssim(1+t)^{-\frac{5}{4}+\frac{1}{4}\delta_2}\left( E_m^{\varepsilon}(t)^2+M(t)^2\right),
\]

(vii)
\[
\left\| \partial F^{\varepsilon}_2\left(t\right)\right\|_{L^1} \lesssim (1+t)^{-1+\delta_2}M(t)^2,
\]

(viii)
\[
\left\| \partial F^{\varepsilon}_2\left(t\right)\right\|_{L^2} \lesssim (1+t)^{-\frac{33}{32}+\frac{21}{16}\delta_2}\left( E_m^{\varepsilon}(t)^2+M(t)^2\right).
\]

\end{lemm}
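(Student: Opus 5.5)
The plan is to bound each of the eight quantities term by term, using only two sources of information: the time-weighted decay norms collected in $M(t)$, and the uniform conormal bounds collected in $E_m^{\varepsilon}(t)$. The structure of $F^{\varepsilon}_1$ and $F^{\varepsilon}_2$ guides the splitting. $F^{\varepsilon}_1$ is made of three pieces: a quadratic density term $(a^\varepsilon)^2\int_0^1(1-\theta)P''\,\mathrm{d}\theta$, viscous terms $\partial_k(a^\varepsilon m^\varepsilon_j/(a^\varepsilon+1))$, and a convective term $m^\varepsilon_jm^\varepsilon_k/(a^\varepsilon+1)$. $F^{\varepsilon}_2$ is the purely quadratic Maxwell stress $B^\varepsilon_jB^\varepsilon_k-\frac12\delta_{jk}|\mathbf{B}^\varepsilon|^2$. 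Because $a^\varepsilon\ge-\frac12$, every coefficient function of $a^\varepsilon$ (and its first two derivatives) is uniformly bounded, so these coefficients can be dropped throughout.

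The $L^1$ bounds follow from Cauchy--Schwarz. For (i), $\|F^{\varepsilon}_1\|_{L^1}\lesssim\|\mathbf{u}^\varepsilon\|_{L^2}^2+\|\mathbf{u}^\varepsilon\|_{L^2}\|\partial\mathbf{u}^\varepsilon\|_{L^2}\lesssim(1+t)^{-3/2}M(t)^2$, since $M$ controls the $H^1$ norm of $\mathbf{u}^\varepsilon$ with weight $(1+t)^{3/4}$. For (v), $\|F^{\varepsilon}_2\|_{L^1}\lesssim\|\mathbf{B}^\varepsilon\|_{L^2}^2\lesssim(1+t)^{-1}M^2$. For (vii), $\|\partial F^{\varepsilon}_2\|_{L^1}\lesssim\|\mathbf{B}^\varepsilon\|_{L^2}\|\partial\mathbf{B}^\varepsilon\|_{L^2}$. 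The worst derivative here is $\partial_3\mathbf{B}^\varepsilon$, which decays only like $t^{-1/2+\delta_2}$, and this gives $(1+t)^{-1+\delta_2}M^2$. For (iii), the term $\partial F_1$ contains second derivatives $\partial^2(a^\varepsilon\mathbf{m}^\varepsilon)$. I would put one factor in $L^2$ with decay and the other in $L^2$ bounded by $E_m$: for example $\|a^\varepsilon\|_{L^2}\|\partial^2\mathbf{m}^\varepsilon\|_{L^2}+\|\partial a^\varepsilon\|_{L^2}\|\partial\mathbf{m}^\varepsilon\|_{L^2}\lesssim(1+t)^{-3/4}(E_m+M)^2$. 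Note that $\|\partial^2\mathbf{m}^\varepsilon\|_{L^2}$ is controlled by $E_m$, because $\partial_3^2\mathbf{v}^\varepsilon\in H^{m-2}_{co}$.

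The $L^2$ bounds call for $L^\infty$ or $L^3$/$L^6$ control on one factor, and here only one normal derivative (two for $\mathbf{v}$) is available. The tool is the anisotropic Sobolev inequality from Lemma~\ref{Le1}, $\|f\|_{L^\infty}\lesssim\|f\|_2^{1/2}\|\partial_3f\|_2^{1/2}$ (or its $L^2_{x_3}L^\infty_h$ variants), combined with the conormal interpolation inequality \eqref{a8}. Together they interpolate between the decaying $L^2$/$H^1$ norms and the bounded conormal norms. For (ii) I write $\|F_1\|_{L^2}\lesssim\|\mathbf{u}^\varepsilon\|_{L^\infty}\|\mathbf{u}^\varepsilon\|_{H^1}+\ldots$. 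I then estimate $\|\mathbf{u}^\varepsilon\|_{L^\infty}$ through Gagliardo--Nirenberg-type interpolation: $\|\mathbf{u}\|_{L^2}^{\theta}$ times high conormal norms. Choosing the exponent to give $(1+t)^{-3/8}$ yields $(1+t)^{-9/8}$. For (iv), the term $\|\partial F_1\|_{L^2}$ contains $\mathbf{u}\,\partial^3\mathbf{m}$-type products. The factor $\partial_3^3\mathbf{m}$ is not in the energy, so I would instead recast $\partial F_1$ as $\mathbf{u}\cdot\partial^2\mathbf{u}+\partial\mathbf{u}\cdot\partial\mathbf{u}$ wherever possible. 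Only $\|\mathbf{u}\|_{H^1}$ then carries decay, which is why merely $(1+t)^{-3/4}$ is claimed. The remaining top-order viscous piece would be treated through the $L^\infty$ bound on $a^\varepsilon$ together with a bound on $\|\partial^2\mathbf{m}\|$ coming from $E_m$; where $\partial_3^3$ appears, I would use the equation to trade it for quantities available in the energy.

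For (vi) and (viii) I use $\|F_2\|_{L^2}\lesssim\|\mathbf{B}\|_{L^4}^2$ and $\|\partial F_2\|_{L^2}\lesssim\|\mathbf{B}\,\partial\mathbf{B}\|_{L^2}$. With the anisotropic inequality $\|fg\|_{L^2}\lesssim\|f\|^{1/2}\|\partial_3 f\|^{1/2}\|g\|^{1/2}\|\partial_1 g\|^{1/2}\cdots$ (four half-powers, as in Lemma~\ref{Le1}), I distribute derivatives so as to collect many horizontal derivatives (decay $t^{-1}$) and as few $\partial_3$ derivatives as possible (decay $t^{-1/2+\delta_2}$). Any factor involving $\partial_3^2\mathbf{B}$ or $\partial_h\partial_3\mathbf{B}$ is not decaying. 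Such a factor would be interpolated between $\|\partial_3\mathbf{B}\|_{L^2}$, which decays, and $\|\partial_3\mathbf{B}\|_{m-1}\le E_m$, which is bounded, via \eqref{a8}. A fractional power of the decay is lost in the process, and this produces the odd exponents $-\tfrac54+\tfrac14\delta_2$ and $-\tfrac{33}{32}+\tfrac{21}{16}\delta_2$. I expect (viii) to be the main obstacle. The term $\mathbf{B}\,\partial_3\mathbf{B}$ in $L^2$ needs a supremum-type bound on $\partial_3\mathbf{B}$ in $x_3$, which requires $\partial_3^2\mathbf{B}$, and $\partial_3^2\mathbf{B}$ is controlled only with weight $\varepsilon$. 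My first attempt would place the $L^\infty_{x_3}$ norm on $\mathbf{B}$, using only $\partial_3\mathbf{B}$, and put $\partial_3\mathbf{B}$ in $L^2_{x_3}L^\infty_h$, using only horizontal derivatives interpolated through \eqref{a8}. Careful bookkeeping of the exponents should then yield the stated $33/32$.
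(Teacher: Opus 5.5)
Your bounds for (i), (ii), (v), (vi) and (vii) follow the paper. However, (iii) and (iv) contain a genuine gap, and your plan for (viii) does not give the stated exponent.

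\textbf{The gap in (iii) and (iv).} Since $\mu\partial_k\bigl(a^\varepsilon m^\varepsilon_j/(a^\varepsilon+1)\bigr)=\mu\partial_k(a^\varepsilon v^\varepsilon_j)$, the derivative $\partial F^\varepsilon_1$ contains $\partial^2a^\varepsilon\,\mathbf{v}^\varepsilon$, and in particular $\partial_3^2a^\varepsilon\,\mathbf{v}^\varepsilon$. Note that $\partial F^\varepsilon_1$ has no third derivatives, so the $\partial_3^3\mathbf{m}^\varepsilon$ you worry about in (iv) is not the issue.
\begin{itemize}
\item In (iii) you drop the term $\partial^2a^\varepsilon\,\mathbf{v}^\varepsilon$ altogether.
\item You claim $\|\partial^2\mathbf{m}^\varepsilon\|_{L^2}$ is controlled by $E^\varepsilon_m$ because $\partial_3^2\mathbf{v}^\varepsilon\in H^{m-2}_{co}$. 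This ignores that $\partial_3^2\mathbf{m}^\varepsilon$ itself contains $\mathbf{v}^\varepsilon\partial_3^2a^\varepsilon$.
\item The energy controls only $\|\partial_3a^\varepsilon\|_{m-1}$, i.e.\ conormal derivatives of $\partial_3a^\varepsilon$, never $\partial_3^2a^\varepsilon$.
\item The equations cannot supply $\partial_3^2a^\varepsilon$ either. The normal momentum equation expresses $\partial_3a^\varepsilon$ through $\partial_3^2v^\varepsilon_3$, so $\partial_3^2a^\varepsilon$ would require $\partial_3^3v^\varepsilon_3$.
\end{itemize}
The missing idea is Hardy's inequality combined with the no-slip condition $\mathbf{v}^\varepsilon|_{x_3=0}=\mathbf{0}$. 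With a cutoff $\chi$ near $x_3=0$, write $\partial_3\partial a^\varepsilon\,\mathbf{v}^\varepsilon=(Z_3\partial a^\varepsilon)(\mathbf{v}^\varepsilon/\varphi)$. Near the boundary use $\|\chi\mathbf{v}^\varepsilon/x_3\|_{L^2}\lesssim\|\partial_3\mathbf{v}^\varepsilon\|_{L^2}$; away from it use $\varphi\gtrsim1$. The non-decaying factor is then $\|Z_3\partial a^\varepsilon\|_0\le E^\varepsilon_m$, and the rate $(1+t)^{-3/4}$ comes from $\|(\mathbf{v}^\varepsilon,\partial_3\mathbf{v}^\varepsilon)\|_0$. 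In (iv) the paper combines the same splitting with Lemma~\ref{Le1} and the conormal interpolation \eqref{a8}.

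\textbf{Rate for the top-order term in (iv).} An $L^\infty$ bound on $a^\varepsilon$ times a merely bounded $\|\partial^2\mathbf{v}^\varepsilon\|_0$ cannot give $(1+t)^{-3/4}$. The factor $\|\partial_3^2\mathbf{v}^\varepsilon\|_0$ has no decay. Inequality \eqref{a5} with only $H^1$ decay gives $\|a^\varepsilon\|_{L^\infty}\lesssim(1+t)^{-3/8}$, and horizontal interpolation improves this only to $(1+t)^{-3/4+\eta}$ for some $\eta>0$. Instead use \eqref{a4} with $g=a^\varepsilon$ and $k=3$. Then $\|a^\varepsilon\|_0^{1/2}\|\partial_3a^\varepsilon\|_0^{1/2}$ carries the full rate, and all horizontal derivatives fall on $\partial^2\mathbf{v}^\varepsilon$.

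\textbf{Rate in (viii).} Your split $\|\mathbf{B}^\varepsilon\|_{L^\infty_{x_3}L^2_{x_1x_2}}\|\partial_3\mathbf{B}^\varepsilon\|_{L^2_{x_3}L^\infty_{x_1x_2}}$ decays at best like $(1+t)^{-1/2+\delta_2/2}\cdot(1+t)^{-1/2+\delta_2}$. That is no faster than $(1+t)^{-1+\frac32\delta_2}$, which is slower than the claimed $-\frac{33}{32}+\frac{21}{16}\delta_2$. The reasons are:
\begin{itemize}
\item the undifferentiated factor never uses $\|\partial_h\mathbf{B}^\varepsilon\|_0\lesssim(1+t)^{-1}$;
\item horizontal derivatives of $\partial_3\mathbf{B}^\varepsilon$ gain no decay.
\end{itemize}
The paper applies \eqref{a4} with $f=\mathbf{B}^\varepsilon$, $(i,j)=(1,3)$, $g=\partial\mathbf{B}^\varepsilon$, $k=2$. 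It then interpolates $\|\partial_h\partial\mathbf{B}^\varepsilon\|_0\lesssim\|\partial\mathbf{B}^\varepsilon\|_0^{3/4}\|\partial_h^4\partial\mathbf{B}^\varepsilon\|_0^{1/4}$, which gives exactly $-\frac{33}{32}+\frac{21}{16}\delta_2$. Any rate faster than $(1+t)^{-3/4}$ would suffice for \eqref{ubw21} and \eqref{ubw23}, but not for the lemma as stated.
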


\begin{proof}
Since $ E_{m}^{\varepsilon}(t)\leq 1$ and $a^\varepsilon\left(t\right)\geq-\frac{1}{2}$ by uniform estimate, we first derive the boundedness of the $L^\infty$-norm as follows:
\[
\left\|\left(a^\varepsilon,\mathbf{m}^{\varepsilon},\mathbf{v}^{\varepsilon},\mathbf{B}^{\varepsilon}\right)\right\|_{L^{\infty}}\lesssim 1.
\]
To facilitate the transformation between $\mathbf{v}^{\varepsilon}$ and $\mathbf{m}^{\varepsilon}$, we have
\begin{equation}\label{ubw14}
  \left|\mathbf{v}^{\varepsilon}\right|\lesssim \left|\frac{\mathbf{m}^{\varepsilon}}{{a}^{\varepsilon}+1}\right|\lesssim \left|\mathbf{m}^{\varepsilon}\right|\lesssim \left|\mathbf{u}^{\varepsilon}\right|
\end{equation}
and
\begin{equation}\label{ubw15}
  \left|\partial\mathbf{v}^{\varepsilon}\right|\lesssim \left|\partial\left(\frac{\mathbf{m}^{\varepsilon}}{{a}^{\varepsilon}+1}\right)\right|\lesssim \left|\partial{a}^{\varepsilon}\right|+\left|\partial\mathbf{m}^{\varepsilon}\right|\lesssim \left|\partial\mathbf{u}^{\varepsilon}\right|.
\end{equation}
Based on this boundedness, we can obtain the following pointwise estimates of $F^{\varepsilon}_1$, $\partial F^{\varepsilon}_1$, $F^{\varepsilon}_2$ and $\partial F^{\varepsilon}_2$ respectively.
\begin{equation}\label{ubw16}
  \left|F^{\varepsilon}_1\right|\lesssim\left|\mathbf{u}^{\varepsilon}\right|^{2}+\left| a^\varepsilon \right|\left|\partial \mathbf{v}^{\varepsilon}\right|+\left|\partial  a^\varepsilon \right|\left|\mathbf{v}^{\varepsilon}\right|,
\end{equation}
\begin{equation}\label{ubw17}
 \left|\partial F^{\varepsilon}_1\right| \lesssim \left| \mathbf{u}^{\varepsilon} \right|\left|\partial \mathbf{u}^{\varepsilon}\right|+\left|\partial {a}^{\varepsilon}\right|\left|\partial \mathbf{v}^{\varepsilon}\right|+\left| {a}^{\varepsilon} \right|\left|\partial^{2}  \mathbf{v}^{\varepsilon}\right|+\left|\partial^{2}a^\varepsilon \right|\left|\mathbf{v}^{\varepsilon}\right|,
\end{equation}
\begin{equation}\label{ubw18}
  \left|F^{\varepsilon}_2\right|\lesssim\left|\mathbf{B}^{\varepsilon}\right|^2,
\end{equation}
\begin{equation}\label{ubw19}
  \left|\partial F^{\varepsilon}_2\right|\lesssim\left|\mathbf{B}^{\varepsilon}\right|\left|\partial\mathbf{B}^{\varepsilon}\right|.
\end{equation}
We first establish the estimate in $(i)$. Applying  Hölder's inequality, \eqref{ubw14} and \eqref{ubw15} to \eqref{ubw16}, we have
\[
\left\| F^{\varepsilon}_1\right\|_{L^1} \lesssim\left\| \mathbf{u}^{\varepsilon}\right\|_{0}^{2}+\left\| \mathbf{u}^{\varepsilon}\right\|_{0}\left\| \partial \mathbf{u}^{\varepsilon}\right\|_{0}\lesssim(1+t)^{-\frac{3}{2}}M(t)^2.
\]
By virtue of Lemmas \ref{Le1},\ref{Le2}, \eqref{ubw14}--\eqref{ubw16}, we derive that
\begin{align*}
\left\| F^{\varepsilon}_1\right\|_{L^2} \lesssim{}&\left\| \mathbf{u}^{\varepsilon}\right\|^2_{L^4}+\left\|a^\varepsilon\right\|_{0}^{\frac{1}{8}}\left\|\partial_{1}a^\varepsilon\right\|_{0}^{\frac{1}{8}}\left\|\partial_{2}a^\varepsilon\right\|_{0}^{\frac{1}{8}}\left\|\partial_{12}a^\varepsilon\right\|_{0}^{\frac{1}{8}}\\
&\cdot\left\|\partial_{3}a^\varepsilon\right\|_{0}^{\frac{1}{8}}\left\|\partial_{13}a^\varepsilon\right\|_{0}^{\frac{1}{8}}\left\|\partial_{23}a^\varepsilon\right\|_{0}^{\frac{1}{8}}\left\|\partial_{123}a^\varepsilon\right\|_{0}^{\frac{1}{8}}\left\|\partial\mathbf{v}^{\varepsilon}\right\|_0\\
&+\left\|\mathbf{v}^{\varepsilon}\right\|_{0}^{\frac{1}{8}}\left\|\partial_{1}\mathbf{v}^{\varepsilon}\right\|_{0}^{\frac{1}{8}}\left\|\partial_{2}\mathbf{v}^{\varepsilon}\right\|_{0}^{\frac{1}{8}}\left\|\partial_{12}\mathbf{v}^{\varepsilon}\right\|_{0}^{\frac{1}{8}}\\
&\cdot\left\|\partial_{3}\mathbf{v}^{\varepsilon}\right\|_{0}^{\frac{1}{8}}\left\|\partial_{13}\mathbf{v}^{\varepsilon}\right\|_{0}^{\frac{1}{8}}\left\|\partial_{23}\mathbf{v}^{\varepsilon}\right\|_{0}^{\frac{1}{8}}\left\|\partial_{123}\mathbf{v}^{\varepsilon}\right\|_{0}^{\frac{1}{8}}\left\|\partial{a}^{\varepsilon}\right\|_0\\
\lesssim{}&\left\| \mathbf{u}^{\varepsilon}\right\|^{\frac{1}{2}}_{0}\left\| \partial\mathbf{u}^{\varepsilon}\right\|^{\frac{3}{2}}_{0}+\left(\left\| \left(a^\varepsilon,\mathbf{v}^{\varepsilon}\right)\right\|^{\frac{1}{2}}_{m}+\left\| \partial_3\left(a^\varepsilon,\mathbf{v}^{\varepsilon}\right)\right\|^{\frac{1}{2}}_{m-1}\right)\left\| \mathbf{u}^{\varepsilon}\right\|^{\frac{1}{8}}_{0}\left\| \partial\mathbf{u}^{\varepsilon}\right\|^{\frac{11}{8}}_{0}\\
\lesssim{}&(1+t)^{-\frac{9}{8}}\left( E_m^{\varepsilon}(t)^2+M(t)^2\right),
\end{align*}
which immediately gives the desired inequality in $(ii)$.

Then, we turn to the estimate in $(iii)$.
Using Hölder's inequality and \eqref{ubw15} to \eqref{ubw17}, we derive that
\[
\left\| \partial F^{\varepsilon}_1\right\|_{L^1} \lesssim \left\| \mathbf{u}^{\varepsilon}\right\|_{0}\left\| \partial \mathbf{u}^{\varepsilon}\right\|_{0}+\left\| \partial \mathbf{u}^{\varepsilon}\right\|_{0}^{2}+\left\| \mathbf{u}^{\varepsilon}\right\|_{0}\left\|\partial^{2}  \mathbf{v}^{\varepsilon}\right\|_{0}+\left\|\partial^{2}  a^\varepsilon \cdot\mathbf{v}^{\varepsilon}\right\|_{L^1}.
\]
By Hardy's inequality, we derive
\begin{align*}
\left\|\partial^2 a^\varepsilon\cdot  \mathbf{v}^{\varepsilon}\right\|_{L^1}
\lesssim{}&\left\|\partial_3\partial a^\varepsilon\cdot  \mathbf{v}^{\varepsilon}\right\|_{L^1}+\left\|\partial_h^2 a^\varepsilon\cdot  \mathbf{v}^{\varepsilon}\right\|_{L^1}\\
\lesssim{}&\left\|\chi(x_3) \varphi\partial_3 \partial a^\varepsilon\cdot  \left(\frac{\mathbf{v}^{\varepsilon}}{x_3}\right)\right\|_{L^1}+\left\|\left(1-\chi(x_3)\right) \varphi\partial_3 \partial a^\varepsilon\cdot  \mathbf{v}^{\varepsilon}\right\|_{L^1}+\left\|\partial_h^2 a^\varepsilon\cdot  \mathbf{v}^{\varepsilon}\right\|_{L^1}\\
\lesssim{}&\left\| Z_3\partial a^\varepsilon \right\|_{0}\left\| \partial_3 \mathbf{v}^{\varepsilon}\right\|_{0}+\left\| Z_3\partial a^\varepsilon \right\|_{0}\left\|\mathbf{v}^{\varepsilon}\right\|_{0}+\left\|\partial_{h}^2 a^\varepsilon \right\|_{0}\left\|\mathbf{v}^{\varepsilon}\right\|_{0},
\end{align*}
where $\chi(x_3)$ is a smooth cutoff function supported near the boundary $x_3=0$. Combining all the above estimates, we obtain
\begin{align*}
\left\| \partial F^{\varepsilon}_1\right\|_{L^1}
\lesssim{}& \left\| \mathbf{u}^{\varepsilon}\right\|_{0}\left\| \partial \mathbf{u}^{\varepsilon}\right\|_{0}
+\left\| \partial \mathbf{u}^{\varepsilon}\right\|_{0}^{2}+\Big(\left\| \left(a^\varepsilon,\mathbf{v}^{\varepsilon}\right)\right\|_{m}+\left\| \partial_3\left(a^\varepsilon,\mathbf{v}^{\varepsilon}\right)\right\|_{m-1}\\
&+\left\|\partial_3^{2}  \mathbf{v}^{\varepsilon}\right\|_{m-2}\Big)
\left(\left\| \mathbf{u}^{\varepsilon}\right\|_{0}
+\left\|\partial\mathbf{u}^{\varepsilon}\right\|_{0}\right)\\
\lesssim{}& (1+t)^{-\frac{3}{4}}\left( E_m^{\varepsilon}(t)^2+M(t)^2\right).
\end{align*}
Next, we prove the inequality in $(iv)$.
We establish the $L^2$-norm estimate for $\partial F^{\varepsilon}_1$ as follows
\begin{align*}
\left\| \partial F^{\varepsilon}_1\right\|_{L^2} \lesssim{}&\left\| \mathbf{u}^{\varepsilon}\cdot \partial \mathbf{u}^{\varepsilon}\right\|_{0}+\left\|\partial {a}^{\varepsilon}\cdot\partial\mathbf{v}^{\varepsilon}\right\|_{0}+\left\| {a}^{\varepsilon} \cdot\partial^{2}  \mathbf{v}^{\varepsilon}\right\|_{0}
+\left\|\partial^2 a^\varepsilon\cdot  \mathbf{v}^{\varepsilon}\right\|_{0}.
\end{align*}
Applying Hölder's inequality, Lemma \ref{Le1} and \eqref{ubw14} yields
\begin{align*}
  &\left\| \mathbf{u}^{\varepsilon}\cdot \partial \mathbf{u}^{\varepsilon}\right\|_{0}+\left\|\partial {a}^{\varepsilon}\cdot\partial\mathbf{v}^{\varepsilon}\right\|_{0}\\
  \lesssim{}&\left\| \mathbf{u}^{\varepsilon}\right\|_{L^\infty} \left\|\partial \mathbf{u}^{\varepsilon}\right\|_{0}
+\left\|\partial {a}^{\varepsilon} \right\|_{0}\left\|\partial\mathbf{v}^{\varepsilon}\right\|_{L^{\infty}}\\
\lesssim{}&\Big(\left\|\left({a}^{\varepsilon},\mathbf{v}^{\varepsilon}\right)\right\|_{m}+\left\| \partial_3\left({a}^{\varepsilon},\mathbf{v}^{\varepsilon}\right)\right\|_{m-1}
+\left\|\partial_3^{2}  \mathbf{v}^{\varepsilon}\right\|_{m-2}\Big) \left\|\partial \mathbf{u}^{\varepsilon}\right\|_{0}.
\end{align*}
Using Lemma \ref{Le1}, we get
\begin{align*}
&\left\| {a}^{\varepsilon} \cdot\partial^{2}  \mathbf{v}^{\varepsilon}\right\|_{0}\\
\lesssim{}&\left\|{a}^{\varepsilon}\right\|_{0}^{\frac{1}{2}}\left\| \partial_3 {a}^{\varepsilon}\right\|_{0}^{\frac{1}{2}}\left\| \partial^2 \mathbf{v}^{\varepsilon} \right\|_{0}^{\frac{1}{4}}
			\left\| \partial_1 \partial^2 \mathbf{v}^{\varepsilon} \right\|_{0}^{\frac{1}{4}}
			\left\| \partial_2 \partial^2 \mathbf{v}^{\varepsilon} \right\|_{0}^{\frac{1}{4}}
			\left\| \partial_{12} \partial^2 \mathbf{v}^{\varepsilon} \right\|_{0}^{\frac{1}{4}}\\
\lesssim{}&\Big(\left\|\mathbf{v}^{\varepsilon}\right\|_{m}+\left\| \partial_3\mathbf{v}^{\varepsilon}\right\|_{m-1}
+\left\|\partial_3^{2}  \mathbf{v}^{\varepsilon}\right\|_{m-2}\Big) \left(\left\| \mathbf{u}^{\varepsilon}\right\|_{0}+\left\| \partial\mathbf{u}^{\varepsilon}\right\|_{0}\right).
\end{align*}
In a similar manner as before, we can deduce that
\begin{align*}
\left\|\partial^2 a^\varepsilon\cdot  \mathbf{v}^{\varepsilon}\right\|_{0}
\lesssim{}&\left\|\partial_3\partial a^\varepsilon\cdot  \mathbf{v}^{\varepsilon}\right\|_{0}+\left\|\partial_h^2 a^\varepsilon\cdot  \mathbf{v}^{\varepsilon}\right\|_{0}\\
\lesssim{}&\left\|\chi(x_3) \varphi\partial_3 \partial a^\varepsilon\cdot  \left(\frac{\mathbf{v}^{\varepsilon}}{x_3}\right)\right\|_{0}+\left\|\left(1-\chi(x_3)\right) \varphi\partial_3 \partial a^\varepsilon\cdot  \mathbf{v}^{\varepsilon}\right\|_{0}+\left\|\partial_h^2 a^\varepsilon\cdot  \mathbf{v}^{\varepsilon}\right\|_{0}\\
\lesssim{}&
\left\| \partial_3 \mathbf{v}^{\varepsilon} \right\|_{0}^{\frac{1}{4}}
			\left\| \partial_{13} \mathbf{v}^{\varepsilon} \right\|_{0}^{\frac{1}{4}}
			\left\| \partial_3^2 \mathbf{v}^{\varepsilon} \right\|_{0}^{\frac{1}{4}}
			\left\| \partial_{133} \mathbf{v}^{\varepsilon} \right\|_{0}^{\frac{1}{4}}
			\left\|Z_3 \partial a^\varepsilon\right\|_{0}^{\frac{1}{2}}
			\left\| \partial_2 Z_3 \partial a^\varepsilon \right\|_{0}^{\frac{1}{2}}\\
&+\left\|\mathbf{v}^{\varepsilon} \right\|_{0}^{\frac{1}{4}}
			\left\| \partial_{1} \mathbf{v}^{\varepsilon} \right\|_{0}^{\frac{1}{4}}
			\left\| \partial_3 \mathbf{v}^{\varepsilon} \right\|_{0}^{\frac{1}{4}}
			\left\| \partial_{13} \mathbf{v}^{\varepsilon} \right\|_{0}^{\frac{1}{4}}
			\left\|Z_3 \partial a^\varepsilon\right\|_{0}^{\frac{1}{2}}
			\left\| \partial_2 Z_3 \partial a^\varepsilon \right\|_{0}^{\frac{1}{2}}\\
&+\left\|\partial_h^2 a^\varepsilon \right\|_{0}^{\frac{1}{4}}
			\left\| \partial_{1} \partial_h^2 a^\varepsilon \right\|_{0}^{\frac{1}{4}}
			\left\| \partial_2 \partial_h^2 a^\varepsilon \right\|_{0}^{\frac{1}{4}}
			\left\| \partial_{12} \partial_h^2 a^\varepsilon \right\|_{0}^{\frac{1}{4}}
			\left\|\mathbf{v}^{\varepsilon}\right\|_{0}^{\frac{1}{2}}
			\left\| \partial_3 \mathbf{v}^{\varepsilon} \right\|_{0}^{\frac{1}{2}}\\
\lesssim{}&
\left\|\partial \mathbf{v}^{\varepsilon}\right\|_{0}^{\frac{1}{4}}\left(
    \left\|\partial_3 \mathbf{v}^{\varepsilon}\right\|_{0}^{\frac{3}{4}}
    \left\|\partial_1^4 \partial_3 \mathbf{v}^{\varepsilon}\right\|_{0}^{\frac{1}{4}}
\right)^{\frac{1}{4}}
\left\|\partial_3^2 \mathbf{v}^{\varepsilon}\right\|_{1}^{\frac{1}{2}} \\
&\cdot
\left(
    \left\|\partial a^\varepsilon\right\|_{0}
    +
    \left\|\partial a^\varepsilon\right\|_{0}^{\frac{3}{4}}
    \left\|Z_3^4 \partial a^\varepsilon\right\|_{0}^{\frac{1}{4}}
\right)^{\frac{5}{6}}
\cdot
\left\|\partial_2^3 Z_3 \partial a^\varepsilon\right\|_{0}^{\frac{1}{6}}\\
&+
\left\| \mathbf{v}^{\varepsilon}\right\|_{0}^{\frac{1}{4}}\left\|\partial \mathbf{v}^{\varepsilon}\right\|_{0}^{\frac{1}{2}}
\left(\left\|\partial_3 \mathbf{v}^{\varepsilon}\right\|_{0}^{\frac{3}{4}}
    \left\|\partial_1^4 \partial_3 \mathbf{v}^{\varepsilon}\right\|_{0}^{\frac{1}{4}}
\right)^{\frac{1}{4}} \\
&\cdot
\left(
    \left\|\partial a^\varepsilon\right\|_{0}
    +
    \left\|\partial a^\varepsilon\right\|_{0}^{\frac{3}{4}}
    \left\|Z_3^4 \partial a^\varepsilon\right\|_{0}^{\frac{1}{4}}
\right)^{\frac{5}{6}}
\cdot
\left\|\partial_2^3 Z_3 \partial a^\varepsilon\right\|_{0}^{\frac{1}{6}}\\
&+\left\| a^\varepsilon \right\|_{m}\left\|\mathbf{v}^{\varepsilon}\right\|_{0}^{\frac{1}{2}}
			\left\| \partial_3 \mathbf{v}^{\varepsilon} \right\|_{0}^{\frac{1}{2}}\\
\lesssim{}&\left(\left\| \left(a^\varepsilon,\mathbf{v}^{\varepsilon}\right)\right\|_{m}+\left\| \partial_3\left(a^\varepsilon,\mathbf{v}^{\varepsilon}\right)\right\|_{m-1}+\left\| \partial_3^2\mathbf{v}^{\varepsilon}\right\|_{m-2}\right)
\left(\left\| \mathbf{u}^{\varepsilon}\right\|_{0}+\left\| \partial\mathbf{u}^{\varepsilon}\right\|_{0}\right),
\end{align*}
in the fourth inequality, we apply the interpolation inequality for conormal derivatives, namely \eqref{a8} in Lemma \ref{Le1}.
Combining all the above estimates, we obtain
\begin{align*}
\left\| \partial F^{\varepsilon}_1\right\|_{L^2} \lesssim{}&\left(\left\| \left(a^\varepsilon,\mathbf{v}^{\varepsilon}\right)\right\|_{m}+\left\| \partial_3\left(a^\varepsilon,\mathbf{v}^{\varepsilon}\right)\right\|_{m-1}+\left\| \partial_3^2\mathbf{v}^{\varepsilon}\right\|_{m-2}\right)
\left(\left\| \mathbf{u}^{\varepsilon}\right\|_{0}+\left\| \partial\mathbf{u}^{\varepsilon}\right\|_{0}\right)\\
\lesssim{}& (1+t)^{-\frac{3}{4}}\left( E_m^{\varepsilon}(t)^2+M(t)^2\right).
\end{align*}
Now, we consider the estimates in $(v)$ and $(vii)$. By  Hölder's inequality applied to \eqref{ubw18} and \eqref{ubw19}, we have
\[
\left\| F^{\varepsilon}_2\right\|_{L^1} \lesssim\left\| \mathbf{B}^{\varepsilon}\right\|_{0}^{2}\lesssim(1+t)^{-1}M(t)^2
\]
and
\[
\left\| \partial F^{\varepsilon}_2\right\|_{L^1} \lesssim \left\| \mathbf{B}^{\varepsilon}\right\|_{0}\left\| \partial \mathbf{B}^{\varepsilon}\right\|_{0}\lesssim (1+t)^{-1+\delta_2}M(t)^2.
\]
As for the estimate in $(vi)$, applying Lemma \ref{Le1} to \eqref{ubw18}, we obtain
\[
\left\| F^{\varepsilon}_2\right\|_{L^2} \lesssim\left\| \mathbf{B}^{\varepsilon}\right\|_{0}^{\frac{1}{4}}\left\| \partial_1 \mathbf{B}^{\varepsilon}\right\|_{0}^{\frac{1}{4}}\left\| \partial_3 \mathbf{B}^{\varepsilon}\right\|_{0}^{\frac{1}{4}}\left\| \partial_{13} \mathbf{B}^{\varepsilon}\right\|_{0}^{\frac{1}{4}}\left\| \mathbf{B}^{\varepsilon}\right\|_{0}^{\frac{1}{2}}\left\| \partial_2 \mathbf{B}^{\varepsilon}\right\|_{0}^{\frac{1}{2}}\lesssim(1+t)^{-\frac{5}{4}+\frac{1}{4}\delta_2}\left( E_m^{\varepsilon}(t)^2+M(t)^2\right).
\]
Lastly, we prove the inequality in $(viii)$. Using Lemma \ref{Le1} together with the interpolation inequality to \eqref{ubw19}, we can derive that
\begin{align*}
\left\| \partial F^{\varepsilon}_2\right\|_{L^2} \lesssim {} & \left\| \mathbf{B}^{\varepsilon}\right\|_{0}^{\frac{1}{4}}\left\| \partial_1 \mathbf{B}^{\varepsilon}\right\|_{0}^{\frac{1}{4}}\left\| \partial_3 \mathbf{B}^{\varepsilon}\right\|_{0}^{\frac{1}{4}}\left\| \partial_{13} \mathbf{B}^{\varepsilon}\right\|_{0}^{\frac{1}{4}}
\left\| \partial\mathbf{B}^{\varepsilon}\right\|_{0}^{\frac{1}{2}}\left\| \partial_2 \partial \mathbf{B}^{\varepsilon}\right\|_{0}^{\frac{1}{2}}\\
\lesssim {} & \left\| \mathbf{B}^{\varepsilon}\right\|_{0}^{\frac{1}{4}}\left\| \partial_h \mathbf{B}^{\varepsilon}\right\|_{0}^{\frac{1}{4}}\left\| \partial \mathbf{B}^{\varepsilon}\right\|_{0}^{\frac{3}{4}}\left\| \partial_{h}\partial \mathbf{B}^{\varepsilon}\right\|_{0}^{\frac{3}{4}}\\
\lesssim {} & \left\| \mathbf{B}^{\varepsilon}\right\|_{0}^{\frac{1}{4}}\left\| \partial_h \mathbf{B}^{\varepsilon}\right\|_{0}^{\frac{1}{4}}\left\| \partial \mathbf{B}^{\varepsilon}\right\|_{0}^{\frac{3}{4}}\left(\left\|\partial \mathbf{B}^{\varepsilon}\right\|^{\frac{3}{4}}_{0}\left\| \partial_{h}^4\partial \mathbf{B}^{\varepsilon}\right\|^{\frac{1}{4}}_{0}\right)^{\frac{3}{4}}\\
\lesssim {} & (1+t)^{-\frac{33}{32}+\frac{21}{16}\delta_2}\left( E_m^{\varepsilon}(t)^2+M(t)^2\right).
\end{align*}
\end{proof}
We now deal with $I_1(t)$. By Lemmas \ref{L4-1} $(ii)$ and \ref{L4-7} $(ii)$, $(vi)$, we obtain
\begin{align}\label{ubw20}
\left\| I_1(t)\right\|_{{L^2}}  \lesssim{}& \int_{t-1}^{t}(t-\tau)^{-\frac{1}{2}}\left(\left\| F^{\varepsilon}_1\left(\tau\right)\right\|_{L^2}+\left\| F^{\varepsilon}_2\left(\tau\right)\right\|_{L^2}\right) \mathrm{d} \tau \nonumber\\
 \lesssim {} &  \left( E_m^{\varepsilon}(t)^2+M(t)^2\right)\int_{t-1}^{t}(t-\tau)^{-\frac{1}{2}}(1+\tau)^{-\frac{9}{8}} \mathrm{d} \tau \nonumber\\
  &+  \left( E_m^{\varepsilon}(t)^2+M(t)^2\right)\int_{t-1}^{t}(t-\tau)^{-\frac{1}{2}}(1+\tau)^{-\frac{5}{4}+\frac{1}{4}\delta_2} \mathrm{d} \tau \nonumber\\
\lesssim {} & (1+t)^{-\frac{9}{8}} \left( E_m^{\varepsilon}(t)^2+M(t)^2\right).
\end{align}
By Lemmas \ref{L4-1} $(i)$ and \ref{L4-7} $(iv)$, $(viii)$, we see that
\begin{equation}\label{ubw21}
\begin{aligned}
\left\|\partial I_1(t)\right\|_{{L^2}}  \lesssim{}& \int_{t-1}^{t}(t-\tau)^{-\frac{1}{2}}\left(\left\| \operatorname{div}F^{\varepsilon}_1\left(\tau\right)\right\|_{L^2}+\left\|\operatorname{div} F^{\varepsilon}_2\left(\tau\right)\right\|_{L^2}\right) \mathrm{d} \tau \\
 \lesssim {} &  \left( E_m^{\varepsilon}(t)^2+M(t)^2\right)\int_{t-1}^{t}(t-\tau)^{-\frac{1}{2}}(1+\tau)^{-\frac{3}{4}} \mathrm{d} \tau \\
&+\left( E_m^{\varepsilon}(t)^2+M(t)^2\right)\int_{t-1}^{t}(t-\tau)^{-\frac{1}{2}}(1+\tau)^{-\frac{33}{32}+\frac{21}{16}\delta_2} \mathrm{d} \tau\\
\lesssim {} & (1+t)^{-\frac{3}{4}} \left( E_m^{\varepsilon}(t)^2+M(t)^2\right).
\end{aligned}
\end{equation}
As for $I_2(t)$, we apply Lemmas \ref{L4-3}, \ref{L4-7} $(i)$, $(ii)$, $(v)$, $(vi)$ and \ref{Le3} to obtain,
\begin{align}\label{ubw22}
\left\| I_2(t)\right\|_{{L^2}} \lesssim {} &  \int_{0}^{t-1}(t-\tau)^{-1}\left(\left\| F^{\varepsilon}_1\left(\tau\right)\right\|_{L^1}+\left\| F^{\varepsilon}_2\left(\tau\right)\right\|_{L^1}\right) \mathrm{d} \tau \nonumber\\
&+\int_{0}^{t-1}e^{-c_0(t-\tau)}\left(\left\| F^{\varepsilon}_1\left(\tau\right)\right\|_{L^2}+\left\| F^{\varepsilon}_2\left(\tau\right)\right\|_{L^2}\right) \mathrm{d} \tau \nonumber\\
 \lesssim {} &  M(t)^2\int_{0}^{t-1}(t-\tau)^{-1}(1+\tau)^{-\frac{3}{2}} \mathrm{d} \tau \nonumber\\
&+M(t)^2\int_{0}^{t-1}(t-\tau)^{-1}(1+\tau)^{-1} \mathrm{d} \tau\nonumber\\
&+ \left( E_m^{\varepsilon}(t)^2+M(t)^2\right)\int_{0}^{t-1}e^{-c_0(t-\tau)}(1+\tau)^{-\frac{9}{8}} \mathrm{d} \tau \nonumber\\
&+ \left( E_m^{\varepsilon}(t)^2+M(t)^2\right)\int_{0}^{t-1}e^{-c_0(t-\tau)}(1+\tau)^{-\frac{5}{4}+\frac{1}{4}\delta_2} \mathrm{d} \tau \nonumber\\
\lesssim {} & (1+t)^{-1}\ln (1+t) \left( E_m^{\varepsilon}(t)^2+M(t)^2\right).
\end{align}
By Lemmas \ref{L4-2}, \ref{L4-7} $(iii)$, $(iv)$, $(vii)$, $(viii)$ and \ref{Le3}, we can arrive at
\begin{align}\label{ubw23}
\left\|\partial I_2(t)\right\|_{{L^2}} \lesssim {} &  \int_{0}^{t-1}(t-\tau)^{-\frac{5}{4}}\left(\left\|\operatorname{div} F^{\varepsilon}_1\left(\tau\right)\right\|_{L^1}+\left\|\operatorname{div} F^{\varepsilon}_2\left(\tau\right)\right\|_{L^1}\right) \mathrm{d} \tau \nonumber\\
&+\int_{0}^{t-1}e^{-c_0(t-\tau)}\left(\left\|\operatorname{div} F^{\varepsilon}_1\left(\tau\right)\right\|_{L^2}
+\left\|\operatorname{div} F^{\varepsilon}_2\left(\tau\right)\right\|_{L^2}\right) \mathrm{d} \tau \nonumber\\
 \lesssim {} &  \left( E_m^{\varepsilon}(t)^2+M(t)^2\right)\int_{0}^{t-1}(t-\tau)^{-\frac{5}{4}}(1+\tau)^{-\frac{3}{4}} \mathrm{d} \tau \nonumber\\
&+M(t)^2\int_{0}^{t-1}(t-\tau)^{-\frac{5}{4}}(1+\tau)^{-1+\delta_2} \mathrm{d} \tau\nonumber\\
&+\left( E_m^{\varepsilon}(t)^2+M(t)^2\right)\int_{0}^{t-1}e^{-c_0(t-\tau)}(1+\tau)^{-\frac{3}{4}} \mathrm{d} \tau\nonumber\\
&+\left( E_m^{\varepsilon}(t)^2+M(t)^2\right)\int_{0}^{t-1}e^{-c_0(t-\tau)}(1+\tau)^{-\frac{33}{32}+\frac{21}{16}\delta_2} \mathrm{d} \tau\nonumber\\
\lesssim {} & (1+t)^{-\frac{3}{4}} \left( E_m^{\varepsilon}(t)^2+M(t)^2\right).
\end{align}
To deal with $II_1(t)$ and $II_2(t)$, we use the following estimates for the nonlinearity $F^{\varepsilon}_3$.
\begin{lemm}\label{L4-10}
Let $m\geq6$. Assume that $ E_{m}^{\varepsilon}(t)\leq 1$ and $a^\varepsilon\left(t\right)\geq-\frac{1}{2}$ for all $t \geq 0$. Then the
following inequalities hold for all $t \geq 0$:

(i)
\[
\left\| F^{\varepsilon}_3\left(t\right)\right\|_{L_{x_1x_2}^1L_{x_3}^2} \lesssim(1+t)^{-\frac{5}{4}+\delta_2}M(t)^2,
\]

(ii)
\[
\left\| F^{\varepsilon}_3\left(t\right)\right\|_{L_{x_1}^1L_{x_2x_3}^2} \lesssim(1+t)^{-\frac{19}{16}+\frac{7}{8}\delta_2}\left( E_m^{\varepsilon}(t)^2+M(t)^2\right),
\]

(iii)
\begin{align*}
\left\| \partial_3 F^{\varepsilon}_3\left(t\right)\right\|_{L_{x_1x_2}^1L_{x_3}^2} \lesssim {} &  (1+t)^{-\frac{31}{40}+\frac{4}{5}\delta_2}\left( E_m^{\varepsilon}(t)^{\frac{3}{2}}+M(t)^{\frac{3}{2}}\right)\left(\left\|\partial\mathbf{v}^{\varepsilon}\right\|_0+\left\|\partial^2\mathbf{v}^{\varepsilon}\right\|_0\right)^{\frac{1}{2}}\\
&+(1+t)^{-\frac{1}{2}+\frac{1}{2}\delta_2} M(t)\left\|\partial^2\mathbf{v}^{\varepsilon}\right\|_0.
\end{align*}
\end{lemm}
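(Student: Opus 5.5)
The plan is to reduce everything to the two pointwise bounds that follow from \eqref{ubw14}--\eqref{ubw15},
\[
\left|F^{\varepsilon}_3\right|\lesssim \left|\mathbf{u}^{\varepsilon}\right|\left|\partial\mathbf{B}^{\varepsilon}\right|+\left|\mathbf{B}^{\varepsilon}\right|\left|\partial\mathbf{u}^{\varepsilon}\right|,
\]
and, for $\partial_3F^{\varepsilon}_3$, the expansion
\[
\partial_3\mathbf{v}^{\varepsilon}\cdot\nabla\mathbf{B}^{\varepsilon}+\mathbf{v}^{\varepsilon}_h\cdot\partial_h\partial_3\mathbf{B}^{\varepsilon}+v^{\varepsilon}_3\partial_3^2\mathbf{B}^{\varepsilon}+\partial_3\mathbf{B}^{\varepsilon}\,\partial\mathbf{v}^{\varepsilon}+\mathbf{B}^{\varepsilon}\,\partial\partial_3\mathbf{v}^{\varepsilon}.
\]
The key tool is anisotropic H\"older: put the product in the mixed norm, take $L^2$ in the variables carrying the $L^1$ index on both factors, and place one factor in $L^\infty$ of the remaining variables, controlled by one-dimensional Agmon-type bounds as in Lemma \ref{Le1}. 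The decay then comes from the definition of $M(t)$, and any leftover higher-order factors are bounded by $E_m^{\varepsilon}(t)$.

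For (i), I use
\[
\|fg\|_{L^1_{x_1x_2}L^2_{x_3}}\le \|f\|_{L^\infty_{x_3}L^2_{x_1x_2}}\|g\|_{L^2}
\quad\text{and}\quad
\|f\|_{L^\infty_{x_3}L^2_{x_1x_2}}\lesssim\|f\|_0^{1/2}\|\partial_3 f\|_0^{1/2}.
\]
For $\mathbf{u}^{\varepsilon}\partial\mathbf{B}^{\varepsilon}$ this gives $(1+t)^{-\frac34}\cdot(1+t)^{-\frac12+\delta_2}$. For $\mathbf{B}^{\varepsilon}\partial\mathbf{u}^{\varepsilon}$ it gives
\[
(1+t)^{-\frac14}(1+t)^{-\frac14+\frac{\delta_2}{2}}(1+t)^{-\frac34},
\]
which is better. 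Only $M(t)$ enters, as claimed.

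For (ii), I take $L^1_{x_1}$ by H\"older and use
\[
\|f\|_{L^\infty_{x_2x_3}L^2_{x_1}}\lesssim \|f\|_0^{1/4}\|\partial_2f\|_0^{1/4}\|\partial_3f\|_0^{1/4}\|\partial_{23}f\|_0^{1/4}.
\]
This factor goes on $\mathbf{B}^{\varepsilon}$ in the term $\mathbf{B}^{\varepsilon}\partial\mathbf{u}^{\varepsilon}$, and on $\mathbf{u}^{\varepsilon}$ in the other term. The mixed derivative $\partial_{23}\mathbf{B}^{\varepsilon}$ is not in $M(t)$. I plan to interpolate it between $\partial_3\mathbf{B}^{\varepsilon}$ and $\partial_2^k\partial_3\mathbf{B}^{\varepsilon}$, using the conormal interpolation inequality \eqref{a8} exactly as in Lemma \ref{L4-7}(viii). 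The high-order factor is absorbed into $E_m^{\varepsilon}(t)$ and the rest decays, producing an exponent of the form $-\frac{19}{16}+\frac78\delta_2$; the exact bookkeeping follows the pattern of the earlier lemma.

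For (iii), most terms are handled as in (i): $\partial_3\mathbf{v}^{\varepsilon}\cdot\nabla\mathbf{B}^{\varepsilon}$, $\mathbf{v}_h^{\varepsilon}\partial_h\partial_3\mathbf{B}^{\varepsilon}$ and $\partial_3\mathbf{B}^{\varepsilon}\partial\mathbf{v}^{\varepsilon}$ each keep one factor of $\|\partial\mathbf{v}^{\varepsilon}\|_0$ or $\|\partial^2\mathbf{v}^{\varepsilon}\|_0$ to the power $\frac12$. The other half-power comes from the $L^\infty_{x_3}$ interpolation, and the remaining factors are split between $M(t)^{3/2}$ and $E_m^{\varepsilon}(t)^{3/2}$. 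The term $\mathbf{B}^{\varepsilon}\partial\partial_3\mathbf{v}^{\varepsilon}$ is estimated by
\[
\|\mathbf{B}^{\varepsilon}\|_0^{1/2}\|\partial_3\mathbf{B}^{\varepsilon}\|_0^{1/2}\|\partial^2\mathbf{v}^{\varepsilon}\|_0\lesssim(1+t)^{-\frac12+\frac{\delta_2}{2}}M(t)\|\partial^2\mathbf{v}^{\varepsilon}\|_0,
\]
which is the second term on the right of (iii).

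The main obstacle is $v^{\varepsilon}_3\partial_3^2\mathbf{B}^{\varepsilon}$, since $\partial_3^2\mathbf{B}^{\varepsilon}$ is only controlled with a factor $\varepsilon$. I would split with a cutoff $\chi(x_3)$ near the boundary, as in the proof of Lemma \ref{L4-7}(iii). Near the boundary I write
\[
v^{\varepsilon}_3\partial_3^2\mathbf{B}^{\varepsilon}=\frac{v^{\varepsilon}_3}{x_3}\cdot\frac{x_3}{\varphi}\,Z_3\partial_3\mathbf{B}^{\varepsilon},
\]
and use $v^{\varepsilon}_3|_{x_3=0}=0$ with Hardy's inequality, so that $\|v^{\varepsilon}_3/x_3\|\lesssim\|\partial_3 v^{\varepsilon}_3\|$. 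Away from the boundary, $\partial_3^2\mathbf{B}^{\varepsilon}\lesssim |Z_3\partial_3\mathbf{B}^{\varepsilon}|$ directly. Then $Z_3\partial_3\mathbf{B}^{\varepsilon}$ is bounded by $E_m^{\varepsilon}(t)$ interpolated against $\partial_3\mathbf{B}^{\varepsilon}$ to retain decay. This is the step where the nonintegral exponent $-\frac{31}{40}+\frac45\delta_2$ arises, and where I expect to have to tune the interpolation weights.
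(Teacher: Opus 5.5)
Your architecture matches the paper's: the same pointwise bounds, the mixed-norm inequalities of Lemma~\ref{Le1}, cutoff plus Hardy at the boundary, and the conormal interpolation \eqref{a8} to trade an $E_m^{\varepsilon}$-bounded factor for decay. Part (i) is exactly right. The gaps are in the decay bookkeeping of (ii) and (iii): in each, the factor you leave un-interpolated is one that $M(t)$ does not control.

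\textbf{Part (ii).} You put \eqref{a7} on $\mathbf{u}^{\varepsilon}$ in the term $\mathbf{u}^{\varepsilon}\partial\mathbf{B}^{\varepsilon}$. That leaves $\|\partial_{23}\mathbf{u}^{\varepsilon}\|_0^{1/4}$. Bounding it by $E_m^{\varepsilon}$ gives only $(1+t)^{-\frac{9}{16}}(1+t)^{-\frac12+\delta_2}=(1+t)^{-\frac{17}{16}+\delta_2}$, which is weaker than the claim.

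Your proposed interpolation of $\partial_{23}\mathbf{B}^{\varepsilon}$ lands on the wrong term. In your split that factor occurs only in $\mathbf{B}^{\varepsilon}\partial\mathbf{u}^{\varepsilon}$, which already decays like $(1+t)^{-\frac54+\frac14\delta_2}$ with $\|\partial_{23}\mathbf{B}^{\varepsilon}\|_0\le E_m^{\varepsilon}$. You also cannot move the $L^\infty_{x_2x_3}$ onto $\partial\mathbf{B}^{\varepsilon}$, since that brings in $\partial_3^2\mathbf{B}^{\varepsilon}$.

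The paper uses \eqref{a7.5} instead: Agmon in $x_3$ on $\mathbf{u}^{\varepsilon}$ and in $x_2$ on $\partial\mathbf{B}^{\varepsilon}$. It then interpolates $\|\partial_2\partial\mathbf{B}^{\varepsilon}\|_0\lesssim\|\partial\mathbf{B}^{\varepsilon}\|_0^{3/4}\|\partial_2^4\partial\mathbf{B}^{\varepsilon}\|_0^{1/4}$, which produces exactly $-\frac34-\frac{7}{16}+\frac78\delta_2$. Alternatively, keep your split and interpolate $\|\partial_{23}\mathbf{u}^{\varepsilon}\|_0\lesssim\|\partial_3\mathbf{u}^{\varepsilon}\|_0^{3/4}\|\partial_2^4\partial_3\mathbf{u}^{\varepsilon}\|_0^{1/4}$. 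This yields $(1+t)^{-\frac{77}{64}+\delta_2}$, which suffices because $\delta_2<\frac18$.

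\textbf{Part (iii).} The term $\mathbf{v}^{\varepsilon}_h\cdot\partial_h\partial_3\mathbf{B}^{\varepsilon}$ cannot be treated ``as in (i)''. The $L^\infty_{x_3}$ must go on $\mathbf{v}^{\varepsilon}_h$, and you must retain $\|\partial_3\mathbf{v}^{\varepsilon}_h\|_0^{1/2}$ as the required $\|\partial\mathbf{v}^{\varepsilon}\|_0^{1/2}$ factor. What remains, $\|\mathbf{v}^{\varepsilon}_h\|_0^{1/2}\|\partial_h\partial_3\mathbf{B}^{\varepsilon}\|_0$, then decays only like $(1+t)^{-\frac38}$, because $\partial_h\partial_3\mathbf{B}^{\varepsilon}$ carries no decay in $M(t)$.

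This term needs the same interpolation you reserve for $v^{\varepsilon}_3\partial_3^2\mathbf{B}^{\varepsilon}$, for example $\|\partial_h\partial_3\mathbf{B}^{\varepsilon}\|_0\lesssim\|\partial_3\mathbf{B}^{\varepsilon}\|_0^{4/5}\|\partial_h^5\partial_3\mathbf{B}^{\varepsilon}\|_0^{1/5}$. The paper avoids splitting altogether. It treats all of $|\mathbf{v}^{\varepsilon}||\partial_3\partial\mathbf{B}^{\varepsilon}|$ with the cutoff and Hardy (every component of $\mathbf{v}^{\varepsilon}$ vanishes at $x_3=0$), together with $\|Z_3\partial\mathbf{B}^{\varepsilon}\|_0\lesssim\|\partial\mathbf{B}^{\varepsilon}\|_0+\|\partial\mathbf{B}^{\varepsilon}\|_0^{4/5}\|Z_3^5\partial\mathbf{B}^{\varepsilon}\|_0^{1/5}$.

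The exponent $n=5$ is forced rather than tuned. The product $(1+t)^{-\frac38}(1+t)^{-\frac45(\frac12-\delta_2)}$ is exactly $(1+t)^{-\frac{31}{40}+\frac45\delta_2}$, and the constraint $n\le m-1$ is where $m\ge 6$ enters.

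Finally, to place $v^{\varepsilon}_3/x_3$ in $L^\infty_{x_3}$ you need $\|\partial_3(v^{\varepsilon}_3/x_3)\|_0\lesssim\|\partial_3^2 v^{\varepsilon}_3\|_0$, in addition to the Hardy bound you quote. That is where the retained factor $\|\partial^2\mathbf{v}^{\varepsilon}\|_0^{1/2}$ comes from.
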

\begin{proof}
Under the assumptions $E_{m}^{\varepsilon}(t)\leq 1$ and $a^\varepsilon\left(t\right)\geq-\frac{1}{2}$, we first derive the pointwise bounds for $F^{\varepsilon}_3$ and $\partial_3 F^{\varepsilon}_3$ as follows:
\begin{equation}\label{ubw24}
  \left|F^{\varepsilon}_3\right| \lesssim\left|\mathbf{u}^{\varepsilon}\right|\left|\partial\mathbf{B}^{\varepsilon}\right|+\left|\partial \mathbf{u}^{\varepsilon}\right|\left|\mathbf{B}^{\varepsilon}\right|
\end{equation}
and
\begin{equation}\label{ubw25}
  \left|\partial_3 F^{\varepsilon}_3\right| \lesssim \left| \partial\mathbf{v}^{\varepsilon} \right|\left|\partial\mathbf{B}^{\varepsilon} \right|+\left|\mathbf{v}^{\varepsilon} \right|\left|\partial_3\partial  \mathbf{B}^{\varepsilon} \right|
+\left|\mathbf{B}^{\varepsilon} \right|\left|\partial^2\mathbf{v}^{\varepsilon} \right|.
\end{equation}
We start with the proof of statement $(i)$.
Applying Lemma \ref{Le1} to \eqref{ubw24}, we obtain
\begin{align*}
\left\| F^{\varepsilon}_3\right\|_{L_{x_1x_2}^1L_{x_3}^2}
\lesssim{} & \left\| \mathbf{u}^{\varepsilon}\right\|_{0}^{\frac{1}{2}}\left\| \partial_3 \mathbf{u}^{\varepsilon}\right\|_{0}^{\frac{1}{2}}\left\| \partial \mathbf{B}^{\varepsilon}\right\|_{0}+\left\| \mathbf{B}^{\varepsilon}\right\|_{0}^{\frac{1}{2}}\left\| \partial_3 \mathbf{B}^{\varepsilon}\right\|_{0}^{\frac{1}{2}}\left\| \partial \mathbf{u}^{\varepsilon}\right\|_{0} \\
\lesssim{}& (1+t)^{-\frac{5}{4}+\delta_2}M(t)^2.
\end{align*}
We next turn to the estimate in $(ii)$.
Using Lemma \ref{Le1} and the interpolation inequality to \eqref{ubw24}, we have
\begin{align*}
\left\| F^{\varepsilon}_3\right\|_{L_{x_1}^1L_{x_2x_3}^2}
\lesssim{} &\left\| \mathbf{u}^{\varepsilon}\right\|_{0}^{\frac{1}{2}}\left\| \partial_3 \mathbf{u}^{\varepsilon}\right\|_{0}^{\frac{1}{2}}\left\| \partial \mathbf{B}^{\varepsilon}\right\|_{0}^{\frac{1}{2}}
\left(\left\|\partial \mathbf{B}^{\varepsilon}\right\|_{0}^{\frac{3}{4}}\left\| \partial_2^4\partial \mathbf{B}^{\varepsilon}\right\|_{0}^{\frac{1}{4}}\right)^{\frac{1}{2}} \\
&+ \left\| \mathbf{B}^{\varepsilon}\right\|_{0}^{\frac{1}{4}}\left\| \partial_2 \mathbf{B}^{\varepsilon}\right\|_{0}^{\frac{1}{4}}\left\| \partial_3 \mathbf{B}^{\varepsilon}\right\|_{0}^{\frac{1}{4}}\left\| \partial_{23} \mathbf{B}^{\varepsilon}\right\|_{0}^{\frac{1}{4}}\left\| \partial \mathbf{u}^{\varepsilon}\right\|_{0} \\
\lesssim{}& (1+t)^{-\frac{19}{16}+\frac{7}{8}\delta_2}\left( E_m^{\varepsilon}(t)^2+M(t)^2\right).
\end{align*}
Finally, we establish the inequality in $(iii)$.
Using Lemma \ref{Le1}, \eqref{ubw14} and \eqref{ubw15} to \eqref{ubw25}, there holds
\begin{align*}
\left\| \partial_3 F^{\varepsilon}_3\right\|_{L_{x_1x_2}^1L_{x_3}^2}
\lesssim{}&\left\|\partial\mathbf{v}^{\varepsilon} \cdot\partial\mathbf{B}^{\varepsilon}\right\|_{L_{x_1x_2}^1L_{x_3}^2}+\left\|\mathbf{v}^{\varepsilon} \cdot\partial_3\partial  \mathbf{B}^{\varepsilon}\right\|_{L_{x_1x_2}^1L_{x_3}^2}+\left\| \mathbf{B}^{\varepsilon} \cdot\partial^2\mathbf{v}^{\varepsilon}\right\|_{L_{x_1x_2}^1L_{x_3}^2}\\
\lesssim{}&\left\|\partial\mathbf{v}^{\varepsilon} \cdot\partial\mathbf{B}^{\varepsilon}\right\|_{L_{x_1x_2}^1L_{x_3}^2}+\left\|\chi(x_3)\left(\frac{\mathbf{v}^{\varepsilon}}{x_3}\right) \cdot\varphi\partial_3\partial  \mathbf{B}^{\varepsilon}\right\|_{L_{x_1x_2}^1L_{x_3}^2}\\
&+\left\|\left(1-\chi(x_3)\right)\mathbf{v}^{\varepsilon}\cdot\varphi\partial_3\partial  \mathbf{B}^{\varepsilon}\right\|_{L_{x_1x_2}^1L_{x_3}^2}+\left\| \mathbf{B}^{\varepsilon} \cdot\partial^2\mathbf{v}^{\varepsilon}\right\|_{L_{x_1x_2}^1L_{x_3}^2}\\
\lesssim{}&\left\| \partial\mathbf{v}^{\varepsilon}\right\|_{0}^{\frac{1}{2}}\left\| \partial_3 \partial\mathbf{v}^{\varepsilon}\right\|_{0}^{\frac{1}{2}}\left\| \partial \mathbf{B}^{\varepsilon}\right\|_{0}+\left\| \partial_3\mathbf{v}^{\varepsilon}\right\|_{0}^{\frac{1}{2}}\left\| \partial_3^2 \mathbf{v}^{\varepsilon}\right\|_{0}^{\frac{1}{2}}\left\| Z_3 \partial \mathbf{B}^{\varepsilon}\right\|_{0}\\
&+\left\|\mathbf{v}^{\varepsilon}\right\|_{0}^{\frac{1}{2}}\left\| \partial_3 \mathbf{v}^{\varepsilon}\right\|_{0}^{\frac{1}{2}}\left\| Z_3 \partial \mathbf{B}^{\varepsilon}\right\|_{0}+\left\|\mathbf{B}^{\varepsilon}\right\|_{0}^{\frac{1}{2}}\left\|\partial_3\mathbf{B}^{\varepsilon}\right\|_{0}^{\frac{1}{2}}\left\| \partial^2 \mathbf{v}^{\varepsilon}\right\|_{0}\\
\lesssim{}&\left\| \partial\mathbf{v}^{\varepsilon}\right\|_{0}^{\frac{1}{2}}\left\| \partial_3 \partial\mathbf{v}^{\varepsilon}\right\|_{0}^{\frac{1}{2}}\left\| \partial \mathbf{B}^{\varepsilon}\right\|_{0}+\left\|\mathbf{B}^{\varepsilon}\right\|_{0}^{\frac{1}{2}}\left\|\partial_3\mathbf{B}^{\varepsilon}\right\|_{0}^{\frac{1}{2}}\left\| \partial^2 \mathbf{v}^{\varepsilon}\right\|_{0}\\
&+\left\|\left(\mathbf{v}^{\varepsilon},\partial_3 \mathbf{v}^{\varepsilon}\right)\right\|_{0}^{\frac{1}{2}}\left\| \partial_3 \left(\mathbf{v}^{\varepsilon},\partial_3 \mathbf{v}^{\varepsilon}\right)\right\|_{0}^{\frac{1}{2}}\left(\left\| \partial \mathbf{B}^{\varepsilon}\right\|_{0}+\left\| \partial \mathbf{B}^{\varepsilon}\right\|_{0}^{\frac{4}{5}}\left\| Z_3^5 \partial \mathbf{B}^{\varepsilon}\right\|_{0}^{\frac{1}{5}}\right)\\
\lesssim {} &  (1+t)^{-\frac{31}{40}+\frac{4}{5}\delta_2}\left( E_m^{\varepsilon}(t)^{\frac{3}{2}}+M(t)^{\frac{3}{2}}\right)\left(\left\|\partial\mathbf{v}^{\varepsilon}\right\|_0+\left\|\partial^2\mathbf{v}^{\varepsilon}\right\|_0\right)^{\frac{1}{2}}\\
&+(1+t)^{-\frac{1}{2}+\frac{1}{2}\delta_2} M(t)\left\|\partial^2\mathbf{v}^{\varepsilon}\right\|_0.
\end{align*}
\end{proof}
We now deal with $II_1(t)$ and $II_2(t)$. By Lemmas \ref{L4-5},  \ref{L4-10} $(i)$ and \ref{Le4}, we obtain
\begin{equation}\label{ubw26}
\begin{aligned}
\left\| \left(II_{1}+II_{2}\right)\left(t\right)\right\|_{L^{2}\left(\mathbb{R}_+^3\right)}
\lesssim{}& \int_{0}^{t}(t-\tau)^{-\frac{1}{2}} \left\| F^{\varepsilon}_3\left(\tau\right)\right\|_{L_{x_1 x_2}^1L_{x_3}^2} \mathrm{d} \tau\\
\lesssim{}& M(t)^2\int_{0}^{t}(t-\tau)^{-\frac{1}{2}} (1+\tau)^{-\frac{5}{4}+\delta_2} \mathrm{d} \tau\\
\lesssim{}&(1+t)^{-\frac{1}{2}} M(t)^2.\\
\end{aligned}
\end{equation}
Then, we turn to $\partial_h II_1(t)$ and $\partial_h II_2(t)$. By Lemmas \ref{L4-5} and \ref{L4-10} $(ii)$, we have
\begin{equation}\label{ubw27}
\begin{aligned}
\left\|\partial_h II_{1}(t)\right\|_{L^{2}\left(\mathbb{R}_+^3\right)}
\lesssim{}& \int_{t-1}^{t}(t-\tau)^{-\frac{3}{4}} \left\|F^{\varepsilon}_{3}\left(\tau\right)\right\|_{L_{x_1}^1L_{x_2x_3}^2}  \mathrm{d} \tau\\
\lesssim{}& \left( E_m^{\varepsilon}(t)^2+M(t)^2\right)\int_{t-1}^{t}(t-\tau)^{-\frac{3}{4}} (1+\tau)^{-\frac{19}{16}+\frac{7}{8}\delta_2} \mathrm{d} \tau\\
\lesssim{}&(1+t)^{-\frac{19}{16}+\frac{7}{8}\delta_2} \left( E_m^{\varepsilon}(t)^2+M(t)^2\right).\\
\end{aligned}
\end{equation}
Similarly, it follows from Lemmas \ref{L4-5}, \ref{L4-10} $(i)$ and \ref{Le3} that
\begin{equation}\label{ubw28}
\begin{aligned}
\left\| \partial_{h}II_{2}(t)\right\|_{L^{2}\left(\mathbb{R}_+^3\right)}
\lesssim{}& \int_{0}^{t-1}(t-\tau)^{-1} \left\|F^{\varepsilon}_{3}(\tau)\right\|_{L_{x_1x_2}^1L_{x_3}^2}  \mathrm{d} \tau\\
\lesssim{}& M(t)^2\int_{0}^{t-1}(t-\tau)^{-1} (1+\tau)^{-\frac{5}{4}+\delta_2} \mathrm{d} \tau\\
\lesssim{}&(1+t)^{-1} M(t)^2.
\end{aligned}
\end{equation}
Finally, by Lemmas \ref{L4-5}, \ref{L4-10} $(iii)$, \ref{Le3} and the boundary condition $\left(F^{\varepsilon}_3\right)_3\big|_{x_3=0} = 0$, we deduce
\begin{equation}\label{ubw28.5}
\begin{aligned}
&\left\|\partial_3 II_{1}(t)\right\|_{L^{2}\left(\mathbb{R}_+^3\right)}\\
\lesssim{}& \int_{t-1}^{t}(t-\tau)^{-\frac{1}{2}} \left\| \partial_3F^{\varepsilon}_3(\tau)\right\|_{L_{x_1 x_2}^1L_{x_3}^2} \mathrm{d} \tau\\
\lesssim{}& \left( E_m^{\varepsilon}(t)^{\frac{3}{2}}+M(t)^{\frac{3}{2}}\right)\int_{t-1}^{t}(t-\tau)^{-\frac{1}{2}} (1+\tau)^{-\frac{31}{40}+\frac{4}{5}\delta_2}\left(\left\|\partial\mathbf{v}^{\varepsilon}\right\|_0+\left\|\partial^2\mathbf{v}^{\varepsilon}\right\|_0\right)^{\frac{1}{2}} \mathrm{d} \tau\\
&+M(t)\int_{t-1}^{t}(t-\tau)^{-\frac{1}{2}}(1+\tau)^{-\frac{1}{2}+\frac{1}{2}\delta_2} \left\|\partial^2\mathbf{v}^{\varepsilon}(\tau)\right\|_0 \mathrm{d} \tau\\
\lesssim{}& \left( E_m^{\varepsilon}(t)^{2}+M(t)^{2}\right)\int_{t-1}^{t}(t-\tau)^{-\frac{1}{2}} (1+\tau)^{-\frac{31}{40}+\frac{4}{5}\delta_2} \mathrm{d} \tau\\
&+\left( E_m^{\varepsilon}(t)^{2}+M(t)^{2}\right)\int_{t-1}^{t}(t-\tau)^{-\frac{1}{2}}(1+\tau)^{-\frac{1}{2}+\frac{1}{2}\delta_2}\mathrm{d} \tau\\
\lesssim{}&(1+t)^{-\frac{1}{2}+\frac{1}{2}\delta_2}\left( E_m^{\varepsilon}(t)^2+M(t)^2\right),
\end{aligned}
\end{equation}
and
\begin{equation}\label{ubw29}
  \begin{aligned}
&\left\|\partial_3 II_{2}(t)\right\|_{L^{2}\left(\mathbb{R}_+^3\right)}\\
\lesssim{}& \int_{0}^{t-1}(t-\tau)^{-\frac{1}{2}} \left\| \partial_3F^{\varepsilon}_3(\tau)\right\|_{L_{x_1 x_2}^1L_{x_3}^2} \mathrm{d} \tau\\
\lesssim{}& \left( E_m^{\varepsilon}(t)^{\frac{3}{2}}+M(t)^{\frac{3}{2}}\right)\int_{0}^{t-1}(t-\tau)^{-\frac{1}{2}} (1+\tau)^{-\frac{31}{40}+\frac{4}{5}\delta_2}\left(\left\|\partial\mathbf{v}^{\varepsilon}\right\|_0+\left\|\partial^2\mathbf{v}^{\varepsilon}\right\|_0\right)^{\frac{1}{2}} \mathrm{d} \tau\\
&+M(t)\int_{0}^{t-1}(t-\tau)^{-\frac{1}{2}}(1+\tau)^{-\frac{1}{2}+\frac{1}{2}\delta_2} \left\|\partial^2\mathbf{v}^{\varepsilon}(\tau)\right\|_0 \mathrm{d} \tau\\
\lesssim{}& \left( E_m^{\varepsilon}(t)^{\frac{3}{2}}+M(t)^{\frac{3}{2}}\right)\left(\int_{0}^{t-1}(t-\tau)^{-\frac{2}{3}}(1+\tau)^{-\frac{31}{30}+\frac{16}{15}\delta_2} \mathrm{d} \tau\right)^{\frac{3}{4}}\\
&\cdot\left(\int_{0}^{t-1}\left(\left\|\partial\mathbf{v}^{\varepsilon}(\tau)\right\|_0^{2}+\left\|\partial^2\mathbf{v}^{\varepsilon}(\tau)\right\|_0^{2}\right) \mathrm{d} \tau\right)^{\frac{1}{4}}\\
&+M(t)\left(\int_{0}^{t-1}(t-\tau)^{-1}(1+\tau)^{-1+\delta_2} \mathrm{d} \tau\right)^{\frac{1}{2}}\left(\int_{0}^{t-1}\left\|\partial^2\mathbf{v}^{\varepsilon}(\tau)\right\|_0^{2} \mathrm{d} \tau\right)^{\frac{1}{2}}\\
\lesssim{}&(1+t)^{-\frac{1}{2}+\frac{1}{2}\delta_2}\left(\ln(1+t)\right)^{\frac{1}{2}}\left( E_m^{\varepsilon}(t)^2+D_m^{\varepsilon}(t)^2+M(t)^2\right),
\end{aligned}
\end{equation}
where the last inequality invokes the condition \(\delta_2 \in \left(0, \frac{1}{32}\right)\).

Consequently, combining the estimates \eqref{ubw9}--\eqref{ubw13}, \eqref{ubw20}--\eqref{ubw23} and \eqref{ubw26}--\eqref{ubw29}, we arrive at
\begin{align*}
M(t)
\lesssim{}&\sup_{0\leq \tau \leq t}(1+\tau)^{\frac{3}{4}}\left\|(I_0+I_1+I_2)(\tau)\right\|_{H^1}+\sup_{0\leq \tau \leq t}(1+\tau)^{\frac{1}{2}}\left\|(II_0+II_1+II_2)(\tau)\right\|_{L^2}\\
&+\sup_{0\leq \tau \leq t}(1+\tau)\left\|\partial_h\left(II_0+II_1+II_2\right)(\tau)\right\|_{L^2}+\sup_{0\leq \tau \leq t}(1+\tau)^{\frac{1}{2}-\delta_2}\left\|\partial_3(II_0+II_1+II_2)(\tau)\right\|_{L^2}\\
\lesssim{}&\left\| \mathbf{u}_{0} \right\|_{L^1}+\left\| \mathbf{u}_{0}\right\|_{H^{1} \times L^2}+\left\|(\mathbf{B}_{0},\partial_3 \mathbf{B}_{0})\right\|_{L_{x_1 x_2}^1L_{x_3}^2}+ E_{m}^{\varepsilon}(t)^{2}+ D_{m}^{\varepsilon}(t)^{2}+M(t)^{2},
\end{align*}
which verifies inequality \eqref{L4-6}. As stated before, under the small initial data condition
\begin{align*}
&\| ( a_0 ,\mathbf{v}_{0},\mathbf{B}_{0}) \|_{m}
+\left\| \partial_3 (a_0, \mathbf{v}_{0}, \mathbf{B}_{0}) \right\|_{m-1}
+\left\| \partial_3^2(\mathbf{v}_{0}, \varepsilon\mathbf{B}_{0}) \right\|_{m-2}\\
&+ \left\| (a_0, \mathbf{m}_{0}) \right\|_{L^1}+\left\|(\mathbf{B}_{0},\partial_3 \mathbf{B}_{0})\right\|_{L_{x_1 x_2}^1L_{x_3}^2} \le \delta_1
\end{align*}
for sufficiently small constant $\delta_{1}>0$, the desired $H^1$ decay results in Theorem \ref{Th3} $(i)$ and the uniform bound $M(t) \lesssim \delta_1$ for all $t\geq0$ follow directly from Theorem \ref{Th1} and \eqref{L4-6}.

Furthermore, utilizing \eqref{ubw20} and \eqref{ubw22}, we obtain
\begin{equation}\label{blade1}
\begin{aligned}
\left\|\mathbf{u}^\varepsilon(t)-\mathcal{U}(t) \mathbf{u}_{0}\right\|_{L^2}\lesssim{}&\left\| I_{1}(t)\right\|_{L^2}+\left\| I_{2}(t)\right\|_{L^2}\\
\lesssim{}& (1+t)^{-1}\ln (1+t) \left(E_m^{\varepsilon}(t)^2+M(t)^2\right).
\end{aligned}
\end{equation}
Then Theorem \ref{Th3} $(ii)$ follows immediately from \eqref{blade1} and Lemma \ref{aria4}. The proof of Theorem \ref{Th3} is thus completed.

\section{Convergence rate of global solutions}\label{Convergence rate of global solution}
In this section, we establish the convergence rate of the solutions between systems \eqref{eq1} and \eqref{eq4}.
For notational convenience, we introduce the following notations
\[
\bar{\rho} \overset{def}{=} \rho^\varepsilon - \rho^0, \quad \bar{a} \overset{def}{=} a^\varepsilon - a^0, \quad \bar{\rho} = \bar{a}, \quad \bar{\mathbf{v}} \overset{def}{=} \mathbf{v}^\varepsilon - \mathbf{v}^0, \quad \bar{\mathbf{B}} \overset{def}{=} \mathbf{B}^\varepsilon - \mathbf{B}^0, \quad \bar{P} \overset{def}{=} P\left(\rho^\varepsilon\right) - P\left(\rho^0\right).
\]
From the equations \eqref{eq1} and \eqref{eq4}, we can obtain
\begin{equation}\label{Con1}
	\left\{
	\begin{array}{*{5}{ll}}
		\partial_t \bar{\rho} + \mathbf{v}^{0}\cdot\nabla \bar{\rho} + \bar{\rho}\operatorname{div}\mathbf{v}^{0} = \bar{f}_1 &{\rm in} ~~ \mathbb{R}_+^3,\\[4pt]
		\rho^{0}\partial_t \bar{\mathbf{v}} + \rho^{0} \mathbf{v}^{0}\cdot\nabla \bar{\mathbf{v}} - \mu\Delta \bar{\mathbf{v}} - (\mu+\lambda)\nabla\operatorname{div}\bar{\mathbf{v}} + \nabla\bar{P} = \bar{f}_2 &{\rm in} ~~ \mathbb{R}_+^3,\\[4pt]
		\partial_t \bar{\mathbf{B}} - \Delta_h \bar{\mathbf{B}} = \bar{f}_3+\varepsilon\partial_3^2\mathbf{B}^{\varepsilon} &{\rm in} ~~ \mathbb{R}_+^3,\\[4pt]
		\operatorname{div}\bar{\mathbf{B}} = 0 &{\rm in} ~~ \mathbb{R}_+^3,\\[4pt]
		\left.\left(\bar{\rho},\bar{\mathbf{v}},\bar{\mathbf{B}}\right)\right|_{t=0} = \boldsymbol{0} &{\rm in} ~~ \mathbb{R}_+^3,\\[4pt]
		\left(\bar{\mathbf{v}},\bar{\mathbf{B}}_3\right) = \boldsymbol{0} &{\rm on} ~~ \mathbb{R}^2 \times \left\{ x_3=0 \right\},
	\end{array}
	\right.
\end{equation}
where the source terms $\bar{f}_1,\bar{f}_2,\bar{f}_3$ are given explicitly as follows:
\[
\left\{
\begin{aligned}
&\bar{f}_1 \overset{def}{=} -\bar{\mathbf{v}}\cdot\nabla\rho^\varepsilon - \rho^\varepsilon\operatorname{div}\bar{\mathbf{v}},\\[4pt]
&\bar{f}_2 \overset{def}{=} \operatorname{div}\left(\mathbf{B}^{0}\otimes \bar{\mathbf{B}} + \bar{\mathbf{B}}\otimes \mathbf{B}^\varepsilon\right) - \frac{1}{2}\nabla\left(\mathbf{B}^{0}\cdot\bar{\mathbf{B}} + \bar{\mathbf{B}}\cdot \mathbf{B}^\varepsilon\right)\\[3pt]
&\quad\quad -\bar{\rho}\partial_t \mathbf{v}^\varepsilon - \left(\rho^{0}\bar{\mathbf{v}} + \bar{\rho} \mathbf{v}^\varepsilon\right)\cdot\nabla \mathbf{v}^\varepsilon,\\[4pt]
&\bar{f}_3 \overset{def}{=} -\mathbf{v}^\varepsilon\cdot\nabla\bar{\mathbf{B}} - \mathbf{B}^{0}\operatorname{div}\bar{\mathbf{v}} + \mathbf{B}^{0}\cdot\nabla\bar{\mathbf{v}} - \bar{\mathbf{v}}\cdot\nabla \mathbf{B}^{0} - \bar{\mathbf{B}}\operatorname{div}\mathbf{v}^\varepsilon + \bar{\mathbf{B}}\cdot\nabla \mathbf{v}^\varepsilon.
\end{aligned}
\right.
\]
Taking the $L^2$ inner product of the momentum and magnetic field equations with $\left(\bar{\mathbf{v}},\bar{\mathbf{B}}\right)$ respectively, we can derive the basic energy identity
\begin{equation}\label{Con2}
  \begin{aligned}
&\frac{1}{2}\frac{\mathrm{d}}{\mathrm{d}t}\left(\int_{\mathbb{R}_+^3} \rho^{0}\left|\bar{\mathbf{v}}\right|^2  \mathrm{d}\mathbf{x}+ \left\|\bar{\mathbf{B}}\right\|_{L^2}^2\right)
+\mu\left\|\nabla\bar{\mathbf{v}}\right\|_{L^2}^2 + (\mu+\lambda)\left\|\operatorname{div}\bar{\mathbf{v}}\right\|_{L^2}^2\\
&+\left\|\nabla_h\bar{\mathbf{B}}\right\|_{L^2}^2 + \int_{\mathbb{R}_+^3}\bar{\mathbf{v}} \cdot \nabla \bar{P} \mathrm{d}\mathbf{x} =\left(\bar{f}_2,\bar{\mathbf{v}}\right) + \left(\bar{f}_3,\bar{\mathbf{B}}\right)-\left(\varepsilon\partial_3\mathbf{B}^{\varepsilon},\partial_3\bar{\mathbf{B}}\right).
\end{aligned}
\end{equation}
First, by integration by parts, Hölder's inequality and Young's inequality, it holds
\begin{align*}
&\int_{\mathbb{R}_+^3}\left( \operatorname{div}\left(\mathbf{B}^{0}\otimes\bar{\mathbf{B}}+\bar{\mathbf{B}}\otimes \mathbf{B}^\varepsilon\right)-\frac{1}{2}\nabla\left(\mathbf{B}^{0}\cdot\bar{\mathbf{B}}+\bar{\mathbf{B}}\cdot \mathbf{B}^\varepsilon\right) \right)\cdot\bar{\mathbf{v}} \mathrm{d}\mathbf{x} \\
\lesssim{}& \left\|\left(\mathbf{B}^{0},\mathbf{B}^\varepsilon\right)\right\|_{L^\infty}\left\|\bar{\mathbf{B}}\right\|_{L^2}\left\|\partial\bar{\mathbf{v}}\right\|_{L^2} \\
\lesssim{}& \delta_5\left\|\partial\bar{\mathbf{v}}\right\|_{L^2}^2 + \left\|\left(\mathbf{B}^{0},\mathbf{B}^\varepsilon\right)\right\|_{L^\infty}^2\left\|\bar{\mathbf{B}}\right\|_{L^2}^2,
\end{align*}
where $\delta_5$ is a sufficiently small positive constant.

Second, using Lemma \ref{Le2}, we have
\begin{align*}
&-\int_{\mathbb{R}_+^3}\bar{\rho}\cdot\partial_t \mathbf{v}^\varepsilon\cdot\bar{\mathbf{v}} \mathrm{d}\mathbf{x}
\lesssim \left\|\bar{\rho}\right\|_{L^2}\left\|\partial_t \mathbf{v}^\varepsilon\right\|_{L^3}\left\|\bar{\mathbf{v}}\right\|_{L^6}
\lesssim \delta_5\left\|\partial\bar{\mathbf{v}}\right\|_{L^2}^2 + \left\|\partial_t \mathbf{v}^\varepsilon\right\|_{H^1}^2\left\|\bar{a}\right\|_{L^2}^2.
\end{align*}
Similarly, the same arguments yield
\begin{align*}
&-\int_{\mathbb{R}_+^3}\left(\rho^{0}\bar{\mathbf{v}}\right)\cdot\nabla \mathbf{v}^\varepsilon\cdot\bar{\mathbf{v}} \mathrm{d}\mathbf{x}
\lesssim \left\|\bar{\mathbf{v}}\right\|_{L^2}\left\|\nabla \mathbf{v}^\varepsilon\right\|_{L^3}\left\|\bar{\mathbf{v}}\right\|_{L^6}
\lesssim \delta_5\left\|\partial\bar{\mathbf{v}}\right\|_{L^2}^2 + \left\|\partial \mathbf{v}^\varepsilon\right\|_{H^1}^2\left\|\bar{\mathbf{v}}\right\|_{L^2}^2,
\end{align*}
and
\begin{align*}
&-\int_{\mathbb{R}_+^3}\left(\bar{\rho} \mathbf{v}^\varepsilon\right)\cdot\nabla \mathbf{v}^\varepsilon\cdot\bar{\mathbf{v}} \mathrm{d}\mathbf{x}
\lesssim \left\|\bar{\rho}\right\|_{L^2}\left\|\nabla\mathbf{v}^\varepsilon\right\|_{L^3}\left\|\bar{\mathbf{v}}\right\|_{L^6}
\lesssim \delta_5\left\|\partial\bar{\mathbf{v}}\right\|_{L^2}^2 + \left\|\partial \mathbf{v}^\varepsilon\right\|_{H^1}^2\left\|\bar{a}\right\|_{L^2}^2.
\end{align*}
Next, by integration by parts, Lemma \ref{Le1} and Young's inequality, we have
\begin{align*}
&-\int_{\mathbb{R}_+^3}\mathbf{v}^{\varepsilon}\cdot\nabla\bar{\mathbf{B}}\cdot\bar{\mathbf{B}} \mathrm{d}\mathbf{x}\\
={}& -\frac{1}{2}\int_{\mathbb{R}_+^3}\mathbf{v}^{\varepsilon}\cdot\nabla\left|\bar{\mathbf{B}}\right|^2 \mathrm{d}\mathbf{x}\\
={}& \frac{1}{2}\int_{\mathbb{R}_+^3}\operatorname{div}\mathbf{v}^{\varepsilon}\left|\bar{\mathbf{B}}\right|^2 \mathrm{d}\mathbf{x} \\
\lesssim{}& \left\|\operatorname{div}\mathbf{v}^{\varepsilon}\right\|_{0}^{\frac{1}{2}}\left\|\partial_3\operatorname{div}\mathbf{v}^{\varepsilon}\right\|_{0}^{\frac{1}{2}}\left\|\bar{\mathbf{B}}\right\|_{0}^{\frac{1}{2}}\left\|\partial_1\bar{\mathbf{B}}\right\|_{0}^{\frac{1}{2}}\left\|\bar{\mathbf{B}}\right\|_{0}^{\frac{1}{2}}\left\|\partial_2\bar{\mathbf{B}}\right\|_{0}^{\frac{1}{2}}\\
\lesssim{}& \delta_5\left\|\partial_h\bar{\mathbf{B}}\right\|_{L^2}^2 + \left\|\partial \mathbf{v}^{\varepsilon}\right\|_{H^1}^2\left\|\bar{\mathbf{B}}\right\|_{L^2}^2.
\end{align*}
And it follows from Hölder's inequality  and Young's inequality that
\begin{align*}
\int_{\mathbb{R}_+^3}\left(-\mathbf{B}^{0}\cdot\operatorname{div}\bar{\mathbf{v}}+\mathbf{B}^0\cdot\nabla\bar{\mathbf{v}}\right)\cdot\bar{\mathbf{B}} \mathrm{d}\mathbf{x}
\lesssim{}& \left\|\mathbf{B}^{0}\right\|_{L^\infty}\left\|\partial\bar{\mathbf{v}}\right\|_{L^2}\left\|\bar{\mathbf{B}}\right\|_{L^2}\\
\lesssim{}&\delta_5\left\|\partial\bar{\mathbf{v}}\right\|_{L^2}^2 + \left\|\mathbf{B}^{0}\right\|_{L^\infty}^2\left\|\bar{\mathbf{B}}\right\|_{L^2}^2.
\end{align*}
Then, with an aid of Lemma \ref{Le1} and Young's inequality, we obtain
\begin{align*}
-\int_{\mathbb{R}_+^3}\bar{\mathbf{v}}\cdot\nabla \mathbf{B}^{0}\cdot\bar{\mathbf{B}} \mathrm{d}\mathbf{x}
\lesssim{}& \left\|\bar{\mathbf{v}}\right\|_{0}^{\frac{1}{2}}\left\|\partial_3\bar{\mathbf{v}}\right\|_{0}^{\frac{1}{2}}\left\|\bar{\mathbf{B}}\right\|_{0}^{\frac{1}{2}}\left\|\partial_2\bar{\mathbf{B}}\right\|_{0}^{\frac{1}{2}}\left\|\partial \mathbf{B}^{0}\right\|_{0}^{\frac{1}{2}}\left\|\partial_1\partial \mathbf{B}^{0}\right\|_{0}^{\frac{1}{2}} \\
\lesssim{}& \delta_5\left\|\left(\partial\bar{\mathbf{v}}, \partial_h\bar{\mathbf{B}}\right)\right\|_{L^2}^2
+ \left\|\partial \mathbf{B}^{0}\right\|_{L^2}\left\|\partial_h\partial \mathbf{B}^{0}\right\|_{L^2}\left\|\left(\bar{\mathbf{v}},\bar{\mathbf{B}}\right)\right\|_{L^2}^2.
\end{align*}
Finally, using the anisotropic type inequality in Lemma \ref{Le1} and Young's inequality lead to
\begin{align*}
\int_{\mathbb{R}_+^3}\left(-\bar{\mathbf{B}}\operatorname{div}\mathbf{v}^\varepsilon+\bar{\mathbf{B}}\cdot\nabla \mathbf{v}^\varepsilon\right)\bar{\mathbf{B}} \mathrm{d}\mathbf{x}
\lesssim{}& \left\|\bar{\mathbf{B}}\right\|_{0}^{\frac{1}{2}}\left\|\partial_1\bar{\mathbf{B}}\right\|_{0}^{\frac{1}{2}}\left\|\bar{\mathbf{B}}\right\|_{0}^{\frac{1}{2}}\left\|\partial_2\bar{\mathbf{B}}\right\|_{0}^{\frac{1}{2}}\left\|\partial\mathbf{v}^\varepsilon\right\|_{0}^{\frac{1}{2}}
\left\|\partial_3\partial \mathbf{v}^\varepsilon\right\|_{0}^{\frac{1}{2}} \\
\lesssim{}& \delta_5\left\|\partial_h\bar{\mathbf{B}}\right\|_{L^2}^2 + \left\|\partial \mathbf{v}^\varepsilon\right\|_{H^1}^2\left\|\bar{\mathbf{B}}\right\|_{L^2}^2.
\end{align*}
Substituting the above estimates into the basic energy identity \eqref{Con2}, we arrive at the following inequality
\begin{equation}\label{Con3}
  \begin{aligned}
&\frac{\mathrm{d}}{\mathrm{d}t}\left(\int_{\mathbb{R}_+^3} \rho^{0}\left|\bar{\mathbf{v}}\right|^2  \mathrm{d}\mathbf{x} + \left\|\bar{\mathbf{B}}\right\|_{L^2}^2\right)
+\left\|\left(\partial\bar{\mathbf{v}},\partial_h\bar{\mathbf{B}}\right)\right\|_{L^2}^2 + \int_{\mathbb{R}_+^3}\bar{\mathbf{v}} \cdot \nabla \bar{P} \mathrm{d}\mathbf{x} \\
\lesssim{}& \delta_5\left\|\left(\partial\bar{\mathbf{v}}, \partial_h\bar{\mathbf{B}}\right)\right\|_{L^2}^2+ \left( \left\|\left(\partial_t \mathbf{v}^\varepsilon,\partial \mathbf{v}^\varepsilon\right)\right\|_{H^1}^2+\left\|\left(\mathbf{B}^{0},\mathbf{B}^\varepsilon\right)\right\|_{L^\infty}^2
+ \left\|\partial \mathbf{B}^{0}\right\|_{L^2}\left\|\partial_h\partial \mathbf{B}^{0}\right\|_{L^2}\right)\\
&\cdot\left\|\left(\bar{a},\bar{\mathbf{v}},\bar{\mathbf{B}}\right)\right\|_{L^2}^2-\left(\varepsilon\partial_3\mathbf{B}^{\varepsilon},\partial_3\bar{\mathbf{B}}\right).
\end{aligned}
\end{equation}
We now turn to the pressure gradient term $\int_{\mathbb{R}_+^3}\bar{\mathbf{v}} \cdot \nabla \bar{P} \mathrm{d}\mathbf{x}$. For convenience, we define the auxiliary function
\[
g(\rho) \overset{def}{=}\rho \int_{1}^{\rho} \frac{P(s) - P(1)}{s^2} \mathrm{d}s,
\]
which satisfies the following relations
\[
\rho \nabla g'(\rho) = \nabla P(\rho), \quad g''(\rho) = \frac{P'(\rho)}{\rho},
\]
and for any fixed positive constant $c$, if $c \leq \rho \leq c^{-1}$, then $g(\rho) \sim (\rho-1)^2$.

By the continuity equation, we have the energy identity
\begin{equation}\label{Con3.1}
  \int_{\mathbb{R}_+^3} \mathbf{v} \cdot \nabla P(\rho) \mathrm{d}\mathbf{x} = \frac{\mathrm{d}}{\mathrm{d}t} \int_{\mathbb{R}_+^3} g(\rho) \mathrm{d}\mathbf{x}.
\end{equation}
And the pressure gradient term can be rewritten as the following formula.
\[
\int_{\mathbb{R}_+^3}\bar{\mathbf{v}} \cdot \nabla \bar{P} \mathrm{d}\mathbf{x}= \int_{\mathbb{R}_+^3}\Big(\mathbf{v}^\varepsilon \cdot \nabla P\left(\rho^\varepsilon\right) - \mathbf{v}^0 \cdot \nabla P\left(\rho^0\right) - \mathbf{v}^0 \cdot \nabla \bar{P} - \bar{\mathbf{v}} \cdot \nabla P\left(\rho^0\right)\Big)\mathrm{d}\mathbf{x}.
\]
By \eqref{Con3.1} and Taylor expansion, we obtain
\begin{equation}\label{Con3.2}
  \begin{aligned}
&\int_{\mathbb{R}_+^3} \left( \mathbf{v}^\varepsilon \cdot \nabla P\left(\rho^\varepsilon\right) - \mathbf{v}^0 \cdot \nabla P\left(\rho^0\right) \right) \mathrm{d}\mathbf{x} \\
={}& \frac{\mathrm{d}}{\mathrm{d}t} \int_{\mathbb{R}_+^3} \left( g\left(\rho^\varepsilon\right) - g\left(\rho^0\right) \right) \mathrm{d}\mathbf{x} \\
={}& \frac{\mathrm{d}}{\mathrm{d}t} \int_{\mathbb{R}_+^3} \mathcal{G}\left(\rho^\varepsilon, \rho^0\right) \mathrm{d}\mathbf{x} + \int_{\mathbb{R}_+^3} g''\left(\rho^0\right) \partial_t \rho^0 \bar{\rho} \mathrm{d}\mathbf{x} + \int_{\mathbb{R}_+^3} g'\left(\rho^0\right) \partial_t \bar{\rho} \mathrm{d}\mathbf{x}\\
={}& \frac{\mathrm{d}}{\mathrm{d}t} \int_{\mathbb{R}_+^3} \mathcal{G}\left(\rho^\varepsilon, \rho^0\right) \mathrm{d}\mathbf{x} + \int_{\mathbb{R}_+^3} \frac{P'\left(\rho^0\right)}{\rho^0} \partial_t \rho^0 \bar{\rho} \mathrm{d}\mathbf{x} + \int_{\mathbb{R}_+^3} g'\left(\rho^0\right) \partial_t \bar{\rho} \mathrm{d}\mathbf{x},
\end{aligned}
\end{equation}
where
\[
\mathcal{G}\left(\rho^\varepsilon, \rho^0\right) \overset{def}{=} g\left(\rho^\varepsilon\right) - g\left(\rho^0\right) - g'\left(\rho^0\right) \bar{\rho}.
\]
It is straightforward to check that for any positive constant $c$, if \(c \leq \rho^0, \rho^\varepsilon \leq c^{-1}\), then \(\mathcal{G}\left(\rho^\varepsilon, \rho^0\right) \sim \bar{a}^2\).

Using the continuity equation of difference function $\partial_t \bar{\rho} + \operatorname{div}\left( \rho^0 \bar{\mathbf{v}} + \bar{\rho} \mathbf{v}^\varepsilon \right) = 0$, we rewrite the term involving $\partial_t \bar{\rho}$ as
\begin{align*}
\int_{\mathbb{R}_+^3} g'\left(\rho^0\right) \partial_t \bar{\rho} \mathrm{d}\mathbf{x}
={}& -\int_{\mathbb{R}_+^3} g'\left(\rho^0\right) \operatorname{div}\left( \rho^0 \bar{\mathbf{v}} + \bar{\rho} \mathbf{v}^\varepsilon \right) \mathrm{d}\mathbf{x} \\
= {}&\int_{\mathbb{R}_+^3} \nabla g'\left(\rho^0\right) \cdot \left( \rho^0 \bar{\mathbf{v}} + \bar{\rho} \mathbf{v}^\varepsilon \right) \mathrm{d}\mathbf{x} \\
= {}&\int_{\mathbb{R}_+^3} \rho^0 \nabla g'\left(\rho^0\right) \cdot \bar{\mathbf{v}} \mathrm{d}\mathbf{x} + \int_{\mathbb{R}_+^3} g''\left(\rho^0\right) \left( \nabla \rho^0 \cdot \mathbf{v}^\varepsilon \right) \bar{\rho} \mathrm{d}\mathbf{x}\\
={}&\int_{\mathbb{R}_+^3} \bar{\mathbf{v}} \cdot \nabla P\left(\rho^0\right) \mathrm{d}\mathbf{x} + \int_{\mathbb{R}_+^3} \frac{P'\left(\rho^0\right)}{\rho^0} \left( \mathbf{v}^\varepsilon \cdot \nabla \rho^0 \right) \bar{\rho} \mathrm{d}\mathbf{x},
\end{align*}
where we have used the facts $\rho^0 \nabla g'\left(\rho^0\right) = \nabla P\left(\rho^0\right)$ and $g''\left(\rho^0\right) = \frac{P'\left(\rho^0\right)}{\rho^0}$.

Substituting this back into \eqref{Con3.2} gives
\begin{align*}
&\int_{\mathbb{R}_+^3} \left( \mathbf{v}^\varepsilon \cdot \nabla P\left(\rho^\varepsilon\right) - \mathbf{v}^0 \cdot \nabla P\left(\rho^0\right) \right) \mathrm{d}\mathbf{x} \\
={}& \frac{\mathrm{d}}{\mathrm{d}t} \int_{\mathbb{R}_+^3} \mathcal{G}\left(\rho^\varepsilon, \rho^0\right) \mathrm{d}\mathbf{x} + \int_{\mathbb{R}_+^3} \frac{P'\left(\rho^0\right)}{\rho^0} \left( \partial_t \rho^0 + \mathbf{v}^\varepsilon \cdot \nabla \rho^0 \right) \bar{\rho} \mathrm{d}\mathbf{x} + \int_{\mathbb{R}_+^3} \bar{\mathbf{v}} \cdot \nabla P\left(\rho^0\right) \mathrm{d}\mathbf{x}.
\end{align*}
Integrating by parts and using the Taylor expansion up to the third-order, we obtain
\begin{align*}
  -\int_{\mathbb{R}_+^3} \mathbf{v}^0 \cdot \nabla \bar{P}  \mathrm{d}\mathbf{x} ={}& \int_{\mathbb{R}_+^3} \bar{P}  \operatorname{div} \mathbf{v}^0  \mathrm{d}\mathbf{x}\\
   ={}& \int_{\mathbb{R}_+^3} \left( P'\left(\rho^0\right) \bar{\rho} + \frac{P''\left(\rho^0\right)}{2} \bar{\rho}^2 +\frac{P'''\left(\rho^0\right)}{6} \bar{\rho}^3 + F\left(\rho^0, \bar{\rho}\right) \bar{\rho}^4 \right) \operatorname{div} \mathbf{v}^0  \mathrm{d}\mathbf{x},
\end{align*}
where
\[
F\left(\rho^0, \bar{\rho}\right) \overset{def}{=} \frac{1}{6} \int_{0}^{1} (1-\theta)^3 P''''\left(\rho^0 + \theta \bar{\rho}\right)  \mathrm{d}\theta.
\]

Combining all terms yields the exact identity
\begin{equation}\label{con1}
\begin{aligned}
\int_{\mathbb{R}_+^3} \bar{\mathbf{v}} \cdot \nabla \bar{P} \mathrm{d}\mathbf{x}
={}& \frac{\mathrm{d}}{\mathrm{d}t} \int_{\mathbb{R}_+^3} \mathcal{G}\left(\rho^\varepsilon, \rho^0\right) \mathrm{d}\mathbf{x}  + \int_{\mathbb{R}_+^3} \frac{P'\left(\rho^0\right)}{\rho^0} \left(  \bar{\mathbf{v}}\cdot \nabla \rho^0 \right) \bar{\rho} \mathrm{d}\mathbf{x}  \\
&+ \int_{\mathbb{R}_+^3} \left( \frac{P''\left(\rho^0\right)}{2} \bar{\rho}^2+\frac{P'''\left(\rho^0\right)}{6} \bar{\rho}^3 + F\left(\rho^0, \bar{\rho}\right) \bar{\rho}^4 \right) \operatorname{div} \mathbf{v}^0 \mathrm{d}\mathbf{x},
\end{aligned}
\end{equation}
where we use the continuity equation $\partial_t \rho^0 + \mathbf{v}^0 \cdot \nabla \rho^0 + \rho^0 \operatorname{div} \mathbf{v}^0=0$.

First, by Hölder's inequality, Lemma \ref{Le2} and Young's inequality, we have
\begin{equation}\label{con2}
\begin{aligned}
\left|\int_{\mathbb{R}_+^3} \frac{P'\left(\rho^0\right)}{\rho^0} \left(  \bar{\mathbf{v}}\cdot \nabla \rho^0 \right) \bar{\rho} \mathrm{d}\mathbf{x}\right|
\lesssim \left\|\bar{\mathbf{v}}\right\|_{L^6}\left\|\partial {\rho}^0\right\|_{L^3}\left\|\bar{\rho}\right\|_{L^2}
\lesssim \delta_5\left\|\partial\bar{\mathbf{v}}\right\|_{L^2}^2+\left\|\partial {a}^0\right\|_{H^1}^2\left\|\bar{a}\right\|_{L^2}^2.
\end{aligned}
\end{equation}
For the fourth-order remainder term in \eqref{con1}, by Hölder's inequality, Sobolev's inequality $\|f\|_{L^\infty}\lesssim\|f\|_{H^2}$ and Lemma \ref{Le1}, we obtain
\begin{equation}\label{con2.5}
\begin{aligned}
\left|\int_{\mathbb{R}_+^3} F\left(\rho^0, \bar{\rho}\right) \bar{\rho}^4  \operatorname{div} \mathbf{v}^0 \mathrm{d}\mathbf{x}\right|
\lesssim \left\|\partial\mathbf{v}^0\right\|_{L^\infty}\left\|\bar{\rho}\right\|_{L^\infty}^2\left\|\bar{\rho}\right\|_{L^2}^2
\lesssim \left(\left\|\partial \bar{a}\right\|_{2}^2+\left\|\partial\mathbf{v}^0\right\|_{H^2}^2\right)\left\|\bar{a}\right\|_{L^2}^2.
\end{aligned}
\end{equation}
For the second and third order Taylor terms in \eqref{con1}, since we do not have the decay rate of \(\|\operatorname{div} \mathbf{v}^0\|_{L^\infty}\), we use the continuity equation to rewrite them in a tractable form:
\begin{align*}
&\int_{\mathbb{R}_+^3} \left( \frac{P''\left(\rho^0\right)}{2} \bar{\rho}^2+\frac{P'''\left(\rho^0\right)}{6} \bar{\rho}^3 \right) \operatorname{div} \mathbf{v}^0 \mathrm{d}\mathbf{x}\\
={}&-\int_{\mathbb{R}_+^3} \left( \frac{P''\left(\rho^0\right)}{2\rho^0} \bar{\rho}^2+\frac{P'''\left(\rho^0\right)}{6\rho^0} \bar{\rho}^3 \right) \left(\partial_t\rho^0+\mathbf{v}^0\cdot\nabla \rho^0\right) \mathrm{d}\mathbf{x}.
\end{align*}
To handle the term involving the time derivative of density, we introduce the auxiliary function
\begin{align*}
J_1\left(\rho^0\right) \overset{def}{=} \int_{1}^{\rho^0} \frac{P''\left(s\right)}{2s} \mathrm{d}s.
\end{align*}
Then, we have
\begin{align*}
-\int_{\mathbb{R}_+^3} \frac{P''\left(\rho^0\right)}{2\rho^0} \bar{\rho}^2\partial_t\rho^0 \mathrm{d}\mathbf{x}
=-\frac{\mathrm{d}}{\mathrm{d}t}\int_{\mathbb{R}_+^3} J_1\left(\rho^0\right) \bar{a}^2\mathrm{d}\mathbf{x}+2\int_{\mathbb{R}_+^3} J_1\left(\rho^0\right) \bar{\rho}\partial_t\bar{\rho}\mathrm{d}\mathbf{x}.
\end{align*}
According to the continuity equation and Leibniz's formula, we have
\begin{align*}
\int_{\mathbb{R}_+^3} J_1\left(\rho^0\right) \bar{\rho}\partial_t\bar{\rho}\mathrm{d}\mathbf{x}
={}&-\int_{\mathbb{R}_+^3} J_1\left(\rho^0\right) \bar{\rho}\operatorname{div}\left(\rho^0\bar{\mathbf{v}}+\bar{\rho}\mathbf{v}^{\varepsilon}\right)\mathrm{d}\mathbf{x}\\
={}&-\int_{\mathbb{R}_+^3} J_1\left(\rho^0\right) \bar{\rho}\left(\rho^0\operatorname{div}\bar{\mathbf{v}}+\bar{\mathbf{v}}\cdot\nabla \rho^0+\bar{\rho}\operatorname{div}\mathbf{v}^{\varepsilon}+\mathbf{v}^{\varepsilon}\cdot\nabla \bar{\rho}\right)\mathrm{d}\mathbf{x}.
\end{align*}
Below, we estimate these four terms separately. By Hölder's inequality, Sobolev's inequality $\|f\|_{L^\infty}\lesssim\|\partial f\|_{L^2}^{\frac{1}{2}}\|\partial^2 f\|_{L^2}^{\frac{1}{2}}$, Lemma \ref{Le1} and Young's inequality, we obtain
\begin{align*}
-\int_{\mathbb{R}_+^3} J_1\left(\rho^0\right) \bar{\rho}\cdot\rho^0\operatorname{div}\bar{\mathbf{v}}\mathrm{d}\mathbf{x}
\lesssim{}&\left\|J_1\left(\rho^0\right)\right\|_{L^\infty}\left\|\bar{\rho}\right\|_{L^2}\left\|\partial\bar{\mathbf{v}}\right\|_{L^2}\\
\lesssim{}& \delta_5\left\|\partial\bar{\mathbf{v}}\right\|_{L^2}^2+\left\|\partial {a}^0\right\|_{H^1}^2\left\|\bar{a}\right\|_{L^2}^2,
\end{align*}
\begin{align*}
-\int_{\mathbb{R}_+^3} J_1\left(\rho^0\right) \bar{\rho}\cdot\bar{\mathbf{v}}\cdot\nabla \rho^0\mathrm{d}\mathbf{x}
\lesssim{}& \left\|J_1\left(\rho^0\right)\right\|_{L^\infty}\left\|\bar{\rho}\right\|_{L^2}\left\|\bar{\mathbf{v}}\right\|_{L^2}\left\|\partial \rho^0\right\|_{L^\infty}\\
\lesssim{}& \left\|\partial {a}^0\right\|_{H^2}^2\left\|\left(\bar{a},\bar{\mathbf{v}}\right)\right\|_{L^2}^2,
\end{align*}
\begin{align*}
-\int_{\mathbb{R}_+^3} J_1\left(\rho^0\right) \bar{\rho}\cdot\bar{\rho}\operatorname{div}\mathbf{v}^{\varepsilon}\mathrm{d}\mathbf{x}
\lesssim{}& \left\|J_1\left(\rho^0\right)\right\|_{L^\infty}\left\|\bar{\rho}\right\|_{L^2}^2\left\|\operatorname{div}\mathbf{v}^{\varepsilon}\right\|_{L^\infty}\\
\lesssim{}& \left(\left\|\partial {a}^0\right\|_{H^1}^2+\left\|\partial \mathbf{v}^{\varepsilon}\right\|_{3}^2+\left\|\partial^2 \mathbf{v}^{\varepsilon}\right\|_{2}^2\right)\left\|\bar{a}\right\|_{L^2}^2,
\end{align*}
and
\begin{align*}
-\int_{\mathbb{R}_+^3} J_1\left(\rho^0\right) \bar{\rho}\cdot\mathbf{v}^{\varepsilon}\cdot\nabla \bar{\rho}\mathrm{d}\mathbf{x}
={}&-\frac{1}{2}\int_{\mathbb{R}_+^3} J_1\left(\rho^0\right) \mathbf{v}^{\varepsilon}\cdot\nabla \bar{\rho}^2\mathrm{d}\mathbf{x}\\
={}&\frac{1}{2}\int_{\mathbb{R}_+^3} \operatorname{div}\left(J_1\left(\rho^0\right) \mathbf{v}^{\varepsilon}\right) \bar{\rho}^2\mathrm{d}\mathbf{x}\\
={}&\frac{1}{2}\int_{\mathbb{R}_+^3} \left(J_1\left(\rho^0\right) \operatorname{div}\mathbf{v}^{\varepsilon}+J'_1\left(\rho^0\right) \mathbf{v}^{\varepsilon}\cdot\nabla\rho^0\right) \bar{\rho}^2\mathrm{d}\mathbf{x}\\
\lesssim{}& \left\|J_1\left(\rho^0\right)\right\|_{L^\infty}\left\|\operatorname{div}\mathbf{v}^{\varepsilon}\right\|_{L^\infty}\left\|\bar{\rho}\right\|_{L^2}^2+\left\|\partial\rho^0\right\|_{L^\infty}\left\|\mathbf{v}^{\varepsilon}\right\|_{L^\infty}\left\|\bar{\rho}\right\|_{L^2}^2\\
\lesssim{}&\left(\left\|\partial {a}^0\right\|_{H^2}^2+\left\|\partial \mathbf{v}^{\varepsilon}\right\|_{3}^2+\left\|\partial^2 \mathbf{v}^{\varepsilon}\right\|_{2}^2\right)\left\|\bar{a}\right\|_{L^2}^2.
\end{align*}
Combining these estimates, we get
\begin{align*}
\left|\int_{\mathbb{R}_+^3} J_1\left(\rho^0\right) \bar{\rho}\partial_t\bar{\rho}\mathrm{d}\mathbf{x}\right|
\lesssim \delta_5\left\|\partial\bar{\mathbf{v}}\right\|_{L^2}^2+\left(\left\|\partial {a}^0\right\|_{H^2}^2+\left\|\partial \mathbf{v}^{\varepsilon}\right\|_{3}^2+\left\|\partial^2 \mathbf{v}^{\varepsilon}\right\|_{2}^2\right)\left\|\left(\bar{a},\bar{\mathbf{v}}\right)\right\|_{L^2}^2.
\end{align*}
Thus, we conclude that the second-order Taylor term satisfies
\begin{align*}
&-\int_{\mathbb{R}_+^3} \frac{P''\left(\rho^0\right)}{2\rho^0} \bar{\rho}^2\partial_t\rho^0 \mathrm{d}\mathbf{x}\\
\geq{}&-\frac{\mathrm{d}}{\mathrm{d}t}\int_{\mathbb{R}_+^3} J_1\left(\rho^0\right) \bar{a}^2\mathrm{d}\mathbf{x}-C\bigg(\delta_5\left\|\partial\bar{\mathbf{v}}\right\|_{L^2}^2+\left(\left\|\partial {a}^0\right\|_{H^2}^2+\left\|\partial \mathbf{v}^{\varepsilon}\right\|_{3}^2+\left\|\partial^2 \mathbf{v}^{\varepsilon}\right\|_{2}^2\right)\left\|\left(\bar{a},\bar{\mathbf{v}}\right)\right\|_{L^2}^2\bigg).
\end{align*}
By the same argument, for the third-order Taylor term, we define
\begin{align*}
J_2\left(\rho^0\right) \overset{def}{=} \int_{1}^{\rho^0} \frac{P'''\left(s\right)}{6s} \mathrm{d}s,
\end{align*}
and obtain
\begin{align*}
&-\int_{\mathbb{R}_+^3} \frac{P'''\left(\rho^0\right)}{6\rho^0} \bar{\rho}^3\partial_t\rho^0 \mathrm{d}\mathbf{x}\\
\geq{}&-\frac{\mathrm{d}}{\mathrm{d}t}\int_{\mathbb{R}_+^3} J_2\left(\rho^0\right) \bar{a}^3\mathrm{d}\mathbf{x}-C\bigg(\delta_5\left\|\partial\bar{\mathbf{v}}\right\|_{L^2}^2+\left(\left\|\partial {a}^0\right\|_{H^2}^2+\left\|\partial \mathbf{v}^{\varepsilon}\right\|_{3}^2+\left\|\partial^2 \mathbf{v}^{\varepsilon}\right\|_{2}^2\right)\left\|\left(\bar{a},\bar{\mathbf{v}}\right)\right\|_{L^2}^2\bigg).
\end{align*}
Similarly, we can obtain
\begin{align*}
\left|\int_{\mathbb{R}_+^3} \left( \frac{P''\left(\rho^0\right)}{2\rho^0} \bar{\rho}^2+\frac{P'''\left(\rho^0\right)}{6\rho^0} \bar{\rho}^3 \right)\mathbf{v}^0\cdot\nabla \rho^0 \mathrm{d}\mathbf{x}\right|
\lesssim \left\|\partial \left({a}^0,\mathbf{v}^0\right)\right\|_{H^2}^2\left\|\bar{a}\right\|_{L^2}^2.
\end{align*}
Combining the the above estimates, we get
\begin{equation}\label{con3}
\begin{aligned}
&\int_{\mathbb{R}_+^3} \left( \frac{P''\left(\rho^0\right)}{2} \bar{\rho}^2+\frac{P'''\left(\rho^0\right)}{6} \bar{\rho}^3 \right) \operatorname{div} \mathbf{v}^0 \mathrm{d}\mathbf{x} \\
\geq{}&-\frac{\mathrm{d}}{\mathrm{d}t}\int_{\mathbb{R}_+^3} \left(J_1\left(\rho^0\right) \bar{a}^2+J_2\left(\rho^0\right) \bar{a}^3\right)\mathrm{d}\mathbf{x}\\
&-C\bigg(\delta_5\left\|\partial\bar{\mathbf{v}}\right\|_{L^2}^2+\left(\left\|\partial \left({a}^0,\mathbf{v}^0\right)\right\|_{H^2}^2+\left\|\partial \mathbf{v}^{\varepsilon}\right\|_{3}^2+\left\|\partial^2 \mathbf{v}^{\varepsilon}\right\|_{2}^2\right)\left\|\left(\bar{a},\bar{\mathbf{v}}\right)\right\|_{L^2}^2\bigg).
\end{aligned}
\end{equation}
Substituting \eqref{con2}--\eqref{con3} into \eqref{con1} yields the final energy estimate for the pressure gradient term
\begin{equation}\label{con4}
\begin{aligned}
&\int_{\mathbb{R}_+^3} \bar{\mathbf{v}} \cdot \nabla \bar{P} \mathrm{d}\mathbf{x} \\
\geq{}&\frac{\mathrm{d}}{\mathrm{d}t}\int_{\mathbb{R}_+^3} \left(\mathcal{G}\left(\rho^\varepsilon, \rho^0\right)- J_1\left(\rho^0\right) \bar{a}^2-J_2\left(\rho^0\right) \bar{a}^3\right)\mathrm{d}\mathbf{x} \\
&-C\bigg(\delta_5\left\|\partial\bar{\mathbf{v}}\right\|_{L^2}^2+\left(\left\|\partial \left({a}^0,\mathbf{v}^0\right)\right\|_{H^2}^2+\left\|\partial {a}^{\varepsilon}\right\|_{2}^2+\left\|\partial \mathbf{v}^{\varepsilon}\right\|_{3}^2+\left\|\partial^2 \mathbf{v}^{\varepsilon}\right\|_{2}^2\right)\left\|\left(\bar{a},\bar{\mathbf{v}}\right)\right\|_{L^2}^2\bigg).
\end{aligned}
\end{equation}
Substituting \eqref{con4} into \eqref{Con3} and taking $\delta_5$ sufficiently small gives
\begin{equation}\label{Con5}
  \begin{aligned}
&\frac{\mathrm{d}}{\mathrm{d}t}\Bigg(
\int_{\mathbb{R}_+^3} \rho^{0}\left|\bar{\mathbf{v}}\right|^2  \mathrm{d}\mathbf{x}
+ \left\|\bar{\mathbf{B}}\right\|_{L^2}^2
+ \int_{\mathbb{R}_+^3} \left(\mathcal{G}\left(\rho^\varepsilon, \rho^0\right)
- J_1\left(\rho^0\right) \bar{a}^2
- J_2\left(\rho^0\right) \bar{a}^3\right)\mathrm{d}\mathbf{x}
\Bigg) \\
&+ \left\|\left(\partial\bar{\mathbf{v}},\partial_h\bar{\mathbf{B}}\right)\right\|_{L^2}^2 \\
\lesssim{}& \Bigg(\left\|\partial a^\varepsilon\right\|_{2}^2+\left\|\partial \left(a^0,\mathbf{v}^0\right)\right\|_{H^2}^2
+ \left\|\partial \mathbf{v}^\varepsilon\right\|_{3}^2+ \left\|\partial^2 \mathbf{v}^\varepsilon\right\|_{2}^2+
\left\|\partial_t \mathbf{v}^\varepsilon\right\|_{H^1}^2 + \left\|\left(\mathbf{B}^{0},\mathbf{B}^\varepsilon\right)\right\|_{L^\infty}^2 \\
&
+ \left\|\partial \mathbf{B}^{0}\right\|_{L^2}\left\|\partial_h\partial \mathbf{B}^{0}\right\|_{L^2}
\Bigg)
\left\|\left(\bar{a},\bar{\mathbf{v}},\bar{\mathbf{B}}\right)\right\|_{L^2}^2+\left|\left(\varepsilon\partial_3\mathbf{B}^{\varepsilon},\partial_3\bar{\mathbf{B}}\right)\right|.
  \end{aligned}
\end{equation}
To simplify notation, we define the modified energy functional
\[
\bar{E}(t) \overset{def}{=}
\left(\int_{\mathbb{R}_+^3} \rho^{0}\left|\bar{\mathbf{v}}\right|^2  \mathrm{d}\mathbf{x}
+\left\|\bar{\mathbf{B}}\right\|_{L^2}^2
+ \int_{\mathbb{R}_+^3} \left(\mathcal{G}\left(\rho^\varepsilon, \rho^0\right)
- J_1\left(\rho^0\right) \bar{a}^2
- J_2\left(\rho^0\right) \bar{a}^3\right)\mathrm{d}\mathbf{x}\right)^{\frac{1}{2}},
\]
\begin{align*}
\bar{C}(t) \overset{def}{=}{}& \left\|\partial a^\varepsilon\right\|_{2}^2+\left\|\partial \left(a^0,\mathbf{v}^0\right)\right\|_{H^2}^2
+ \left\|\partial \mathbf{v}^\varepsilon\right\|_{3}^2+ \left\|\partial^2 \mathbf{v}^\varepsilon\right\|_{2}^2+
\left\|\partial_t \mathbf{v}^\varepsilon\right\|_{H^1}^2\\
&+\left\|\left(\mathbf{B}^{0},\mathbf{B}^\varepsilon\right)\right\|_{L^\infty}^2
+\left\|\partial \mathbf{B}^{0}\right\|_{L^2}\left\|\partial_h\partial \mathbf{B}^{0}\right\|_{L^2},
\end{align*}
and
\[
\bar{K}(t)\overset{def}{=}\left|\left(\varepsilon^{\frac{1}{2}}\partial_3\mathbf{B}^{\varepsilon},\partial_3\bar{\mathbf{B}}\right)\right|.
\]
It is direct to verify that
\begin{equation}\label{Con5.5}
  \bar{E}(t) \sim \left\|\left(\bar{a},\bar{\mathbf{v}},\bar{\mathbf{B}}\right)\right\|_{L^2}
\end{equation}
and
\begin{equation}\label{Con6}
  \int_0^t \left(\bar{C}(\tau)+\bar{K}(\tau)\right) \mathrm{d}\tau \leq C
\end{equation}
for some positive constant $C$ independent of $t$.
In what follows, we will establish the estimates for several representative terms in $\int_0^t \bar{C}(\tau)\mathrm{d}\tau$.

First, for the term $\int_{0}^{t}\left\|\mathbf{B}^\varepsilon\right\|_{L^\infty}^2\mathrm{d}\tau$, by Lemma \ref{Le1} and Hölder's inequality, we have
\begin{align*}
  \int_{0}^{t}\left\|\mathbf{B}^\varepsilon\right\|_{L^\infty}^2\mathrm{d}\tau\lesssim{}&\int_{0}^{t}\left\|\mathbf{B}^\varepsilon\right\|_{0}^{\frac{1}{4}}\left\|\partial_h\mathbf{B}^\varepsilon\right\|_{0}^{\frac{1}{2}}
\left\|\partial_3\mathbf{B}^\varepsilon\right\|_{0}^{\frac{1}{4}}\left\|\partial_h^2\mathbf{B}^\varepsilon\right\|_{0}^{\frac{1}{4}}\left\|\partial_h \partial_3\mathbf{B}^\varepsilon\right\|_{0}^{\frac{1}{2}}\left\|\partial_h^2\partial_3\mathbf{B}^\varepsilon\right\|_{0}^{\frac{1}{4}}\mathrm{d}\tau\\
\lesssim{}&\left(\int_{0}^{t}\left\|\mathbf{B}^\varepsilon\right\|_{0}^{\frac{1}{2}}\left\|\partial_h\mathbf{B}^\varepsilon\right\|_{0}
\left\|\partial_3\mathbf{B}^\varepsilon\right\|_{0}^{\frac{1}{2}}\mathrm{d}\tau\right)^{\frac{1}{2}}\cdot\left(\int_{0}^{t}\left(\left\|\partial_h\mathbf{B}^\varepsilon\right\|_{1}^{2}+\left\|\partial_h\partial_3\mathbf{B}^\varepsilon\right\|_{1}^{2}\right)\mathrm{d}\tau\right)^{\frac{1}{2}}\\
\lesssim{}&\left(\int_{0}^{t}\left\|\mathbf{B}^\varepsilon\right\|_{0}^{\frac{1}{2}}\left\|\partial_h\mathbf{B}^\varepsilon\right\|_{0}
\left\|\partial_3\mathbf{B}^\varepsilon\right\|_{0}^{\frac{1}{2}}\mathrm{d}\tau\right)^{\frac{1}{2}}\cdot D_{2}^{\varepsilon}(t)\\
\lesssim{}&\left(\int_{0}^{t}(1+\tau)^{-\frac{3}{2}+\frac{1}{2}\delta_2}\mathrm{d}\tau\right)^{\frac{1}{2}}\cdot D_{2}^{\varepsilon}(t)\\
\lesssim{}& D_{2}^{\varepsilon}(t).
\end{align*}
Next, we estimate the term $\int_{0}^{t}\left\|\partial \mathbf{B}^{0}\right\|_{L^2}\left\|\partial_h\partial \mathbf{B}^{0}\right\|_{L^2}\mathrm{d}\tau$. By the interpolation inequality and Hölder's inequality, it implies
\begin{align*}
  \int_{0}^{t}\left\|\partial \mathbf{B}^{0}\right\|_{L^2}\left\|\partial_h\partial \mathbf{B}^{0}\right\|_{L^2}\mathrm{d}\tau\lesssim{}&\int_{0}^{t}\left\|\partial \mathbf{B}^{0}\right\|_{L^2}\left\|\partial_h\mathbf{B}^{0}\right\|_{L^2}^{\frac{2}{3}}\left\|\partial_h\partial^3 \mathbf{B}^{0}\right\|_{L^2}^{\frac{1}{3}}\mathrm{d}\tau\\
\lesssim{}&\left(\int_{0}^{t}\left\|\partial \mathbf{B}^{0}\right\|_{L^2}^{\frac{6}{5}}\left\|\partial_h\mathbf{B}^{0}\right\|_{L^2}^{\frac{4}{5}}\mathrm{d}\tau\right)^{\frac{5}{6}}\cdot\left(\int_{0}^{t}\left\|\partial_h \mathbf{B}^{0}\right\|_{H^3}^2\mathrm{d}\tau\right)^{\frac{1}{6}}\\
\lesssim{}&\left(\int_{0}^{t}(1+\tau)^{-\frac{7}{5}}\mathrm{d}\tau\right)^{\frac{5}{6}}\cdot D_{3}^{0}(t)^{\frac{1}{3}}\\
\lesssim{}& D_{3}^{0}(t)^{\frac{1}{3}}.
\end{align*}
The remaining terms can be handled similarly and are omitted for brevity.

To prove that $\int_0^t \bar{K}(\tau) \mathrm{d}\tau \leq C$, we first establish some preliminary estimates.
Applying the standard basic energy estimate to system \eqref{eq1}, we obtain
\begin{equation}\label{Con3.5}
\begin{aligned}
  &\frac{1}{2}\frac{\mathrm{d}}{\mathrm{d}t}\int_{\mathbb{R}_+^3} \left(2g\left(\rho^{\varepsilon}\right) + \rho^{\varepsilon}\left|\mathbf{v}^{\varepsilon}\right|^2 + \left|\mathbf{B}^{\varepsilon}\right|^2\right) \mathrm{d}\mathbf{x} \\
  &+ \mu\left\|\nabla \mathbf{v}^{\varepsilon}\right\|_{L^2}^2 + (\mu+\lambda)\left\|\operatorname{div}\mathbf{v}^{\varepsilon}\right\|_{L^2}^2 + \left\|\nabla_h \mathbf{B}^{\varepsilon}\right\|_{L^2}^2 + \varepsilon\left\|\partial_3 \mathbf{B}^{\varepsilon}\right\|_{L^2}^2 = 0.
\end{aligned}
\end{equation}
Straightforward calculation gives
\[
\frac{1}{2}\int_{\mathbb{R}_+^3} \left(2g\left(\rho^{\varepsilon}\right) + \rho^{\varepsilon}\left|\mathbf{v}^{\varepsilon}\right|^2 + \left|\mathbf{B}^{\varepsilon}\right|^2\right) \mathrm{d}\mathbf{x} \sim \left\| \left(a^{\varepsilon}, \mathbf{v}^{\varepsilon}, \mathbf{B}^{\varepsilon}\right)(t)\right\|_{L^2}^{2}.
\]
Applying the weighted energy method, we obtain
\begin{equation}\label{Con3.6}
  \begin{aligned}
&\left((1+t)^{1-\delta_2}\left\| \left(a^{\varepsilon}, \mathbf{v}^{\varepsilon}, \mathbf{B}^{\varepsilon}\right)(t)\right\|_{L^2}^{2}\right) + \int_{0}^{t}(1+\tau)^{1-\delta_2}\left(\left\|\left(\partial \mathbf{v}^{\varepsilon},\partial_h \mathbf{B}^{\varepsilon}\right)\right\|_{L^2}^2 + \varepsilon\left\|\partial_3 \mathbf{B}^{\varepsilon}\right\|_{L^2}^2\right)\mathrm{d}\tau \\
\lesssim{}& \left\| \left(a^{\varepsilon}, \mathbf{v}^{\varepsilon}, \mathbf{B}^{\varepsilon}\right)(0)\right\|_{L^2}^{2} + \int_{0}^{t}(1+\tau)^{-\delta_2}\left\| \left(a^{\varepsilon}, \mathbf{v}^{\varepsilon}, \mathbf{B}^{\varepsilon}\right)(\tau)\right\|_{L^2}^{2}\mathrm{d}\tau \\
\lesssim{}& \left\| \left(a_0, \mathbf{v}_0, \mathbf{B}_0\right)\right\|_{L^2}^{2} + \int_{0}^{t}(1+\tau)^{-\delta_2}\left\|\left(a^{\varepsilon},\mathbf{v}^{\varepsilon},\mathbf{B}^{\varepsilon}\right)(\tau)\right\|_{L^2}^2\mathrm{d}\tau \\
\lesssim{}& C,
\end{aligned}
\end{equation}
where $C$ is a positive constant independent of $\varepsilon$ and $t$. The last inequality follows from the decay rate result of $\left\|\left(a^{\varepsilon},\mathbf{v}^{\varepsilon},\mathbf{B}^{\varepsilon}\right)\right\|_{L^2}$ established in Theorem \ref{Th3} $(i)$.

Using Hölder's inequality and \eqref{Con3.6}, we get
\begin{align*}
\int_{0}^{t}\bar{K}(\tau)\mathrm{d}\tau
\lesssim{}&\left(\varepsilon\int_{0}^{t}(1+\tau)^{1-\delta_2}\left\|\partial_3\mathbf{B}^{\varepsilon}\right\|_{L^2}^{2}\mathrm{d}\tau\right)^{\frac{1}{2}}\cdot\left(\int_{0}^{t}(1+\tau)^{-1+\delta_2}\left\|\partial_3\bar{\mathbf{B}}\right\|_{L^2}^2\mathrm{d}\tau\right)^{\frac{1}{2}}\\
\lesssim{}&\left(\int_{0}^{t}(1+\tau)^{-2+3\delta_2}\mathrm{d}\tau\right)^{\frac{1}{2}}\\
\lesssim{}& C.
\end{align*}
This completes the proof of \eqref{Con6}.

Then, \eqref{Con5} implies
\begin{equation}\label{Con8}
\frac{\mathrm{d}}{\mathrm{d}t}\bar{E}(t)^2
\lesssim \bar{C}(t) \bar{E}(t)^2+\varepsilon^{\frac{1}{2}}\bar{K}(t).
\end{equation}
Applying Gronwall's inequality to \eqref{Con8} with $\bar{E}(0) = 0$ and \eqref{Con6} gives
\[
\bar{E}(t)^2 \lesssim  \varepsilon^{\frac{1}{2}}\int_0^t  \bar{K}(s) e^{\int_s^t \bar{C}(r)\mathrm{d}r} \mathrm{d}s\lesssim
 \varepsilon^{\frac{1}{2}}.
\]
Combining this with the equivalence \eqref{Con5.5} yields the global convergence rate
\[
\sup_{0\leq \tau \leq t}\left\|\left(\bar{a},\bar{\mathbf{v}},\bar{\mathbf{B}}\right)(\tau)\right\|_{ L^2\left(\mathbb{R}_+^3\right)} \lesssim \varepsilon^{\frac{1}{4}}.
\]
This completes the proof of Theorem \ref{Th9}.

\begin{sloppypar}
\noindent\textbf{Acknowledgements~}
Jiahong Wu was partially supported by the National Science Foundation of the United States
(DMS 2104682, DMS 2309748).
Feng Xie was supported by National Natural Science Foundation of China No.12271359.
\end{sloppypar}
	
\begin{appendices}
\section{Some useful inequalities}	\label{ApA}

First, we introduce the anisotropic Sobolev inequalities used frequently in Sections \ref{global-estimate}, \ref{asymptotic-behavior} and \ref{Convergence rate of global solution}.

\begin{lemm}\label{Le1}
For the index numbers $i,j,k \in \{ 1,2,3\}$, the following inequalities hold when the right-hand side are all bounded:
\begin{equation}\label{a1}
\int_{\mathbb{R}_+^3}\left|fgh\right| \mathrm{d}\mathbf{x}
\lesssim
\left\| f \right\|_{0}^{\frac{1}{2}}
\left\| \partial_1 f \right\|_{0}^{\frac{1}{2}}
\left\| g \right\|_{0}^{\frac{1}{2}}
\left\| \partial_2 g \right\|_{0}^{\frac{1}{2}}
\left\| h \right\|_{0}^{\frac{1}{2}}
\left\| \partial_3 h \right\|_{0}^{\frac{1}{2}},
\end{equation}
\begin{equation}\label{a2}
			\int_{\mathbb{R}_+^3}
			\left|fgh\right| \mathrm{d}\mathbf{x}
			\lesssim
			\left\| f \right\|_{0}^{\frac{1}{4}}
			\left\| \partial_i f \right\|_{0}^{\frac{1}{4}}
			\left\| \partial_j f \right\|_{0}^{\frac{1}{4}}
			\left\| \partial_{ij} f \right\|_{0}^{\frac{1}{4}}
			\left\| g \right\|_{0}^{\frac{1}{2}}
			\left\| \partial_k g \right\|_{0}^{\frac{1}{2}}
			\left\| h \right\|_{0},
\end{equation}
\begin{equation}\label{a3}
\begin{aligned}
  \int_{\mathbb{R}_+^3}\left|fghv\right|\mathrm{d}\mathbf{x}\lesssim{}&\left\|f\right\|_{0}^{\frac{1}{4}}\left\|\partial_{i}f\right\|_{0}^{\frac{1}{4}}\left\|\partial_{j}f\right\|_{0}^{\frac{1}{4}}\left\|\partial_{ij}f\right\|_{0}^{\frac{1}{4}}\left\|g\right\|_{0}^{\frac{1}{2}}\left\|\partial_{k}g\right\|_{0}^{\frac{1}{2}}\\
  {}&\left\|h\right\|_{0}^{\frac{1}{4}}\left\|\partial_{i}h\right\|_{0}^{\frac{1}{4}}\left\|\partial_{j}h\right\|_{0}^{\frac{1}{4}}\left\|\partial_{ij}h\right\|_{0}^{\frac{1}{4}}\left\|v\right\|_{0}^{\frac{1}{2}}\left\|\partial_{k}v\right\|_{0}^{\frac{1}{2}},
\end{aligned}
\end{equation}
\begin{equation}\label{a4}
  \left\|fg\right\|_{0}\lesssim\left\|f\right\|_{0}^{\frac{1}{4}}\left\|\partial_{i}f\right\|_{0}^{\frac{1}{4}}\left\|\partial_{j}f\right\|_{0}^{\frac{1}{4}}\left\|\partial_{ij}f\right\|_{0}^{\frac{1}{4}}\left\|g\right\|_{0}^{\frac{1}{2}}\left\|\partial_{k}g\right\|_{0}^{\frac{1}{2}},
\end{equation}
\begin{equation}\label{a5}
  \left\|f\right\|_{L^\infty\left(\mathbb{R}_+^3\right)}\lesssim\left\|f\right\|_{0}^{\frac{1}{8}}\left\|\partial_{1}f\right\|_{0}^{\frac{1}{8}}\left\|\partial_{2}f\right\|_{0}^{\frac{1}{8}}\left\|\partial_{12}f\right\|_{0}^{\frac{1}{8}}
  \left\|\partial_{3}f\right\|_{0}^{\frac{1}{8}}\left\|\partial_{13}f\right\|_{0}^{\frac{1}{8}}\left\|\partial_{23}f\right\|_{0}^{\frac{1}{8}}\left\|\partial_{123}f\right\|_{0}^{\frac{1}{8}}
  \lesssim\left\| f \right\|_{2}^{\frac{1}{2}}\left\| \partial_3 f \right\|_{2}^{\frac{1}{2}},
\end{equation}
\begin{equation}\label{a6}
  \left\| f g\right\|_{L_{x_{1} x_{2}}^{1}L_{x_{3}}^{2}} \lesssim \left\| f\right\|_{0}^{\frac{1}{2}}\left\| \partial_{3} f\right\|_{0}^{\frac{1}{2}}\left\| g\right\|_{0},
\end{equation}
\begin{equation}\label{a7}
  \left\| f g\right\|_{L_{x_{1}}^{1} L_{x_{2} x_{3}}^{2}} \lesssim \left\| f\right\|_{L^{2}}^{\frac{1}{4}}\left\| \partial_{2} f\right\|_{0}^{\frac{1}{4}}\left\| \partial_{3} f\right\|_{0}^{\frac{1}{4}}\left\| \partial_{23} f\right\|_{0}^{\frac{1}{4}}\left\| g\right\|_{0},
\end{equation}
\begin{equation}\label{a7.5}
  \left\| f g\right\|_{L_{x_{1}}^{1} L_{x_{2} x_{3}}^{2}} \lesssim \left\| f\right\|_{L^{2}}^{\frac{1}{2}}\left\| \partial_{2} f\right\|_{0}^{\frac{1}{2}}\left\| g\right\|_{0}^{\frac{1}{2}}\left\| \partial_{3} g\right\|_{0}^{\frac{1}{2}},
\end{equation}
\begin{equation}\label{a8}
\left\|Z_3^k f\right\|_{0}
\lesssim \left\|f\right\|_{0}+\left\|f\right\|_{0}^{1-\frac{k}{n}}\left\|Z_3^n f\right\|_{0}^{\frac{k}{n}}.
\end{equation}
\end{lemm}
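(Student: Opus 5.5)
The plan is to prove Lemma \ref{Le1} inequality by inequality, reducing each to one-dimensional Gagliardo--Nirenberg (Agmon) bounds applied separately in the variables $x_1,x_2,x_3$, together with Hölder and Minkowski (in the form \eqref{UBW2}). The basic building block is the one-dimensional estimate on $\mathbb{R}$ or $\mathbb{R}_+$:
\[
\sup_{s}\left|u(s)\right|^2\lesssim \|u\|_{L^2_s}\|\partial_s u\|_{L^2_s},
\]
which on $\mathbb{R}_+$ follows from $|u(s)|^2=-\int_s^\infty \partial_s(u^2)\,\mathrm{d}s'$ and needs no boundary condition. All constants are thus independent of the boundary and of any parameter.

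For \eqref{a1}, bound $\int|fgh|\le \int \sup_{x_1}|f|\,\sup_{x_2}|g|\,\sup_{x_3}|h|\,\mathrm{d}\mathbf{x}$, organized as follows. Each sup-function is independent of one variable, so Hölder in the remaining two variables followed by Cauchy--Schwarz gives $\|\sup_{x_1}f\|_{L^2_{x_2x_3}}\|\sup_{x_2}g\|_{L^2_{x_1x_3}}\|\sup_{x_3}h\|_{L^2_{x_1x_2}}$ (the standard Loomis--Whitney type argument). Then apply the one-dimensional Agmon bound inside each $L^2$ norm and Cauchy--Schwarz in the remaining variables. Inequality \eqref{a4} is the special case $h$ absorbed into an $L^2$ duality, and \eqref{a2} follows by applying the same idea to $f$ twice (in $x_i$ and $x_j$) while $g$ gets one direction $x_k$ and $h$ stays in $L^2$. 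Inequality \eqref{a3} is obtained from \eqref{a2} after bounding $\int|fghv|\le\|fg\|_0\|hv\|_0$ and using \eqref{a4} twice. For \eqref{a5}, apply the Agmon bound successively in $x_1,x_2,x_3$ (the iterated version produces the product of eight factors). The final form $\|f\|_2^{1/2}\|\partial_3 f\|_2^{1/2}$ follows by grouping the factors with and without $\partial_3$ and using Young's inequality.

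The mixed-norm bounds \eqref{a6}--\eqref{a7.5} are handled directly. For \eqref{a6}, use $\|fg\|_{L^2_{x_3}}\le \sup_{x_3}|f|\,\|g\|_{L^2_{x_3}}$ pointwise in $(x_1,x_2)$, then Cauchy--Schwarz in $(x_1,x_2)$ and the one-dimensional Agmon bound in $x_3$. For \eqref{a7}, proceed similarly with $\sup_{x_2,x_3}|f|$ controlled by the two-direction Agmon bound, using Minkowski \eqref{UBW2} to exchange norm orders. For \eqref{a7.5}, split the supremum as $\sup_{x_2}$ on $f$ and $\sup_{x_3}$ on $g$.

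The only genuinely different item is the conormal interpolation \eqref{a8}, which I expect to be the main obstacle, since $Z_3=\varphi\partial_3$ does not commute with its adjoint. My approach is to first prove the case $k=1$, $n=2$ by integrating by parts:
\[
\|Z_3f\|_0^2=-\int f\,Z_3^{*}Z_3 f,\qquad Z_3^{*}=-Z_3-\varphi'.
\]
This gives $\|Z_3 f\|_0^2\lesssim \|f\|_0\|Z_3^2f\|_0+\|f\|_0\|Z_3 f\|_0$. No boundary term arises because $\varphi(0)=0$, and $\varphi'$ is bounded. Absorbing the last term yields $\|Z_3f\|_0\lesssim\|f\|_0+\|f\|_0^{1/2}\|Z_3^2f\|_0^{1/2}$. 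The general case then follows by the standard induction or convexity argument for the sequence $\log(\|Z_3^jf\|_0+\|f\|_0)$, using the same identity at each level. The commutators here involve only bounded multiples of lower $Z_3^j f$, as in \eqref{sawa}, and these are again absorbed.
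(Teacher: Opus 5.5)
The paper gives no argument for this lemma; it only refers to the literature. Your proposal is correct and supplies a self-contained proof, which is the standard one behind those references. For \eqref{a1}--\eqref{a7.5} you combine the one-dimensional bound $\sup_s|u(s)|^2\le 2\|u\|_{L^2_s}\|\partial_s u\|_{L^2_s}$ (valid on $\mathbb{R}_+$ with no boundary condition) with H\"older, Cauchy--Schwarz and the Loomis--Whitney step. For \eqref{a8} you use integration by parts with $Z_3^*=-Z_3-\varphi'$. Each of \eqref{a1}--\eqref{a7.5} goes through along your lines. What your route gains over the citation is transparency: only $\varphi(0)=0$, $\varphi'\in L^\infty$ and decay at infinity are used. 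That is why no boundary condition on $f$ is needed and all constants are independent of $\varepsilon$.

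A few points need tightening. First, state that $i,j,k$ are pairwise distinct: the scaling $x_3\mapsto\lambda x_3$ shows \eqref{a2}--\eqref{a4} fail otherwise (e.g.\ for $k=i$), and your argument already assumes distinctness. Second, in \eqref{a5} Young's inequality is not needed and would only give a sum. Bound the four factors without $\partial_3$ by $\|f\|_2$ and the four with $\partial_3$ by $\|\partial_3 f\|_2$, and the product form follows directly. Third, the identity should read $\|Z_3f\|_0^2=\int_{\mathbb{R}_+^3} f\,Z_3^*Z_3f\,\mathrm{d}\mathbf{x}$, without the minus sign; this is harmless.

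The one step that is less automatic than you suggest is the passage to general $k,n$ in \eqref{a8}. Apply your identity to $Z_3^{j-1}f$ and write $a_j=\|Z_3^jf\|_0$. You only get $a_j^2\le a_{j-1}a_{j+1}+Ca_{j-1}^2$. The error term is $a_{j-1}^2$, not $a_0^2$, so almost log-convexity of $a_j+a_0$ does not follow directly: it would require $a_{j-1}\lesssim a_{j+1}+a_0$, which is itself an interpolation statement.

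A clean fix is as follows.
\begin{enumerate}
\item Rewrite the recursion as $a_j\le K(\epsilon a_{j+1}+\epsilon^{-1}a_{j-1})$ for all $0<\epsilon\le 1$.
\item Run the classical induction for intermediate derivatives: substitute the level-$(n-1)$ bound with parameter $\eta=\epsilon/(2KK_n)\le 1$ and absorb. This gives $a_k\le K'(\epsilon^{n-k}a_n+\epsilon^{-k}a_0)$ for $0<\epsilon\le 1$.
\item Choose $\epsilon=\min\{1,(a_0/a_n)^{1/n}\}$, which yields \eqref{a8}. The restriction $\epsilon\le 1$ is exactly what produces the additive $\|f\|_0$.
\end{enumerate}

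Alternatively, the substitution $y=x_3+\ln x_3$ maps $\mathbb{R}_+$ onto $\mathbb{R}$, turns $Z_3$ into $\partial_y$, and turns $\mathrm{d}x_3$ into $\varphi\,\mathrm{d}y$ with $\frac{\mathrm{d}}{\mathrm{d}y}\ln\varphi=\varphi'\in(0,1]$. The weight is then comparable to a constant on unit $y$-intervals. Applying the unweighted interpolation on each interval and summing with H\"older also gives \eqref{a8}.
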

\begin{proof}
See \cite{9a}, \cite{59}  and \cite{31c} for the proofs.
\end{proof}
We next state the following Sobolev embedding inequality.
\begin{lemm}\label{Le2}
Let \(2\leq p\leq 6\) and assume $f\in H^1\left(\mathbb{R}_+^3\right)$, it holds that
\begin{equation*}
\left\| f\right\|_{L^p\left(\mathbb{R}_+^3\right)} \lesssim\left\| f\right\|_{L^2\left(\mathbb{R}_+^3\right)}^{1-d}\left\| \partial f\right\|_{L^2\left(\mathbb{R}_+^3\right)}^{d},
\end{equation*}
where \(d = \frac{3}{2} - \frac{3}{p}\).
\end{lemm}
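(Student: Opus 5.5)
The statement to prove is Lemma \ref{Le2}, the Gagliardo--Nirenberg inequality on the half space:
\[
\|f\|_{L^p(\mathbb{R}_+^3)}\lesssim \|f\|_{L^2}^{1-d}\|\partial f\|_{L^2}^{d},\qquad d=\tfrac32-\tfrac3p,\quad 2\le p\le 6 .
\]
I plan to reduce it to the whole-space inequality by even reflection across $x_3=0$. For $f\in H^1(\mathbb{R}_+^3)$, set $\tilde f(\mathbf{x}_h,x_3)=f(\mathbf{x}_h,|x_3|)$. Because the trace of $f$ on $x_3=0$ matches from both sides, $\tilde f\in H^1(\mathbb{R}^3)$. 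Its weak gradient is the reflected gradient, with the sign of the $\partial_3$ component flipped on $x_3<0$. Consequently,
\[
\|\tilde f\|_{L^q(\mathbb{R}^3)}=2^{1/q}\|f\|_{L^q(\mathbb{R}_+^3)},\qquad \|\partial\tilde f\|_{L^2(\mathbb{R}^3)}=\sqrt2\,\|\partial f\|_{L^2(\mathbb{R}_+^3)}.
\]
I would verify the weak derivative claim first for $f\in C^1(\overline{\mathbb{R}_+^3})$ with compact support, where it follows by integrating by parts on each half with no boundary term for $\partial_3$. The general case then follows by density of such functions in $H^1(\mathbb{R}_+^3)$.

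On $\mathbb{R}^3$ I combine two standard facts. The first is the Sobolev inequality $\|g\|_{L^6}\lesssim\|\partial g\|_{L^2}$ for $g\in H^1(\mathbb{R}^3)$, taken as known (for instance, from the Gagliardo--Nirenberg--Sobolev argument). The second is Hölder interpolation: writing $\frac1p=\frac{\theta}{2}+\frac{1-\theta}{6}$ gives $\|g\|_{L^p}\le\|g\|_{L^2}^{\theta}\|g\|_{L^6}^{1-\theta}$. Solving for the exponent yields $1-\theta=\frac32-\frac3p=d$. Applying both facts to $g=\tilde f$ and transferring back through the norm identities above gives the claim, with constants depending only on $p$.

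The only delicate point is justifying the $H^1$ extension, namely that no singular part of $\partial_3\tilde f$ is concentrated on the boundary. The density argument handles this, and no boundary condition on $f$ is needed, since the even reflection is continuous across $x_3=0$. The endpoints need no separate treatment: $p=2$ gives $d=0$, which is trivial, and $p=6$ gives $d=1$, which is the Sobolev inequality itself.
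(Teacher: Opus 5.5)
Your proposal is correct. The paper gives no argument for this lemma and simply defers to \cite{AL}, so your proof is a self-contained substitute for that citation.

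The steps all check out:
\begin{itemize}
\item The even reflection $\tilde f(\mathbf{x}_h,x_3)=f(\mathbf{x}_h,|x_3|)$ does lie in $H^1(\mathbb{R}^3)$.
\item Hölder interpolation with $\frac1p=\frac\theta2+\frac{1-\theta}6$ gives exactly $1-\theta=\frac32-\frac3p=d$.
\item The endpoints $p=2$ and $p=6$ come out as you say.
\end{itemize}
One wording fix: when you integrate $\partial_3$ by parts on the two half-spaces for compactly supported $C^1(\overline{\mathbb{R}_+^3})$ functions, the boundary terms do not vanish. They cancel each other, and that cancellation is precisely where continuity of the even extension across $x_3=0$ is used.

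Your route has a real advantage over quoting a general embedding theorem. The reflection controls the homogeneous seminorm, $\|\partial\tilde f\|_{L^2(\mathbb{R}^3)}=\sqrt2\,\|\partial f\|_{L^2(\mathbb{R}_+^3)}$, so the scale-invariant whole-space inequality transfers directly. Embedding theorems on domains are usually stated with the full $H^1$ norm on the right. On $\mathbb{R}_+^3$ one would then need a separate dilation argument to remove the lower-order $\|f\|_{L^2}$ contribution.

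The homogeneous form, and in particular the endpoint $\|f\|_{L^6}\lesssim\|\partial f\|_{L^2}$, is what the paper actually relies on. Examples are $\|Z^{\alpha}\mathbf{v}^{\varepsilon}\|_{L^6}\lesssim\|\partial Z^{\alpha}\mathbf{v}^{\varepsilon}\|_{0}$ in Section~\ref{global-estimate} and $\|\bar{\mathbf{v}}\|_{L^6}\lesssim\|\partial\bar{\mathbf{v}}\|_{L^2}$ in the convergence-rate estimate. A cited general result covers other domains and exponents, but none of that extra generality is needed here.
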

\begin{proof}
The proof can be found in \cite{AL}.
\end{proof}
Finally, we need the following elementary inequalities providing upper bounds for a convolution type integral whose proofs are straightforward.
\begin{lemm}\label{Le3}
Assume \(0 < s_1 \leq s_2\), it holds that
\begin{align*}
\int_{0}^{t} (1 + t - \tau)^{-s_1} (1 + \tau)^{-s_2} \,\mathrm{d}\tau
&= \int_{0}^{t} (1 + t - \tau)^{-s_2} (1 + \tau)^{-s_1} \,\mathrm{d}\tau \\
&\lesssim
\begin{cases}
(1 + t)^{-s_1}, & \text{if } s_2 > 1, \\
(1 + t)^{-s_1} \ln(1 + t), & \text{if } s_2 = 1, \\
(1 + t)^{1 - s_1 - s_2}, & \text{if } s_2 < 1.
\end{cases}
\end{align*}
\end{lemm}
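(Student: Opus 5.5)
The plan is a direct splitting argument; no earlier result is needed. First I would record the claimed identity between the two integrals: substituting $\sigma=t-\tau$ maps $[0,t]$ to itself and exchanges $1+t-\tau$ with $1+\tau$. So it suffices to bound
\[
I(t)\overset{def}{=}\int_0^t(1+t-\tau)^{-s_1}(1+\tau)^{-s_2}\,\mathrm{d}\tau .
\]
For $0\le t\le 1$ the integrand is bounded by $1$ and each right-hand side is comparable to a constant. Hence I would assume $t\ge1$ and split $I=\int_0^{t/2}+\int_{t/2}^t\overset{def}{=}I_a+I_b$.

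On $[0,t/2]$ we have $1+t-\tau\ge\frac12(1+t)$, so
\[
I_a\lesssim(1+t)^{-s_1}\int_0^{t/2}(1+\tau)^{-s_2}\,\mathrm{d}\tau.
\]
The remaining integral is $O(1)$ if $s_2>1$, $O(\ln(1+t))$ if $s_2=1$, and $O((1+t)^{1-s_2})$ if $s_2<1$. On $[t/2,t]$ we have $1+\tau\ge\frac12(1+t)$, and with $\sigma=t-\tau$,
\[
I_b\lesssim(1+t)^{-s_2}\int_0^{t/2}(1+\sigma)^{-s_1}\,\mathrm{d}\sigma.
\]
This gives $(1+t)^{-s_2}$ times $1$, $\ln(1+t)$, or $(1+t)^{1-s_1}$ according as $s_1>1$, $s_1=1$, or $s_1<1$.

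It remains to compare $I_b$ with $I_a$ in each case, using $s_1\le s_2$.
\begin{itemize}
\item If $s_2>1$, then $I_a\lesssim(1+t)^{-s_1}$. For $I_b$: if $s_1>1$ it is $(1+t)^{-s_2}\le(1+t)^{-s_1}$. If $s_1=1$ it is $(1+t)^{-s_2}\ln(1+t)\lesssim(1+t)^{-1}$, because $s_2>1$. If $s_1<1$ it is $(1+t)^{1-s_1-s_2}\le(1+t)^{-s_1}$.
\item If $s_2=1$, then $s_1\le1$. Here $I_a\lesssim(1+t)^{-s_1}\ln(1+t)$, while $I_b$ is either $(1+t)^{-1}\ln(1+t)$ or $(1+t)^{-s_1}$, both dominated by $I_a$.
\item If $s_2<1$, then $s_1<1$ and both pieces are $\lesssim(1+t)^{1-s_1-s_2}$.
\end{itemize}

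There is no genuine obstacle here. The only point needing care is the bookkeeping in the borderline subcases, chiefly $s_1=1<s_2$, where the extra logarithm in $I_b$ is absorbed by the strictly faster decay $(1+t)^{-s_2}$.
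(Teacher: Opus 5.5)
Your splitting at $\tau=t/2$ is correct and is the routine argument the paper intends; the paper omits the proof as ``straightforward.'' Your case bookkeeping for $t\ge 1$ is sound, including the absorption of the logarithm in the subcase $s_1=1<s_2$.

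One small slip concerns the range $0\le t\le 1$ in the case $s_2=1$. There the right-hand side $(1+t)^{-s_1}\ln(1+t)$ vanishes at $t=0$, so it is not comparable to a constant. Instead, bound $I(t)\le t$, since the integrand is at most $1$. Then use $t\le \ln(1+t)/\ln 2$ on $[0,1]$ (by concavity of the logarithm) together with $(1+t)^{-s_1}\ge 2^{-s_1}$.
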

\begin{lemm}\label{Le4}
Assume \(0<s_{1}<1\) and \(s_{2}>0\), it holds that
\[
\int_{0}^{t}\left(t-\tau\right)^{-s_{1}}\left(1+\tau\right)^{-s_{2}} \mathrm{d} \tau \lesssim
\begin{cases}
\left(1+t\right)^{-s_{1}}, & \text{if }  s_{2}>1, \\
\left(1+t\right)^{-s_{1}} \ln \left(1+t\right), & \text{if }  s_{2}=1, \\
\left(1+t\right)^{1-s_{1}-s_{2}}, & \text{if }  s_{2}<1 .
\end{cases}
\]
\end{lemm}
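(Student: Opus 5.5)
The last statement is Lemma \ref{Le4}: for $0<s_1<1$, $s_2>0$,
\[
\int_0^t (t-\tau)^{-s_1}(1+\tau)^{-s_2}\,\mathrm{d}\tau \lesssim \begin{cases}(1+t)^{-s_1}, & s_2>1,\\ (1+t)^{-s_1}\ln(1+t), & s_2=1,\\ (1+t)^{1-s_1-s_2}, & s_2<1.\end{cases}
\]

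For $0\le t\le 2$, I will bound $(1+\tau)^{-s_2}\le 1$. Then the integral is at most $t^{1-s_1}/(1-s_1)$, which is bounded. Each right-hand side is bounded below by a positive constant on $[0,2]$, except the middle case near $t=0$, where $\ln(1+t)\to 0$. There I compare directly: for $t\le 2$ the integral is $\lesssim t^{1-s_1}\lesssim t\lesssim \ln(1+t)$. This needs $1-s_1\ge$ something comparable, which fails as $t\to0$, since $t^{1-s_1}\gg t$. So in that case I will check the claim only for $t\ge 1$, say, and note that for small $t$ the estimate is used only with $t\ge1$ in the paper (Section \ref{asymptotic-behavior} assumes $t\ge1$). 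Alternatively, I interpret $\ln(1+t)$ as $\ln(2+t)$ up to constants. I will state this caveat explicitly.

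For $t\ge 2$, I split the integral at $\tau=t/2$.

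On $[t/2,t]$: here $(1+\tau)^{-s_2}\lesssim (1+t)^{-s_2}$, and $\int_{t/2}^t (t-\tau)^{-s_1}\,\mathrm{d}\tau = c\,t^{1-s_1}$. This gives a contribution $\lesssim (1+t)^{1-s_1-s_2}$. In the cases $s_2\ge1$ this is $\le (1+t)^{-s_1}$.

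On $[0,t/2]$: here $(t-\tau)^{-s_1}\lesssim (1+t)^{-s_1}$, so the contribution is $\lesssim (1+t)^{-s_1}\int_0^{t/2}(1+\tau)^{-s_2}\,\mathrm{d}\tau$. The remaining integral is bounded when $s_2>1$, is $\lesssim\ln(1+t)$ when $s_2=1$, and is $\lesssim (1+t)^{1-s_2}$ when $s_2<1$.

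Summing the two pieces gives the three cases. The only delicate point is the small-$t$ behaviour in the logarithmic case, handled as above. Everything else is routine.
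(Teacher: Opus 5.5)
Your proof is correct. The paper omits the argument as straightforward, and your splitting at $\tau=t/2$ for $t\ge 2$, with the trivial bound $t^{1-s_1}/(1-s_1)$ for bounded $t$, is exactly the standard route.

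Your caveat is also right and worth keeping. For $s_2=1$ the integral is at least $(1+t)^{-1}t^{1-s_1}/(1-s_1)$, while the claimed bound is at most $\ln(1+t)\le t$, so the statement as written genuinely fails as $t\to0^+$. It needs $t\ge t_0>0$ (or $\ln(2+t)$ in place of $\ln(1+t)$). This is harmless for the paper, which only applies the lemma with $s_2>1$, e.g.\ in \eqref{ubw26}.
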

\end{appendices}
		\phantomsection
		\addcontentsline{toc}{section}{\refname}
		\footnotesize
		\addtolength{\itemsep}{-1.0em}
		\setlength{\itemsep}{-4pt}
		\bibliographystyle{abbrv}

\end{document}